\documentclass[reqno]{amsart}
\pdfoutput=1

\usepackage[british]{babel}
\usepackage[utf8]{inputenc}
\usepackage[T1]{fontenc}

\usepackage[breaklinks=true]{hyperref}
\usepackage{amssymb}
\usepackage{amsfonts}
\usepackage{amsmath}
\usepackage{amsthm}
\usepackage{enumitem}
\usepackage{mathtools}
\usepackage{csquotes}
\usepackage{tikz-cd}
\usepackage{adjustbox}
\usepackage{accents}
\usepackage{MnSymbol}
\usepackage{soul}
\usepackage{stmaryrd}
\PassOptionsToPackage{hyphens}{url}\usepackage{hyperref}
\usepackage{xurl}
\usepackage{cleveref}
\usepackage{dsfont}
\usepackage{pdfpages}
\usepackage{bbm}
\usepackage{graphicx}
\usepackage{comment}
\usepackage{xcolor}
\usepackage{etoolbox}

\makeatletter
\patchcmd{\@setaddresses}{\indent}{\noindent}{}{}
\patchcmd{\@setaddresses}{\indent}{\noindent}{}{}
\patchcmd{\@setaddresses}{\indent}{\noindent}{}{}
\patchcmd{\@setaddresses}{\indent}{\noindent}{}{}
\makeatother

\numberwithin{equation}{section}
\newtheorem{thm}[equation]{Theorem}
\newtheorem{lem}[equation]{Lemma}
\newtheorem{prop}[equation]{Proposition}
\newtheorem{cor}[equation]{Corollary}

\theoremstyle{definition}
\newtheorem{defi}[equation]{Definition}
\newtheorem{ex}[equation]{Example}
\newtheorem{re}[equation]{Remark}
\newtheorem{rec}[equation]{Recollection}
\newtheorem{cons}[equation]{Construction}

\newtheorem{no}[equation]{Notation}

\newcommand{\nD}{\tn{D}}

\newcommand{\nH}{\tn{H}}

\newcommand{\nK}{\tn{K}}

\newcommand{\nN}{\tn{N}}

\newcommand{\bA}{\mathbb{A}}

\newcommand{\bF}{\mathbb{F}}

\newcommand{\bP}{\mathbb{P}}

\newcommand{\bR}{\mathbb{R}}
\newcommand{\bS}{\mathbb{S}}

\newcommand{\bZ}{\mathbb{Z}}

\newcommand{\cA}{\mathcal{A}}

\newcommand{\cC}{\mathcal{C}}
\newcommand{\cD}{\mathcal{D}}

\newcommand{\cF}{\mathcal{F}}
\newcommand{\cG}{\mathcal{G}}
\newcommand{\cH}{\mathcal{H}}
\newcommand{\cI}{\mathcal{I}}

\newcommand{\cK}{\mathcal{K}}

\newcommand{\cM}{\mathcal{M}}
\newcommand{\cN}{\mathcal{N}}

\newcommand{\cP}{\mathcal{P}}

\newcommand{\tn}[1]{\textnormal{#1}}
\newcommand{\bb}[1]{\mathbb{#1}}
\newcommand{\tsf}[1]{\textsf{#1}}
\newcommand{\tbf}[1]{\textbf{#1}}
\newcommand{\mcal}[1]{\mathcal{#1}}

\newcommand{\underl}[1]{\underline{#1}}
\newcommand{\overl}[1]{\overline{#1}}

\newcommand{\colimit}[1]{\underset{#1}{\tn{colim}} \;}

\newcommand*{\defeq}{\mathrel{\vcenter{\baselineskip0.4ex \lineskiplimit0pt
\hbox{\scriptsize.}\hbox{\scriptsize.}}}%
=}
\newcommand{\op}{\tn{op}}
\newcommand{\fun}{\tn{Fun}}
\newcommand{\Hom}{\tn{Hom}}
\newcommand{\funadd}{\tn{Fun}_{\tn{add}}}
\newcommand{\funrlin}{\tn{Fun}_{R\tn{-lin}}}
\newcommand{\map}{\tn{map}}
\newcommand{\Map}{\tn{Map}}
\newcommand{\psigma}{\cP_{\Sigma}}
\newcommand{\alg}{\tn{Alg}}
\newcommand{\calg}{\tn{CAlg}}
\newcommand{\bbone}{\mathbbm{1}}

\newcommand{\modr}{\tsf{\textup{Mod}}(R)}
\newcommand{\modgr}{\tsf{\textup{Mod}}(G;R)}
\newcommand{\Mod}{\tsf{\textup{Mod}}}
\newcommand{\heart}{\ensuremath\heartsuit}
\newcommand{\ch}{\tn{Ch}}
\newcommand{\finstar}{\tsf{\textup{Fin}}_{\ast}}
\newcommand{\infoperads}{\tsf{\textup{Op}}_{\infty}}

\newcommand{\Proj}{\tn{Proj}}
\newcommand{\proj}{\tn{proj}}
\newcommand{\spans}{\tsf{\textup{span}}}
\newcommand{\Spans}{\tsf{\textup{Span}}}
\newcommand{\spec}{\tsf{\textup{Sp}}}
\newcommand{\specg}{\tsf{\textup{Sp}}^G}
\newcommand{\specwgh}{\tsf{\textup{Sp}}^{\wgh}}
\newcommand{\spc}{\tsf{\textup{Spc}}}

\newcommand{\spcg}{\tsf{\textup{Spc}}^G}
\newcommand{\spcgp}{\tsf{\textup{Spc}}^G_{\ast}}
\newcommand{\catinf}{\tsf{\textup{Cat}}_{\infty}}
\newcommand{\catinfex}{\tsf{\textup{Cat}}_{\infty}^{\tn{ex}}}

\newcommand{\prl}{\tsf{\textup{Pr}}^{\tn{L}}}
\newcommand{\prlst}{\tsf{\textup{Pr}}^{\tn{L}}_{\tn{st}}}

\newcommand{\prrst}{\tsf{Pr}^{\tn{R}}_{\tn{st}}}

\newcommand{\mackg}{\tsf{\textup{Mack}}(G)}
\newcommand{\mackgr}{\tsf{\textup{Mack}}_R(G)}
\newcommand{\mack}{\tsf{\textup{Mack}}}
\newcommand{\cmackg}{\tsf{\textup{Mack}}^{\tsf{\textup{coh}}}(G)}
\newcommand{\cmackgr}{\tsf{\textup{Mack}}_R^{\tsf{\textup{coh}}}(G)}

\newcommand{\perm}{\tsf{\textup{perm}}}
\newcommand{\permgr}{\tsf{\textup{perm}}(G;R)}
\newcommand{\Perm}{\tsf{\textup{Perm}}}
\newcommand{\Permgr}{\tsf{\textup{Perm}}({G};{R})}
\newcommand{\gset}{G\tsf{\textup{-set}}}
\newcommand{\gSet}{G\tsf{\textup{-Set}}}
\newcommand{\freer}{\tn{free}_R}

\newcommand{\weyl}[2]{{#1}/\!\!/{#2}}
\newcommand{\wgh}{\weyl{G}{H}}
\newcommand{\pic}{\tn{Pic}}
\newcommand{\stab}{\tn{Stab}}

\newcommand{\sylp}{\tn{Syl}_p}
\newcommand{\subp}{\tn{Sub}_p}
\newcommand{\sub}{\tn{Sub}}
\newcommand{\glo}{\tsf{Glo}}
\newcommand{\gppairs}{\tsf{G}(p)}

\newsavebox{\pullback}
\sbox\pullback{%
\begin{tikzpicture}%
\draw (0,0) -- (1ex,0ex);%
\draw (1ex,0ex) -- (1ex,1ex);%
\end{tikzpicture}}

\tikzcdset{scale cd/.style={every label/.append style={scale=#1}, cells={nodes={scale=#1}}}}

\date{}
\author{Yorick Fuhrmann}
\address{Yorick Fuhrmann, Warwick Mathematics Institute, Coventry CV4 7AL, UK}
\email{yorick.fuhrmann@warwick.ac.uk}
\urladdr{https://warwick.ac.uk/fac/sci/maths/people/staff/fuhrmann}

\hypersetup{{pdfauthor={Yorick Fuhrmann}},pdftitle={Modular fixed points in equivariant homotopy theory}}

\usepackage[backend=biber, citestyle=authoryear, style=alphabetic, maxbibnames=99]{biblatex}
\addbibresource{references.bib}
\setcounter{biburlnumpenalty}{9000}
\AtBeginBibliography{\footnotesize}

\begin{document}

\title[Modular fixed points in equivariant homotopy theory]{Modular fixed points in \\ equivariant homotopy theory}

\begin{abstract}
We show that the derived $\infty$-category of permutation modules is equivalent to the category of modules over the Eilenberg-MacLane spectrum associated to a constant Mackey functor in the $\infty$-category of equivariant spectra. On such module categories we define a modular fixed point functor using geometric fixed points followed by an extension of scalars and identify it with the modular fixed point functor on derived permutation modules introduced by Balmer-Gallauer in \cite{bg25a}. As an application, we show that the Picard group of such a module category for a $p$-group is given by the group of class functions satisfying the Borel-Smith conditions. In the language of representation theory, this result was first obtained by Miller in \cite{mil25b}.
\end{abstract}


\maketitle
\tableofcontents

\section{Introduction}\label{sec:intro}

Geometric fixed points extend the fixed points of equivariant spaces to the level of equivariant spectra. That is, if $H$ is a subgroup of a finite group $G$ with Weyl group $\wgh = N_G(H)/H$, then by the universal property of the $\infty$-category of $G$-spectra the geometric $H$-fixed point functor 
$$\Phi^H: \specg \to \specwgh$$
is the essentially unique symmetric monoidal left adjoint $\specg \to \specwgh$ which makes the square
\[\begin{tikzcd}
	\spcgp & \spc^{\wgh}_{\ast} \\
	{\spec^{G}} & {\specwgh}
	\arrow["{(-)^H}", from=1-1, to=1-2]
	\arrow["{\Sigma^{\infty}}"', from=1-1, to=2-1]
	\arrow["{\Sigma^{\infty}}", from=1-2, to=2-2]
	\arrow["{\Phi^H}", from=2-1, to=2-2]
\end{tikzcd}\]
commute, where $\Sigma^{\infty}: \spc^{(-)}_* \to \spec^{(-)}$ is the infinite suspension functor from pointed equivariant spaces to equivariant spectra. We hence have the property
$$\Phi^H (\Sigma^{\infty} X) \simeq \Sigma^{\infty} (X^H)$$
for every pointed $G$-space $X$. From this there arises a natural question: If $R$ is a discrete commutative ring, is there an $R$-linear version of this property? In the realm of equivariant homotopy theory there are multiple sensible ways of promoting a discrete ring to an equivariant spectrum. For one of those, namely the Eilenberg-MacLane spectrum of the constant Green functor $\underl{R}$ associated to $R$, we answer this question in the affirmative by constructing a symmetric monoidal left adjoint $\nH R$-linear \emph{modular fixed point functor} 
$$\Psi^H: \Mod_{\nH \underl{R}}(\specg) \to \Mod_{\nH \underl{R}}(\specwgh)$$
which exists when we pass to the $\underl{R}$-linear version of the $\infty$-category of equivariant spectra for a finite group $G$ and the Weyl group $\wgh$ of a $p$-subgroup $H \leq G$, where we assume that $p=0$ in the ring $R$ (\Cref{defi:eqmodularfixedpoints}). This functor can be seen as an $R$-linearisation of geometric fixed points, since it makes the diagram
\[\begin{tikzcd}
	\specg & \specwgh \\
	{\Mod_{\nH \underl{R}}(\spec^{G})} & {\Mod_{\nH \underl{R}}(\specwgh)}
	\arrow["{\Phi^H}", from=1-1, to=1-2]
	\arrow["{F_{\nH \underl{R}}}"', from=1-1, to=2-1]
	\arrow["{F_{\nH \underl{R}}}", from=1-2, to=2-2]
	\arrow["{\Psi^H}", from=2-1, to=2-2]
\end{tikzcd}\]
with vertical free functors commute, see \Cref{lem:eqmodfixfree}. In particular, it satisfies the analogous property 
$$\Psi^H(\nH \underl{R} \otimes \Sigma^{\infty}X) \simeq \nH \underl{R} \otimes \Sigma^{\infty} (X^H)$$
for any pointed $G$-space $X$. Once defined for finite groups, the functor naturally extends to the case where $G$ is profinite and $H$ is a pro-$p$-subgroup. \\

\noindent Let us shine some light on the $\infty$-category $\Mod_{\nH \underl{R}}(\specg)$ by relating it to other ways of promoting the discrete ring $R$ to $\specg$. That is, it receives symmetric monoidal left adjoint functors from two other $G$-equivariant module $\infty$-categories:
\begin{enumerate}
    \item[(1)] Modules over the $R$-linear Burnside Mackey functor $\Mod_{\nH \bA_R}(\specg)$, which by \cite[Theorem 5.10]{psw22} are equivalent to the derived $\infty$-category of $R$-module valued Mackey functors $\cD(\mackgr)$. Below we observe that this category admits a spectral Mackey functor description.
    \item[(2)] Modules over the inflated coefficients $\nH R_G \defeq \tn{infl}^G_{1}(\nH R)$, $\Mod_{\nH R_G}(\specg)$, where $\tn{infl}^G_{1}: \spec \to \specg$ is the inflation functor from the trivial group to $G$. By \cite[Theorem 4.50]{psw22} we can view this category as the $\infty$-categorical version of Kaledin's category of derived Mackey functors $\nD\mackgr$ from \cite{kal11}, and with $\fun^{\times}(\Spans(G)^{\op},\Mod_{\nH R}(\spec))$ it also has a description in terms of spectral Mackey functors, see \cite[Proposition 4.41]{psw22}.
\end{enumerate}

As the name modular fixed points and the presence of constant Mackey functors might suggest, this construction is related to Brauer quotients \cite[Section 1]{bro85} and their extension to derived permutation modules: Motivated by the study of the tt-geometry of the derived category of permutation modules $\nD\Permgr$ defined in \cite{bg22a} (or equivalently, the derived category of cohomological Mackey functors or the derived category of Artin motives over a field,\footnote{In a subsequent paper we will extend this equivalence to more general base schemes.} see \cite{bg23}), Balmer and Gallauer in \cite{bg25a} under similar assumptions constructed a `modular fixed point' tt-functor
$$\Psi^H: \nD\Permgr \to \nD\Perm(\wgh;R)$$
which linearises $H$-fixed points of $G$-sets, i.e. it maps the permutation module $R(X)$ for a finite $G$-set $X$ to the permutation module $R(X^H)$ and hence recovers the classical Brauer quotient on permutation modules. These fixed point functors were crucial to stratify $\nD\Permgr$, see \cite[Section 9]{bg25a}. \\
We show that the $\infty$-categorical enhancement of $\nD\Permgr$ is equivalent to the module category $\Mod_{\nH \underl{R}}(\specg)$, a result which was already mentioned but not proven in \cite{bg23}. We do so in two different ways:
\begin{enumerate}
    \item[(1)] One goes through \emph{cohomological} Mackey functors $\cmackgr$, which form the category of modules over the constant Mackey functor $\underl{R}$ in abelian group or $R$-module valued Mackey functors. Such Mackey functors are characterised by the property that induction after restriction is equal to multiplication by the index. In \Cref{cor:comackasspgmodules} we show that $\Mod_{\nH \underl{R}}(\specg)$ is equivalent to the derived $\infty$-category of $\cmackgr$. \\
    The second required equivalence $\cD(\cmackgr) \simeq \cD\Permgr$ builds on an observation of Yoshida that cohomological Mackey functors are nothing but $R$-module valued presheaves on permutation modules, see \Cref{thm:eqdperminf}.
    \item[(2)] For the other arguably more direct way we construct a bounded weight structure on the compact part of $\Mod_{\nH \underl{R}}(\specg)$ which has weight heart $\permgr^{\natural}$, the idempotent completion of finitely generated permutation modules. This weight structure is of independent interest, since it allows us to translate statements about permutation modules into the world of genuine equivariance. Using the higher categorical machinery for weight structures developed by Sosnilo we obtain an equivalence of compact parts $\Mod_{\nH \underl{R}}^{\omega}(\specg) \simeq \cK_{\tn{b}}(\permgr^{\natural})$, see \Cref{thm:eqhrmoddpermweight}.
\end{enumerate}
Using weight structure arguments we then show in \Cref{thm:identifymodfix} that the equivariant modular fixed points identify with the modular fixed points on derived permutation modules by Balmer-Gallauer. \\   

\noindent The relationship between categories of modules in Mackey functors and $\infty$-categories of modules over equivariant Eilenberg-MacLane spectra turns out to be a robust one, which has already been observed model-categorically \cite{gs14}, \cite{zen18}. We use a description of the derived $\infty$-category of a Grothendieck abelian category with a system of compact projective generators as finite product preserving presheaves of spectra to show in \Cref{thm:modulesandderivation} that taking modules commutes with $\infty$-categorical derivation, a result which is probably well known but we couldn't find a reference for. This implies, relying on such a result for the Burnside Mackey functor \cite[Theorem 5.10]{psw22}, that for any Green functor $B \in \mackgr$ there is an equivalence
$$\cD(\Mod_B(\mackgr)) \simeq \Mod_{\nH B}(\specg),$$
see \Cref{thm:greenmodulesasspgmodules}. The presheaf description of $\cD(\Mod_B(\mackgr))$ we use gives a further equivalence
$$\Mod_{\nH B}(\specg) \simeq \fun^{\times}(B[\gset]^\op,\spec),$$
where $B[\gset] \subseteq \Mod_B(\mackgr)$ is the full subcategory on the $B$-modules $B(X \times -)$ for all finite $G$-sets $X$. Hence $\Mod_{\nH B}(\specg)$ is the initial presentable stable $\infty$-category receiving a finite direct sum preserving functor from $B[\gset]$, see \Cref{cor:modhbaspresheaves}. For specific Green functors this gives spectral Mackey functor descriptions of their module categories in equivariant spectra. The case we are mostly concerned with is the constant Mackey functor $\underl{R}$, where we get
$$\Mod_{\nH \underl{R}}(\specg) \simeq \fun^\times(\permgr^\op,\spec).$$ 
Here $\permgr$ again is the category of finitely generated $(G;R)$-permutation modules. This fact already appeared in \cite[Example 2.15]{bcn25}. For the $R$-linear Burnside Mackey functor $\bA_R$ we obtain
$$\Mod_{\nH \bA_R}(\specg) \simeq \fun^\times(\Omega_R(G)^\op,\spec),$$
where $\Omega_R(G)$ is the $R$-linear Burnside category. If we choose $R=\bZ$, the right-hand side simplifies to $\fun^\times(\spans(G),\spec)$, where $\spans(G)$ is the effective Burnside (1-)category. It is the homotopy category of $\Spans(G)$, Barwick's effective Burnside $\infty$-category of finite $G$-sets \cite[Section 3]{bar17}. Using the spectral Mackey functor description of $G$-spectra 
$$\specg \simeq \fun^\times(\Spans(G)^\op,\spec)$$
this identifies the restriction along the canonical map $\Spans(G) \to \spans(G)$ with the forgetful functor $\Mod_{\nH \bA}(\specg) \to \specg$. The free functor is given by left Kan extension. In particular, this means that the only `difference' between $\specg$ and $\Mod_{\nH \bA}(\specg)$ is the 2-morphisms in the (effective) Burnside category. For the trivial group $G=1$ we also recover the classical equivalence
$$\Mod_{\nH R}(\spec) \simeq \fun^\times(\tsf{Lat}_R^\op,\spec),$$
where $\tsf{Lat}_R$ is the category of $R$-modules that are free of finite rank, viewed as a full subcategory of all $R$-modules, cf. \cite[Example 5.1.6(2)]{cs24}. \\

\noindent We finally use the equivariant modular fixed point functors to construct a group isomorphism
$$\pic(\Mod_{\nH \underl{k}}(\spec^{G})) \cong \tn{CF}_{\tn{b}}(G)$$
for $G$ a $p$-group and $k$ a field of characteristic $p$, where $\tn{CF}_{\tn{b}}(G)$ is the group of integral class functions on conjugacy classes of subgroups of $G$ which satisfy the \emph{Borel-Smith conditions}, see \Cref{thm:picborelsmith}. This has been shown by Miller in a series of papers \cite{mil24}, \cite{mil25a}, \cite{mil25b} using the language of representation theory. Miller considers the invertible elements of $\nK_{\tn{b}}(\perm(G;k)^{\natural})$, which are called \emph{endotrivial complexes}. For Artin motives over a field, this Picard group has been considered by Bachmann \cite{bac16}, who also constructed modular fixed point functors in this geometric setting. \\
Our methods are more topological. On the one hand, they rely on work of Smith and Borel on fixed points of finite equivariant CW complexes which are $\bF_p$-cohomology spheres, as presented in \cite[Section III.4]{td87}. On the other hand, they use work of tom Dieck on dimension functions of orthogonal representations \cite[Section III.5]{td87}, which in turn builds on work of Atiyah-Tall and Serre on Adams operations on real orthogonal representation rings. From the above isomorphism for $p$-groups it can be deduced that for any finite group $G$ there is an isomorphism 
$\pic(\Mod_{\nH \underl{k}}(\specg)) \cong \tn{CF}_{\tn{b}}(G,p) \times \Hom(G,k^{\times}),$
where $\tn{CF}_{\tn{b}}(G,p)$ stands for the group of integral class functions on conjugacy classes of $p$-subgroups of $G$ which satisfy the Borel-Smith conditions, but it requires some nontrivial inputs as explained in \cite[Section 6]{mil25b}. In \Cref{cor:picborelsmithprop} we finally extend the isomorphism $\pic(\Mod_{\nH \underl{k}}(\spec^{G})) \cong \tn{CF}_{\tn{b}}(G)$ to the case where $G$ is a pro-$p$-group. \\

\noindent \tbf{Organisation of the paper.} We start by recalling facts about Mackey functors and permutation modules in \Cref{sec:mackey}. We then consider derived $\infty$-categories of modules in Grothendieck abelian categories, using a spectral presheaf description. In \Cref{sec:modules} we are concerned with modules in equivariant spectra and their relationship to derived $\infty$-categories of Mackey functors. We then introduce the derived $\infty$-category of permutation modules in \Cref{sec:derivedpermutationmodules} and compare it to modules over the constant Mackey functor in equivariant spectra. \Cref{sec:weightstructures} is devoted to weight structures and the construction of one on modules over the constant Mackey functor in equivariant spectra. \Cref{sec:fixedpoints} reviews geometric fixed points and constructs modular fixed points in the equivariant setting. We prove properties of the latter and compare it to the modular fixed points on derived permutation modules. We end by considering the Picard group of modules over the constant Mackey functor in \Cref{sec:picardgroups} by employing the constructed functors. \\

\noindent \tbf{Notations and conventions.} Throughout, $G$ is a profinite group, and we mention when we put further assumptions on $G$. If not mentioned otherwise the notation $H \leq G$ and the term \emph{subgroup} will always mean an open subgroup, or equivalently, a closed subgroup of finite index. If $H \leq G$ is a subgroup we denote by $\wgh \defeq N_G(H)/H$ its Weyl group. If $K,H \leq G$ are subgroups we write $K \leq_G H$ if $K$ is subconjugate to $H$ in $G$. When $G$ is finite and $p$ is a prime we write $\sub(G)$ for the set of subgroups, $\subp(G)$ for the set of $p$-subgroups and $\sylp(G)$ for the set of Sylow $p$-subgroups of $G$. By $R$ we denote a commutative unital ring, and $\underl{R}$ is the constant Mackey functor associated to $R$ which has identic restriction maps and whose induction maps multiply with the index. All $R$-linear categories are implicitly assumed additive. The term \emph{tensor category} will mean an additive category with a symmetric monoidal structure, additive in both variables, and an \emph{$R$-linear tensor category} is $R$-linear and a tensor category such that the tensor product is $R$-linear in both variables. A functor between ($R$-linear) tensor categories which is \emph{tensor} is an additive (resp. $R$-linear) and symmetric monoidal functor. If we write a pair of adjoint functors as $(F,G)$, then $F$ will be left and $G$ right adjoint. \\
We write $\spc$ for the $\infty$-category of spaces and $\spec$ for the $\infty$-category of spectra. The $G$-equivariant analogues are denoted $\spcg$ and $\specg$. Mapping spaces in an $\infty$-category $\cC$ are denoted $\Map_{\cC}$, and by $\Hom_{\cC}$ we mean $\pi_0 \Map_{\cC}$, the $\Hom$-set in the homotopy category $\tn{h}\cC$. If an $\infty$-category $\cC$ admits a symmetric monoidal enhancement we denote the corresponding operad by $\cC^{\otimes} \to \nN(\finstar)$, so that $\cC^{\otimes}_{\langle 1 \rangle} = \cC$ holds, and the symmetric monoidal structure on $\tn{h}\cC$ is denoted by $\otimes$. Abusing terminology we often say that $\cC$ is a symmetric monoidal $\infty$-category. The letters $t$ and $w$ are used for t- and weight structures respectively, e.g. hearts are denoted $\cC^{t \heart}$ and $\cC^{w \heart}$. For an additive ($\infty$-)category $\mcal{A}$ we denote its idempotent-completion by $\mcal{A}^{\natural}$. We tend to use upright letters for 1-categorical and curly letters for $\infty$-categorical constructions, e.g. for derived ($\infty$-)categories we write $\nD(-)$ and $\cD(-)$ respectively. \\

\noindent \tbf{Acknowledgements.} I want to thank my advisor Martin Gallauer for suggesting the topic, for his guidance during the work on it, and for his excellent comments on drafts of this paper. I furthermore would like to thank Benjamin Dünzinger, John Greenlees, Drew Heard, Achim Krause, Clover May and Isaac Moselle for excellent comments and discussions. \\
The author is supported by the Warwick Mathematics Institute Centre for Doctoral Training and gratefully acknowledges funding from the University of Warwick.

\section{Mackey Functors}\label{sec:mackey}

Let us start by recalling the basic objects of our study: (cohomological) Mackey functors and permutation modules. After that we prove a fact about the commutativity of $\infty$-categorical derivation and taking modules in the context of Grothendieck abelian categories, which we will apply to the category of Mackey functors. 

\begin{rec}
Let $\gSet$ be the category with objects sets (viewed as discrete topological spaces) on which $G$ acts continuously and morphisms $G$-equivariant maps. The category $\gset$ is the full subcategory on finite $G$-sets. Both categories become symmetric monoidal when equipped with the cartesian monoidal structure.
\end{rec}

\begin{rec}
We denote by 
$$\mackgr \defeq \funadd(\Omega(G)^{\op},\modr) = \funrlin(\Omega_R(G)^{\op},\modr)$$
the Grothendieck abelian category of \emph{Mackey functors} with values in $R$-modules. Here $\Omega(G)$ is the \emph{Burnside category} obtained by group completing the hom-monoids in $\spans(G)$, which is the category of finite $G$-sets and isomorphism classes of spans of $G$-equivariant maps between them, and $\Omega_R(G)$ is obtained from $\Omega(G)$ by tensoring each hom-group with $R$. The ($R$-linear) \emph{Burnside Mackey functor} is given by the functor $\bA_R \defeq \Hom_{\Omega_R(G)}(-,G/G)$. Since $(\Omega_R(G))^{\op} \simeq \Omega_R(G)$ the exact evaluation functor $\tn{ev}_{G/H}: \mackgr \to \Mod(R), \; M \mapsto M(G/H)$ is both represented and corepresented by $\bA_R(G/H \times -)$, and hence $\{ \bA_R(G/H \times -) \mid H \leq G \}$ is a set of compact\footnote{By a \emph{compact} object we always mean that mapping out of it commutes with filtered colimits. In the context of Grothendieck abelian categories this is often called \emph{finitely presented}.} projective generators for $\mackgr$. When $R = \bZ$, we write $\mackg$ for $\mack_{\bZ}(G)$ and $\bA$ for $\bA_{\bZ}$.
\end{rec}

\begin{rec}
We let $\modgr$ be the category of \emph{discrete $(G;R)$-modules}, which are $R$-modules with the discrete topology on which $G$ acts continuously. There is a natural functor 
$$\freer: \gSet \to \modgr, \; X \mapsto R(X),$$ and the objects in the essential image of this functor are called \emph{permutation modules}, the full subcategory they span is denoted $\Permgr$. The objects in the essential image of the finite $G$-sets are the \emph{finitely generated permutation modules}, the full subcategory they span is denoted $\permgr$. Both categories are symmetric monoidal under tensor product over $R$, and it makes the above free functor symmetric monoidal. When $R = \bZ$ we will just write $\Perm(G)$ and $\perm(G)$.
\end{rec}

\begin{defi}\label{defi:cohomologicalmackeyfunctors}
A Mackey functor $M: \Omega(G)^{\op} \to \modr$ is called \emph{cohomological} if it takes a span of canonical projections
\[\begin{tikzcd}
    & {G/K} \\
    {G/H} && {G/H}
	\arrow["{\pi^H_K}"', from=1-2, to=2-1]
	\arrow["{\pi^H_K}", from=1-2, to=2-3]
\end{tikzcd}\]
to multiplication by $[H:K]$ whenever $K \leq H \leq G$. Then $\cmackgr \subseteq \mackgr$ is defined as the full subcategory on cohomological Mackey functors. In the case $R=\bb{Z}$ we will again just write $\cmackg$.
\end{defi}

The following is shown by noting that $\perm(G;R)$ is equivalent to the locally group completed and $R$-linearised span category $\Omega_R(G)$ modulo a ``cohomological'' ideal $\cI_R(G)$, which is the two-sided ideal of homomorphisms on the generators 
$$(G/H \stackrel{\pi^H_K}{\longleftarrow} G/K \stackrel{\pi^H_K}{\longrightarrow} G/H) - ([H:K]\cdot \tn{id}_{G/H})$$
for each pair of subgroups $K \leq  H \leq G$, see \cite[Proposition 4.17]{bg23}.

\begin{lem}[Yoshida]\label{lem:comackasperm}
Precomposition with $\freer: \Omega_R(G) \to \permgr$ induces an equivalence of Grothendieck abelian categories 
$$\cmackgr \simeq \funrlin(\permgr^{\op},\modr) \simeq \funadd(\perm(G)^{\op},\modr).$$
\end{lem}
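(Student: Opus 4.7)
The plan is to bootstrap the identification $\permgr \simeq \Omega_R(G)/\cI_R(G)$ stated in the paragraph preceding the lemma (citing \cite[Proposition 4.17]{bg23}): this says that the $R$-linear extension $\freer: \Omega_R(G) \to \permgr$ of the free functor is essentially surjective and full, with kernel exactly the two-sided ideal $\cI_R(G)$ generated $R$-linearly by the differences $(G/H \leftarrow G/K \rightarrow G/H) - [H:K]\cdot \tn{id}_{G/H}$ as $K \leq H \leq G$ range over pairs of subgroups.

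For the first equivalence $\cmackgr \simeq \funrlin(\permgr^{\op},\modr)$, I would invoke the universal property of the quotient of an $R$-linear category by a two-sided ideal: restriction along $\freer^{\op}$ is a fully faithful functor from $\funrlin(\permgr^{\op},\modr)$ into $\funrlin(\Omega_R(G)^{\op},\modr) = \mackgr$ whose essential image consists of those $M$ that vanish on $\cI_R(G)$. By $R$-linearity, $M$ vanishes on $\cI_R(G)$ if and only if it vanishes on the listed generators, which is visibly the cohomological condition in \Cref{defi:cohomologicalmackeyfunctors}. Hence the essential image is precisely $\cmackgr$.

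The second equivalence $\funrlin(\permgr^{\op},\modr) \simeq \funadd(\perm(G)^{\op},\modr)$ rests on the observation that $\permgr$ is the $R$-linearisation of $\perm(G)$: both have the same objects, and the decomposition of hom-groups between permutation modules into invariants gives a natural isomorphism $\Hom_{\permgr}(R(X),R(Y)) \cong R \otimes_{\bZ} \Hom_{\perm(G)}(\bZ(X),\bZ(Y))$. Since $\modr$ is $R$-linear, the universal property of $R$-linearisation then produces the desired bijection. I expect the only point requiring any care is the generation claim for $\cI_R(G)$, which however is exactly the content of the Balmer-Gallauer result we are invoking; beyond that, no step is difficult.
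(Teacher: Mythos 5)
Your argument is correct and follows exactly the route the paper gestures at in the paragraph preceding the lemma (the presentation $\permgr \simeq \Omega_R(G)/\cI_R(G)$ from \cite[Proposition 4.17]{bg23}, plus the identification of $\permgr$ as the $R$-linearisation of $\perm(G)$ via the orbit description of its hom-groups); the paper itself simply cites \cite[Corollary 4.22]{bg23} for the whole statement. The only points needing care — that an $R$-linear functor kills a two-sided ideal iff it kills its generators, and that vanishing on the listed generators is literally the cohomological condition of \Cref{defi:cohomologicalmackeyfunctors} — are handled correctly.
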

\begin{proof}
We refer to \cite[Corollary 4.22]{bg23}.
\end{proof}

\begin{rec}\label{re:dayconvolutionmack}
The category $\mackgr$ becomes an $R$-linear tensor category under Day convolution, which makes the $R$-linear Yoneda embedding 
$$\Omega_R(G) \hookrightarrow \funadd(\Omega_R(G)^{\op},\modr) = \mackgr$$
symmetric monoidal. That is, for $M,N \in \mackgr$, we define their monoidal product $M \boxtimes N$, also called \emph{box product}, to be the $R$-linear left Kan extension of $M(-) \otimes N(-)$ along the functor $- \times -$ given by the cartesian products of $G$-sets:
\[\begin{tikzcd}
	{\Omega_R(G)^{\op} \times \Omega_R(G)^{\op}} & {\modr \times \modr} & {\modr} \\
	& {\Omega_R(G)^{\op}}
	\arrow["{M \times N}", from=1-1, to=1-2]
	\arrow["{-\otimes-}", from=1-2, to=1-3]
	\arrow["{-\times-}"', from=1-1, to=2-2]
	\arrow["{M \boxtimes N}"', dashed, from=2-2, to=1-3]
\end{tikzcd}\]
The unit is the Burnside Mackey functor $\bA_R$. The symmetric monoidal structure is closed with internal hom given by $$\underl{\tn{Hom}}(M,N)(T) = \Hom_{\mackgr}(M,N(T\times -)) \tn{ for } T \in \gset.$$
\end{rec}

\begin{rec}
A \emph{Green functor} is a commutative monoid object in $\mackgr$.\footnote{The terminology Green functor is also used for non-commutative monoid objects, but we will always assume commutativity.} If $B$ is a Green functor, the category of modules $\Mod_B(\mackgr)$ is again Grothendieck abelian and the relative tensor product over $B$ defined as a coequaliser endows it with the structure of an $R$-linear tensor category. A set of compact projective generators for $\Mod_B(\mackgr)$ is given by the collection $\{ B(G/H \times -) \mid H \leq G \}$.
\end{rec}

\begin{ex}\label{ex:coefficientsaremodules}
We can express $\mackgr$ as $\Mod_{\bA_R}(\mackg)$. Then the functor $\mackgr \to \mackg$ forgetting the $R$-linear structure is just the restriction of scalars along the $R$-linearisation map $\bA \to \bA_R$.
\end{ex}

\begin{ex}\label{ex:cohomological}
We have $\cmackgr = \Mod_{\underl{R}}(\mackgr)$ by \cite[Proposition 16.3]{tw95}, where $\underl{R}$ is the constant Mackey functor associated to $R$. Under this identification the inclusion $\cmackgr \hookrightarrow \mackgr$ is just the restriction of scalars along $\bA_R \to \underl{R}$. Via \Cref{lem:comackasperm} we can endow $\cmackgr$ with a Day convolution symmetric monoidal structure. This symmetric monoidal structure agrees with the canonical one on $\Mod_{\underl{R}}(\mackgr)$.
\end{ex}

\begin{rec}\label{rec:modules}
If $\cC^{\otimes}$ is a presentably symmetric monoidal\footnote{That is, it is presentable and the associated tensor bifunctor commutes with colimits in both variables separately.} stable $\infty$-category and $A \in \calg(\cC^{\otimes})$, then the $\infty$-category $\Mod_A(\cC)^{\otimes}$ of $A$-modules in $\cC$ is presentably symmetric monoidal and stable as well \cite[Theorem 3.4.4.2]{lur17}. It comes with a free-forgetful adjunction
$$(F_A,U_A): \cC \rightleftarrows \Mod_A(\cC),$$
the free functor $F_A$ is symmetric monoidal, and the composite $U_AF_A$ is the functor $(-) \otimes A: \cC \to \cC$ \cite[Corollary 4.2.3.7, 4.2.4.8, Theorem 4.5.2.1, Section 4.5.3]{lur17}. If $\cC$ is compactly generated by a set of objects $\cG$, then $\Mod_A(\cC)$ is compactly generated by the set $F_A(\cG)$. If the objects of $\cG$ are also dualisable, then the objects of $F_A(\cG)$ are. If the unit of $\cC$ is compact then $A$ is compact in $\Mod_A(\cC)$, so in this case an object of $\Mod_A(\cC)$ is compact if and only if it is dualisable, and $\Mod_A(\cC)$ is rigidly-compactly generated by the set $F_A(\cG)$. \\
A symmetric monoidal functor $\alpha: \cC^{\otimes} \to \cD^{\otimes}$ restricts to commutative algebra objects $\calg(\cC^{\otimes}) \to \calg(\cD^{\otimes})$, and for any $A \in \calg(\cC^{\otimes})$ there is an induced symmetric monoidal functor
$$\bar{\alpha}: \Mod_A(\cC)^{\otimes} \to \Mod_{\alpha(A)}(\cD)^{\otimes}$$
on module categories. If $\alpha$ preserves colimits then so does the induced $\bar{\alpha}$ \cite[Corollary 4.2.3.5]{lur17}, and as explained in \cite[Remark 1.1.11]{erg22} both squares in the following diagram commute.
\[\begin{tikzcd}
	\cC & \cD \\
	{\Mod_A(\cC)} & {\Mod_{\alpha(A)}(\cD)}
	\arrow["\alpha", from=1-1, to=1-2]
	\arrow[""{name=0, anchor=center, inner sep=0}, "{F_A}"', shift right=2, from=1-1, to=2-1]
	\arrow[""{name=1, anchor=center, inner sep=0}, "{F_{\alpha(A)}}"', shift right=2, from=1-2, to=2-2]
	\arrow[""{name=2, anchor=center, inner sep=0}, "{U_A}"', shift right=2, from=2-1, to=1-1]
	\arrow["\bar{\alpha}", from=2-1, to=2-2]
	\arrow[""{name=3, anchor=center, inner sep=0}, "{U_{\alpha(A)}}"', shift right=2, from=2-2, to=1-2]
	\arrow["\dashv"{anchor=center}, draw=none, from=0, to=2]
	\arrow["\dashv"{anchor=center}, draw=none, from=1, to=3]
\end{tikzcd}\]
If $\alpha$ preserves limits, then so does $\bar{\alpha}$ \cite[Corollary 4.2.3.3]{lur17}. In particular, a right adjoint $\beta: \cD \to \cC$ to $\alpha$ induces a right adjoint $\bar{\beta}: \Mod_{\alpha(A)}(\cD) \to \Mod_A(\cC)$ to the functor $\bar{\alpha}: \Mod_A(\cC) \to \Mod_{\alpha(A)}(\cD)$. As a right adjoint to a symmetric monoidal functor the functor $\beta: \cD \to \cC$ is lax symmetric monoidal, and when $B \in \calg(\cD^{\otimes})$, then we also have an induced lax symmetric monoidal functor
$$\tilde{\beta}: \Mod_B(\cD)^{\otimes} \to \Mod_{\beta(B)}(\cC)^{\otimes}.$$
It factors as $\Mod_B(\cD) \stackrel{\varepsilon_B^*}{\longrightarrow} \Mod_{\alpha \beta(B)}(\cD) \stackrel{\bar{\beta}}{\longrightarrow} \Mod_{\beta(B)}(\cC)$, where $\varepsilon_B^*$ is the restriction of scalars along the counit $\varepsilon_B: \alpha \beta(B) \to B$, and hence $\tilde{\beta}$ has a symmetric monoidal left adjoint $\tilde{\alpha}: \Mod_{\beta(B)}(\cC) \to \Mod_B(\cD)$ given by the composition $\Mod_{\beta(B)}(\cC) \stackrel{\bar{\alpha}}{\longrightarrow} \Mod_{\alpha \beta(B)}(\cD) \stackrel{\varepsilon_{B,!}}{\longrightarrow} \Mod_B(\cD)$, where $\varepsilon_{B,!}$ is the extension of scalars along $\varepsilon_B: \alpha \beta(B) \to B$.
\end{rec}

\begin{rec}\label{rec:psigma}
Let $\cC$ be an $\infty$-category which admits finite coproducts. Let $\psigma(\cC) \subseteq \cP(\cC) = \fun(\cC^{\op},\spc)$ be the full subcategory on presheaves that preserve finite products. Then $\psigma(\cC)$ is compactly generated \cite[Proposition 5.5.8.10(6)]{lur09} and an accessible localisation of $\cP(\cC)$ \cite[Proposition 5.5.8.10(1)]{lur09}, we write 
$$L: \cP(\cC) \to \psigma(\cC)$$
for the left adjoint localisation functor. The Yoneda embedding $y: \cC \hookrightarrow \cP(\cC)$ factors through $\psigma(\cC)$ \cite[Proposition 5.5.8.10(2)]{lur09}. \\
If $\cC$ is additively symmetric monoidal,\footnote{That is, $\cC$ is additive and admits a symmetric monoidal structure for which the tensor product is additive in both variables.} then the presheaf $\infty$-categories $\cP(\cC)$ and $\cP(\cC;\spec) = \fun(\cC^{\op},\spec)$ become symmetric monoidal under Day convolution, which makes the stabilisation map 
$$\Sigma^{\infty}_{+}: \cP(\cC) \to \spec(\cP(\cC)) \simeq \cP(\cC;\spec)$$
symmetric monoidal \cite[Section 2.2.6, 4.8.1]{lur17}. As the tensor product preserves $L$-equivalences in both variables, $\psigma(\cC)$ by \cite[Proposition 2.2.1.9]{lur17} inherits a symmetric monoidal structure from $\cP(\cC)$ which informally is given by performing a Day convolution in $\cP(\cC)$ and then applying the localisation functor $L$. This makes $L: \cP(\cC) \to \psigma(\cC)$ and $y: \cC \hookrightarrow \psigma(\cC)$ symmetric monoidal.\\
Analogously, the full subcategory $\psigma(\cC;\spec) \subseteq \cP(\cC;\spec)$ on spectral presheaves which preserve finite products is a symmetric monoidal localisation of $\cP(\cC;\spec)$, cf. \cite[Remark 2.10]{aok20}. By \cite[Remark C.1.5.9]{lur18} or \cite[Lemma 2.6]{aok20} the $\infty$-category $\psigma(\cC;\spec)$ is the stabilisation of $\psigma(\cC)$, and there is a symmetric monoidal stabilisation map
$$\Sigma^{\infty}_{+}: \psigma(\cC) \to \spec(\psigma(\cC)) \simeq \psigma(\cC;\spec)$$
which we can view as the unit of the adjunction induced by the inclusion $\prlst \hookrightarrow \prl$ \cite[Proposition 4.8.2.18]{lur17}. Another way to think about this map is as follows: by \cite[Corollary 2.10(iii), Example 5.3(ii)]{ggn15} or \cite[Proposition C.1.5.7]{lur18} there is a symmetric monoidal equivalence $\psigma(\cC) \simeq \psigma(\cC;\spec_{t \geq 0})$ which is induced by the functor $\Omega^{\infty}: \spec_{t \geq 0} \to \spc$, and hence the inclusion of connective spectra into spectra induces a fully faithful symmetric monoidal functor $\psigma(\cC) \hookrightarrow \psigma(\cC;\spec)$. This gives a `stable Yoneda embedding' 
$$\bar{y}: \cC \stackrel{y}{\longrightarrow} \psigma(\cC) \to \psigma(\cC;\spec),$$
as in \cite[Corollary 2.9]{aok20}. It follows as in the proof of \cite[Theorem 3.7]{pst23} that the stable $\infty$-category $\psigma(\cC;\spec)$ is compactly generated by the presheaves $\bar{y}(c)$ for all $c \in \cC$. Stabilising \cite[Proposition 5.3.6.2]{lur09} it follows that $\psigma(\cC;\spec)$ is the initial presentable stable $\infty$-category receiving a functor from $\cC$ that preserves finite direct sums. If $\cD$ is any stable presentably symmetric monoidal $\infty$-category, then we have equivalences
\begin{align*}
\fun_{\otimes}^L(\psigma(\cC;\spec),\cD) &\simeq \fun_{\otimes}^L(\psigma(\cC),\cD) \\
&\simeq \fun_{\otimes}^{L,L\tn{-eq}}(\cP(\cC),\cD),
\end{align*}
where $L\tn{-eq}$ stands for functors inverting $L$-equivalences and $\otimes$ stands for symmetric monoidal functors. Functors in the latter category correspond to symmetric monoidal functors $\cC \to \cD$ whose colimit preserving extension $\cP(\cC) \to \cD$ factors through $\psigma(\cC)$, cf. \cite[Remark 2.31]{pst23}.
\end{rec}

\begin{defi}[cf. {\cite[Definition 2.51]{pst23}}]
Let $\cA$ be an abelian category. We call a full subcategory $\bP \subseteq \cA$ a \emph{system of compact projective generators} if it consists of compact projective objects which generate $\cA$ under colimits, contains the zero object and is closed under taking retracts and finite direct sums.
\end{defi}

\begin{re}
That $\bP$ generates $\cA$ under colimits is equivalent to the statement that for all $X \in \cA$ there exists an epimorphism $\bigoplus_{i \in I} P_i \twoheadrightarrow X$ from a direct sum of objects of $\bP$. \\
In \cite[Definition 2.51]{pst23} Pstrągowski without the projectivity assumption on the objects demands $\bP$ to be closed under pullbacks along epimorphisms (i.e. the cospan defining the pullback lies in $\bP$ and one of its legs is an epimorphism). Since we assume the objects of $\bP$ to be projective this condition is equivalent to closure under taking retracts and finite direct sums.
\end{re}

The following statement is a variant of \cite[Lemma 2.61]{pst23} which expresses the derived $\infty$-category of a Grothendieck abelian category as an $\infty$-category of additive presheaves of spectra on a system of compact projective generators. For this we fix a Grothendieck abelian category $\cA$ which has a system of compact projective generators $\bP$. Assume further that $\cA$ is closed symmetric monoidal, that $\bP$ is closed under tensor products and that $\bP$ contains the unit $\bbone_{\cA}$. We fix a commutative monoid object $B$ of $\cA$.

\begin{prop}\label{prop:psigmamodules}
Let $\bP_B$ be the full subcategory of $\Mod_B(\cA)$ consisting of objects $P \otimes B$ for $P \in \bP$ and direct summands of such. Then the inclusion $\bP_B \hookrightarrow \Mod_B(\cA)$ induces an equivalence
$$\cP_{\Sigma}(\bP_B; \spec) \simeq \cD(\Mod_B(\cA))$$
under which the complex $M[0]$ for $M \in \Mod_B(\cA)$ corresponds to the representable presheaf $\bar{y}(M)|_{\bP_B}$, where $\bar{y}: \Mod_B(\cA) \hookrightarrow \psigma(\Mod_B(\cA);\spec)$ is the stable Yoneda embedding from \Cref{rec:psigma}.
\end{prop}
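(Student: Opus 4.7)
The approach is to reduce the statement to the version of \cite[Lemma 2.61]{pst23} applied directly to $\Mod_B(\cA)$, viewed as a Grothendieck abelian category with system of compact projective generators $\bP_B$. First I would observe that $\Mod_B(\cA)$ is Grothendieck abelian: the forgetful functor $U_B \colon \Mod_B(\cA) \to \cA$ creates and preserves small (co)limits, so exactness of filtered colimits transfers from $\cA$, and a generator of $\Mod_B(\cA)$ is produced by freely inducing a generator of $\cA$.

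Next I would verify that $\bP_B$ really is a system of compact projective generators for $\Mod_B(\cA)$. The free-forgetful adjunction yields a natural isomorphism
$$\Hom_{\Mod_B(\cA)}(P \otimes B, -) \cong \Hom_{\cA}(P, U_B(-))$$
for $P \in \bP$. Since $U_B$ preserves filtered colimits and epimorphisms (both being computed in the underlying category $\cA$), the compactness and projectivity of $P \in \bP$ transfer to $P \otimes B$, and hence to any retract. For generation, given $M \in \Mod_B(\cA)$ choose an epimorphism $\bigoplus_i P_i \twoheadrightarrow U_B(M)$ in $\cA$ with $P_i \in \bP$; its adjunct $\bigoplus_i P_i \otimes B \to M$ is then an epimorphism in $\Mod_B(\cA)$ because $U_B$ is conservative. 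Closure of $\bP_B$ under the zero object, finite direct sums, and retracts is immediate from its very definition, noting that $(P \oplus P') \otimes B \cong (P \otimes B) \oplus (P' \otimes B)$.

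With these hypotheses in hand the cited result yields the equivalence $\psigma(\bP_B;\spec) \simeq \cD(\Mod_B(\cA))$. The identification $M[0] \leftrightarrow \bar{y}(M)|_{\bP_B}$ is built into the construction: on objects of $\bP_B$, the equivalence sends the compact projective $P \otimes B$ to its associated representable spectral presheaf, and the assignment extends to arbitrary $M$ by choosing a projective resolution of $M$ by objects of $\bP_B$ and invoking that both sides are the $\infty$-categorical derived category of their common compact projective heart.

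\textbf{Main obstacle.} The genuinely hard content is already absorbed into the Grothendieck-abelian analogue of Pstrągowski's lemma, so the work here reduces to verifying that $\bP_B$ satisfies the required axioms. The slightly delicate step is the compactness and projectivity of the objects $P \otimes B$, which rests on the free-forgetful adjunction together with the good exactness properties of $U_B$. Once this is settled, the remainder of the argument is formal.
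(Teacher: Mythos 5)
Your proposal is correct and follows essentially the same route as the paper: reduce to Pstrągowski's Lemma 2.61 by checking that $\bP_B$ is a system of compact projective generators of the Grothendieck abelian category $\Mod_B(\cA)$, using the free--forgetful adjunction for compactness, projectivity and generation. The only cosmetic difference is in the generation step, where the paper argues that the counit of $(F \dashv U)$ is an epimorphism because $U$ is faithful, while you invoke conservativity of $U_B$ (which, combined with its exactness and the triangle identity showing $U_B$ of the adjunct is epi, gives the same conclusion); the paper's extra check that $\bP_B$ is closed under tensor products is only needed for the subsequent symmetric monoidal remark, not for the proposition itself.
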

\begin{proof}
We can view the forgetful functor $U: \Mod_B(\cA) \to \cA$ as a restriction of scalars along $B \to \bbone_{\cA}$, so it has adjoints on both sides given by extension and coextension of scalars. As it is faithful it is conservative, so it follows from \cite[Proposition 10.6.3.1]{lur18} that $\Mod_B(\cA)$ is Grothendieck abelian as well. We also check that $\bP_B$ inherits a symmetric monoidal structure from $\Mod_B(\cA)$, which we will use later. Since $\bP$ contains the unit $\bbone_{\cA}$, $\bP_B$ contains the unit $B \cong \bbone_{\cA} \otimes B$. We have $(P \otimes B) \otimes_B (P' \otimes B) \cong (P \otimes P') \otimes B$, and the tensor product of direct summands of $P \otimes B$ and $P' \otimes B$ is a direct summand of $(P \otimes P') \otimes B$. So since $\bP$ is closed under tensor products, $\bP_B$ is as well. \\
Now by \cite[Lemma 2.61]{pst23} it suffices to show that $\bP_B$ is a system of compact projective generators of $\Mod_B(\cA)$. If $\tn{colim}_{i \in I} M_i$ is a filtered colimit of $B$-modules and $P \in \bP$ we have
\begin{align*}
\Hom_{\Mod_B(\cA)}(P \otimes B, \colimit{i \in I} M_i) &\cong \Hom_{\cA}(P, \, U(\colimit{i \in I} M_i)) \\
&\cong \colimit{i \in I} \Hom_{\cA}(P, \, U(M_i)) \\
&\cong \colimit{i \in I} \Hom_{\Mod_B(\cA)}(P \otimes B, M_i),
\end{align*}
so the $P \otimes B$ and hence their direct summands are compact. For $P \in \bP$ we have $\Hom_{\Mod_B(\cA)}(P \otimes B,-) \cong \Hom_{\cA}(P,U(-))$, which preserves epimorphisms since $U$ preserves epimorphisms as a left adjoint and $P$ is projective. So $P \otimes B$ and any direct summand thereof is projective. If $M \in \Mod_B(\cA)$ there exists an epimorphism $\bigoplus_{i \in I} P_i \twoheadrightarrow U(M)$ with $P_i \in \bP$. Since $U$ is faithful, the counit of $(F \dashv U)$ is a pointwise epimorphism, so under the adjunction the above map corresponds to an epimorphism $\bigoplus_{i \in I} P_i \otimes B \twoheadrightarrow M$. Hence $\bP_B$ generates. Since $0 \in \bP$, $0 \cong 0 \otimes B \in \bP_B$. Finally, $\bP_B$ is closed under taking direct summands by definition, and since $\bP$ is closed under taking finite direct sums the same holds for $\bP_B$.
\end{proof}

\begin{re}
As $\bP_B$ inherits a symmetric monoidal structure from $\Mod_B(\cA)$, the $\infty$-category $\cP_{\Sigma}(\bP_B; \spec)$ always admits a symmetric monoidal enhancement given by localised Day convolution, see \Cref{rec:psigma}. If the symmetric monoidal structure of $\Mod_B(\cA)$ descends to $\cD(\Mod_B(\cA))$ so that the latter is presentably symmetric monoidal, then the equivalence of \Cref{prop:psigmamodules} enhances to a symmetric monoidal one, since it is induced by a symmetric monoidal inclusion and $\cD(\Mod_B(\cA))$ is stable and presentably symmetric monoidal. This uses the description of symmetric monoidal functors given in \Cref{rec:psigma}. So if we mention a symmetric monoidal structure on $\cD(\Mod_B(\cA))$ we will always mean the one inherited from $\cP_{\Sigma}(\bP_B; \spec)$, and there is no ambiguity.
\end{re}

\begin{re}\label{re:tstructurespsigmamodules}
By \cite[Proposition 2.16]{pst23} there is a standard t-structure on $\cP_{\Sigma}(\bP_B; \spec)$ for which an additive presheaf $F$ is connective if $\pi_n(F)(M)$ vanishes for all $M \in \bP_B$ and $n<0$, and it is coconnective if $\Omega^{\infty}F$ is a discrete presheaf of spaces, where $\Omega^{\infty}: \cP_{\Sigma}(\bP_B; \spec) \to \cP_{\Sigma}(\bP_B)$ is the right adjoint to stabilisation. With respect to the standard t-structure on the derived $\infty$-category the equivalence of \Cref{prop:psigmamodules} is t-exact, since it is induced by an equivalence of connective parts, see \cite[Remark 2.62]{pst23}.
\end{re}

We now furthermore assume that the class $\bP \subseteq \cA$ consists of dualisable objects and that $\bP$ is closed under taking duals, that is, all objects of $\bP$ are dualisable inside the symmetric monoidal subcategory $\bP$. Since $\cA$ is closed symmetric monoidal $\bP$ is automatically closed under taking duals if it is closed under taking internal homs.

\begin{thm}\label{thm:modulesandderivation}
There is a t-exact equivalence of symmetric monoidal $\infty$-categories 
$$\cD(\Mod_B(\cA)) \simeq \Mod_{B}(\cD(\cA)).$$
\end{thm}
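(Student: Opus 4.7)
The plan is to invoke \Cref{prop:psigmamodules} twice to convert both sides into spectral presheaf $\infty$-categories, and then compare them directly through their universal properties. Applied to $B$, the proposition yields a symmetric monoidal equivalence $\cD(\Mod_B(\cA)) \simeq \cP_\Sigma(\bP_B;\spec)$. Applied to the unit $\bbone_\cA$, and noting that $\bP$ is already closed under retracts so that $\bP_{\bbone_\cA} = \bP$, it gives $\cD(\cA) \simeq \cP_\Sigma(\bP;\spec)$ symmetric monoidally. In particular, by \Cref{rec:psigma}, the stable Yoneda embedding $\bar{y}\colon \bP \to \cD(\cA)$ is strong symmetric monoidal and lands in the heart of the standard t-structure.

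Next I would construct a symmetric monoidal colimit-preserving comparison
\[
\Phi\colon \cP_\Sigma(\bP_B;\spec) \simeq \cD(\Mod_B(\cA)) \longrightarrow \Mod_B(\cD(\cA)).
\]
Composing $\bar{y}\colon \bP \to \cD(\cA)$ with the symmetric monoidal free-module functor $(-) \otimes B\colon \cD(\cA) \to \Mod_B(\cD(\cA))$ from \Cref{rec:modules} produces a symmetric monoidal additive functor $\bP \to \Mod_B(\cD(\cA))$ sending $P$ to $P \otimes B$ and landing in the heart. Since $\Mod_B(\cD(\cA))$ is idempotent complete, this extends uniquely (and symmetric monoidally, by the universal property of idempotent completions of symmetric monoidal categories) to $\bP_B \to \Mod_B(\cD(\cA))$. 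The universal property of $\cP_\Sigma(-;\spec)$ recalled in \Cref{rec:psigma} then produces the desired $\Phi$.

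To see that $\Phi$ is an equivalence, I would check full faithfulness and essential surjectivity on compact generators. By \Cref{rec:modules}, since $\cD(\cA)$ is compactly generated by the dualisable objects of $\bP$, the collection $\{P \otimes B : P \in \bP\}$ is a set of compact generators of $\Mod_B(\cD(\cA))$; it lies in the image of $\Phi$, so essential surjectivity follows from colimit-preservation. For full faithfulness it suffices to compute the map on morphism spectra between these generators. On the source, Yoneda combined with projectivity of $P \otimes B$ in $\Mod_B(\cA)$ shows
\[
\Map_{\cP_\Sigma(\bP_B;\spec)}(P \otimes B, Q \otimes B) \simeq \Hom_{\Mod_B(\cA)}(P \otimes B, Q \otimes B) \cong \Hom_\cA(P, Q \otimes B),
\]
as discrete spectra. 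On the target, the free--forgetful adjunction in $\cD(\cA)$ gives
\[
\Map_{\Mod_B(\cD(\cA))}(P \otimes B, Q \otimes B) \simeq \Map_{\cD(\cA)}(P, Q \otimes B).
\]
Here dualisability of $Q$ forces $Q$ to be flat (the functor $(-) \otimes Q$ has a two-sided adjoint and is therefore exact), so $Q \otimes^L B = Q \otimes B$ lies in the heart, and projectivity of $P$ yields $\Map_{\cD(\cA)}(P, Q \otimes B) \simeq \Hom_\cA(P, Q \otimes B)$ as discrete spectra. Both sides agree, and tracing through the construction one sees that $\Phi$ realises this identification.

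Finally, $\Phi$ is t-exact: by construction it sends the heart-generators $\bar{y}(P \otimes B)$ of the standard t-structure on $\cP_\Sigma(\bP_B;\spec)$ (see \Cref{re:tstructurespsigmamodules}) to objects in the heart of $\Mod_B(\cD(\cA))$, and since $\Phi$ is a colimit-preserving equivalence whose inverse is also colimit-preserving, this forces both connective and coconnective parts to be preserved. I expect the main obstacle to be the careful bookkeeping of the symmetric monoidal structures, specifically verifying that the stable Yoneda $\bP \to \cD(\cA)$ is strong symmetric monoidal (which rests on the flatness coming from dualisability in $\bP$) and that the extension through idempotent completion and then through the $\cP_\Sigma$-universal property both remain symmetric monoidal.
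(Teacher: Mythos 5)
Your argument is correct in substance but takes a genuinely different route from the paper. The paper also starts from the two presheaf descriptions of \Cref{prop:psigmamodules}, but then considers the adjunction $(\bar{\alpha}_!,\bar{\alpha}^*)$ induced by $\alpha\colon \bP \to \bP_B$, $P \mapsto P \otimes B$, and applies the symmetric monoidal Barr--Beck--Lurie theorem: it verifies that $\bar{\alpha}^*$ is conservative and colimit-preserving and establishes the projection formula by showing both presheaf categories are generated under colimits by dualisable objects (invoking Ayoub's lemma). You instead build an explicit comparison functor $\Phi\colon \psigma(\bP_B;\spec) \to \Mod_B(\cD(\cA))$ out of the universal property of $\psigma$ and verify it is an equivalence by matching compact generators and computing mapping spectra between them; the monadicity and projection-formula machinery is replaced by a hands-on identification of both mapping spectra with the discrete spectrum $\Hom_{\cA}(P,Q\otimes B)$, using projectivity of $P$ and flatness of the dualisable $Q$. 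Both methods lean on the same hypotheses (dualisability of $\bP$, projectivity, compact generation); yours is more elementary and makes the generators-and-hearts picture explicit, while the paper's is shorter once the monadicity criteria are granted and generalises more readily to situations where the generators are less explicit.

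One step needs repair in your write-up: the passage ``since $\Mod_B(\cD(\cA))$ is idempotent complete, this extends uniquely \dots to $\bP_B$'' does not literally work, because $\bP_B$ is not the idempotent completion of $\bP$ --- the functor $\bP \to \bP_B$, $P \mapsto P\otimes B$, is neither full nor faithful in general (morphisms are base-changed along $\bbone_\cA \to B$). To obtain a functor defined on $\bP_B$ you must first show that $P \mapsto \bar{y}(P)\otimes \bar{y}(B)$ descends along $\bP \to \bP_B$, which amounts exactly to the identification $\pi_0\Map_{\Mod_B(\cD(\cA))}(\bar{y}(P)\otimes B,\bar{y}(Q)\otimes B) \cong \Hom_{\Mod_B(\cA)}(P\otimes B, Q\otimes B)$ that you prove only afterwards (alternatively, define the functor as the inclusion of $\bP_B$ into the heart of $\Mod_B(\cD(\cA))$ and use your flatness argument to identify it with $P\mapsto \bar{y}(P)\otimes \bar{y}(B)$ on the image of $\bP$). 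Since the required computation is already present in your proposal, this is a matter of reordering rather than a missing idea, but as written the construction of $\Phi$ is not yet justified at the point where you assert it.
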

\begin{proof}
We use the descriptions $\cD(\cA) \simeq \psigma(\bP;\spec)$, $\cD(\Mod_B(\cA)) \simeq \psigma(\bP_B;\spec)$ of \Cref{prop:psigmamodules}. Extension of scalars along $\bbone_{\cA} \to B$ restricts to a symmetric monoidal additive functor
$$\alpha: \bP \to \bP_B, \; P \mapsto P \otimes B,$$
which induces a restriction functor $\alpha^*: \cP(\bP_B;\spec) \to \cP(\bP;\spec)$ given by precomposition with $\alpha$. It has a left adjoint $\alpha_! \dashv \alpha^*$ given by left Kan extension along $\alpha$ and restricts to a functor $\bar{\alpha}^*: \psigma(\bP_B;\spec) \to \psigma(\bP;\spec)$. By \cite[Proposition 3.3.2]{chll24} the left Kan extension $\alpha_!$ also restricts to a functor $\bar{\alpha}_!:  \psigma(\bP;\spec) \to \psigma(\bP_B;\spec)$ which is a left adjoint to $\bar{\alpha}^*$. As $\bar{y} \circ \alpha: \bP \to \bP_B \to \psigma(\bP_B;\spec)$ is symmetric monoidal with left Kan extension along $y: \bP \hookrightarrow \cP(\bP)$ factoring through $\psigma(\bP)$, $\bar{\alpha}_!$ is canonically symmetric monoidal, see \Cref{rec:psigma}. \\
We want to apply the symmetric monoidal Barr-Beck-Lurie theorem \cite[Proposition 5.29]{mnn17} to the adjunction 
$$(\bar{\alpha}_!,\bar{\alpha}^*): \psigma(\bP;\spec) \rightleftarrows \psigma(\bP_B;\spec),$$
so we need to show that $\bar{\alpha}^*$ is conservative, preserves colimits, and that the projection formula holds. Filtered colimits in additive presheaves are computed pointwise, hence $\bar{\alpha}^*$ commutes with them. As it is an exact functor of stable $\infty$-categories it also commutes with finite colimits, hence with all colimits. If $G \in \psigma(\bP_B;\spec)$ such that $\bar{\alpha}^*(G) \simeq 0$, then $G(P \otimes B) \simeq 0$ for all $P \in \bP$. Since $\bP_B$ is the idempotent completion of the subcategory spanned by the objects $P \otimes B$ for $P \in \bP$, it follows $G \simeq 0$ and $\bar{\alpha}^*$ is conservative. For the projection formula we have to show that the natural map
$$\bar{\alpha}^*(G) \otimes F \to \bar{\alpha}^*(G \otimes \bar{\alpha}_!(F))$$
which is adjoint to 
\[\begin{tikzcd}
	{\bar{\alpha}_!(\bar{\alpha}^*(G) \otimes F) \simeq \bar{\alpha}_!(\bar{\alpha}^*(G)) \otimes \bar{\alpha}_!(F)} && {G \otimes \bar{\alpha}_!(F)}
	\arrow["{\varepsilon_G \otimes \tn{id}_{\bar{\alpha}_!(F)}}", from=1-1, to=1-3]
\end{tikzcd}\]
is an equivalence for all $F \in \psigma(\bP;\spec), \; G \in \psigma(\bP_B;\spec)$. Since the presheaf categories $\psigma(\bP;\spec)$ and $\psigma(\bP_B;\spec)$ are presentably symmetric monoidal and the functors $\bar{\alpha}_!$ and $\bar{\alpha}^*$ preserve colimits this follows from \cite[Lemme 2.8]{ayo14} if we show that both categories are generated under colimits by dualisable objects. \\
For $\psigma(\bP;\spec)$ this follows from the fact that $\bar{y}: \bP \hookrightarrow \psigma(\bP;\spec)$ is symmetric monoidal, that $\psigma(\bP;\spec)$ is generated under colimits by shifts of objects in the image of $\bar{y}$ and the assumption that the objects of $\bP$ are dualisable. For $\psigma(\bP_B;\spec)$ it holds for similar reasons, noting that the objects $P \otimes B$ for $P \in \bP$ are dualisable as they are images of dualisables under a symmetric monoidal functor and that dualisables in $\Mod_B(\cA)$ are closed under retracts since $\Mod_B(\cA)$ is idempotent complete. We obtain a symmetric monoidal equivalence 
$$\psigma(\bP_B;\spec) \simeq \Mod_{\bar{\alpha}^*(\bar{y}(B))}(\psigma(\bP;\spec))$$
which under \Cref{prop:psigmamodules} identifies with an equivalence 
$$\cD(\Mod_B(\cA)) \simeq \Mod_{B}(\cD(\cA)).$$
The asserted t-exactness follows from \Cref{re:tstructurespsigmamodules} and the fact that the restriction functor $\bar{\alpha}^*$ satisfies $\pi_n\bar{\alpha}^* \simeq \bar{\alpha}^*\pi_n$ and $\Omega^{\infty}\bar{\alpha}^* \simeq \bar{\alpha}^*\Omega^{\infty}$.
\end{proof}

\begin{cor}\label{cor:greenmodulesasmackmodules}
Let $B \in \mackgr$ be a Green functor. Then there is a t-exact equivalence of symmetric monoidal $\infty$-categories 
$$\cD(\Mod_B(\mackgr)) \simeq \Mod_{B}(\cD(\mackgr)).$$
\end{cor}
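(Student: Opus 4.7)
The plan is to deduce this from \Cref{thm:modulesandderivation} applied to $\cA = \mackgr$. By \Cref{re:dayconvolutionmack} the category $\mackgr$ is Grothendieck abelian and closed symmetric monoidal under the box product with unit $\bA_R$, so all that remains is to exhibit a class $\bP$ of compact projective generators which contains the unit, is closed under tensor products and duals, and whose objects are dualisable.

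I would take $\bP$ to be the full subcategory of $\mackgr$ spanned by all retracts of finite direct sums of representables $\bA_R(X \times -)$ for $X \in \gset$. Since $\{\bA_R(G/H \times -) \mid H \leq G\}$ is known to be a set of compact projective generators of $\mackgr$ and every finite $G$-set is a finite coproduct of orbits, $\bP$ is a system of compact projective generators in the sense of the definition preceding \Cref{prop:psigmamodules}. Closure under finite direct sums and retracts is built into the definition of $\bP$, and $\bbone_{\mackgr} = \bA_R = \bA_R(G/G \times -)$ clearly lies in $\bP$.

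Closure of $\bP$ under $\boxtimes$ follows from the standard Day convolution identity $\bA_R(X \times -) \boxtimes \bA_R(Y \times -) \simeq \bA_R((X \times Y) \times -)$, combined with distributivity of $\boxtimes$ over finite direct sums and retracts. For dualisability and closure under duals, one observes that every $X \in \gset$ is self-dual in $\Omega_R(G)$ via the unit span $G/G \leftarrow X \xrightarrow{\Delta} X \times X$ and the opposite counit span $X \times X \xleftarrow{\Delta} X \to G/G$; transporting along the symmetric monoidal Yoneda embedding $\Omega_R(G) \hookrightarrow \mackgr$ (see \Cref{re:dayconvolutionmack}) shows that each $\bA_R(X \times -)$ is self-dual in $\mackgr$, and dualisability together with self-duality persists through retracts and finite direct sums.

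With these hypotheses verified, \Cref{thm:modulesandderivation} applies directly and produces the desired t-exact symmetric monoidal equivalence $\cD(\Mod_B(\mackgr)) \simeq \Mod_B(\cD(\mackgr))$. The argument is essentially a verification of the hypotheses of the previous theorem, so no step is particularly hard; the only item that uses the specific structure of the Burnside category rather than a generic Grothendieck abelian setup is the self-duality of $\bA_R(X \times -)$, which is classical and is the main point to spell out carefully.
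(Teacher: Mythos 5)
Your proposal is correct and takes essentially the same route as the paper: both choose $\bP$ to be the direct summands of the representables $\bA_R(T\times -)$ for finite $G$-sets $T$ (your ``retracts of finite direct sums'' is the same class, since $\bA_R(T\times -)\oplus\bA_R(T'\times -)\cong\bA_R((T\sqcup T')\times -)$) and then verify the hypotheses of \Cref{thm:modulesandderivation}. The only substantive point in either version is the self-duality of $\bA_R(T\times -)$, which you correctly identify and spell out via the diagonal spans in $\Omega_R(G)$.
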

\begin{proof}
This follows from \Cref{thm:modulesandderivation} if we show that $\mackgr$ admits a system of compact projective dualisable generators $\bP$ which is closed under taking tensor products, finite direct sums, direct summands and contains the unit $\bA_R$. \\
Choose $\bP$ to be the set of Mackey functors which are direct summands of $\bA_R(T \times -)$ for a finite $G$-set $T$.\footnote{By \cite[Corollary 1.5]{Gre92} this is the set of compact projective Mackey functors.} Then $\bP$ clearly contains the zero object $0$, the unit $\bA_R$, and it consists of compact projective objects which generate $\mackgr$ under colimits. The objects $\bA_R(T \times -)$ are self-dual, so as $\mackgr$ is idempotent complete their direct summands are dualisable. Since $\bA_R(T \times -) \boxtimes \bA_R(T' \times -) \cong \bA_R(T \times T' \times -)$ and $\bA_R(T \times -) \oplus \bA_R(T' \times -) \cong \bA_R((T \sqcup T') \times -)$ it is closed under tensor products and direct sums. By definition $\bP$ is closed under taking direct summands.
\end{proof}

\section{Modules in Equivariant Spectra}\label{sec:modules}

There are multiple models for the $\infty$-category of $G$-spectra $\specg$. To name a few: It can be realised as the underlying $\infty$-category of the model category of orthogonal $G$-spectra, in the profinite case this model category has been constructed in \cite{fau08}. Furthermore there is a parametrised model \cite{bh21}, a model of spectral Mackey functors \cite{bar17}, and if $G$ is a compact Lie group there is a model whose construction starts from $G$-spaces and then inverts representation spheres \cite{gm20}. \\
By \cite[Theorem 1.14]{gm24}, \cite[Appendix A]{nar16}, \cite[Appendix A]{cmnn24}, \cite[Section 4]{psw22}, \cite[Appendix C]{gm20} and \cite[Section 9.2]{bh21} these different approaches are all equivalent. We also refer to \cite[Appendix A]{bbb24} for an extensive comparison in the profinite case.

\begin{no}
We fix some notation for the profinite group $G$. Let $(N_i)_{i \in I}$ be the cofiltered system of open normal subgroups of $G$ under inclusion and let $G_i \defeq G/N_i$, so that $G = \lim_i G_i$. Write $\alpha_i: G \twoheadrightarrow G_i$ for the natural projection. If $H \leq G$ is a closed subgroup, then we have $H \cong \lim_i H_i$ and $H$ is itself profinite, since $H_i \defeq HN_i/N_i \cong H/(H \cap N_i)$ is a subgroup of the finite group $G_i$.
\end{no}

\begin{rec}
The abelian category $\mackg$ of Mackey functors is equivalent to the t-heart of $\specg$, taken with respect to the standard t-structure. The image of a Mackey functor $M$ under the lax symmetric monoidal composition $\nN(\mack(G)) \simeq (\specg)^{t \heart} \hookrightarrow \specg$ is denoted $\tn{H}M \in \specg$, it is the \emph{Eilenberg-MacLane $G$-spectrum associated to the Mackey functor $M$}. By passing to commutative algebra objects we get a functor 
The \emph{$\infty$-category of $G$-spectra} $\specg$ is stable and presentably symmetric monoidal. We denote the symmetric monoidal product by $\otimes$ and the internal mapping $G$-spectrum by $\map_{\specg}(-,-)$. There is a symmetric monoidal left adjoint suspension functor from pointed $G$-spaces $\Sigma^{\infty}: \spcgp \to \specg$, and we denote the sphere spectrum $\Sigma^{\infty} G/G_{\plus}$ by $\bS = \bS^0$. It is the unit of the symmetric monoidal structure. The $\infty$-category $\specg$ is rigidly compactly generated by the suspension spectra of orbits $\Sigma^{\infty}G/H_+$, $H \leq G$. It has a natural t-structure for which the t-structure homotopy functors agree with the homotopy Mackey functors $\pi_n^{H}(-) = \Hom_{\specg}(\Sigma^{\infty + n} G/H_+,-)$. This t-structure is compatible with the symmetric monoidal structure on $\specg$, in the sense that $\specg_{t \geq 0}$ is a symmetric monoidal subcategory and the inclusion $\iota_{t \geq 0}: \specg_{t \geq 0} \hookrightarrow \specg$ is symmetric monoidal. By \cite[Example 2.2.1.10]{lur17} this implies that the t-heart $(\specg)^{t \heart}$ inherits a symmetric monoidal structure which makes the truncation $\tau_{t \leq 0}: \specg_{t \geq 0} \to (\specg)^{t \heart}$ into a symmetric monoidal functor.
\end{rec}

When $G$ is a compact Lie group, the $\infty$-category $\specg$ enjoys a universal property.

\begin{thm}[{\cite[Proposition 2.9, Corollary 2.20]{rob15}, \cite[Corollary C.7]{gm20}}]\label{thm:specguniversal}
For any symmetric monoidal left adjoint $F: \spcgp \to \cD$ into a presentably symmetric monoidal $\infty$-category $\cD$ so that $F(S^V)$ is invertible for every irreducible $G$-representation $V$, there is an essentially unique symmetric monoidal left adjoint $\overl{F}: \specg \to \cD$ with an equivalence $\overl{F} \circ \Sigma^{\infty} \simeq F$.
\end{thm}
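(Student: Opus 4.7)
The plan is to identify $\specg$ as a symmetric monoidal localisation of $\spcgp$ at the representation spheres, and then invoke the universal property of such localisations developed by Robalo.

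First, I would recall the content of \cite[Proposition 2.9, Corollary 2.20]{rob15}: given a presentably symmetric monoidal $\infty$-category $\cC$ and a collection of \emph{symmetric} objects $\{X_i\}$ (so that the cyclic permutation on some tensor power $X_i^{\otimes n}$ is homotopic to the identity), there exists a presentably symmetric monoidal $\infty$-category $\cC[\{X_i^{-1}\}]$ together with a symmetric monoidal left adjoint $\cC \to \cC[\{X_i^{-1}\}]$ which is universal in $\prlstmon$ (or rather its presentable symmetric monoidal analogue) among symmetric monoidal left adjoints out of $\cC$ sending each $X_i$ to an invertible object.

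Next, I would invoke \cite[Corollary C.7]{gm20}, whose content is that for $G$ a compact Lie group the suspension spectrum functor $\Sigma^\infty: \spcgp \to \specg$ exhibits $\specg$ precisely as the localisation $\spcgp[\{(S^V)^{-1}\}]$ in the above sense, where $V$ ranges over (irreducible) $G$-representations. Combining these, given any symmetric monoidal left adjoint $F: \spcgp \to \cD$ into a presentable symmetric monoidal $\infty$-category $\cD$ which inverts $S^V$ for every irreducible $V$, one notes that $F$ then inverts $S^W$ for every $G$-representation $W$: decomposing $W$ as a direct sum of irreducibles gives $S^W \simeq S^{W_1} \wedge \cdots \wedge S^{W_k}$, and the tensor product of invertibles is invertible. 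The universal property of the localisation then supplies the essentially unique symmetric monoidal left adjoint $\overline{F}: \specg \to \cD$ with $\overline{F} \circ \Sigma^\infty \simeq F$.

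The main obstacle---which is external to the argument sketched here and is what \cite{gm20} really accomplishes---is verifying that the representation spheres $S^V$ are symmetric objects in the relevant sense, so that Robalo's machinery is applicable, and that the resulting localisation genuinely agrees with $\specg$ rather than some intermediate category. In the non-equivariant case the symmetry of $S^2$ follows by a direct argument, but for a compact Lie group $G$ and a nontrivial representation $V$ one needs to produce, for a suitable $n$, a $G$-equivariant homotopy from the cyclic permutation on $V^{\oplus n}$ to the identity, which is a delicate representation-theoretic input.
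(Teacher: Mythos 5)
The paper offers no proof of this theorem; it is quoted directly from the cited sources, and your proposal is a correct account of exactly how those sources combine: Robalo's universal property of symmetric monoidal inversion of symmetric objects, plus Gepner--Meier's identification of $\Sigma^\infty:\spcgp\to\specg$ as that localisation at the representation spheres (with the reduction from all representations to irreducibles handled by direct sum decomposition). This matches the paper's intended justification, so there is nothing to add.
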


\begin{rec}
For any continuous group homomorphism $\alpha: G \to G'$ there is a symmetric monoidal left adjoint functor $\alpha^*: \spec^{G'} \to \specg$, and we denote its lax symmetric monoidal right adjoint by $\alpha_*: \spec^{G} \to \spec^{G'}$. If $\alpha$ is the inclusion of a closed subgroup $H \hookrightarrow G$, the left adjoint $\alpha^*: \spec^{G} \to \spec^{H}$ is the \emph{restriction functor} $\tn{res}^G_H$, and its right adjoint is the \emph{coinduction} $\tn{coind}^G_H$. If $H$ is furthermore of finite index, $\tn{res}^G_H$ has a left adjoint $\tn{ind}^G_H$ called \emph{induction}. When $G$ is finite, the Wirthmüller isomorphism provides an equivalence $\tn{ind}^G_H \simeq \tn{coind}^G_H$.\\
If $\alpha: G \twoheadrightarrow G/N$ is a quotient homomorphism for $N \trianglelefteq G$ a closed normal subgroup then $\alpha^*: \spec^{G/N} \to \spec^{G}$ is the \emph{inflation functor} $\tn{infl}^G_{G/N}$, and its right adjoint $\alpha_*$ is the \emph{categorical fixed point functor} $(-)^N: \spec^{G} \to \spec^{G/N}$. 
\end{rec}

\begin{rec}
The abelian category $\mackg$ of Mackey functors is equivalent to the t-heart of $\specg$, taken with respect to the standard t-structure. The image of a Mackey functor $M$ under the lax symmetric monoidal composition $\nN(\mack(G)) \simeq (\specg)^{t \heart} \hookrightarrow \specg$ is denoted $\tn{H}M \in \specg$, it is the \emph{Eilenberg-MacLane $G$-spectrum associated to the Mackey functor $M$}. By passing to commutative algebra objects we get a functor 
$$\nN(\calg(\mackg)) \simeq \calg(\nN(\mackg)) \to \calg(\specg).$$
The functor $\mackgr \to \mackg$ which forgets $R$-module structures is lax symmetric monoidal as well, it is right adjoint to the symmetric monoidal $R$-linearisation functor $\mackg \to \mackgr$. So it induces a functor
$$\nN(\calg(\mackgr)) \to \nN(\calg(\mackg)) \to \calg(\specg).$$
\end{rec}

\begin{rec}\label{rec:preservation}
When $\alpha: G \to G'$ is a homomorphism of finite groups, an explicit calculation shows that the right adjoint $\alpha_*: \specg \to \spec^{G'}$ of the functor $\alpha^*: \spec^{G'} \to \specg$ is t-exact, and its restriction to t-hearts $\alpha_*^{t \heart}: \mack(G) \to \mack(G')$ is given by precomposition\footnote{Although it is given by precomposition we will keep the lower star notation.} with the canonical functor $\Omega(G') \to \Omega(G)$ which pulls back the group action along $\alpha: G \to G'$. \\
When $\alpha$ is the inclusion of a subgroup $H \hookrightarrow G$, $\alpha^* = \tn{res}^G_H: \spec^{G} \to \spec^{H}$ is t-exact as well\footnote{An abstract reason for this is that $\alpha^*$ preserves limits and thus has a left adjoint if and only if $\alpha$ is injective. In this case $\alpha^*$ is both left and right adjoint to a t-exact functor, thus itself t-exact.} and on t-hearts identifies with the left adjoint of $\alpha_*^{t \heart}$. This left adjoint is symmetric monoidal and preserves the Burnside Mackey functor as well as constant Mackey functors, see e.g. \cite[Section 6.1]{hmq23}. \\
But when $\alpha$ is a quotient map $G \twoheadrightarrow G/N$ for $N \trianglelefteq G$ a normal subgroup, then $\alpha^* = \tn{infl}^G_{G/N}: \spec^{G/N} \to \spec^{G}$ does not preserve Eilenberg-MacLane spectra of constant Mackey functors, as for example $\pi_0(\tn{infl}^G_1(\nH \bZ))$ by \cite[Section 5.2]{gs14} is given by the Burnside Mackey functor $\bA$ and not the constant Mackey functor $\underl{\bZ}$. This also shows that inflation does not preserve Eilenberg-MacLane spectra of Burnside Mackey functors either: if $\nH \bZ_G = \tn{infl}^G_1(\nH \bZ)$ were equivalent to $\nH \bA$, the $\infty$-categories $\Mod_{\nH \bZ_G}(\specg)$ and $\Mod_{\nH \bA}(\specg)$ would coincide. By the equivalences proven in \cite{psw22} this would give an equivalence between Kaledin's $\nD\mackg$ and the `honest' derived category $\nD(\mackg)$. But Kaledin shows in \cite{kal11} that these categories don't coincide. \\
The right adjoint $\alpha_* = (-)^N: \spec^{G} \to \spec^{G/N}$ on the other hand does preserve Eilenberg-MacLane spectra of constant Mackey functors but not the ones of Burnside Mackey functors, as the categorical fixed points $(\nH\bA)^G$ are the Eilenberg-MacLane spectrum of the Burnside ring $\bA(G/G) = A(G)$ of $G$, which differs from the integers when $G$ is nontrivial.
\end{rec}

We will often be in situations where maps of equivariant spectra by t-structure arguments are already determined on $\pi_0$, or even on specific sections of it. For one of these cases we record the following elementary lemma.

\begin{lem}\label{lem:pi0ofinflation}
Let $N \unlhd G$ be a normal subgroup of a finite group $G$ and $X \in \spec^{G/N}$. Then we have that $\pi_n^1(\tn{infl}_{G/N}^G(X)) \cong \pi_n^1(X)$ for all $n \in \bZ$. 
\end{lem}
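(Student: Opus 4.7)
The plan is to reduce the statement to the functoriality of the pullback along group homomorphisms. Recall that for any group homomorphism $\alpha: G \to G'$ the functor $\alpha^*: \spec^{G'} \to \specg$ is symmetric monoidal and left adjoint, and by uniqueness the assignment $\alpha \mapsto \alpha^*$ is functorial in $\alpha$: composition of homomorphisms corresponds to composition of pullback functors (up to canonical equivalence).

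Apply this to the composite
\[
1 \xhookrightarrow{\iota} G \xtwoheadrightarrow{\alpha} G/N,
\]
whose composition is the trivial homomorphism $\beta: 1 \to G/N$. Functoriality gives a natural equivalence $\iota^* \circ \alpha^* \simeq \beta^*$, i.e.\ $\tn{res}^G_1 \circ \tn{infl}^G_{G/N} \simeq \tn{res}^{G/N}_1$ as symmetric monoidal left adjoint functors $\spec^{G/N} \to \spec$.

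Now unwind the definition of $\pi_n^1$. For $Y \in \spec^G$ we have, by the adjunction $\tn{ind}^G_1 \dashv \tn{res}^G_1$,
\[
\pi_n^1(Y) = \Hom_{\specg}(\Sigma^{\infty + n}G/1_+, Y) \cong \Hom_{\spec}(\Sigma^{\infty+n} S^0, \tn{res}^G_1 Y) = \pi_n(\tn{res}^G_1 Y),
\]
and similarly for spectra over $G/N$. Substituting $Y = \tn{infl}^G_{G/N}(X)$ and using the equivalence $\tn{res}^G_1 \circ \tn{infl}^G_{G/N} \simeq \tn{res}^{G/N}_1$ yields
\[
\pi_n^1(\tn{infl}^G_{G/N}(X)) \cong \pi_n(\tn{res}^G_1 \tn{infl}^G_{G/N}(X)) \cong \pi_n(\tn{res}^{G/N}_1 X) \cong \pi_n^1(X).
\]

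There is no real obstacle here; the only thing to be careful about is that the identification $\alpha \mapsto \alpha^*$ really is functorial in $\alpha$, but this is built into any of the equivalent models for $\specg$ cited at the start of \Cref{sec:modules} (for instance, via the spectral Mackey functor model one uses $2$-functoriality of $\Spans((-)\text{-set})$ applied to the morphism of profinite groupoids $(1 \to G \to G/N)$). Thus the argument is a one-line consequence of functoriality once it is correctly set up.
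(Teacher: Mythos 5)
Your argument is correct and is essentially the paper's own proof: both reduce $\pi_n^1$ to $\pi_n$ of the underlying spectrum via the adjunction $\tn{ind}^G_1 \dashv \tn{res}^G_1$ and then use the identification $\tn{res}^G_1 \circ \tn{infl}^G_{G/N} \simeq \tn{res}^{G/N}_1$ coming from functoriality of $\alpha \mapsto \alpha^*$. You merely spell out the functoriality step that the paper uses implicitly.
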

\begin{proof} We calculate
\begin{align*}
\pi_n^1(\tn{infl}_{G/N}^G(X)) &= \Hom_{\specg}(\Sigma^{\infty+n} G/1_+, \, \tn{infl}_{G/N}^G(X)) \\
&\cong \Hom_{\specg}(\tn{ind}^G_1(\bS^n), \, \tn{infl}_{G/N}^G(X)) \\
&\cong \Hom_{\spec}(\bS^n, \, \tn{res}^G_1\tn{infl}_{G/N}^G(X)) \\
&\cong \Hom_{\spec}(\bS^n, \, \tn{res}^{G/N}_1(X)) \\
&\cong \Hom_{\spec^{G/N}}(\Sigma^{\infty+n} (G/N)/(N/N)_+, \, X) = \pi_n^1(X). \qedhere
\end{align*}
\end{proof}

\begin{cons}\label{cons:restrictionmodules}
From the example for inflation given in \Cref{rec:preservation} it follows that the functor $\alpha^*: \spec^{G'} \to \spec^{G}$ induced by some homomorphism of finite groups $\alpha: G \to G'$ in general does not induce a functor
$$\Mod_{\nH B}(\spec^{G'}) \to \Mod_{\nH B}(\specg)$$
in the naive way, simply because typically $\alpha^*(\nH B) \not\simeq \nH B$, where $B$ is either $\bA$ or the constant Mackey functor $\underl{R}$ associated to a ring $R$. We will remedy this problem by postcomposing the induced functor on module categories with a base change, so we need to construct ring maps
$$\alpha^*(\nH B) \to \nH B$$
along which we can extend scalars. Since every homomorphism of finite groups can be factored as the quotient by a normal subgroup $G \twoheadrightarrow G/N$ followed by an inclusion $G/N \hookrightarrow G'$, and the restriction $\tn{res}^{G'}_{G/N}: \spec^{G'} \to \spec^{G/N}$ preserves the Eilenberg-MacLane spectra of the Mackey functors $\bA$ and $\underl{R}$ in question, it suffices to construct the base change for quotient homomorphisms $G \twoheadrightarrow G/N$. \\
For the Burnside Mackey functor $\bA$, giving a ring map $\tn{infl}^G_{G/N}(\nH \bA) \to \nH \bA$ is equivalent to giving a ring map $\nH \bA \to (\nH \bA)^N \simeq \nH(\bA^N)$ in the t-heart. When $M$ is a $G$-Mackey functor, we write $M^N$ for the $G/N$-Mackey functor that is given by $M$ on the orbits inflated from $G/N$. We choose the unit map in $\mack(G/N)$, evaluated on a $G/N$-orbit $(G/N)/(H/N)$ it is the natural map 
$$\Hom_{\Omega(G/N)}((G/N)/(H/N),(G/N)/(G/N)) \to \Hom_{\Omega(G)}(G/H,G/G)$$
which is induced by the fully faithful inflation functor $\tn{infl}^G_{G/N}: G/N\tsf{-set} \hookrightarrow \gset$. \\
For the constant Mackey functor $\underl{R}$ we have $(\nH \underl{R})^N \simeq \nH (\underl{R}^N) \simeq \nH \underl{R}$, and hence we can use the ring map $\tn{infl}^G_{G/N}(\nH \underl{R}) \to \nH \underl{R}$ which under adjunction corresponds to the identity $\nH \underl{R} \to \nH \underl{R}$. Note that this is just the counit map $\tn{infl}^G_{G/N}((\nH \underl{R})^N) \to \nH \underl{R}$ of the adjunction $\tn{infl}^G_{G/N} \dashv (-)^N$. Note that since the domain is connective and the target is in the t-heart the map $\tn{infl}^G_{G/N}(\nH \underl{R}) \to \nH \underl{R}$ is determined on $\pi_0$, and as it maps into a constant Mackey functor and by \Cref{lem:pi0ofinflation} it is already determined by being identic on $\pi_0^1$. \\
Again writing $B$ for either $\bA$ or $\underl{R}$ and letting $\alpha: G \twoheadrightarrow G/N \hookrightarrow G'$ be a general homomorphism of finite groups, the above choices define a composition functor
$$\Mod_{\nH B}(\spec^{G'}) \stackrel{\alpha^*}{\longrightarrow} \Mod_{\alpha^*(\nH B)}(\spec^{G}) \to \Mod_{\nH B}(\spec^{G})$$
of the `honest' restriction functor on module categories induced by $\alpha$, followed by an extension of scalars along $\tn{infl}^G_{G/N}(\nH B) \to \nH B$. By the \emph{restriction functor on module categories} 
$$\alpha^*:  \Mod_{\nH B}(\spec^{G'}) \to \Mod_{\nH B}(\specg)$$
we will always mean the composition constructed in this way. We write $\nu(\alpha)$ for the ring map $\alpha^*(\nH \underl{R}) \to \nH \underl{R}$ to emphasise the dependency on the group homomorphism. We will use the notation $\nu^G_H$ when $\alpha$ is the inclusion of a subgroup $H \hookrightarrow G$ and $\nu^G_{G/N}$ when it is a quotient map $G \twoheadrightarrow G/N$.
\end{cons}

The following theorem expresses the derived $\infty$-category of Mackey functors as a category of modules in $\specg$. When $G$ is finite, this was shown by Patchkoria-Sanders-Wimmer. Relying on their result we provide a version for profinite $G$.

\begin{thm}[{\cite[Theorem 5.10]{psw22}} for $G$ finite]\label{thm:mackeyasspgmodules}
There is an equivalence of symmetric monoidal $\infty$-categories 
$$\cD(\mackg) \simeq \Mod_{\nH \bA}(\specg).$$
\end{thm}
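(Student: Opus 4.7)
The strategy is to reduce the profinite statement to the finite case by writing both sides as filtered colimits over the cofiltered system of finite quotients of $G$.

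First I would fix the system $G \cong \lim_i G_i$ of finite quotients by open normal subgroups, and use the (standard) fact that the inflation functors $\tn{infl}^{G_i}_{G_j}: \spec^{G_j} \to \spec^{G_i}$ for $N_i \subseteq N_j$ exhibit
$\specg \simeq \colim_i \spec^{G_i}$
as a filtered colimit in $\prlstmon$ — this is part of the package of profinite comparison results referenced just before the statement. Dually, on the abelian side, $\Omega(G) \simeq \colim_i \Omega(G_i)$, and left Kan extensions along $\Omega(G_j) \hookrightarrow \Omega(G_i) \hookrightarrow \Omega(G)$ give $\mack(G) \simeq \colim_i \mack(G_i)$ as Grothendieck abelian categories. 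Invoking the presheaf description of \Cref{prop:psigmamodules} with $B = \bA$ and $\bP$ built from the representables of $\Omega(G)$ then yields $\cD(\mackg) \simeq \colim_i \cD(\mack(G_i))$ in $\prlstmon$.

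Next I would transport the colimit description to the module side. By \Cref{cons:restrictionmodules} each quotient $G_i \twoheadrightarrow G_j$ produces a ring map $\mu^{G_i}_{G_j}: \tn{infl}^{G_i}_{G_j}(\nH \bA_{G_j}) \to \nH \bA_{G_i}$; these assemble into a compatible system converging to the maps $\mu^G_{G_i}: \tn{infl}^G_{G_i}(\nH \bA_{G_i}) \to \nH \bA_G$. Composing the honest inflation on module categories with extension of scalars along $\mu^{G_i}_{G_j}$ defines a filtered diagram of symmetric monoidal left adjoints between the $\Mod_{\nH \bA_{G_i}}(\spec^{G_i})$. One checks that the $(\mu^G_{G_i})$ exhibit $\nH \bA_G$ as $\colim_i \tn{infl}^G_{G_i}(\nH \bA_{G_i})$ in $\specg$, which reduces to a $\pi_0$-computation on compact generators $\Sigma^\infty G/H_+$ since every such orbit factors through some $G_i$. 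Combined with the module-theoretic functoriality of \Cref{rec:modules}, this gives $\Mod_{\nH \bA_G}(\specg) \simeq \colim_i \Mod_{\nH \bA_{G_i}}(\spec^{G_i})$ in $\prlstmon$.

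Finally, \cite[Theorem 5.10]{psw22} furnishes a symmetric monoidal equivalence $\cD(\mack(G_i)) \simeq \Mod_{\nH \bA_{G_i}}(\spec^{G_i})$ for each finite $G_i$, and passing to the colimit yields the claimed equivalence. The hard part will be establishing that the finite-group PSW equivalence is natural in $G_i$ with respect to the two transition systems: the derived Mackey diagram uses inflation defined directly on abelian categories, while the module diagram uses inflation followed by base change along $\mu^{G_i}_{G_j}$. Concretely, one must verify that under PSW the canonical unit $\bA_{G_j} \to (\bA_{G_i})^{N_j/N_i}$ in $\mack(G_j)$ is sent to the ring map $\mu^{G_i}_{G_j}$ in $\calg(\spec^{G_i})$ constructed in \Cref{cons:restrictionmodules}. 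Once this coherence is in hand, the filtered colimit of symmetric monoidal equivalences is itself a symmetric monoidal equivalence, and the proof is complete.
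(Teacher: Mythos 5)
Your proposal is correct and follows essentially the same route as the paper: reduce the profinite case to the finite case by exhibiting both $\cD(\mackg)$ and $\Mod_{\nH\bA}(\specg)$ as filtered colimits in $\prlstmon$ over the finite quotients $G_i$, and then apply \cite[Theorem 5.10]{psw22} levelwise. The only (minor) divergences are technical: the paper establishes $\cD(\mackg) \simeq \tn{colim}_i\,\cD(\mack(G_i))$ via $\cK_{\tn{b}}(\proj(-))$ and Ind-completion rather than the $\psigma$-presheaf description, and obtains the module-side colimit by passing to the $\prrstmon$-limit of right adjoints and invoking \cite[Theorem 6.42]{mnn17} rather than your $\pi_0$-computation on generators; the naturality issue you rightly flag is likewise left implicit in the paper.
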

\begin{proof}
When $G$ is finite, \cite[Theorem 5.10]{psw22} uses the universal property of $\specg$ to produce a symmetric monoidal left adjoint $\alpha: \specg \to \cD(\mackg)$. Then the Barr-Beck-Lurie theorem provides a symmetric monoidal equivalence functor $\bar{\beta}: \cD(\mackg) \stackrel{\sim}{\longrightarrow} \Mod_{\nH \bA}(\specg)$ which makes the diagram
\[\begin{tikzcd}
	& {\Mod_{\nH \bA}(\specg)} \\
	\cD(\mackg) & \specg.
	\arrow["{U_{\nH \bA}}", from=1-2, to=2-2]
	\arrow["\beta"', from=2-1, to=2-2]
	\arrow[from=2-1, to=1-2, "\bar{\beta}"]
\end{tikzcd}\]
commute, where $\beta$ is right adjoint to $\alpha$. \\
When $G$ is profinite, there is an equivalence $\specg \simeq \tn{colim}_{i} \, \spec^{G_i}$, where the colimit is taken in $\prlst$ along the (symmetric monoidal) left adjoints $f_{ij}^*$ induced by the group homomorphisms $f_{ij}: G_i \to G_j$ associated to the inclusions $N_i \hookrightarrow N_j$, see e.g. \cite[Section 6.A]{bbb24}. The natural maps $\spec^{G_i} \to \tn{colim}_i \, \spec^{G_i} \simeq \specg$ are the inflation functors $\alpha_{i}^* = \tn{infl}^G_{G_i}$ induced by the group homomorphisms $\alpha_i: G \twoheadrightarrow G/N_i = G_i$. We obtain an equivalence 
$$\Mod_{\nH \bA}(\specg) \simeq \tn{colim}_{i} \, \Mod_{\nH \bA}(\spec^{G_i}),$$
where the colimit is taken along the maps $\Mod_{\nH \bA}(\spec^{G_j}) \to \Mod_{\nH \bA}(\spec^{G_i})$ induced by the $f_{ij}$, as explained in \Cref{cons:restrictionmodules}. One can see this by writing $\specg$ as a $\prrst$-limit of the associated diagram of right adjoints, then passing to modules as in \Cref{rec:modules}, using that the right adjoints preserve the relevant Eilenberg-MacLane spectra and the proof of \cite[Theorem 6.42]{mnn17}, and finally passing back to left adjoints. \\
On the other hand, the Burnside category $\Omega(G)$ can be written as $\tn{colim}_i \, \Omega(G_i)$, the filtered colimit along the functors $\Omega(G_j) \to \Omega(G_i)$ given by pulling back along the $f_{ij}: G_i \to G_j$. This descends to a decomposition 
$$\mackg \simeq \tn{colim}_{i} \, \mack(G_i),$$
where the colimit is taken along the precomposition functors $\mack(G_i) \to \mack(G_j)$. Since these functors have both a right and a left adjoint given by the respective Kan extensions, they are exact and preserve projective objects, and the equivalence restricts to compact projectives: $\proj(\mackg) \simeq \tn{colim}_i \, \proj(\mack(G_i))$. Using \cite[Theorem 7.4.9]{bckw24} and the fact that the nerve functor commutes with filtered colimits we obtain an equivalence
$$\cK_b(\proj(\mackg)) \simeq \tn{colim}_{i} \, \cK_b(\proj(\mack(G_i)))$$
in $\catinf^{\tn{perf}}$, and taking Ind-objects gives the desired equivalence 
$$\cD(\mackg) \simeq \tn{colim}_{i} \, \cD(\mack(G_i))$$
in $\prlst$. We can therefore reduce to the case for finite groups. \\
We finally observe that the obtained equivalence is symmetric monoidal since the forgetful functor $\calg(\prlst) \to \prl$ preserves filtered colimits \cite[Remark 4.8.1.24, Corollary 3.2.3.2]{lur17} and the two diagrams of which we take colimits entirely lie in $\calg(\prlst)$.
\end{proof}

\begin{re}
When $G$ is finite, the proof of \cite[Theorem 5.10]{psw22} shows that the right adjoint $\beta$ satisfies $\pi_*^H\beta(M_{\bullet}) \cong H_*(M_{\bullet}(G/H)) \cong H_*(M_{\bullet})(G/H)$ for a complex of Mackey functors $M_{\bullet}$ and a subgroup $H \leq G$. Hence the above equivalence is t-exact for the standard t-structure on $\cD(\mackg)$ and the t-structure on $\Mod_{\nH \bA}(\specg)$ detected in $\specg$.\footnote{That is, a module is connective (resp. coconnective) if and only if it is connective (resp. coconnective) after applying the forgetful functor.} \\
From the equivalence $\Mod_{\nH \bA}(\specg) \simeq \tn{colim}_i \, \Mod_{\nH \bA}(\spec^{G_i})$ for profinite $G$ we obtain an equivalence
$$\Mod_{\nH \bA}(\specg) \simeq \tn{lim}_{i} \, \Mod_{\nH \bA}(\spec^{G_i})$$
by passing to the $\prrst$-diagram of right adjoint functors $f_{ij,*}$ on module categories (the right adjoints are given by restrictions of scalars, followed by the right adjoints to the restriction maps $f_{ij}^*: \Mod_{\nH \bA}(\spec^{G_j}) \to \Mod_{f_{ij}^*(\nH \bA)}(\spec^{G_i})$ as in \Cref{cons:restrictionmodules}). Since all t-structures on module categories are defined on underlying categories and all functors involved are t-exact the above equivalence $\Mod_{\nH \bA}(\specg) \simeq \lim_i \Mod_{\nH \bA}(\spec^{G_i})$ is t-exact as well, and the same holds for the equivalence $\cD(\mackg) \simeq \lim_i \cD(\mack(G_i))$ in $\prrst$ obtained from the one in $\prlst$ by passing to right adjoints. So the equivalence of \Cref{thm:mackeyasspgmodules} is t-exact.
\end{re}

\begin{re}\label{re:cellular}
It also follows from the proof of \cite[Theorem 5.10]{psw22} that when $G$ is finite the inverse $\bar{\alpha}: \Mod_{\nH \bA}(\specg) \to \cD(\mackgr)$ of $\bar{\beta}$ sends a free $\nH \bA$-module $\nH \bA \otimes \Sigma^{\infty}X$ for $X$ a pointed finite $G$-CW complex to the cellular chain complex of Mackey functors with $\bA$-coefficients $C^{\tn{cell}}_{\bullet}(X;\bA)$. In degree $n$ it is given by the Mackey functor $\pi_n^{(-)}(\Sigma^{\infty}X^n/X^{n-1})$, and the differentials are determined by the connecting homomorphism of the triple $(X^n,X^{n-1},X^{n-2})$.
\end{re}

For Mackey functors different from the Burnside Mackey functor, the following has appeared in the literature using the language of model categories \cite[Section 5.2]{gs14}, \cite[Corollary 5.2, Remark 5.3]{zen18} and $\infty$-categorically for the group $G=C_2$ \cite[Proposition 2.11]{hp25}.

\begin{thm}\label{thm:greenmodulesasspgmodules}
Let $B \in \mackgr$ be a Green functor. Then there is a t-exact equivalence of symmetric monoidal $\infty$-categories 
$$\cD(\Mod_B(\mackgr)) \simeq \Mod_{\nH B}(\specg).$$
\end{thm}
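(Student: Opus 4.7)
The plan is to chain together Corollary~\ref{cor:greenmodulesasmackmodules} and Theorem~\ref{thm:mackeyasspgmodules}, reducing the $R$-linear statement to the integral one.

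The first step is to observe that, by Example~\ref{ex:coefficientsaremodules}, we have $\mackgr \simeq \Mod_{\bA_R}(\mackg)$, and any Green functor $B \in \mackgr$ is in particular a Green functor in $\mackg$ equipped with a ring map $\bA_R \to B$. Since a $B$-action automatically determines an underlying $\bA_R$-action, $\Mod_B(\mackgr) \simeq \Mod_B(\mackg)$ as symmetric monoidal abelian categories, so it suffices to produce a t-exact symmetric monoidal equivalence $\cD(\Mod_B(\mackg)) \simeq \Mod_{\nH B}(\specg)$. For this I would apply Corollary~\ref{cor:greenmodulesasmackmodules} with $R = \bZ$ to get $\cD(\Mod_B(\mackg)) \simeq \Mod_B(\cD(\mackg))$, and then use Theorem~\ref{thm:mackeyasspgmodules} to replace $\cD(\mackg)$ by $\Mod_{\nH \bA}(\specg)$. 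Under this latter equivalence the Green functor $B$ corresponds to $\nH B$ as a commutative algebra in $\Mod_{\nH \bA}(\specg)$, so passing to $B$-modules yields $\Mod_{\nH B}(\Mod_{\nH \bA}(\specg))$, which by the standard iterated-modules identification for the algebra map $\nH \bA \to \nH B$ collapses to $\Mod_{\nH B}(\specg)$.

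The main obstacle is tracking that $B$ corresponds to $\nH B$ not just as an object but as a commutative algebra under the equivalence $\cD(\mackg) \simeq \Mod_{\nH \bA}(\specg)$. For objects this is immediate from the formula $\pi_*^H \beta(M_\bullet) \cong H_*(M_\bullet(G/H))$ recorded in the remark after Theorem~\ref{thm:mackeyasspgmodules}, which identifies the restriction of the equivalence to t-hearts with the Eilenberg-MacLane functor $M \mapsto \nH M$; the upgrade to commutative algebras then follows because the equivalence is symmetric monoidal and both sides sit compatibly over $\calg(\specg)$ via their forgetful functors. Finally, t-exactness of the composite equivalence follows from t-exactness at each step (t-structures on module categories are detected on underlying objects), and symmetric monoidality is preserved throughout since passing to modules over a commutative algebra is functorial for symmetric monoidal equivalences.
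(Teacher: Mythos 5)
Your proposal is correct and follows essentially the same route as the paper: identify $\Mod_B(\mackgr)$ with $\Mod_B(\mackg)$ via \Cref{ex:coefficientsaremodules}, apply \Cref{cor:greenmodulesasmackmodules} (integrally), pass through \Cref{thm:mackeyasspgmodules}, and collapse the iterated module category using the change-of-algebra equivalence from Lurie. The only difference is that you spell out why $B$ corresponds to $\nH B$ as a commutative algebra under the equivalence, a point the paper leaves implicit, and your justification there is sound.
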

\begin{proof}
By \Cref{ex:coefficientsaremodules} we have $\mackgr \simeq \Mod_{\bA_R}(\mackg)$, so we can view $\Mod_B(\mackgr)$ as $\Mod_B(\mackg)$. We have t-exact equivalences of symmetric monoidal $\infty$-categories
\begin{align*}
\cD(\Mod_B(\mackg)) &\stackrel{\ref{cor:greenmodulesasmackmodules}}{\simeq} \Mod_{B}(\cD(\mackg)) \\
&\;\stackrel{\ref{thm:mackeyasspgmodules}}{\simeq} \; \Mod_{\nH B}(\Mod_{\nH \bA}(\specg)) \\
&\;\;\simeq\;\; \Mod_{\nH B}(\specg), 
\end{align*}
where the last equivalence holds by \cite[Corollary 3.4.1.9]{lur17}.
\end{proof}

\begin{re}
Since the equivalence of \Cref{thm:greenmodulesasspgmodules} is obtained from the one of \Cref{thm:mackeyasspgmodules} through a base change, it follows from \Cref{re:cellular} that when $G$ is finite, a free $\nH B$-module $\nH B \otimes \Sigma^{\infty}X$ (for $X$ a pointed finite $G$-CW complex) under $\Mod_{\nH B}(\specg) \simeq \cD(\Mod_B(\mackgr))$ corresponds to the cellular chain complex of Mackey functors with $B$-coefficients $C^{\tn{cell}}_{\bullet}(X;B) \simeq C^{\tn{cell}}_{\bullet}(X;\bA) \boxtimes B$.
\end{re}

\begin{re}
\Cref{thm:greenmodulesasspgmodules} can be seen as an equivariant analogue of the Schwede-Shipley theorem \cite[Theorem 5.1.6]{ss03}, for the symmetric monoidal version see \cite{shi07}, \cite{str20}: There is a Quillen equivalence between the model category of chain complexes over an ordinary ring $A$ and the model category of modules of spectra over the Eilenberg-MacLane ring spectrum $\nH A$. In the $\infty$-categorical setting, Lurie shows in \cite[Theorem 7.1.2.13]{lur17} that there is an equivalence of symmetric monoidal $\infty$-categories 
$$\cD(\Mod(A)) \simeq \Mod_{\nH A}(\spec).$$
\end{re}

For $B = \underl{R}$, \Cref{thm:greenmodulesasspgmodules} in conjunction with \Cref{ex:coefficientsaremodules} gives the following.

\begin{cor}\label{cor:comackasspgmodules}
There is a t-exact symmetric monoidal equivalence
$$\cD(\cmackgr) \simeq \Mod_{\nH \underl{R}}(\specg).$$
\end{cor}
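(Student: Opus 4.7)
The plan is to deduce this as an immediate consequence of \Cref{thm:greenmodulesasspgmodules} applied to the Green functor $B = \underl{R}$.

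First, I would note that $\underl{R}$ is indeed a commutative monoid object in $\mackgr$, since it is the image of the commutative ring $R$ under the canonical symmetric monoidal functor $\modr \to \mackgr$ sending an $R$-module to its associated constant Mackey functor; equivalently, $\underl{R}$ is the tensor unit of $\cmackgr$ viewed as a symmetric monoidal subcategory of $\mackgr$ via \Cref{lem:comackasperm}, hence tautologically a Green functor. Second, by \Cref{ex:cohomological} there is an identification
$$\cmackgr \simeq \Mod_{\underl{R}}(\mackgr)$$
of symmetric monoidal abelian categories.

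Combining these two observations, \Cref{thm:greenmodulesasspgmodules} applied to $B = \underl{R}$ directly yields the desired t-exact symmetric monoidal equivalence
$$\cD(\cmackgr) \;\simeq\; \cD(\Mod_{\underl{R}}(\mackgr)) \;\simeq\; \Mod_{\nH \underl{R}}(\specg).$$
Since all the work has been done in \Cref{thm:greenmodulesasspgmodules} (and in turn in \Cref{thm:modulesandderivation} and \Cref{thm:mackeyasspgmodules}), there is no real obstacle here; the only point worth emphasising in the write-up is that the symmetric monoidal structure on $\cmackgr$ obtained via \Cref{lem:comackasperm} coincides with the canonical one on $\Mod_{\underl{R}}(\mackgr)$, which is precisely the content of the second half of \Cref{ex:cohomological}. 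Thus the corollary reduces to a single line of reference-chasing.
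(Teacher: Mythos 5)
Your proposal is correct and takes essentially the same route as the paper, which likewise deduces the corollary immediately from \Cref{thm:greenmodulesasspgmodules} applied to $B=\underl{R}$ together with the identification $\cmackgr = \Mod_{\underl{R}}(\mackgr)$ (and the agreement of the Yoshida/Day convolution monoidal structure with the canonical one on $\underl{R}$-modules, as recorded in \Cref{ex:cohomological}). Nothing further is needed.
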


\Cref{thm:greenmodulesasspgmodules} can be reformulated using the presheaf description of the derived $\infty$-category we used in \Cref{sec:mackey}. We thank Drew Heard and Achim Krause for pointing us to \cite[Example 2.15]{bcn25}, where this description is spelled out for the constant Mackey functor.

\begin{cor}\label{cor:modhbaspresheaves}
Let $B \in \mackgr$ be a Green functor. Then there is a t-exact equivalence of symmetric monoidal $\infty$-categories 
$$\Mod_{\nH B}(\specg) \simeq \psigma(B[\gset];\spec),$$
where $B[\gset] \subseteq \Mod_B(\mackgr)$ is the full subcategory on the functors $B(X \times -)$ for all $X \in \gset$. Hence $\Mod_{\nH B}(\specg)$ is the initial presentable stable $\infty$-category receiving a functor from $B[\gset]$ that preserves finite direct sums.
\end{cor}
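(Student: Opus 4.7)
The plan is to combine Theorem \ref{thm:greenmodulesasspgmodules} with the spectral presheaf description of the derived $\infty$-category given in Proposition \ref{prop:psigmamodules}. As the system of compact projective generators of $\mackgr$ I would take $\bP$ consisting of the Mackey functors $\bA_R(T \times -)$ for $T \in \gset$ together with their retracts. This system was already verified in the proof of Corollary \ref{cor:greenmodulesasmackmodules} to satisfy all requirements: it consists of compact projective dualisable objects, is closed under tensor products, finite direct sums and retracts, and contains the unit $\bA_R$.

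First I would apply Proposition \ref{prop:psigmamodules} to $\bP$ to obtain a t-exact symmetric monoidal equivalence
$$\cD(\Mod_B(\mackgr)) \simeq \psigma(\bP_B; \spec),$$
where $\bP_B \subseteq \Mod_B(\mackgr)$ is the full subcategory on direct summands of $P \boxtimes B$ for $P \in \bP$. Using the self-duality of $\Omega_R(G)$, Day convolution against a representable computes as $\bA_R(T \times -) \boxtimes B \cong B(T \times -)$, so $\bP_B$ is exactly the idempotent completion $B[\gset]^{\natural}$ taken inside $\Mod_B(\mackgr)$.

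Next I would observe that restriction along the inclusion $B[\gset] \hookrightarrow B[\gset]^{\natural}$ induces a symmetric monoidal t-exact equivalence $\psigma(B[\gset]^{\natural};\spec) \simeq \psigma(B[\gset];\spec)$; this is because $\spec$ is idempotent complete, so any spectral presheaf extends uniquely along idempotent completion, and such an extension automatically preserves finite products. Chaining this equivalence with Theorem \ref{thm:greenmodulesasspgmodules} yields the desired identification $\Mod_{\nH B}(\specg) \simeq \psigma(B[\gset];\spec)$. The asserted universal property as the initial presentable stable $\infty$-category receiving a finite direct sum preserving functor from $B[\gset]$ then follows immediately from the analogous property of $\psigma(-;\spec)$ recorded at the end of Recollection \ref{rec:psigma}.

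I do not anticipate any substantial obstacle: the argument is a bookkeeping combination of results proven earlier in the paper. The one item requiring a brief justification is compatibility of the idempotent-completion step with the symmetric monoidal and t-structure data, but this is automatic, since $B[\gset]^{\natural}$ inherits its symmetric monoidal structure uniquely from $B[\gset]$ by universal property, the restriction $\psigma(B[\gset]^{\natural};\spec) \to \psigma(B[\gset];\spec)$ is symmetric monoidal by construction, and the t-structure on $\psigma(-;\spec)$ described in Remark \ref{re:tstructurespsigmamodules} is detected pointwise and so is preserved by restriction.
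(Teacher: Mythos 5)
Your proposal is correct and follows essentially the same route as the paper: identify $\Mod_{\nH B}(\specg)$ with $\psigma(\bP_B;\spec)$ via Theorem \ref{thm:greenmodulesasspgmodules}, Proposition \ref{prop:psigmamodules} and Corollary \ref{cor:greenmodulesasmackmodules}, note that $\bA_R(X\times -)\boxtimes B \cong B(X\times -)$ so that $\bP_B$ is the idempotent completion of $B[\gset]$, and use idempotent completeness of $\spec$ to drop the completion. The universal property from Recollection \ref{rec:psigma} is also how the paper concludes, so no substantive differences.
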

\begin{proof}
Using \Cref{thm:greenmodulesasspgmodules} and the description of $\cD(\Mod_B(\mackgr))$ given in \Cref{prop:psigmamodules} and \Cref{cor:greenmodulesasmackmodules} we can identify $\Mod_{\nH B}(\specg)$ with $\psigma(\bP_B;\spec)$, where $\bP_B$ is the full (additive) subcategory of $\Mod_B(\mackgr)$ on direct summands of the objects $\bA_R(X \times -) \boxtimes B \cong B(X \times -)$ for finite $G$-sets $X$. Note that $\Mod_B(\mackgr)$ is abelian, so $\bP_B$ is just the idempotent completion of $B[\gset]$. Since $\spec$ is idempotent complete we have
$\psigma(\bP_B;\spec) \simeq \psigma(B[\gset];\spec)$.
\end{proof}

In the two cases $B = \bA_R$ and $B=\underl{R}$ we can simplify this further.

\begin{cor}
There is a t-exact symmetric monoidal equivalence
$$\Mod_{\nH \bA_R}(\specg) \simeq \psigma(\Omega_R(G);\spec).$$
If $R=\bZ$ the right hand side simplifies to $\psigma(\spans(G);\spec)$.
\end{cor}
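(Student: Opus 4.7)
The plan is to apply \Cref{cor:modhbaspresheaves} with $B = \bA_R$ and identify $\bA_R[\gset]$ with $\Omega_R(G)$. Since $\bA_R$ is the monoidal unit of $\mackgr$, we have $\Mod_{\bA_R}(\mackgr) \simeq \mackgr$, and $\bA_R[\gset]$ consists precisely of the representable Mackey functors $\bA_R(X \times -) \cong \Hom_{\Omega_R(G)}(-, X)$ for $X \in \gset$. I would invoke the $R$-linear Yoneda lemma to identify this full subcategory with $\Omega_R(G)$ as a symmetric monoidal $R$-linear category, using that $\bA_R(X \times -) \boxtimes \bA_R(Y \times -) \cong \bA_R((X \times Y) \times -)$. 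Substituting into \Cref{cor:modhbaspresheaves} then yields $\Mod_{\nH \bA_R}(\specg) \simeq \psigma(\Omega_R(G);\spec)$, with t-exactness and symmetric monoidality inherited directly.

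For the $R = \bZ$ refinement I would compare $\psigma(\Omega(G);\spec) \simeq \psigma(\spans(G);\spec)$ via the universal property from the end of \Cref{rec:psigma}: both sides are the initial presentable stable symmetric monoidal $\infty$-category admitting a finite-direct-sum-preserving symmetric monoidal functor from $\spans(G)$ (respectively from $\Omega(G)$). The key point is that hom-spectra in any stable $\infty$-category are automatically group-complete, so any such functor from $\spans(G)$ factors uniquely through the pointwise group completion $\spans(G) \to \Omega(G)$ on hom-monoids, and this factorisation remains symmetric monoidal.

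The main obstacle will be making this group-completion argument precise and symmetric monoidal. A concrete route is to observe that the stable Yoneda embedding $\bar{y}\colon \spans(G) \to \psigma(\spans(G);\spec)$ factors through $\Omega(G)$, since its target is stable; this produces a symmetric monoidal additive functor $\Omega(G) \to \psigma(\spans(G);\spec)$ whose colimit-preserving extension to $\psigma(\Omega(G);\spec)$ is inverse to the comparison map $\psigma(\spans(G);\spec) \to \psigma(\Omega(G);\spec)$ induced by $\spans(G) \hookrightarrow \Omega(G)$. One then verifies this inverse pair on the compact generators $\bar{y}(X)$ by direct inspection of mapping spectra.
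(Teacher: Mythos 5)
Your proposal is correct and follows essentially the same route as the paper: apply \Cref{cor:modhbaspresheaves} and identify $\bA_R[\gset]$ with $\Omega_R(G)$ via the (self-dual) Yoneda embedding, and for $R=\bZ$ observe that the local group completion $\spans(G) \to \Omega(G)$ is invisible to finite-product-preserving functors into the additive target $\spec$. Your extra detail on implementing the group-completion step via the stable Yoneda embedding is a sound way to make precise what the paper asserts in one sentence.
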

\begin{proof}
This follows from \Cref{cor:modhbaspresheaves} by noting that $\bA_R[\gset]$ is equivalent to $\Omega_R(G)$ via the Yoneda embedding $\Omega_R(G) \hookrightarrow \mackgr$. When $R = \bZ$ then the local group completion of $\spans(G)$ which gives $\Omega(G)$ is not necessary, since $\spec$ is additive and we are considering functors which preserve finite direct sums.
\end{proof}

\begin{cor}[cf. {\cite[Example 2.15]{bcn25}}]\label{cor:modhunderlraspresheaves}
There is a t-exact symmetric monoidal equivalence
$$\Mod_{\nH \underl{R}}(\specg) \simeq \psigma(\permgr;\spec).$$
\end{cor}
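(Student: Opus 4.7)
The plan is to deduce the statement from \Cref{cor:modhbaspresheaves} applied to $B = \underl{R}$, which gives a t-exact symmetric monoidal equivalence
$$\Mod_{\nH \underl{R}}(\specg) \simeq \psigma(\underl{R}[\gset];\spec).$$
Since $\psigma(-;\spec)$ is functorial in the base category (sending equivalences of additive categories to symmetric monoidal equivalences via precomposition) and the standard t-structure recalled in \Cref{re:tstructurespsigmamodules} is detected by evaluation at the representable generators, the statement will follow once I exhibit a symmetric monoidal equivalence of additive categories $\underl{R}[\gset] \simeq \permgr$.

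To produce this equivalence, I would consider the symmetric monoidal composition
$$\Omega_R(G) \hookrightarrow \mackgr \xrightarrow{(-) \boxtimes \underl{R}} \Mod_{\underl{R}}(\mackgr) \simeq \cmackgr, \qquad X \mapsto \bA_R(X \times -) \boxtimes \underl{R} \simeq \underl{R}(X \times -),$$
where the last identification is the one already used in the proof of \Cref{cor:modhbaspresheaves} and the middle equivalence is \Cref{ex:cohomological}. This functor factors through the $R$-linearised permutation category $\permgr$: the cohomological identity $\tn{ind}^H_K \circ \tn{res}^H_K = [H:K] \cdot \tn{id}$ holding in every object of $\cmackgr$ is exactly what kills the cohomological ideal $\cI_R(G)$ in the presentation $\permgr \simeq \Omega_R(G)/\cI_R(G)$ recalled before \Cref{lem:comackasperm}. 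The resulting symmetric monoidal functor $\permgr \to \cmackgr$ is essentially surjective onto $\underl{R}[\gset]$ by construction.

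The remaining task is to check full faithfulness of this functor, which is precisely what Yoshida's equivalence \Cref{lem:comackasperm} buys us: the free $\underl{R}$-module $\underl{R}(X \times -) \simeq \bA_R(X \times -) \boxtimes \underl{R}$ satisfies the universal property $\Hom_{\cmackgr}(\underl{R}(X \times -), M) \simeq M(X)$, and under the restriction equivalence $\cmackgr \simeq \funrlin(\permgr^{\op}, \modr)$ evaluation at $X$ becomes evaluation at $R(X) \in \permgr$, so this universal property matches the one of the representable presheaf $\Hom_{\permgr}(-, R(X))$. The main subtlety is checking compatibility of the symmetric monoidal structures — that the Day convolution on $\funrlin(\permgr^{\op}, \modr)$ transported through Yoshida agrees with the one on $\cmackgr$ inherited from $\Mod_{\underl{R}}(\mackgr)$ — but this is precisely the statement recorded at the end of \Cref{ex:cohomological}, so no additional work is required.
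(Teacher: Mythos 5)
Your proposal is correct and follows essentially the same route as the paper: the paper's proof simply applies \Cref{cor:modhbaspresheaves} with $B=\underl{R}$ and identifies $\underl{R}[\gset]$ with $\permgr$ via Yoshida's equivalence (\Cref{lem:comackasperm}). Your additional verification that the functor $\Omega_R(G) \to \cmackgr$ kills the cohomological ideal and is fully faithful onto $\underl{R}[\gset]$ just spells out details the paper leaves implicit.
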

\begin{proof}
Via the classical Yoshida equivalence of \Cref{lem:comackasperm}, the category $\underl{R}[\gset]$ is equivalent to $\permgr$. So this again follows from \Cref{cor:modhbaspresheaves}.
\end{proof}

\begin{re}\label{re:hrlinearstructure}
Since $\cD(\cmackgr)$ by definition is the dg-nerve of an $R$-linear dg-category, the $\infty$-category $\Mod_{\nH \underl{R}}(\specg)$ inherits the structure of an $\nH R$-linear stable $\infty$-category. That is, it is a module over $\Mod_{\nH R}(\spec)$ in $\prlst$. It receives symmetric monoidal $\nH R$-linear left adjoint functors from two other module categories of $G$-spectra.
Since the Burnside Mackey functor $\bA_R$ is the unit of $\mackgr$, there is a unique map of Green functors $\bA_R \to \underl{R}$. The induced ring map $\nH \bA_R \to \nH \underl{R}$ gives an extension of scalars functor
$$\Mod_{\nH \bA_R}(\specg) \to \Mod_{\nH \underl{R}}(\specg)$$
which identifies with the derived functor of the $1$-categorical extension of scalars $\mackgr \to \Mod_{\underl{R}}(\mackgr)$. We can also precompose with the extension of scalars $\Mod_{\nH \bA}(\specg) \to \Mod_{\nH \bA_R}(\specg)$ induced by $\bA \to \bA_R$. \\
The inflation $\tn{infl}^G_{1}: \spec \to \specg$ induces a symmetric monoidal functor on module categories $\Mod_{\nH R}(\spec) \to \Mod_{\nH R_G}(\specg)$, where $\nH R_G \defeq \tn{infl}^G_{1}(\nH R)$. Giving a symmetric monoidal left adjoint $\Mod_{\nH R_G}(\specg) \to \Mod_{\nH \underl{R}}(\specg)$ under $\specg$ by \cite[Theorem 4.8.5.11]{lur17} is equivalent to giving a ring map $\zeta_G: \nH R_G \to \nH \underl{R}$, which by adjunction corresponds to a ring map $\nH R \to (\nH \underl{R})^G \simeq \nH R$, hence to an endomorphism of the ring $R$. Choosing the identity (which is the only $R$-linear ring endomorphism of $R$), we obtain another extension of scalars functor
$$\zeta_{G,!}: \Mod_{\nH R_G}(\specg) \to \Mod_{\nH \underl{R}}(\specg).$$
When composed with $\Mod_{\nH R}(\spec) \to \Mod_{\nH R_G}(\specg)$ the latter functor also recovers the $\nH R$-linear structure on $\Mod_{\nH \underl{R}}(\specg)$ without going through dg-categories. \\
Since categorical fixed points are t-exact, the inflation $\tn{infl}^G_1$ is right t-exact and $\nH R_G$ is connective. Hence the ring map $\zeta_G: \nH R_G \to \nH \underl{R}$ factors through $\tau_{t \leq 0} \nH R_G$, which by \cite[Section 5.2]{gs14} is $\nH \bA_R$. The constructed ring maps give a commuting triangle of scalar extension functors
\[\begin{tikzcd}
	{\Mod_{\nH R_G}(\spec^G)} && {\Mod_{\nH \underl{R}}(\specg),} \\
	& {\Mod_{\nH \bA_R}(\specg)}
	\arrow[from=1-1, to=1-3]
	\arrow[from=1-1, to=2-2]
	\arrow[from=2-2, to=1-3]
\end{tikzcd}\]
relating an $\infty$-categorical analogue of Kaledin's derived Mackey functors \cite[Theorem 4.50]{psw22} and the derived $\infty$-categories of $\mackgr$ and $\cmackgr$.
\end{re}

\begin{re}\label{re:restrictionunderhr}
If $\alpha: G \to G'$ is a group homomorphism, then the restriction functor
$$\alpha^*: \Mod_{\nH \underl{R}}(\spec^{G'}) \to \Mod_{\nH \underl{R}}(\specg)$$
from \Cref{cons:restrictionmodules} becomes a functor under $\Mod_{\nH R}(\spec)$ for the left adjoints $\Mod_{\nH R}(\spec) \to \Mod_{\nH \underl{R}}(\spec^{(-)})$ provided by \Cref{re:hrlinearstructure}. When $\alpha$ is the inclusion of a subgroup this is clear, and when $\alpha$ is a quotient homomorphism $G \twoheadrightarrow G/N$ this boils down to the fact that $\zeta_G: \nH R_G \to \nH \underl{R}$ factors as $\nu^G_{G/N} \circ \tn{infl}^G_{G/N}(\zeta_{G/N})$, which can be checked on $\pi_0$.
\end{re}

\begin{re}
As in the previous two remarks, we will often reduce commutativity of diagrams of symmetric monoidal module $\infty$-categories to commutativity of diagrams of algebra objects and ultimately of objects in a t-heart, that is, to $1$-categorical data. This is possible in our cases since the adjunctions we use to do so, e.g. the ones coming from a t-structure, restrict to adjunctions on commutative algebra objects.
\end{re}

\section{Derived Permutation Modules}\label{sec:derivedpermutationmodules}

Permutation modules already came into play when we discussed cohomological Mackey functors, and we will now consider them in greater detail. Since they form an additive but non-abelian category, determining a suitable notion of `derivation' is a non-trivial task. Balmer-Gallauer proposed a candidate by taking into account fixed points for all subgroups and defined a derived tt-category of permutation modules in \cite{bg22a}. \\
In this section we will define the derived category of permutation modules as an $\infty$-category and then show that it is equivalent to the derived $\infty$-category of cohomological Mackey functors, this relies on the corresponding 1-categorical statements of \cite{bg23}. By \Cref{cor:comackasspgmodules} this will also express it as modules over the Eilenberg-MacLane spectrum associated to a constant Mackey functor.

\begin{cons}
The category $\Permgr$ is additive and equipped with a symmetric monoidal structure which is $R$-linear and preserves coproducts in both variables, inherited from the tensor product over $R$ on $\modgr$. Using \cite[Remark 1.3.2.2]{lur17} and by combining \cite[Proposition 1.3.4.5]{lur17} and \cite[Proposition 3.2.2]{hin16} we obtain a symmetric monoidal stable $\infty$-category $\cK(\Permgr)$ of unbounded complexes of permutation modules up to chain homotopy equivalence. \\
Denote by $\cK_{G-\tn{ac}}(\Permgr)$ the full subcategory of $\cK(\Permgr)$ on $G$-acyclic objects, i.e. on complexes $X \in \cK(\Permgr)$ for which the $H$-fixed points $X^H$ are acyclic for each subgroup $H \leq G$. As it is closed under taking shifts and cofibres, it defines a stable subcategory.
\end{cons}

\begin{defi}
The \emph{derived $\infty$-category of permutation modules of $G$ with coefficients in $R$} is defined as the Verdier localisation 
$$\cD\Permgr \defeq \frac{\cK(\Permgr)}{\cK_{G-\tn{ac}}(\Permgr)}.$$
That is, $\cD\Permgr$ is the cofibre of the inclusion of the stable subcategory 
$$\cK_{G-\tn{ac}}(\Permgr) \hookrightarrow \cK(\Permgr),$$
taken in $\catinfex$, the $\infty$-category of small stable $\infty$-categories and exact functors.
\end{defi}

\begin{lem}\label{lem:dpermlocalising}
Let $\tn{Loc}(R(G/H) \mid H \leq G) \subseteq \cK(\Permgr)$ be the localising subcategory generated by the objects $R(G/H)$, $H \leq G$. Then there is an equivalence
$$\tn{Loc}(R(G/H) \mid H \leq G) \simeq \cD\Permgr.$$
\end{lem}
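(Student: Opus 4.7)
The plan is to identify $\cL := \tn{Loc}(R(G/H) \mid H \leq G) \subseteq \cK(\Permgr)$ with the left-orthogonal complement of $\cK_{G-\tn{ac}}(\Permgr)$, and then to recognise the composition $\cL \hookrightarrow \cK(\Permgr) \twoheadrightarrow \cD\Permgr$ as the canonical equivalence supplied by the formalism of Bousfield colocalisations of stable $\infty$-categories.

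The crucial computation is that each $R(G/H)$ represents the $H$-fixed-point functor on $\Permgr$, so the mapping complex $\map_{\cK(\Permgr)}(R(G/H), Y)$ agrees with $Y^H$ and in particular $\pi_n \Map_{\cK(\Permgr)}(R(G/H), Y) \cong H_n(Y^H)$. Hence $Y$ lies in $\cK_{G-\tn{ac}}(\Permgr)$ exactly when $\Map_{\cK(\Permgr)}(R(G/H)[n], Y) \simeq 0$ for every $H \leq G$ and $n \in \bZ$. This simultaneously says $\cL^{\perp} = \cK_{G-\tn{ac}}(\Permgr)$ and, since orthogonality in the first variable is preserved by colimits, $\cL \subseteq {}^{\perp}\cK_{G-\tn{ac}}(\Permgr)$.

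The inclusion $\cL \subseteq {}^{\perp}\cK_{G-\tn{ac}}$ immediately yields fully faithfulness of $\cL \to \cD\Permgr$: for $X \in \cL$, any morphism $Y \to Y'$ in $\cK(\Permgr)$ with cofibre in $\cK_{G-\tn{ac}}$ induces an equivalence on $\Map_{\cK(\Permgr)}(X, -)$, so the calculus-of-fractions description of the Verdier quotient identifies $\Map_{\cD\Permgr}(X, Y) \simeq \Map_{\cK(\Permgr)}(X, Y)$. For essential surjectivity I would produce, for every $X \in \cK(\Permgr)$, a cellularisation $\Gamma X \in \cL$ together with a map $\Gamma X \to X$ whose cofibre lies in $\cL^{\perp} = \cK_{G-\tn{ac}}$; this map then becomes invertible in $\cD\Permgr$, exhibiting $X$ as the image of $\Gamma X$.

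The main technical obstacle is constructing the cellularisation rigorously, since $\cK(\Permgr)$ of a large additive category is not automatically presentable and one cannot directly invoke the adjoint functor theorem. I would build $\Gamma X$ explicitly by a transfinite small-object argument, at each successor stage attaching shifts $R(G/H)[n]$ along every map from such a generator into the current cofibre of the approximation; termination is ensured by the essentially small collection of generators $\{R(G/H)\}_{H \leq G}$ (the open subgroups of a profinite group indeed form a set via the finite quotients of $G$). Alternatively, one can import the cellularisation from the presentable enlargement $\Mod_{\nH \underl{R}}(\specg) \simeq \psigma(\permgr;\spec)$ of \Cref{cor:modhunderlraspresheaves}, though this inverts the order of the paper's exposition.
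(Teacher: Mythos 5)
Your argument is correct, and it is in substance the same mechanism the paper uses, just unpacked rather than cited. The paper's proof is a two-line appeal to \cite[Proposition 3.9]{bg23}, where Balmer--Gallauer apply the Neeman--Thomason localisation theorem to the triangulated homotopy category $\nK(\Permgr)$: the objects $R(G/H)$ are compact (fixed points commute with coproducts for finitely generated permutation modules), they generate a compactly generated localising subcategory whose right orthogonal is exactly $\cK_{G\text{-ac}}(\Permgr)$ by the computation $\Hom_{\nK}(R(G/H)[n],Y)\cong H_n(Y^H)$, and Brown representability supplies the colocalisation $\Gamma X \to X$. The paper then lifts this to $\infty$-categories via the principle that an exact functor of stable $\infty$-categories inducing an equivalence of homotopy categories is an equivalence. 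Your proposal reconstructs precisely the content of that cited theorem directly at the level of mapping spectra: the identification $\cL^{\perp}=\cK_{G\text{-ac}}$, full faithfulness of $\cL\to\cD\Permgr$ from left orthogonality plus the calculus-of-fractions description of the Verdier quotient, and essential surjectivity via a cellularisation built by a (transfinite, in fact countable, by compactness of the generators) small-object argument. What your route buys is self-containedness and avoidance of the homotopy-category detection principle; what the citation buys is brevity, since the convergence of the cellularisation tower and the set-theoretic bookkeeping you rightly flag are exactly what Neeman--Thomason packages. One cosmetic remark: in the standard construction one attaches cells along maps from generators into the \emph{fibre} of the current approximation $\Gamma_i X\to X$ (so that the attaching maps become null after composing to $X$ and the map extends to the next stage); since the generating set is closed under shifts this differs from your "cofibre" phrasing only by a suspension, but it is the cleaner way to run the induction. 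Your alternative of importing the colocalisation from $\Mod_{\nH\underline{R}}(\specg)$ would, as you note, be circular at this point of the paper.
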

\begin{proof}
The composition of exact functors between stable $\infty$-categories
$$\tn{Loc}(R(G/H) \mid H \leq G) \hookrightarrow \cK(\Permgr) \twoheadrightarrow \cD\Permgr.$$
is an equivalence on homotopy categories, as \cite[Proposition 3.9]{bg23} shows by employing the Neeman-Thomason localisation theorem. It hence is an equivalence of $\infty$-categories.
\end{proof}

\begin{re}
It follows that the quotient functor $\cK(\Permgr) \twoheadrightarrow \cD\Permgr$ exhibits $\cD\Permgr$ as a colocalisation of $\cK(\Permgr)$, i.e. it has a fully faithful left adjoint. In particular, we can view $\cD\Permgr$ not only as a quotient, but also as a full subcategory of $\cK(\Permgr)$.
\end{re}

\begin{lem}
The stable $\infty$-category $\cD\Permgr$ is compactly generated with subcategory of compact objects given by the thick subcategory of $\cK(\Permgr)$ generated by $\permgr$:
$$\cD\Permgr^{\omega} \simeq \tn{Thick}(\permgr) \simeq \cK_{\tn{b}}(\permgr^{\natural}).$$
\end{lem}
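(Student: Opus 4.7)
The plan is to exhibit $\cD\Permgr$ as compactly generated by the orbit objects $R(G/H)$, $H \leq G$, via a Neeman-Thomason style argument, and then to match the resulting compact part successively with $\tn{Thick}(\permgr)$ and $\cK_{\tn{b}}(\permgr^{\natural})$. By \Cref{lem:dpermlocalising} I may view $\cD\Permgr$ as the localising subcategory $\tn{Loc}(R(G/H) \mid H \leq G) \subseteq \cK(\Permgr)$. Since this subcategory is closed under coproducts by definition, coproducts in $\cD\Permgr$ coincide with those of $\cK(\Permgr)$. Once each $R(G/H)$ is shown to be compact in $\cK(\Permgr)$, the standard result that a stable $\infty$-category generated under colimits by a set of compact objects is compactly generated with compacts the thick closure of those generators yields both compact generation of $\cD\Permgr$ and the identification $\cD\Permgr^\omega = \tn{Thick}(R(G/H) \mid H \leq G)$ as a subcategory of $\cK(\Permgr)$.

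For the compactness claim, the mapping spectrum from $R(G/H)$ to a complex $X_\bullet \in \cK(\Permgr)$ is computed by the $H$-fixed point complex $X_\bullet^H$. Filtered colimits in $\cK(\Permgr)$ are computed degree-wise, so it suffices to check that $(-)^H \colon \modgr \to \modr$ preserves filtered colimits. Given a filtered colimit $M = \tn{colim}_i M_i$ in $\modgr$ and an $H$-fixed element $m \in M$ represented by $m_i \in M_i$, continuity of the $G$-action makes $\tn{Stab}_G(m_i)$ open in $G$; therefore $\tn{Stab}_H(m_i)$ is open and the $H$-orbit of $m_i$ is finite. The finitely many relations $h m_i = m_i$ (for $h$ ranging over this orbit) all hold in $M$, hence already at some index $i' \geq i$ in $M_{i'}$, so $m$ lifts to $M_{i'}^H$, and injectivity is immediate. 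This gives the desired compactness.

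Since every finitely generated permutation module is a finite direct sum of orbits $R(G/H)$, one has $\tn{Thick}(R(G/H) \mid H \leq G) = \tn{Thick}(\permgr)$ inside $\cK(\Permgr)$, which establishes the first equivalence. For the second, the natural inclusion $\cK_{\tn{b}}(\permgr^{\natural}) \hookrightarrow \cK(\Permgr)$ realises $\cK_{\tn{b}}(\permgr^{\natural})$ as a stable, idempotent-complete full subcategory containing $\permgr$, so $\tn{Thick}(\permgr) \subseteq \cK_{\tn{b}}(\permgr^{\natural})$. Conversely, any bounded complex of objects of $\permgr^{\natural}$ admits a finite filtration by stupid truncations whose associated graded pieces are shifts of objects of $\permgr^{\natural}$, all of which lie in $\tn{Thick}(\permgr)$ by retracts; iterating the corresponding cofibre sequences gives the reverse inclusion.

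The main obstacle is verifying compactness of the $R(G/H)$ in the profinite setting, where the subgroup $H$ itself may be infinite. The rescue is that discreteness of the modules forces $H$-orbits of elements to remain finite, so only finitely many relations must be propagated along the filtered colimit, which is precisely where the standing profinite conventions on $G$ and its open subgroups become essential.
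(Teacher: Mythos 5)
Your proposal is correct in substance and follows the same Neeman--Thomason strategy as the paper: the paper reduces compactness questions to the homotopy category via \cite[Remark 1.4.4.1]{lur17} and then simply cites \cite[Corollary 3.10]{bg23}, whereas you write out the argument that the citation contains (compactness of the orbit objects, generation, thick closure, and the identification with $\cK_{\tn{b}}(\permgr^{\natural})$). One step is misstated, however: filtered colimits in $\cK(\Permgr)$ are \emph{not} computed degreewise. The degreewise filtered colimit of permutation modules is generally not a permutation module, and the homotopy colimits that do exist in $\cK(\Permgr)$ (built from coproducts and cofibres, e.g.\ the telescope for a sequential diagram) agree with degreewise colimits only up to quasi-isomorphism, not up to chain homotopy equivalence. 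The repair is exactly what the paper's appeal to \cite[Remark 1.4.4.1]{lur17} provides: since $\cK(\Permgr)$ admits small coproducts, compactness of $R(G/H)$ may be tested on coproducts in the homotopy category, and coproducts \emph{are} degreewise; there your fixed-point computation applies (indeed $(\bigoplus_i M_i)^H \cong \bigoplus_i M_i^H$ is immediate, your open-stabiliser argument being the right one for the stronger statement that $(-)^H$ preserves all filtered colimits of discrete modules). A second, smaller gloss: realising $\cK_{\tn{b}}(\permgr^{\natural})$ as a full subcategory of $\cK(\Permgr)$ containing $\tn{Thick}(\permgr)$ requires knowing that $\cK(\Permgr)$ is idempotent complete, which holds because it admits countable coproducts; with that in hand your stupid-truncation argument for $\tn{Thick}(\permgr) \simeq \cK_{\tn{b}}(\permgr^{\natural})$ goes through.
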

\begin{proof}
As $\cK(\Permgr)$ has small coproducts, \cite[Remark 1.4.4.1]{lur17} implies that compactness in and compact generation of $\cK(\Permgr)$ can be studied in terms of the homotopy category $\nK(\Permgr)$. Given this, the result again follows from the Neeman-Thomason localisation theorem, see \cite[Corollary 3.10]{bg23}.
\end{proof}

\begin{re}
It follows that $\cD\Permgr$, as a subcategory of the symmetric monoidal $\infty$-category $\cK(\Permgr)$, is closed under tensor products, since the compact part $\cK_{\tn{b}}(\Permgr^{\natural})$ is. So $\cD\Permgr$ inherits a symmetric monoidal structure which makes it into a presentably symmetric monoidal stable $\infty$-category.
\end{re}

\begin{thm}\label{thm:eqdperminf}
There is an equivalence of symmetric monoidal $\infty$-categories $$\cD\Permgr \simeq \cD(\cmackgr)$$
under which the $(G;R)$-permutation module $R(X)$ on a finite $G$-set $X$ corresponds to the cohomological Mackey functor $\underl{R}(X \times -)$.
\end{thm}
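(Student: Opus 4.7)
The plan is to combine Yoshida's identification of cohomological Mackey functors with additive presheaves on permutation modules (\Cref{lem:comackasperm}) with the $\psigma$-description of derived $\infty$-categories (\Cref{prop:psigmamodules}) to identify $\cD(\cmackgr)$ with $\psigma(\permgr;\spec)$, and then to invoke the universal property of the latter to produce a comparison functor to $\cD\Permgr$, which is shown to be an equivalence by checking on compact generators.

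First I express $\cD(\cmackgr)$ as a spectral presheaf $\infty$-category. Yoshida's equivalence exhibits the representables $\underl{R}(X \times -)$, $X \in \gset$, as compact projective dualisable generators of $\cmackgr$, and they are closed under the Day convolution product described in \Cref{ex:cohomological}. Applying \Cref{prop:psigmamodules} yields a t-exact symmetric monoidal equivalence
$$\cD(\cmackgr) \;\simeq\; \psigma(\permgr;\spec),$$
sending $\underl{R}(X \times -)$ to the stable Yoneda image $\bar{y}(R(X))$. The same identification also drops out of combining \Cref{cor:comackasspgmodules} with \Cref{cor:modhunderlraspresheaves}.

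Next I construct the comparison functor. The inclusion
$$\iota\colon \permgr \hookrightarrow \cK(\Permgr) \twoheadrightarrow \cD\Permgr$$
is symmetric monoidal and preserves finite direct sums, and $\cD\Permgr$ is presentably symmetric monoidal and stable. The universal property of $\psigma(\permgr;\spec)$ recorded in \Cref{rec:psigma} therefore extends $\iota$ to a symmetric monoidal colimit preserving functor
$$\Phi\colon \psigma(\permgr;\spec) \longrightarrow \cD\Permgr, \qquad \bar{y}(R(X)) \longmapsto R(X).$$
Composing with the equivalence of the previous paragraph produces a symmetric monoidal candidate functor $\cD(\cmackgr) \to \cD\Permgr$ sending $\underl{R}(X \times -) \mapsto R(X)$, matching the asserted identification of objects.

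It remains to show $\Phi$ is an equivalence. Both sides are compactly generated and $\Phi$ preserves colimits, so it suffices to check that $\Phi$ restricts to an equivalence on compact parts. Essential surjectivity is immediate, since the $R(G/H)$ generate $\cD\Permgr^{\omega}$ by \Cref{lem:dpermlocalising} and lie in the image of $\Phi$. For fully faithfulness on generators, \Cref{lem:dpermlocalising} also gives a fully faithful inclusion $\cD\Permgr \hookrightarrow \cK(\Permgr)$, whence
$$\Map_{\cD\Permgr}(R(X),R(Y)[n]) \simeq \Map_{\cK(\Permgr)}(R(X),R(Y)[n])$$
is discrete, concentrated in degree $0$ with $\pi_0 = \Hom_{\permgr}(R(X),R(Y))$. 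On the other side, the $\bar{y}(R(X))$ lie in the standard t-heart of $\psigma(\permgr;\spec)$ (see \Cref{re:tstructurespsigmamodules}) and are projective there, so their mapping spectra are likewise discrete with $\pi_0 = \Hom_{\permgr}(R(X),R(Y))$; by construction $\Phi$ realises the tautological identification between them. The main subtlety is really bookkeeping: one has to align the symmetric monoidal structures through the universal property and check that mapping spectra truly vanish in nonzero degrees on both sides. Both facts are essentially formal, because $\psigma(\permgr;\spec)$ and $\cD\Permgr$ are built from the same symmetric monoidal additive category $\permgr$ by universal constructions, namely additive spectral presheaves on the one hand and idempotent-complete localisation of chain complexes on the other.
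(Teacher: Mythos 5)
Your proof is correct, but it takes a genuinely different route from the paper's. The paper imports the $1$-categorical tt-equivalence $\nD\Permgr \simeq \nD(\cmackgr)$ of Balmer--Gallauer (via the fixed-point functor $\tn{FP}$ and \cite[Corollary 5.7]{bg23}), observes that the induced exact functor $\cD\Permgr \hookrightarrow \cK(\Permgr) \to \cK(\cmackgr) \twoheadrightarrow \cD(\cmackgr)$ is an equivalence because it is one on homotopy categories of stable $\infty$-categories, and then upgrades lax to strict symmetric monoidality by restricting to compact parts and using the tensor equivalence $\permgr^{\natural} \simeq \proj(\cmackgr)$. You instead identify $\cD(\cmackgr) \simeq \psigma(\permgr;\spec)$ via \Cref{lem:comackasperm} and \Cref{prop:psigmamodules}, use the universal property of $\psigma(\permgr;\spec)$ from \Cref{rec:psigma} to produce the comparison functor $\Phi$, and verify it is an equivalence by a mapping-spectrum computation on the compact generators (using that $\cD\Permgr$ sits fully faithfully inside $\cK(\Permgr)$ as the localising subcategory of \Cref{lem:dpermlocalising}, so that $\map(R(X),R(Y)[n])$ is the Eilenberg--MacLane spectrum on $\Hom_{\permgr}(R(X),R(Y))$, matching the Yoneda computation on the presheaf side). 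Your argument thus avoids citing the derived $1$-categorical equivalence of \cite{bg23} altogether, needing only Yoshida's abelian-level statement plus the compact generation of $\cD\Permgr$; the trade-off is that symmetric monoidality of $\Phi$ rests on the monoidal universal property of $\psigma$ (checking that the extension of $\permgr \hookrightarrow \cD\Permgr$ inverts the $L$-equivalences, which holds because the inclusion is additive), whereas the paper gets it from an already established tensor equivalence of additive categories. Two small points of bookkeeping: essential surjectivity on compacts is really the statement $\cD\Permgr^{\omega} \simeq \tn{Thick}(\permgr)$ from the lemma following \Cref{lem:dpermlocalising} rather than \Cref{lem:dpermlocalising} itself; and dualisability of the generators $\underl{R}(X\times-)$, which you mention, is not needed for \Cref{prop:psigmamodules} (only for \Cref{thm:modulesandderivation}), so its invocation is harmless but superfluous.
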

\begin{proof}
We will rebuild the equivalence of tt-categories
$$\nD\Permgr \hookrightarrow \nK(\Permgr) \stackrel{\tn{FP}}{\longrightarrow} \nK(\cmackgr) \twoheadrightarrow \nD(\cmackgr)$$
from \cite[Corollary 5.7]{bg23} $\infty$-categorically. Here $\tn{FP}$ is the fixed point functor
\begin{align*}
\tn{FP}: \; &\modgr \to \funrlin(\permgr^{\op},\modr) \stackrel{\ref{lem:comackasperm}}{\simeq} \cmackgr, \\
& M \mapsto \Hom_{\modgr}(-,M)|_{\, \permgr}.
\end{align*}
The functor $\tn{FP}$ is lax symmetric monoidal, $R$-linear \cite[Lemma 5.5 (a)]{bg23} and by \cite[Theorem 16.5]{tw95},\cite[Proposition 5.6]{bg23} it restricts to equivalences of tensor categories $\Permgr^{\natural} \simeq \Proj(\cmackgr)$ and $\permgr^{\natural} \simeq \proj(\cmackgr)$, where $\Proj(-)$ stands for the subcategory of projective objects and $\proj(-)$ for the one on compact projective objects. The restriction of $\tn{FP}$ to $\Permgr$ induces a lax symmetric monoidal functor 
$$\cK(\Permgr) \to \cK(\Proj(\cmackgr)),$$
and there is a canonical symmetric monoidal functor 
$$\cK(\Proj(\cmackgr)) \twoheadrightarrow \cD(\cmackgr)$$ which inverts quasi-isomorphisms. The inclusion $\cD\Permgr \hookrightarrow \cK(\Permgr)$ by definition of the symmetric monoidal structure on $\cD\Permgr$ is symmetric monoidal. We obtain a lax symmetric monoidal composition
$$\cD\Permgr \hookrightarrow \cK(\Permgr) \to \cK(\cmackgr) \twoheadrightarrow \cD(\cmackgr)$$
which is an equivalence on the level of homotopy categories. So as the outer two $\infty$-categories are stable, the composition is an equivalence on the level of $\infty$-categories as well. On compact parts, it restricts to a symmetric monoidal equivalence
$$\cK_{\tn{b}}(\permgr^{\natural}) \simeq \cK_{\tn{b}}(\proj(\cmackgr)),$$
since it is induced by the tensor equivalence $\permgr^{\natural} \simeq \proj(\cmackgr)$ to which $\tn{FP}$ restricts. So as the symmetric monoidal structures commute with colimits in both variables, the above composition is symmetric monoidal as well.
\end{proof}

\begin{cor}\label{cor:eqhrmoddperm}
There is an equivalence of symmetric monoidal $\infty$-categories
$$\Mod_{\nH \underl{R}}(\specg) \simeq \cD\Permgr.$$
If $X$ is a pointed finite $G$-CW complex, then the free module $\nH \underl{R} \otimes \Sigma^{\infty} X$ corresponds to the cellular chain complex of $(G;R)$-permutation modules $C^{\tn{cell}}_{\bullet}(X;R)$.
\end{cor}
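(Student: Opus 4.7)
The plan is to obtain the first equivalence by simply composing the two equivalences that have already been established: Corollary \ref{cor:comackasspgmodules} provides a t-exact symmetric monoidal equivalence
$$\Mod_{\nH \underl{R}}(\specg) \simeq \cD(\cmackgr),$$
while Theorem \ref{thm:eqdperminf} provides a symmetric monoidal equivalence
$$\cD(\cmackgr) \simeq \cD\Permgr.$$
Composing these yields the desired equivalence. No additional work is required for the first assertion.

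For the identification of the free module $\nH \underl{R} \otimes \Sigma^{\infty} X$ with the cellular chain complex, I would trace it through both equivalences. By the remark following Theorem \ref{thm:greenmodulesasspgmodules}, the first equivalence sends $\nH \underl{R} \otimes \Sigma^{\infty} X$ to the cellular chain complex of cohomological Mackey functors $C^{\tn{cell}}_{\bullet}(X;\underl{R})$, whose term in degree $n$ is the Mackey functor $\pi_n^{(-)}(\Sigma^{\infty} X^n/X^{n-1})$. For a finite pointed $G$-CW complex, the $n$-skeleton quotient $X^n/X^{n-1}$ splits as a wedge $\bigvee_i G/H_i{}_+ \wedge S^n$ indexed by the $n$-cells, so this Mackey functor is $\bigoplus_i \underl{R}(G/H_i \times -)$.

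Under the Yoshida-type equivalence of Theorem \ref{thm:eqdperminf}, the cohomological Mackey functor $\underl{R}(T \times -)$ on a finite $G$-set $T$ corresponds to the permutation module $R(T)$, so each term $\bigoplus_i \underl{R}(G/H_i \times -)$ is sent to $\bigoplus_i R(G/H_i)$, which is precisely the $n$-th term $C^{\tn{cell}}_n(X;R)$ of the cellular chain complex of permutation modules. The differentials agree because both are determined by the connecting homomorphisms of the triples $(X^n, X^{n-1}, X^{n-2})$, and the functors involved preserve these boundary data.

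The only mild subtlety is the profinite case: a finite pointed $G$-CW complex has all cells of the form $G/H_+$ for open subgroups $H$, so $X$ is pulled back from a finite $G$-CW complex for some finite quotient $G_i = G/N_i$ with $N_i$ acting trivially. Since all the equivalences in Theorem \ref{thm:mackeyasspgmodules} and its consequences are compatible with the $\tn{colim}_i$-decomposition along the inflation maps, the identification reduces to the already-recorded case of finite groups. I do not anticipate a serious obstacle here, as everything is formal once the compatibility of the two composed equivalences with inflation (equivalently, with the filtered colimit of finite-group approximations) is noted.
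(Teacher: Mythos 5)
Your proposal is correct and matches the paper's intended argument: the corollary is stated without a separate proof precisely because it is the composition of Corollary \ref{cor:comackasspgmodules} with Theorem \ref{thm:eqdperminf}, combined with the remark after Theorem \ref{thm:greenmodulesasspgmodules} identifying free modules with cellular chain complexes of Mackey functors. Your tracing of the cells through the Yoshida equivalence and your reduction of the profinite case to a finite quotient are exactly the steps the paper leaves implicit.
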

\begin{proof}
Combine \Cref{cor:comackasspgmodules} and \Cref{thm:eqdperminf}.
\end{proof}

\section{Weight Structures}\label{sec:weightstructures}

We first recall the definition of a weight structures on a stable $\infty$-category and consider weight exact and weight conservative functors between $\infty$-categories with weight structures. We then define a weight structure on the compact part of the $\infty$-category of $\nH \underl{R}$-modules in equivariant spectra and identify its weight heart.

\begin{rec}
Let $\cC$ be a stable $\infty$-category. A \emph{weight structure} on $\cC$ is a pair of full subcategories $(\cC_{w \leq 0}, \cC_{w \geq 0})$ such that the following properties hold:
\begin{enumerate}
    \item[(1)] $\cC_{w \leq 0}$ and $\cC_{w \geq 0}$ are closed under retracts in $\cC$,
    \item[(2)] $\Hom_{\cC}(x,\Sigma^n y) \cong 0$ for all $x \in \cC_{w \leq 0}$, $y \in \cC_{w \geq 0}$ and $n \geq 1$,
    \item[(3)] for all $x \in \cC$ there exists a \emph{weight decomposition} cofibre sequence
    $$x_{w \leq 0} \to x \to x_{w \geq 1}$$
    with $x_{w \leq 0} \in \cC_{w \leq 0}$ and $\Sigma^{-1}x_{w \geq 1} \in \cC_{w \geq 0}$.
\end{enumerate}
The \emph{heart} of the weight structure $(\cC_{w \leq 0}, \cC_{w \geq 0})$ is the additive subcategory of $\cC$ given by the intersection
$$\cC^{w\heart} \defeq \cC_{w \leq 0} \cap \cC_{w \geq 0}.$$
For $n \in \bZ$ we write $\cC_{w \leq n} \defeq \Sigma^n \cC_{w \leq 0}$, $\cC_{w \geq n} \defeq \Sigma^n \cC_{w \geq} 0$ and $\cC_{w = n} \defeq \cC_{w \leq n} \cap \cC_{w \geq n}$. The weight structure is \emph{bounded above} if $\cC = \bigcup_{n \in \bZ} \cC_{w \leq n}$, it is \emph{bounded below} if $\cC = \bigcup_{n \in \bZ} \cC_{w \geq n}$, and it is \emph{bounded} if it is both bounded above and bounded below.
\end{rec}

\begin{rec}
An exact functor $F: \cC \to \cD$ between stable $\infty$-categories with weight structures is called \emph{left weight exact} if $F(\cC_{w \leq 0}) \subseteq \cD_{w \leq 0}$, \emph{right weight exact} if $F(\cC_{w \geq 0}) \subseteq \cD_{w \geq 0}$, and \emph{weight exact} if it is both left and right weight exact. It is called \emph{left weight conservative} if for $x \in \cC$, $F(x) \in \cD_{w \leq 0}$ implies $x \in \cC_{w \leq 0}$, \emph{right weight conservative} if for $x \in \cC$, $F(x) \in \cD_{w \geq 0}$ implies $x \in \, \cC_{w \geq 0}$, and \emph{weight conservative} if it is both left and right weight conservative.
\end{rec}

We will now show that weight conservativity of a weight exact functor can be phrased in terms of a condition on the weight hearts, if the weight structure on the source is bounded. To a large extent this follows ideas of Bachmann \cite[Section 5.2.1]{bac16}. We later apply it to the family of yet to be defined modular fixed point functors followed by restriction to the trivial subgroup, where the family is indexed on the $p$-subgroups of a finite group $G$.

\begin{lem}
Let $\cC$ be a stable $\infty$-category with a weight structure $(\cC_{w \leq 0}, \cC_{w \geq 0})$.
\begin{enumerate}
    \item[(1)] $\cC_{w \leq 0} = \{ x \in \cC \mid \Hom_{\cC}(x,y) = 0 \tn{ for all } y \in \cC_{w \geq 1} \}$.
    \item[(2)] $\cC_{w \geq 0} = \{ y \in \cC \mid \Hom_{\cC}(x,y) = 0 \tn{ for all } x \in \cC_{w \leq -1} \}$.
\end{enumerate}
\end{lem}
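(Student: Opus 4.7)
The plan is to prove each equality by two inclusions, where the forward inclusions follow immediately from the orthogonality axiom (2), and the reverse inclusions follow from the existence of weight decompositions (axiom (3)) together with closure under retracts (axiom (1)). I expect no real obstacle; the argument is the standard ``orthogonality characterises the weight classes'' fact.

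For (1), the inclusion $\cC_{w\leq 0} \subseteq \{x \mid \Hom_{\cC}(x,y)=0 \text{ for all } y \in \cC_{w \geq 1}\}$ is immediate: if $x \in \cC_{w \leq 0}$ and $y \in \cC_{w \geq 1}$, then $y \simeq \Sigma y'$ for some $y' \in \cC_{w \geq 0}$ (by definition of $\cC_{w \geq 1}$), and axiom (2) gives $\Hom_{\cC}(x,y) \cong \Hom_{\cC}(x, \Sigma y') = 0$. For the reverse inclusion, take $x \in \cC$ orthogonal to all objects of $\cC_{w \geq 1}$, and choose a weight decomposition
\[
x_{w \leq 0} \to x \xrightarrow{\,f\,} x_{w \geq 1}
\]
from axiom (3). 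Since $x_{w \geq 1} \in \cC_{w \geq 1}$, our hypothesis forces $f$ to be nullhomotopic. Rotating, the fibre sequence $x_{w \leq 0} \to x \to x_{w \geq 1}$ with $f \simeq 0$ splits: $x_{w \leq 0} \simeq x \oplus \Omega x_{w \geq 1}$, so $x$ is a retract of $x_{w \leq 0} \in \cC_{w \leq 0}$. Axiom (1) then yields $x \in \cC_{w \leq 0}$.

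Part (2) is dual. The forward inclusion again uses axiom (2) after writing $x \in \cC_{w \leq -1}$ as $\Sigma^{-1} x'$ with $x' \in \cC_{w \leq 0}$ (equivalently, $x' = \Sigma x \in \cC_{w \leq 0}$). For the reverse inclusion, given $y$ orthogonal to all of $\cC_{w \leq -1}$, shift a weight decomposition of $\Sigma^{-1}y$ (or equivalently take a decomposition of $y$ shifted by one) to obtain a cofibre sequence
\[
y_{w \leq -1} \xrightarrow{\,g\,} y \to y_{w \geq 0}
\]
with $y_{w \leq -1} \in \cC_{w \leq -1}$ and $y_{w \geq 0} \in \cC_{w \geq 0}$. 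The hypothesis on $y$ makes $g$ nullhomotopic, so the sequence splits and exhibits $y$ as a retract of $y_{w \geq 0}$, hence $y \in \cC_{w \geq 0}$ by axiom (1).

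The only potentially delicate point is justifying that a cofibre sequence with null connecting map splits in a stable $\infty$-category; this is a standard consequence of the equivalence between fibre and cofibre sequences, and fibres of null maps splitting off the target's loops.
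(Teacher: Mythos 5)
Your proposal is correct and takes essentially the same route as the paper: one inclusion from the orthogonality axiom, and for the other a weight decomposition whose structure map is forced to be null, so the sequence splits and exhibits $x$ (resp.\ $y$) as a retract of $x_{w\leq 0}$ (resp.\ $y_{w \geq 0}$), with closure under retracts finishing the argument; the paper only writes out (1) and declares (2) analogous, which you spell out. The one small indexing slip is in how you produce the shifted decomposition $y_{w\leq -1} \to y \to y_{w\geq 0}$: you should apply axiom (3) to $\Sigma y$ and then desuspend (decomposing $\Sigma^{-1}y$ and suspending would put the outer terms in $\cC_{w\leq 1}$ and $\cC_{w\geq 2}$), but this does not affect the validity of the rest of the argument.
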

\begin{proof}
We prove (1), assertion (2) is analogous. The inclusion of $\cC_{w \leq 0}$ into the right-hand side holds by property (2) defining a weight structure. Conversely, let $x \in \cC$ such that $\Hom_{\cC}(x,y)=0$ for all $y \in \cC_{w \geq 1}$. Pick a cofibre sequence $x_{w \leq 0} \to x \to x_{w \geq 1}$ with $x_{w \leq 0} \in \cC_{w \leq 0}$ and $\Sigma^{-1}x_{w \geq 1} \in \cC_{w \geq 0}$. Then the map $x \to x_{w \geq 1}$ is trivial, and hence $x_{w \leq 0} \simeq x \oplus \Sigma^{-1}x_{w \geq 1}$. So $x$ is a retract of an object in $\cC_{w \leq 0}$ and thus itself in $\cC_{w \leq 0}$.
\end{proof}

\begin{lem}\label{lem:weightcofibre}
Let $\cC$ be a stable $\infty$-category with a weight structure $(\cC_{w \leq 0}, \cC_{w \geq 0})$. 
\begin{enumerate}
    \item[(1)] If $f: x \to y$ is a morphism with $x, y \in \cC_{w \leq n}$ then $\tn{cofib}(f) \in \cC_{w \leq n+1}$.
    \item[(2)] If $x \to y \to z$ is a cofibre sequence with $x,z \in \cC_{w \leq n}$ then $y \in \cC_{w \leq n}$ as well.
    \item[(3)] A cofibre sequence $x \to y \to z$ with $x \in \cC_{w \geq n}$ and $z \in \cC_{w \leq n}$ splits.
\end{enumerate}
\end{lem}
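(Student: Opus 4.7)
The plan is to derive all three parts from the orthogonality characterisations of $\cC_{w \leq 0}$ and $\cC_{w \geq 0}$ proved in the preceding lemma, combined with the long exact sequences of $\Hom$-groups attached to the given (co)fibre sequences. Since $\cC_{w \leq n} = \Sigma^n \cC_{w \leq 0}$ and $\cC_{w \geq n} = \Sigma^n \cC_{w \geq 0}$, and each statement is invariant under a simultaneous shift of all weights, I would reduce to the case $n = 0$ throughout.

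For part (1), I would test $\tn{cofib}(f)$ against an arbitrary $z \in \cC_{w \geq 2}$; by part (1) of the preceding lemma (shifted by one), this detects membership in $\cC_{w \leq 1}$. Mapping the cofibre sequence $x \to y \to \tn{cofib}(f)$ into $z$ yields a fibre sequence of mapping spectra, whose associated three-term exact sequence on $\pi_0$ has the shape
$$\Hom_{\cC}(\Sigma x, z) \to \Hom_{\cC}(\tn{cofib}(f), z) \to \Hom_{\cC}(y, z).$$
The right-hand term vanishes since $y \in \cC_{w \leq 0}$ and $z \in \cC_{w \geq 2} \subseteq \cC_{w \geq 1}$, and the left-hand term equals $\Hom_\cC(x, \Sigma^{-1}z)$, which vanishes since $x \in \cC_{w \leq 0}$ and $\Sigma^{-1}z \in \cC_{w \geq 1}$. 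Part (2) proceeds by the same device: given $x \to y \to z$ with $x, z \in \cC_{w \leq 0}$, I would map into an arbitrary test object $w \in \cC_{w \geq 1}$ to obtain $\Hom_{\cC}(z, w) \to \Hom_{\cC}(y, w) \to \Hom_{\cC}(x, w)$ with both outer groups zero by orthogonality.

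For part (3), the observation is that a cofibre sequence $x \to y \to z$ in a stable $\infty$-category splits precisely when $\tn{id}_z$ lifts along $y \to z$ (a retraction of $x \to y$ is then produced automatically by stability). Applying $\Map_{\cC}(z, -)$ to the fibre sequence $x \to y \to z$ places the obstruction to this lift in $\Hom_{\cC}(z, \Sigma x)$, and this group vanishes by the orthogonality axiom since $z \in \cC_{w \leq n}$ and $\Sigma x \in \cC_{w \geq n+1}$.

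There is no genuinely hard step here; the whole lemma is essentially bookkeeping with the orthogonality axiom and with how $\Sigma$ acts on the two halves of the weight structure. The only mild point of care is recognising in (3) that splitting is equivalent to a one-sided lifting problem, so that only a single $\Hom$-vanishing is needed.
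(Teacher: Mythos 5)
Your proposal is correct and follows essentially the same route as the paper: all three parts come from the long exact sequence of $\Hom$-groups associated to the relevant (rotated) cofibre sequence, together with the orthogonality axiom, with (1) detected by mapping into $\cC_{w \geq n+2}$ via the preceding lemma and (3) reduced to the vanishing of $\Hom_{\cC}(z,\Sigma x)$. The only cosmetic difference is that you phrase (3) as a lifting problem for $\tn{id}_z$ while the paper reads off surjectivity of $\Hom_{\cC}(z,y) \to \Hom_{\cC}(z,z)$ directly, which is the same argument.
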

\begin{proof}
(1) We need to show that $\Hom_{\cC}(\tn{cofib}(f),t) = 0$ for all $t \in \cC_{w \geq n+2}$. We have an exact sequence 
$$\pi_1\Map_{\cC}(x,t) \to \pi_0\Map_{\cC}(\tn{cofib}(f),t) \to \pi_0\Map_{\cC}(y,t).$$
The right term vanishes by definition and $\pi_1\Map_{\cC}(x,t) \simeq \pi_0\Map_{\cC}(\Sigma x,t) = 0$. So $\Hom_{\cC}(\tn{cofib}(f),t) = \pi_0\Map_{\cC}(\tn{cofib}(f),t) = 0$ as well.\\
(2) For $t \in \cC_{w \geq n+1}$ we have an exact sequence with vanishing outer terms
$$\Hom_{\cC}(z,t) \to \Hom_{\cC}(y,t) \to \Hom_{\cC}(x,t).$$
(3) The cofibre sequence $y \to z \to \Sigma x$ gives an exact sequence
$$\Hom_{\cC}(z,y) \to \Hom_{\cC}(z,z) \to \Hom_{\cC}(z,\Sigma x).$$
As $\Hom_{\cC}(z,\Sigma x) = 0$, $\Hom_{\cC}(z,y) \to \Hom_{\cC}(z,z)$ is onto and $y \to z$ has a section.
\end{proof}

\begin{lem}\label{lem:retractionandweighti}
Let $\cC$ be a stable $\infty$-category with a weight structure $(\cC_{w \leq 0}, \cC_{w \geq 0})$. Then a morphism $f: x \to y$ with $x \in \cC_{w = n}$ and $y \in \cC_{w \leq n}$ admits a retraction if and only if $\tn{cofib}(f) \in \cC_{w \leq n}$.
\end{lem}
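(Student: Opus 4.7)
The plan is to prove both directions using the previously established Lemma \ref{lem:weightcofibre}, and in particular part (3) of it, which asserts that cofibre sequences $x \to y \to z$ with $x \in \cC_{w \geq n}$ and $z \in \cC_{w \leq n}$ split. The statement becomes almost immediate once one rotates the cofibre sequence $x \xrightarrow{f} y \to \tn{cofib}(f)$ and reads off the weight bounds on its terms.

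For the forward direction, suppose $f$ admits a retraction $r \colon y \to x$ with $r \circ f \simeq \tn{id}_x$. In a stable $\infty$-category the existence of such a retraction forces the cofibre sequence $x \to y \to \tn{cofib}(f)$ to split, giving an equivalence $y \simeq x \oplus \tn{cofib}(f)$. Hence $\tn{cofib}(f)$ is a retract of $y$, and since $\cC_{w \leq n}$ is closed under retracts by property (1) defining a weight structure, $\tn{cofib}(f) \in \cC_{w \leq n}$.

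For the reverse direction, suppose $\tn{cofib}(f) \in \cC_{w \leq n}$. Since $x \in \cC_{w = n} \subseteq \cC_{w \geq n}$, the cofibre sequence
\[
x \xrightarrow{f} y \to \tn{cofib}(f)
\]
has first term in $\cC_{w \geq n}$ and third term in $\cC_{w \leq n}$. By Lemma \ref{lem:weightcofibre}(3) it splits, so $y \simeq x \oplus \tn{cofib}(f)$ and the projection onto the first summand provides a retraction of $f$.

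The proof is essentially a two-line application of Lemma \ref{lem:weightcofibre}(3); no real obstacle arises. The only subtlety worth emphasising is that in a stable $\infty$-category splitting of a cofibre sequence $x \to y \to z$ is equivalent to either of the maps admitting a section or retraction, so once the cofibre sequence splits the retraction of $f$ is automatic.
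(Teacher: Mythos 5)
Your proof is correct and follows essentially the same route as the paper's: the forward direction uses that a retraction splits the cofibre sequence so that $\tn{cofib}(f)$ becomes a retract of $y \in \cC_{w \leq n}$, and the reverse direction is a direct application of \Cref{lem:weightcofibre}(3). No issues.
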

\begin{proof}
By \Cref{lem:weightcofibre}(1) we have a cofibre sequence 
$$x \to y \to \tn{cofib}(f)$$
for which $x \in \cC_{w = n}$, $y \in \cC_{w \leq n}$ and $\tn{cofib}(f) \in \cC_{w \leq n+1}$. If $f$ admits a retraction the sequence splits and $\tn{cofib}(f)$ is a retract of $y$. Hence $\tn{cofib}(f) \in \cC_{w \leq n}$. Conversely, if $\tn{cofib}(f) \in \cC_{w \leq n}$ then \Cref{lem:weightcofibre}(3) implies that the sequence splits, and so $f$ admits a retraction.
\end{proof}

\begin{lem}\label{lem:retractionandweightii}
Let $\cC$ be a stable $\infty$-category with a weight structure $(\cC_{w \leq 0}, \cC_{w \geq 0})$. For $x \in \cC_{w \leq 0}$ choose a weight decomposition $a \to x \to b$ with $a \in \cC_{w \leq -1}, b \in \cC_{w \geq 0}$ and another weight decomposition $c \to a \to d$ with $c \in \cC_{w \leq -2}$, $d \in \cC_{w \geq -1}$. Then $b, \Sigma d \in \cC^{w\heart}$ and the composition $b \to \Sigma a \to \Sigma d$ admits a retraction if and only if $x \in \cC_{w \leq -1}$.
\end{lem}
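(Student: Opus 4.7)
The plan is first to check that $b$ and $\Sigma d$ lie in the weight heart, and then to identify $\tn{cofib}(f)$ for $f : b \to \Sigma a \to \Sigma d$ via an octahedron, reducing the claim to \Cref{lem:retractionandweighti}.

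For $b \in \cC^{w\heart}$: the containment $b \in \cC_{w \geq 0}$ holds by assumption, and rotating the cofibre sequence $a \to x \to b$ gives a cofibre sequence $x \to b \to \Sigma a$ with $x, \Sigma a \in \cC_{w \leq 0}$ (using $a \in \cC_{w \leq -1}$), so \Cref{lem:weightcofibre}(2) forces $b \in \cC_{w \leq 0}$. Similarly, $\Sigma d \in \cC_{w \geq 0}$ follows from $d \in \cC_{w \geq -1}$, and rotating $c \to a \to d$ to a cofibre sequence $a \to d \to \Sigma c$ with $a, \Sigma c \in \cC_{w \leq -1}$ shows $d \in \cC_{w \leq -1}$, hence $\Sigma d \in \cC_{w \leq 0}$.

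For the main equivalence, I apply the octahedral axiom to the composition $c \to a \to x$. Writing $y \defeq \tn{cofib}(c \to x)$, this produces a cofibre sequence $d \to y \to b$ whose connecting map $b \to \Sigma d$ agrees with the composite $f = b \to \Sigma a \to \Sigma d$ appearing in the statement (i.e.\ the boundary of the first triangle followed by the suspension of $a \to d$). Rotating once more gives $\tn{cofib}(f) \simeq \Sigma y$. Since $b \in \cC_{w = 0}$ and $\Sigma d \in \cC_{w \leq 0}$, \Cref{lem:retractionandweighti} tells us that $f$ admits a retraction if and only if $\Sigma y \in \cC_{w \leq 0}$, equivalently $y \in \cC_{w \leq -1}$.

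It remains to translate this condition on $y$ into one on $x$ via the defining cofibre sequence $c \to x \to y$. If $y \in \cC_{w \leq -1}$, then since $c \in \cC_{w \leq -2} \subseteq \cC_{w \leq -1}$, \Cref{lem:weightcofibre}(2) gives $x \in \cC_{w \leq -1}$. Conversely, if $x \in \cC_{w \leq -1}$, rotating to $x \to y \to \Sigma c$ with both outer terms in $\cC_{w \leq -1}$ and applying \Cref{lem:weightcofibre}(2) once more yields $y \in \cC_{w \leq -1}$. The only subtle point in the whole argument is the identification of the connecting map produced by the octahedron with the stated composite, which is a standard feature of the octahedral axiom.
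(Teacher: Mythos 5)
Your proof is correct, but it takes a genuinely different route from the paper's. Your verification that $b, \Sigma d \in \cC^{w\heart}$ matches the paper, but for the main equivalence the paper argues entirely with $\Hom$-groups into the weight heart: from the two weight decompositions it extracts, for every $t \in \cC^{w\heart}$, an exact sequence $\Hom_{\cC}(\Sigma d,t) \to \Hom_{\cC}(b,t) \to \Hom_{\cC}(x,t) \to 0$ whose first map is induced by $b \to \Sigma a \to \Sigma d$; testing at $t=b$ produces the retraction when $x \in \cC_{w \leq -1}$, and conversely a retraction forces $\Hom_{\cC}(x,t)=0$, so $x \to b$ is zero, $a \simeq x \oplus \Sigma^{-1}b$, and $x$ is a retract of an object of $\cC_{w \leq -1}$. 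You instead apply the octahedral axiom to $c \to a \to x$, identify $\tn{cofib}(b \to \Sigma a \to \Sigma d) \simeq \Sigma\,\tn{cofib}(c \to x)$, reduce the existence of a retraction to a weight bound on this cofibre via \Cref{lem:retractionandweighti}, and shuttle that bound between $\tn{cofib}(c \to x)$ and $x$ using \Cref{lem:weightcofibre}(2). The identification of the connecting map with the stated composite is exactly what TR4 for the triangulated homotopy category of $\cC$ provides, so your flagged subtlety is harmless; the only other implicit input is the inclusion $\cC_{w \leq -2} \subseteq \cC_{w \leq -1}$, which follows from axiom (2) (stated for all $n \geq 1$) together with the characterisation of $\cC_{w \leq 0}$ in the first lemma of the section. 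What your approach buys is symmetry between the two implications and reuse of \Cref{lem:retractionandweighti}, which the paper's proof of this lemma does not invoke; what the paper's approach buys is that it avoids the octahedral axiom altogether and stays at the level of elementary $\Hom$-group computations and retract-closure.
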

\begin{proof}
Rotating the first cofibre sequence gives a cofibre sequence $x \to b \to \Sigma a$ with $x,\Sigma a \in \cC_{w \leq 0}$, so by \Cref{lem:weightcofibre}(2) we have $b \in \cC^{w \heart}$. By the same argument applied to the second cofibre sequence we have $\Sigma d \in \cC^{w \heart}$. For $t \in \cC^{w \heart}$ the first cofibre sequence gives an exact sequence
$$\Hom_{\cC}(\Sigma a,t) \to \Hom_{\cC}(b,t) \to \Hom_{\cC}(x,t) \to \Hom_{\cC}(a,t)$$
with $\Hom_{\cC}(a,t)=0$. The second cofibre sequence gives an exact sequence
$$\Hom_{\cC}(\Sigma d,t) \to \Hom_{\cC}(\Sigma a,t) \to \Hom_{\cC}(\Sigma c,t),$$
and as $\Hom_{\cC}(\Sigma c,t)=0$, $\Hom_{\cC}(\Sigma d,t) \to \Hom_{\cC}(\Sigma a,t)$ is surjective. It follows that 
$$\Hom_{\cC}(\Sigma d,t) \to \Hom_{\cC}(b,t) \to \Hom_{\cC}(x,t) \to 0$$
is exact, where the first map is induced by the composition $b \to \Sigma a \to \Sigma d$. \\
Now if $x \in \cC_{w \leq -1}$ then $\Hom_{\cC}(x,t) = 0$ and $\Hom_{\cC}(\Sigma d,t) \to \Hom_{\cC}(b,t)$ is surjective, so choosing $t = b$ shows that $b \to \Sigma a \to \Sigma d$ admits a retraction. \\
Conversely, if $b \to \Sigma a \to \Sigma d$ admits a retraction then $\Hom_{\cC}(\Sigma d,t) \to \Hom_{\cC}(b,t)$ is surjective and so $\Hom_{\cC}(x,t) = 0$. Hence the map $x \to b$ is zero and $a \simeq x \oplus \Sigma^{-1} b$, so $x$ is a retract of an object in $\cC_{w \leq -1}$ and hence in $\cC_{w \leq -1}$ as well.
\end{proof}

\begin{prop}[{\cite[Corollary 5.15]{bac16}}]\label{prop:weightconservativecharac}
Let $F: \cC \to \cD$ be a weight exact functor between stable $\infty$-categories equipped with weight structures such that the weight structure on $\cC$ is bounded.\footnote{Bounded above would be enough for the first and bounded below for the second assertion.} Then $F$ is left weight conservative if and only if $F^{w\heart}: \cC^{w\heart} \to \cD^{w\heart}$ detects retractions, i.e. if $f: x \to y$ is a morphism in $\cC^{w\heart}$ such that $F(f): F(x) \to F(y)$ admits a retraction, then $f$ admits a retraction as well. Dually, $F$ is right weight conservative if and only if $F^{w\heart}$ detects sections.
\end{prop}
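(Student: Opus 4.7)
The plan is to leverage \Cref{lem:retractionandweighti} and \Cref{lem:retractionandweightii}, which already characterise weight-theoretic conditions in terms of existence of retractions, and to handle the two implications separately.

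For the forward direction, suppose $F$ is left weight conservative and let $f: x \to y$ be a morphism in $\cC^{w\heart}$ such that $F(f)$ admits a retraction. Since $F$ is weight exact, $F(x), F(y) \in \cD^{w\heart}$ and $F(\tn{cofib}(f)) \simeq \tn{cofib}(F(f))$. Applying one direction of \Cref{lem:retractionandweighti} in $\cD$ gives $F(\tn{cofib}(f)) \in \cD_{w \leq 0}$; left weight conservativity then yields $\tn{cofib}(f) \in \cC_{w \leq 0}$, and the converse direction of \Cref{lem:retractionandweighti} applied in $\cC$ produces the desired retraction for $f$.

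For the reverse direction, I assume $F^{w\heart}$ detects retractions and take $x \in \cC$ with $F(x) \in \cD_{w \leq 0}$, aiming to show $x \in \cC_{w \leq 0}$. Boundedness of the weight structure on $\cC$ gives $x \in \cC_{w \leq n}$ for some $n \in \bZ$, so I proceed by downward induction on $n$, the base case $n \leq 0$ being tautological. For the step, given $n \geq 1$ I choose weight decompositions $a \to x \to b$ with $a \in \cC_{w \leq n-1}$ and $b \in \cC_{w = n}$ (the upper bound $b \in \cC_{w \leq n}$ coming from \Cref{lem:weightcofibre}(2) applied to $x \to b \to \Sigma a$), and $c \to a \to d$ with $c \in \cC_{w \leq n-2}$, $d \in \cC_{w = n-1}$. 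By weight exactness, applying $F$ produces weight decompositions of $F(x)$ at the same levels. Since $F(x) \in \cD_{w \leq 0} \subseteq \cD_{w \leq n-1}$, the $n$-shifted version of \Cref{lem:retractionandweightii} applied in $\cD$ implies that the composite $F(b) \to \Sigma F(a) \to \Sigma F(d)$ admits a retraction. Desuspending by $n$ and invoking the hypothesis that $F^{w\heart}$ detects retractions, the map $\Sigma^{-n} b \to \Sigma^{-n+1} d$ in $\cC^{w\heart}$ admits a retraction, whence so does $b \to \Sigma d$ in $\cC$. The converse direction of \Cref{lem:retractionandweightii} in $\cC$ then yields $x \in \cC_{w \leq n-1}$, completing the induction. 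The dual statement for right weight conservativity and detection of sections follows by running the same argument after passing to opposite $\infty$-categories with reversed weight structures.

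The main thing to keep track of is the bookkeeping of shifts in the inductive step and the verification that applying a weight exact functor to a weight decomposition again yields a weight decomposition; both are immediate from the containments $F(\cC_{w \leq k}) \subseteq \cD_{w \leq k}$ and $F(\cC_{w \geq k}) \subseteq \cD_{w \geq k}$, so I do not anticipate any substantive obstacle beyond this.
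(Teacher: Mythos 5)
Your proof is correct and takes essentially the same route as the paper's: the forward direction is the identical two-way application of \Cref{lem:retractionandweighti}, and your explicit downward induction in the reverse direction is exactly the reduction the paper performs (implicitly) before invoking \Cref{lem:retractionandweightii}, the only cosmetic difference being that you shift the lemma to weight $n$ rather than normalising $x$ to weight $0$.
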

\begin{proof}
We prove the statement about left weight conservativity, the statement about right weight conservativity is dual. Assume that $F$ is left weight conservative and let $f: x \to y$ be a morphism in $\cC^{w\heart}$. We get a cofibre sequence 
$$x \to y \to \tn{cofib}(f)$$
with $x,y \in \cC_{w = 0}$ and $\tn{cofib}(f) \in \cC_{w \leq 1}$. Applying $F$ gives a cofibre sequence 
$$F(x) \to F(y) \to \tn{cofib}(F(f))$$
with $F(x),F(y) \in \cD_{w = 0}$ and $\tn{cofib}(F(f)) \in \cD_{w \leq 1}$. If $F(f)$ admits a retraction, then $F(\tn{cofib}(f)) \simeq \tn{cofib}(F(f)) \in \cD_{w \leq 0}$ by \Cref{lem:retractionandweighti}, so $\tn{cofib}(f) \in \cC_{w \leq 0}$ and hence again by \Cref{lem:retractionandweighti} the map $f$ admits a retraction. \\
Conversely, assume that $F^{w\heart}$ detects retractions and let $x \in \cC$ with $F(x) \in \cD_{w \leq 0}$. Since the weight structure on $\cC$ is bounded, $x \in \cC_{w \leq n}$ for some $n \geq 1$. Hence it suffices to show: if $x \in \cC_{w \leq 0}$ and $F(x) \in \cD_{w \leq -1}$, then $x \in \cC_{w \leq -1}$. This now follows from \Cref{lem:retractionandweightii}.
\end{proof}

\begin{rec}
Let $\cA$ be an additive idempotent complete $1$-category. Then the $\infty$-category of bounded chain complexes up to chain homotopy equivalence defined as $\cK_{\tn{b}}(\cA) = \nN_{\tn{dg}}(\ch_{\tn{b}}(\cA))$ admits a canonical bounded weight structure for which $\cK_{\tn{b}}(\cA)_{w \geq 0}$ resp. $\cK_{\tn{b}}(\cA)_{w \leq 0}$ are the essential images of $\nN(\ch_{\tn{b},\geq 0}(\cA))$ resp. $\nN(\ch_{\tn{b},\leq 0}(\cA))$ under the functor $\nN(\ch_{\tn{b}}(\cA)) \twoheadrightarrow \cK_{\tn{b}}(\cA)$ which inverts chain homotopy equivalences. The canonical fully faithful functor $\cA \hookrightarrow \cK_{\tn{b}}(\cA)$ factors through the weight heart and exhibits an equivalence $\cK_{\tn{b}}(\cA)^{w\heart} \simeq \cA$ for which we refer to \cite[Section 1.3]{sos19} and \cite[Example 3.5]{aok20}.
\end{rec}

\begin{re}
The equivalence $\Mod_{\nH \underl{R}}(\specg) \simeq \cD\Permgr$ of \Cref{cor:eqhrmoddperm} restricts to an equivalence $\Mod_{\nH \underl{R}}^{\omega}(\specg) \simeq \cD\Permgr^{\omega} \simeq \cK_{\tn{b}}(\permgr^{\natural})$ of compact parts. Since $\cK_{\tn{b}}(\permgr^{\natural})$ has a canonical bounded weight structure it endows $\Mod_{\nH \underl{R}}^{\omega}(\specg)$ with a bounded weight structure. We will put ourselves in the situation where we don't know about this equivalence and independently construct a bounded weight structure on $\Mod_{\nH \underl{R}}^{\omega}(\specg)$. We will see that from this the equivalence $\Mod_{\nH \underl{R}}^{\omega}(\specg) \simeq \cK_{\tn{b}}(\permgr^{\natural})$ already follows. In particular, it is not necessary to go through cohomological Mackey functors. \\
In addition, the presence of this weight structure will provide the correct framework to translate certain properties which lie in the realm of permutation modules to the world of genuine equivariance (see \Cref{prop:conservativity}), a circumstance which justifies independent interest in constructing such a weight structure.
\end{re}

\begin{prop}\label{prop:weightonhrmod}
Let $\cH \subseteq \Mod_{\nH \underl{R}}^{\omega}(\specg)$ be the full subcategory on finite direct sums of the $\underl{R}$-linear orbits $\nH \underl{R} \otimes \Sigma^{\infty}G/H_+$ for $H \leq G$. Then there exists a bounded weight structure on $\Mod_{\nH \underl{R}}^{\omega}(\specg)$ with weight heart $\cH^{\natural}$. 
\end{prop}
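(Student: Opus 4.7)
The plan is to invoke the higher-categorical variant of Bondarko's construction, developed by Sosnilo (and used elsewhere in this paper): if $\cC$ is a small stable idempotent-complete $\infty$-category and $\cN \subseteq \cC$ is a full additive subcategory containing $0$ and closed under finite direct sums, satisfying (i) $\Hom_{\cC}(x,\Sigma^n y) = 0$ for all $x,y \in \cN$ and $n \geq 1$ (\emph{negativity}) and (ii) $\cN$ generates $\cC$ as a thick subcategory, then there is a (unique) bounded weight structure on $\cC$ whose heart is $\cN^{\natural}$. I would apply this with $\cN = \cH$ and $\cC = \Mod_{\nH \underl{R}}^{\omega}(\specg)$.

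For thick generation, I would invoke \Cref{rec:modules}: because $\specg$ is rigidly compactly generated by the orbits $\Sigma^{\infty} G/H_+$ for $H \leq G$, the module category $\Mod_{\nH \underl{R}}(\specg)$ is compactly generated by their free images $\nH \underl{R} \otimes \Sigma^{\infty} G/H_+$, so the compact part $\Mod_{\nH \underl{R}}^{\omega}(\specg)$ is the thick closure of $\cH$. For negativity, fixing $H, K \leq G$ and $n \geq 1$, the free-forgetful adjunction together with the rigidity of $\Sigma^{\infty} G/H_+$ yields
\[
\Hom\bigl(\nH \underl{R} \otimes \Sigma^{\infty} G/H_+,\ \Sigma^n(\nH \underl{R} \otimes \Sigma^{\infty} G/K_+)\bigr) \cong \pi_{-n}^H\bigl(\nH \underl{R} \otimes \Sigma^{\infty} G/K_+\bigr).
\]
Since the standard t-structure on $\specg$ is compatible with the symmetric monoidal structure, the tensor product of the heart object $\nH \underl{R}$ with the connective object $\Sigma^{\infty} G/K_+$ is connective, so the right-hand side vanishes for every $n \geq 1$. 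Negativity for arbitrary finite direct sums then follows additively.

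I do not expect a serious obstacle: both hypotheses reduce to standard facts about the module category $\Mod_{\nH \underl{R}}(\specg)$ and connectivity of the standard t-structure on $\specg$, while Sosnilo's theorem then delivers the boundedness of the resulting weight structure and the identification of its heart with $\cH^{\natural}$ for free. If anything requires care, it is merely checking that the cited form of Sosnilo's theorem indeed produces a bounded weight structure (rather than just a half-bounded one) — which is automatic because $\cH$ generates the whole compact part as a thick subcategory and hence every object lies in some $\cC_{w \geq -N} \cap \cC_{w \leq N}$.
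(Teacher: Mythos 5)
Your proposal is correct and follows essentially the same route as the paper: both verify that $\cH$ (hence $\cH^{\natural}$) generates the compact part as a thick subcategory and is negative, and then invoke the Bondarko--Sosnilo existence theorem to produce the bounded weight structure with heart $\cH^{\natural}$. The only cosmetic difference is in the negativity check, where the paper dualises the orbit and uses that $\nH\underline{R}$ lies in the t-heart, while you use connectivity of $\nH\underline{R}\otimes\Sigma^{\infty}G/K_+$ via compatibility of the t-structure with the tensor product; both are valid.
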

\begin{proof}
First note that the embedding $\cH \hookrightarrow \Mod_{\nH \underl{R}}^{\omega}(\specg)$ extends to an embedding $\cH^{\natural} \hookrightarrow \Mod_{\nH \underl{R}}^{\omega}(\specg)$. Indeed, since $\Mod_{\nH \underl{R}}(\specg)$ is presentable it has all colimits and we have an embedding $\cH^{\natural} \hookrightarrow \Mod_{\nH \underl{R}}(\specg)$. As the objects of $\cH$ are compact their direct summands are, and the embedding of $\cH^{\natural}$ lands in the compact part. \\ 
The statement follows from \cite[Theorem 4.3.2 II]{bon10} once we have shown that $\cH^{\natural}$ generates $\tn{h}\Mod_{\nH \underl{R}}^{\omega}(\specg)$ as a thick subcategory and that it is negative, i.e. the mapping spectrum $\map_{\cC}(a,b)$ is connective for all $a,b \in \cH^{\natural}$. \\
Since the orbits $\nH \underl{R} \otimes \Sigma^{\infty}G/H_+$ form a set of compact generators for $\Mod_{\nH \underl{R}}(\specg)$ and compact generation of stable $\infty$-categories can be studied in terms of the homotopy category, the same is true for $\tn{h}\Mod_{\nH \underl{R}}(\specg)$. So $\tn{h}\cH$ generates $\tn{h}\Mod_{\nH \underl{R}}^{\omega}(\specg)$ as a thick subcategory. We show that $\cH$ is negative. For this, note that orbits are self-dual. This is well known for finite groups, and in the profinite case an orbit corresponding to an open subgroup of $G$ is inflated from some $\spec^{G_i}$, where it is self-dual. So for $n \geq 1$ we have
\begin{align*}
& \, \Hom_{\Mod_{\nH \underl{R}}(\specg)}(\nH \underl{R} \otimes \Sigma^{\infty}G/H_+, \bS^n \otimes \nH \underl{R} \otimes \Sigma^{\infty}G/K_+) \\
&\cong \Hom_{\specg}(\bS^{-n} \otimes \Sigma^{\infty}G/H_+, \nH \underl{R} \otimes \Sigma^{\infty}G/K_+) \\
&\cong \Hom_{\specg}(\bS^{-n} \otimes \Sigma^{\infty}(G/H \times G/K)_+, \nH \underl{R}) = 0
\end{align*}
by definition of $\nH \underl{R}$. This also implies that $\cH^{\natural}$ is negative, since the idempotent completion adds directs summands of linearised orbits $\nH \underl{R} \otimes \Sigma^{\infty}G/H_+$ to $\cH$.
\end{proof}

\begin{lem}\label{lem:heartperm}
There is a symmetric monoidal equivalence $\cH^{\natural} \simeq \permgr^{\natural}$. In particular, the subcategory $\cH^{\natural} \subseteq \Mod_{\nH \underl{R}}^{\omega}(\specg)$ is (the nerve of) a $1$-category.
\end{lem}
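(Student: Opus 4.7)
The plan is to use the symmetric monoidal equivalence $\Mod_{\nH \underl{R}}(\specg) \simeq \psigma(\permgr;\spec)$ of Corollary \ref{cor:modhunderlraspresheaves} together with the stable Yoneda embedding $\bar{y}\colon \permgr \hookrightarrow \psigma(\permgr;\spec) \simeq \Mod_{\nH \underl{R}}(\specg)$ from Recollection \ref{rec:psigma}. This embedding is fully faithful and symmetric monoidal, and the aim is to identify its essential image (after finite direct sums) with $\cH$.

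The first step is to verify that under the above equivalence the free module $\nH \underl{R} \otimes \Sigma^{\infty} G/H_+$ corresponds to the stable Yoneda image $\bar{y}(R(G/H))$. I would establish this by tracing through the intermediate equivalence with $\cD(\cmackgr)$ provided by Corollary \ref{cor:comackasspgmodules}: by the cellular chain description in the remark following Theorem \ref{thm:greenmodulesasspgmodules}, applied to the pointed $G$-CW complex $G/H_+$ (which has a single equivariant $0$-cell), the free module maps to the cohomological Mackey functor $\underl{R}(G/H \times -)$ concentrated in degree zero. Under the Yoshida equivalence of Lemma \ref{lem:comackasperm} this is precisely the representable presheaf corresponding to $R(G/H)$, matching $\bar{y}(R(G/H))$ on the presheaf side.

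Since $\permgr$ is an additive $1$-category, the stable Yoneda image lands in the heart of the natural $t$-structure on $\psigma(\permgr;\spec)$, so the mapping spectra are discrete: $\map(\bar{y}(M), \bar{y}(N)) \simeq \bar{y}(N)(M) \simeq \nH \Hom_{\permgr}(M,N)$, yielding $\Map(\bar{y}(M), \bar{y}(N)) \simeq \Hom_{\permgr}(M,N)$. Every finite $G$-set decomposes as a finite disjoint union of orbits, and $\bar{y}$ preserves finite direct sums, so the essential image of $\bar{y}|_{\permgr}$ inside $\Mod_{\nH \underl{R}}(\specg)$ coincides with $\cH$. This gives a symmetric monoidal equivalence $\permgr \simeq \cH$ of ordinary $1$-categories, and passing to idempotent completions (a symmetric monoidal construction) yields the desired $\permgr^{\natural} \simeq \cH^{\natural}$.

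The main step is the identification in the second paragraph; once established, the rest is formal. An alternative route would be to work directly in $\cD\Permgr$ via Corollary \ref{cor:eqhrmoddperm}, invoking projectivity of the representable cohomological Mackey functors $\underl{R}(G/H \times -)$ in $\cmackgr$ to force the mapping spectra between objects of $\cH$ to concentrate in $\pi_0$, combined with the connectivity computation already carried out in the proof of Proposition \ref{prop:weightonhrmod}. The spectral presheaf description is however more direct, since both full faithfulness and symmetric monoidality of $\bar{y}$ are manifest.
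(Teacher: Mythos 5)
Your argument is correct, but it takes a genuinely different route from the paper. The paper's proof of \Cref{lem:heartperm} never invokes the presheaf model: it builds a symmetric monoidal, finite-coproduct-preserving functor $\tsf{Burn}^{\tn{eff}}(G) \to \Mod_{\nH \underl{R}}(\specg)$ by composing the spectral Mackey functor description of $\specg$ with the free functor, computes $\pi_*\Map(\nH \underl{R} \otimes \Sigma^{\infty}G/K_+, \nH \underl{R} \otimes \Sigma^{\infty}G/H_+)$ directly from the definition of $\nH \underl{R}$ (getting $R(K\backslash G/H)$ in degree $0$ and nothing else), observes that the resulting functor $\Omega_R(G) \to \cH$ kills the cohomological ideal $\cI_R(G)$ because the span $(G/H \leftarrow G/K \rightarrow G/H)$ goes to multiplication by $[H:K]$, and then checks full faithfulness against the known hom-sets of $\permgr$. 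This is deliberate: the remark preceding \Cref{prop:weightonhrmod} announces that the weight-structure route is meant to be independent of the cohomological Mackey functor equivalences, so that \Cref{thm:eqhrmoddpermweight} reproves $\Mod_{\nH \underl{R}}^{\omega}(\specg) \simeq \cK_{\tn{b}}(\permgr^{\natural})$ without Yoshida. Your proof instead routes everything through \Cref{cor:modhunderlraspresheaves} (hence through \Cref{thm:greenmodulesasspgmodules}, \Cref{lem:comackasperm} and the cellular-chain remark) and identifies $\cH$ with the image of the stable Yoneda embedding $\bar{y}\colon \permgr \hookrightarrow \psigma(\permgr;\spec)$. There is no circularity — none of those inputs depend on \Cref{lem:heartperm} or on the weight structure — and the individual steps (free module $\mapsto \underl{R}(G/H\times -)[0] \mapsto$ representable presheaf, discreteness of mapping spectra via the spectral Yoneda lemma, matching of essential images after closing under finite direct sums) are all sound. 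What your route buys is brevity and the fact that full faithfulness and symmetric monoidality of $\bar{y}$ come for free; what it costs is exactly the independence the paper wanted, since \Cref{thm:eqhrmoddpermweight} would then rest on the Yoshida-based equivalences after all.

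Two small points to patch. First, the cellular-chain identification you cite (the remark after \Cref{thm:greenmodulesasspgmodules}, going back to \Cref{re:cellular}) is stated only for finite $G$, whereas the lemma is used with $G$ profinite; you should either pass to the colimit $\Mod_{\nH \underl{R}}(\specg) \simeq \tn{colim}_i \Mod_{\nH \underl{R}}(\spec^{G_i})$ along inflations, or argue intrinsically that $\nH \underl{R} \otimes \Sigma^{\infty}G/H_+ \simeq \tn{ind}^G_H(\nH\underl{R})$ lies in the t-heart with $\pi_0 \cong \underl{R}(G/H \times -)$ and use t-exactness of the equivalence of \Cref{cor:modhunderlraspresheaves}. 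Second, to state the conclusion as a symmetric monoidal equivalence you should note explicitly (as the paper does) that $\cH$, hence $\cH^{\natural}$, is closed under the tensor product and contains the unit; in your setup this follows because it is the image of the symmetric monoidal subcategory $\permgr$ under the symmetric monoidal fully faithful $\bar{y}$, but the sentence is worth writing.
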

\begin{proof}
Since compact and dualisable objects in $\Mod_{\nH \underl{R}}(\specg)$ agree, its compact part inherits a symmetric monoidal structure. We first show that $\cH^{\natural}$ inherits a symmetric monoidal structure from $\Mod_{\nH \underl{R}}^{\omega}(\specg)$. It contains the unit $\nH \underl{R}$, and
\begin{align*}
(\nH \underl{R} \otimes \Sigma^{\infty}G/H_+) \otimes_{\nH \underl{R}} (\nH \underl{R} \otimes \Sigma^{\infty}G/K_+) &\simeq \nH \underl{R} \otimes \Sigma^{\infty}(G/H \times G/K)_+ \\
&\simeq \bigoplus_{[g] \in K \backslash G/H} \nH \underl{R} \otimes \Sigma^{\infty}(G/K^g \cap H)_+,
\end{align*}
hence $\cH$ is closed under tensor products, and the same follows for $\cH^{\natural}$. We show that $\permgr \simeq \cH$, then the statement follows by passing to idempotent completions. There is a symmetric monoidal and finite coproduct preserving composition
\[\begin{tikzcd}
	\Spans(G) & \fun^{\times}(\Spans(G)^{\op},\spc) & \specg & {\Mod_{\nH \underl{R}}(\specg),}
	\arrow[hook, from=1-1, to=1-2]
    \arrow[from=1-2, to=1-3]
	\arrow["{F_{\nH \underl{R}}}", from=1-3, to=1-4]
\end{tikzcd}\]
where the second functor is the map 
\begin{align*}
\fun^{\times}(\Spans(G)^{\op},\spc) &\simeq \fun^{\times}(\Spans(G)^{\op},\tn{Mon}_{\tn{Comm}}(\spc)) \\
&\to \fun^{\times}(\Spans(G)^{\op},\spec) \simeq \specg,
\end{align*}
using \cite[Corollary 2.5(iii)]{ggn15}, the finite product preserving group completion functor $\tn{Mon}_{\tn{Comm}}(\spc) \to \tn{Mon}_{\tn{Comm}}^{\tn{gp}}(\spc)$ constructed in \cite[Corollary 4.4]{ggn15} and the inclusion $\tn{Mon}_{\tn{Comm}}^{\tn{gp}}(\spc) \simeq \spec_{t \geq 0} \hookrightarrow \spec$ that uses \cite[Remark 5.2.6.26]{lur17}, see also \cite[Section A.2]{nar17}. Here $\Spans(G)$ is the effective Burnside $\infty$-category of finite $G$-sets \cite[Section 3]{bar17}. Using the fact that every finite $G$-set splits into a finite disjoint sum of orbits the essential image is precisely $\cH$. Let us consider mapping spaces, for which it suffices to consider single orbits. We have that
\begin{align*}
& \, \pi_n\Map_{\Mod_{\nH \underl{R}}(\specg)}(\nH \underl{R} \otimes \Sigma^{\infty}G/K_+, \nH \underl{R} \otimes \Sigma^{\infty}G/H_+) \\
&\cong \Hom_{\specg}(\bS^{n} \otimes \Sigma^{\infty}(G/K \times G/H)_+, \nH \underl{R}) \\
&\cong \begin{cases}
			\underl{R}(G/K \times G/H), & n = 0, \\
            0, & n \neq 0,
		 \end{cases}
\;\; \cong \begin{cases}
			R(K \backslash G/H), & n = 0, \\
            0, & n \neq 0.
		 \end{cases}
\end{align*}
Hence $\cH$ is a $1$-category, and since $\Mod_{\nH \underl{R}}^{\omega}(\specg)$ is $\nH R$-linear, $\cH$ is $R$-linear. Thus the above composition factors through the $R$-linearisation of the homotopy category of $\Spans(G)$, and we obtain a symmetric monoidal functor $\Omega_R(G) \to \cH$. Tracing through the isomorphisms, the functor $\Omega_R(G) \to \cH$ for $K,H \leq G$, $L \leq K^g \cap H$ and $[g] \in K\backslash G /H$ maps a span 
$$G/K \stackrel{{}^g\pi^K_{L}}{\longleftarrow} G/L \stackrel{\pi^H_{L}}{\longrightarrow} G/H$$
to the element $[K^g \cap H : L] \cdot [g] \in R(K\backslash G /H)$.
In particular, if $K \leq H \leq G$, then the span $(G/H \leftarrow G/K \rightarrow G/H)$ consisting of two canonical projections induces multiplication by the index $[H:K]$. Hence the functor $\Omega_R(G) \to \cH$ kills the cohomological ideal $\cI_R(G)$ and descends to a symmetric monoidal functor $\permgr \to \cH$. It follows from the description of the hom-sets in $\permgr$ \cite[Corollary 2.13, Proposition 4.17]{bg23} that $\permgr \to \cH$ is fully faithful.
\end{proof}

Having established the weight structure and identified its weight heart, we can reprove the equivalence $\Mod_{\nH \underl{R}}^{\omega}(\specg) \simeq \cK_{\tn{b}}(\permgr^{\natural})$. This is independent of the considerations around \Cref{cor:eqhrmoddperm}.

\begin{thm}\label{thm:eqhrmoddpermweight}
There is a symmetric monoidal weight exact equivalence 
$$\Mod_{\nH \underl{R}}^{\omega}(\specg) \simeq \cK_{\tn{b}}(\permgr^{\natural}).$$
\end{thm}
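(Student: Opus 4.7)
The plan is to invoke Sosnilo's strictification theorem \cite{sos19}, which in the form we need states that for a stable $\infty$-category $\cC$ equipped with a bounded weight structure whose weight heart is (equivalent to) an ordinary additive $1$-category, the inclusion $\cC^{w\heart} \hookrightarrow \cC$ extends to a weight exact equivalence $\cK_{\tn{b}}(\cC^{w\heart}) \stackrel{\sim}{\to} \cC$. Essentially all the hard work has been done already: \Cref{prop:weightonhrmod} furnishes the bounded weight structure on $\Mod_{\nH \underl{R}}^{\omega}(\specg)$ with heart $\cH^{\natural}$, and \Cref{lem:heartperm} identifies $\cH^{\natural}$ symmetric monoidally with $\permgr^{\natural}$ and in particular shows that this heart is a $1$-category.

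Concretely, I would proceed as follows. Composing the symmetric monoidal equivalence $\permgr^{\natural} \simeq \cH^{\natural}$ of \Cref{lem:heartperm} with the inclusion of the weight heart yields a symmetric monoidal additive functor $\permgr^{\natural} \hookrightarrow \Mod_{\nH \underl{R}}^{\omega}(\specg)$. By functoriality of $\cK_{\tn{b}}(-)$ applied to symmetric monoidal additive $\infty$-categories, together with the fact that the target is stable with a bounded weight structure extending the one on the heart, this extends to a symmetric monoidal weight exact functor
$$\Phi: \cK_{\tn{b}}(\permgr^{\natural}) \to \Mod_{\nH \underl{R}}^{\omega}(\specg).$$
By construction $\Phi$ restricts to the equivalence $\permgr^{\natural} \simeq \cH^{\natural}$ on weight hearts. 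Sosnilo's theorem then guarantees that $\Phi$ is an equivalence, since both weight structures are bounded and their hearts agree via a $1$-categorical equivalence.

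For the symmetric monoidal refinement, one can either appeal to the symmetric monoidal enhancement of Sosnilo's theorem (e.g.\ in the form used in \cite{aok20}), or argue directly: both $\cK_{\tn{b}}(\permgr^{\natural})$ and $\Mod_{\nH \underl{R}}^{\omega}(\specg)$ have their tensor products determined by the symmetric monoidal structures on their weight hearts (since every object is built from the heart via finitely many cofibre sequences and retracts, and the tensor product is exact in each variable), and the equivalence on hearts is symmetric monoidal by \Cref{lem:heartperm}. The main potential obstacle is precisely this symmetric monoidal compatibility — one has to be careful that the extension $\Phi$ can be promoted to a symmetric monoidal functor, not merely an additive equivalence intertwining the tensor products up to isomorphism. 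This is handled cleanly by invoking a symmetric monoidal version of the universal property of $\cK_{\tn{b}}$ on additive symmetric monoidal $1$-categories with values in stable symmetric monoidal $\infty$-categories with bounded weight structure, which is available in the literature on weight structures in the higher categorical setting.
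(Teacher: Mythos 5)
Your proposal is correct and follows essentially the same route as the paper: it uses the bounded weight structure of \Cref{prop:weightonhrmod} and the identification of the heart in \Cref{lem:heartperm}, then applies Sosnilo's result (the paper factors this through the stable envelope and the weight complex functor via \cite[Corollary 3.4, Proposition 3.3]{sos19}) and obtains the symmetric monoidal refinement from \cite[Theorem 4.3]{aok20}, exactly as you suggest. Your alternative ``direct'' argument for monoidality is too informal to stand on its own, but since you also invoke Aoki's enhancement, the substance matches the paper's proof.
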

\begin{proof}
The weight structure on $\Mod_{\nH \underl{R}}^{\omega}(\specg)$ from \Cref{prop:weightonhrmod} is bounded and by \Cref{lem:heartperm} its weight heart is equivalent to $\permgr^{\natural}$. Hence \cite[Corollary 3.4]{sos19} gives a weight exact equivalence 
$$\Mod_{\nH \underl{R}}^{\omega}(\specg) \simeq \stab(\permgr^{\natural}),$$
where $\stab(\permgr^{\natural})$ is the stable envelope of $\permgr^{\natural}$, the full subcategory of $\psigma(\permgr^{\natural};\spec)$ generated by the essential image of the stable Yoneda embedding $\permgr^{\natural} \hookrightarrow \psigma(\permgr^{\natural};\spec)$ of \Cref{rec:psigma} under finite limits and colimits. Using the same argument one obtains a further weight exact equivalence $\cK_{\tn{b}}(\permgr^{\natural}) \simeq \stab(\permgr^{\natural})$. We get a composite equivalence 
$$\Mod_{\nH \underl{R}}^{\omega}(\specg) \simeq \cK_{\tn{b}}(\permgr^{\natural})$$
which restricts to the identity on weight hearts and hence identifies with the weight complex functor in the sense of \cite[Section 3]{bon10}, see \cite[Proposition 3.3, Corollary 3.5]{sos19}. Since the weight hearts on both sides are closed under tensor products and the identification of the weight heart of $\Mod_{\nH \underl{R}}^{\omega}(\specg)$ is symmetric monoidal, it follows from \cite[Theorem 4.3]{aok20} that $\Mod_{\nH \underl{R}}^{\omega}(\specg) \simeq \cK_{\tn{b}}(\permgr^{\natural})$ refines to a symmetric monoidal equivalence.
\end{proof}

\section{Geometric and Modular Fixed Points}\label{sec:fixedpoints}

In \cite{bg22a,bg25b}, Balmer and Gallauer construct, for a closed pro-$p$-subgroup $H$ of a profinite group $G$ and a commutative ring $R$ with $p=0$ in $R$, a weight exact $R$-linear `modular fixed point' tt-functor
$$\Psi^H: \nD\Perm(G;R) \to \nD\Perm(\wgh;R)$$
which linearises $H$-fixed points of $G$-sets; that is, it maps the permutation module $R(X)$ for a finite $G$-set $X$ to the permutation module $R(X^H)$. The functor already exists on additive categories of permutation modules, where it recovers the classical Brauer quotient \cite[Section 1]{bro85}, and its construction naturally extends to the $\infty$-categorical setting of \Cref{sec:derivedpermutationmodules}. \Cref{cor:eqhrmoddperm} and \Cref{thm:eqhrmoddpermweight} suggest that such a functor should equally exist in the equivariant context of $\nH \underl{R}$-modules of equivariant spectra, and the correspondence of the permutation module $R(X)$ to the free $\nH \underl{R}$-module $\nH \underl{R} \otimes \Sigma^{\infty}_+X$ furthermore suggests that it should be induced by geometric fixed points. Its construction will be the main objective of this section. We will first assume all groups to be finite and then extend the construction to the profinite case. Let us first review the construction of geometric fixed points.

\begin{rec}\label{rec:geometricfixedpoints}
There are multiple ways to define geometric fixed points, one of them uses the universal property of $G$-spectra (\Cref{thm:specguniversal}), as explained in \Cref{sec:intro}. We now recall the classical, arguably more hands on construction. For a subgroup $H \leq G$ we denote by $\cF_H$ the family of subgroups of $N_G(H)$ which do not contain $H$. By 
$$A_H \defeq \Pi_{K \in \cF_H} \, \map_{\spec^{N_G(H)}}(\Sigma^{\infty} N_G(H)/K_+,\bS) \in \calg(\spec^{N_G(H)})$$
we denote the commutative algebra object corresponding to this family. Here each of the mapping spectra $\map_{\spec^{N_G(H)}}(\Sigma^{\infty} N_G(H)/K_+,\bS)$ is a commutative algebra object of $\spec^{N_G(H)}$, since $N_G(H)/K$ is a commutative coalgebra in $N_G(H)$-spaces. Write $\spec^{N_G(H)}[A_H^{-1}]$ for the full subcategory of $\spec^{N_G(H)}$ on the $A_H^{-1}$-local objects \cite[Definition 3.10]{mnn17}. The localisation 
$$\spec^{N_G(H)} \stackrel{L}{\longrightarrow} \spec^{N_G(H)}[A_H^{-1}] \hookrightarrow \spec^{N_G(H)}$$
associated to the set of orbits $\Sigma^{\infty} N_G(H)/K_+$ for $K \in \cF_H$ is smashing, and by \cite[Proposition 6.5]{mnn17} it is given by tensoring with $\Sigma^{\infty}\tilde{E}\cF_H$, where $\tilde{E}\cF_H$ is the pointed classifying space of the family $\cF_H$. It fits into the cofibre sequence 
$$E\cF_{H ,+} \to S^0 \to \tilde{E}\cF_H.$$
The \emph{geometric $H$-fixed point functor} $\Phi^H = \Phi^{H;G}: \specg \to \specwgh$ is defined as the composition
\[\begin{tikzcd}[column sep=1.4cm]
	\specg & {\spec^{N_G(H)}} & {\spec^{N_G(H)}[A_H^{-1}] \subseteq \spec^{N_G(H)}} & {\specwgh.}
	\arrow["{\tn{res}^G_{N_G(H)}}", from=1-1, to=1-2]
	\arrow["{(-)\otimes \Sigma^{\infty}\tilde{E}\cF_H}", from=1-2, to=1-3]
	\arrow["(-)^H", from=1-3, to=1-4]
\end{tikzcd}\]
By the proof of \cite[Proposition 3.22]{psw22} the composition 
$$\spec^{N_G(H)}[A_H^{-1}] \hookrightarrow \spec^{N_G(H)} \stackrel{(-)^H}{\longrightarrow} \specwgh$$
is an equivalence of symmetric monoidal $\infty$-categories, and the inverse is given by $L \circ \tn{infl}_{\wgh}^{N_G(H)}$. It follows that $\Phi^{H;G}$ is a symmetric monoidal left adjoint, and we denote its right adjoint by $\Phi_H = \Phi_{H;G}: \specwgh \to \specg$. By \cite[Corollary 9.9]{lms86} we furthermore have that 
$$\Phi^{H} \Sigma^{\infty}X \simeq \Sigma^{\infty}(X^H)$$
for any $X \in \spcgp$. Let $\alpha: G \to G'$ be a group homomorphism, let $H \leq G$ be a subgroup and set $H' \defeq \alpha(H)$, giving an induced homomorphism $\overl{\alpha}: \wgh \to \weyl{G'}{H'}$. It follows from the essential uniqueness in \Cref{thm:specguniversal} that there is a commutative square of symmetric monoidal left adjoints
\[\begin{tikzcd}
	{\spec^{G'}} & {\spec^{G}} \\
	{\spec^{\weyl{G'}{H'}}} & \specwgh
	\arrow["{\alpha^*}", from=1-1, to=1-2]
	\arrow["{\Phi^{H';G'}}"', from=1-1, to=2-1]
	\arrow["{\Phi^{H;G}}", from=1-2, to=2-2]
	\arrow["{\overl{\alpha}^*}", from=2-1, to=2-2]
\end{tikzcd}\]
which in the case of a normal subgroup $N \trianglelefteq G$ implies that $\Phi^N : \specg \to \spec^{G/N}$ is split by $\tn{infl}^G_{G/N}$. Similarly, if $H \leq G$ is a subgroup and if $\bar{K} = K/H \leq \wgh$ is another subgroup, then ``nesting'' geometric fixed points gives a commutative diagram
\[\begin{tikzcd}
	\specg && \specwgh \\
	{\spec^{\weyl{G}{K}}} && {\spec^{\weyl{(\wgh)}{(\bar{K})}}.}
	\arrow["{\Phi^{H;G}}", from=1-1, to=1-3]
	\arrow["{\Phi^{K;G}}"', from=1-1, to=2-1]
	\arrow["{\Phi^{\bar{K};\wgh}}", from=1-3, to=2-3]
	\arrow["{\tn{res}_{\weyl{(\wgh)}{(\bar{K})}}^{\weyl{G}{K}}}", from=2-1, to=2-3]
\end{tikzcd}\]
If we choose $H=G$ we get the total geometric fixed point functor $\Phi^G: \spec^G \to \spec$, and it sends an object $X \in \specg$ to the spectrum $(X \otimes \Sigma^{\infty}\tilde{E}\cF_G)^G$, where now $\cF_G$ is the family of all proper subgroups of $G$. For these (and further) properties of geometric fixed points we also refer to \cite[Section 2]{bs17}.
\end{rec}

\begin{re}
For $G = C_{2^n}$ by \cite[Proposition 3.18]{hhr16} we have that 
$$\pi_*(\Phi^G(\nH\underl{\bZ})) \cong \bZ/2[b], \tn{ where } b \in \pi_2(\Phi^G(\nH\underl{\bZ})).$$ 
Using the arguments presented in \cite[Section 3]{hhr16} we can generalise this to other constant coefficients. If $M$ is any Mackey functor, the group $\pi_k(\Phi^G(\nH M)) \cong \pi_k^G(\nH M \otimes \Sigma^{\infty} \tilde{E}\cF_G)$ is given by the Bredon homology group $H_k^G(\tilde{E}\cF_G;M)$, see \cite[Section 5]{gm95}. If $\sigma$ is the sign representation of $C_{2^n}$, then the representation sphere $S^{\infty \sigma}$ of countably many copies of $\sigma$ provides a model for $\tilde{E}\cF_G$, and we can write it as the colimit $S^{\infty \sigma} \cong \tn{colim}_{d \to \infty} S^{d \sigma}$. By \cite[Example 3.8]{hhr16} the group $H_k^G(S^{d \sigma};M)$ is computed as the homology of the length $d+1$ chain complex 
\[\begin{tikzcd}
    {M(C_{2^n}/C_{2^{n-1}})} & \dots & {M(C_{2^n}/C_{2^{n-1}})} & {M(C_{2^n}/C_{2^{n-1}})}
	\arrow[from=1-1, to=1-2]
	\arrow["2", from=1-2, to=1-3]
	\arrow["0", from=1-3, to=1-4]
\end{tikzcd}\]
with alternating differentials. If $M=\underl{R}$, the constant Mackey functor associated to a ring $R$ in which $2=0$, we get the complex
\[\begin{tikzcd}
	R & \dots & R & R
	\arrow["0", from=1-1, to=1-2]
	\arrow["0", from=1-2, to=1-3]
	\arrow["0", from=1-3, to=1-4]
\end{tikzcd}\]
and it follows that $H_k^G(S^{d \sigma};M)=R$ for $0 \leq k \leq d$. So $\pi_k(\Phi^G(\nH \underl{R}))=R$ for all $k \geq 0$ and hence geometric fixed points in general do not preserve Eilenberg-MacLane spectra, they are generally not left t-exact.\footnote{Since they are defined by tensoring with a connective object followed by a t-exact functor they are of course right t-exact, and hence their right adjoint is left t-exact.}
\end{re}

\begin{re}
The previous remark shows that a modular fixed point functor
$$\Mod_{\nH \underl{R}}(\specg) \to \Mod_{\nH \underl{R}}(\specwgh)$$
in the equivariant setting can not be induced by geometric fixed points in the naive way. That is, it is not the functor 
$$\Mod_{\nH \underl{R}}(\specg) \to \Mod_{\Phi^H(\nH \underl{R})}(\specwgh)$$
induced by geometric fixed points on module categories. This goes in line with the arguments presented in \cite[Remark 4.11]{bg25a}: Writing $\tn{SH}(G)$ and $\tn{SH}$ for the triangulated homotopy categories $h\spec^G$ and $h\spec$ respectively, the total geometric fixed point functor on compact parts
$$\Phi^G: \tn{SH}^c(G) \to \tn{SH}^c$$
can be obtained as the Verdier localisation at ``everything induced from proper subgroups'', i.e. at the thick subcategory generated by the subcategories $\tn{ind}^G_H(\tn{SH}^c(H))$ for all $H \lneqq G$. This localisation is equivalent to the triangulated category of finite spectra $\tn{SH}^c$. If a total modular fixed point functor was induced by total geometric fixed points on module categories in the naive way, it would correspond to a localisation on module categories (see \cite[Proposition 3.18]{psw22}) with target $\cD_{\tn{perf}}(R)$. Translated to permutation modules, this localisation is the Verdier quotient of $\nD\Permgr^c$ by the thick subcategory generated by the $\tn{ind}^G_H(\nD\Perm(H;R)^c)$ for all $H \lneqq G$, and this quotient is richer than $\nD\Perm(1;R)^c \simeq \nD_{\tn{perf}}(R)$. But by how we will construct the total equivariant modular fixed point functor it will be clear that it factors through this quotient.
\end{re}

\begin{re}\label{re:geometricfixedpointsremedy}
Although the modular fixed point functor in the equivariant context is not induced by geometric fixed points in the naive way, there is an obvious fix which we will use as a definition. For $K/H \leq \wgh$ there are isomorphisms
\begin{align*}
\pi^{K/H}_n(\Phi^{H;G}(\nH M)) &= \Hom_{\specwgh}(\bS^n \otimes \Sigma^{\infty} (\wgh)/(K/H)_+,\Phi^{H;G}(\nH M)) \\
&\cong \Hom_{\spec^{K/H}}(\bS^n, \tn{res}^{\wgh}_{K/H} \Phi^{H;G}(\nH M)) \\
&\cong \Hom_{\spec^{K/H}}(\bS^n, \Phi^{H;K}\tn{res}^G_K(\nH M)) \\
&\cong \Hom_{\spec^{K}}(\bS^n, \Sigma^{\infty}\tilde{E}\cF_H \otimes \tn{res}^G_K(\nH M)) \\
&\cong H_n^{K}(\tilde{E}\cF_H;M),
\end{align*}
the $n$-th Bredon homology of the finite $K$-CW complex $\tilde{E}\cF_H$ with coefficients in the Mackey functor $M$. From this observation or the definition as a smash with a connective spectrum followed by categorical fixed points one sees that the spectrum $\Phi^H(\nH M)$ will always be connective, and the $0$-truncation
$$(\Phi^H(\nH M))^{t \heart} \defeq \iota_{t \leq 0}\tau_{t \leq 0}(\Phi^H(\nH M))$$
lives in the t-heart of $\specwgh$. If $M$ is a Green functor, then $\nH M$ is a commutative algebra object of $\specg$, and if $(\Phi^H(\nH M))^{t \heart} \simeq \nH M$ as algebra objects such that the unit map 
$$\Phi^H(\nH M) \to (\Phi^H(\nH M))^{t \heart} \simeq \nH M$$
is a map of algebra objects, then we can consider the composition
\[\begin{tikzcd}
	{\Mod_{\nH M}(\spec^{G})} & {\Mod_{\Phi^H(\nH M)}(\specwgh)} &&& {\Mod_{\nH M}(\specwgh)}
	\arrow["{\Phi^H}", from=1-1, to=1-2]
	\arrow["{(-) \otimes_{\Phi^H(\nH M)} \nH M}", from=1-2, to=1-5]
\end{tikzcd}\]
of geometric fixed points on module categories, followed by a base change back to $\nH M$-modules. We will revisit this construction in \Cref{defi:eqmodularfixedpoints}.
\end{re}

\begin{lem}\label{lem:weylspectramodules}
For a subgroup $H \leq G$ there is a symmetric monoidal equivalence
$$\overl{\Phi}_H: \specwgh \stackrel{\sim}{\longrightarrow} \Mod_{\Phi_H(\bS)}(\specg).$$ 
The equivalence functor makes the following diagram commute:
\[\begin{tikzcd}
	& {\Mod_{\Phi_H(\bS)}(\specg)} \\
	{\specwgh} & {\specg.}
	\arrow["{U_{\Phi_H(\bS)}}", from=1-2, to=2-2]
	\arrow["{\overl{\Phi}_H}", from=2-1, to=1-2]
	\arrow["{\Phi_H}"', from=2-1, to=2-2]
\end{tikzcd}\]
In particular, geometric fixed points $\Phi^H: \specg \to \specwgh$ under the above equivalence identify with the extension of scalars $\specg \to \Mod_{\Phi_H(\bS)}(\specg)$ along $\bS \to \Phi_H(\bS)$.
\end{lem}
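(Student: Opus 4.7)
The plan is to apply the symmetric monoidal Barr--Beck--Lurie theorem \cite[Proposition 5.29]{mnn17} to the adjunction $(\Phi^H,\Phi_H): \spec^G \rightleftarrows \specwgh$. By construction (\Cref{rec:geometricfixedpoints}), $\Phi^H$ is a symmetric monoidal left adjoint between presentably symmetric monoidal stable $\infty$-categories, so it remains to verify the three hypotheses: (i) $\Phi_H$ is conservative, (ii) $\Phi_H$ preserves all small colimits, and (iii) the projection formula holds, i.e.\ the canonical map
$$ X \otimes \Phi_H(Y) \to \Phi_H(\Phi^H(X) \otimes Y) $$
is an equivalence for all $X \in \specg$ and $Y \in \specwgh$.

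For (i) and (ii) I would use the factorisation from \Cref{rec:geometricfixedpoints}, which exhibits $\Phi_H$ as the composite
$$ \specwgh \xrightarrow{\;\simeq\;} \spec^{N_G(H)}[A_H^{-1}] \;\hookrightarrow\; \spec^{N_G(H)} \xrightarrow{\tn{coind}^G_{N_G(H)}} \specg. $$
The first functor is an equivalence, the inclusion is the right adjoint of a \emph{smashing} localisation and hence both conservative and colimit preserving, and finally $\tn{coind}^G_{N_G(H)}$ agrees with $\tn{ind}^G_{N_G(H)}$ via the Wirthmüller isomorphism, so it is a left adjoint (preserves colimits) and is conservative because $\tn{res}^G_{N_G(H)}\tn{ind}^G_{N_G(H)}(X)$ contains $X$ as a retract. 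Composing gives (i) and (ii).

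For (iii) the plan is to verify the projection formula for each factor of $\Phi^H = (-)^H \circ (-\otimes \Sigma^\infty \tilde{E}\cF_H) \circ \tn{res}^G_{N_G(H)}$ and then observe that it is preserved under composition of symmetric monoidal left adjoints whose right adjoints preserve colimits. The projection formula for $(\tn{res},\tn{coind})$ in finite (equivalently, finite profinite) equivariant homotopy theory is classical; for the smashing localisation it is formal from $L(X) = X \otimes L(\bbone)$ and the fact that $L(\bbone) \otimes \iota(Y) \simeq \iota(Y)$ for $Y$ local; the last factor is an equivalence and so satisfies it trivially. I expect this to be the main technical check, though it is also routine enough that it may simply be cited from the literature on geometric fixed points.

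With (i)--(iii) in hand, \cite[Proposition 5.29]{mnn17} directly produces a symmetric monoidal equivalence $\overl{\Phi}_H: \specwgh \xrightarrow{\sim} \Mod_{\Phi_H(\bS)}(\specg)$ together with the commuting triangle with $U_{\Phi_H(\bS)}$ stated in the lemma. The final assertion---that under this equivalence $\Phi^H$ becomes extension of scalars along $\bS \to \Phi_H(\bS)$---is automatic from the construction of the equivalence in \cite[Proposition 5.29]{mnn17}, where the symmetric monoidal left adjoint $\Phi^H$ is identified with the free module functor composed with $\overl{\Phi}_H$, since both are symmetric monoidal left adjoints fitting into the same diagram over $\specg$.
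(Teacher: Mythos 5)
Your proposal follows the same overall route as the paper: apply the symmetric monoidal Barr--Beck--Lurie theorem \cite[Proposition 5.29]{mnn17} to the adjunction $(\Phi^H,\Phi_H)$ and verify conservativity, colimit preservation and the projection formula; the last assertion about identifying $\Phi^H$ with extension of scalars is then automatic, exactly as you say. The difference is only in how the hypotheses are checked. The paper gets the projection formula for free from the fact that both $\specg$ and $\specwgh$ are rigidly-compactly generated, citing \cite[Lemme 2.8]{ayo14} (the same trick reused in \Cref{cor:weylspectramodulesanyring}), and gets colimit preservation from the observation that $\Phi^H$ preserves dualisables, hence compacts, so $\Phi_H$ commutes with filtered colimits and, being exact, with all colimits; for conservativity it uses the same factorisation of $\Phi_H$ you do, citing the proof of \cite[Theorem 5.32]{mnn17} for conservativity of $\tn{coind}^G_{N_G(H)}$ where you instead invoke the Wirthmüller isomorphism together with the retract coming from the double coset formula. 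Your more hands-on, factorwise verification is correct: projection formulas do compose (a routine diagram chase, and you do not actually need the right adjoints to preserve colimits for that step), the smashing localisation argument is exactly right, and the retract argument for conservativity of induction/coinduction is valid since $N_G(H)$ has finite index. So both arguments work; the paper's appeal to rigid-compact generation is simply the shorter way to dispose of the projection formula and colimit preservation.
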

\begin{proof}
We apply the Barr-Beck-Lurie theorem in the symmetric monoidal version \cite[Proposition 5.29]{mnn17} to the adjunction $(\Phi^H,\Phi_H): \specg \rightleftarrows \specwgh$. Both categories are rigidly compactly generated and presentably symmetric monoidal, so the projection formula is automatically satisfied, cf. \cite[Lemme 2.8]{ayo14}. As $\Phi^H$ preserves dualisable objects it preserves compact objects, so $\Phi_H$ commutes with filtered colimits. As $\Phi_H$ preserves limits and is a functor of stable $\infty$-categories it also preserves finite colimits, hence all colimits. \\
It remains to show that $\Phi_H$ is conservative. In the notation of \Cref{rec:geometricfixedpoints}, $\Phi_H = \tn{coind}^G_{N_G(H)} \circ \iota \circ L \circ \tn{infl}_{\wgh}^{N_G(H)}$. The composition $L \circ \tn{infl}_{\wgh}^{N_G(H)}$ is conservative as it is an equivalence, the functor $\iota: \spec^{N_G(H)}[A_H^{-1}] \hookrightarrow \spec^{N_G(H)}$ is conservative as it is fully faithful, and $\tn{coind}^G_{N_G(H)}$ is conservative by the proof of \cite[Theorem 5.32]{mnn17}, where we use that $G$ is finite. \\
Thus we obtain an induced adjunction $(\overl{\Phi}^H,\overl{\Phi}_H): \Mod_{\Phi_H(\bS)}(\specg) \rightleftarrows \specwgh$ which is an inverse equivalence of symmetric monoidal $\infty$-categories.
\end{proof}

\begin{cor}\label{cor:weylspectramodulesanyring}
For any $A \in \calg(\specwgh)$, there is an equivalence of symmetric monoidal $\infty$-categories 
$$\overl{\Phi_HU_A}: \Mod_A(\specwgh) \stackrel{\sim}{\longrightarrow} \Mod_{\Phi_H(A)}(\specg).$$
The equivalence functor makes the following diagram commute:
\[\begin{tikzcd}
	&& {\Mod_{\Phi_H(A)}(\specg)} \\
	{\Mod_A(\specwgh)} & {\specwgh} & {\specg.}
	\arrow["{U_{\Phi_H(A)}}", from=1-3, to=2-3]
	\arrow["{\overl{\Phi_HU_A}}", from=2-1, to=1-3]
	\arrow["{U_A}"', from=2-1, to=2-2]
    \arrow["{\Phi_H}"', from=2-2, to=2-3]
\end{tikzcd}\]
In particular, $F_A\Phi^H: \specg \to \Mod_A(\specwgh)$ under the above equivalence identifies with the extension of scalars $\specg \to \Mod_{\Phi_H(A)}(\specg)$ along $\bS \to \Phi_H(A)$.
\end{cor}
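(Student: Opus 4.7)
The plan is to bootstrap from \Cref{lem:weylspectramodules} by applying the transitivity of module categories. First I would record that $\Phi_H \colon \specwgh \to \specg$ is lax symmetric monoidal (as the right adjoint of a symmetric monoidal functor), so it lifts to a functor $\calg(\specwgh) \to \calg(\specg)$, and in particular $\Phi_H(A)$ naturally carries the structure of a commutative algebra in $\specg$. Moreover, under the equivalence $\overl{\Phi}_H \colon \specwgh \xrightarrow{\sim} \Mod_{\Phi_H(\bS)}(\specg)$ of \Cref{lem:weylspectramodules}, $A$ corresponds to $\Phi_H(A)$ regarded as a $\Phi_H(\bS)$-algebra via the unit $\Phi_H(\bS) \to \Phi_H(A)$, since the forgetful functor $\Mod_{\Phi_H(\bS)}(\specg) \to \specg$ matches $\Phi_H$ along $\overl{\Phi}_H$ and the lax structure on $\Phi_H$ is the one induced by the symmetric monoidal equivalence.

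Next, applying $\Mod_{(-)}$ to this identification of algebras yields a symmetric monoidal equivalence
\[
\Mod_A(\specwgh) \;\simeq\; \Mod_{\Phi_H(A)}\!\bigl(\Mod_{\Phi_H(\bS)}(\specg)\bigr),
\]
since the assignment $A \mapsto \Mod_A(\cC)$ is functorial under symmetric monoidal equivalences of presentably symmetric monoidal stable $\infty$-categories (see \Cref{rec:modules}). The transitivity of modules \cite[Corollary 3.4.1.9]{lur17} then supplies a symmetric monoidal equivalence
\[
\Mod_{\Phi_H(A)}\!\bigl(\Mod_{\Phi_H(\bS)}(\specg)\bigr) \;\simeq\; \Mod_{\Phi_H(A)}(\specg),
\]
and composing gives the desired $\overl{\Phi_H U_A}$.

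To check the commutativity of the triangle, I would observe that all three equivalences above respect the forgetful functors down to $\specg$: the first sends $U_A$ to the corresponding forgetful functor $U_{\Phi_H(A)}$ in $\Mod_{\Phi_H(\bS)}(\specg)$; the second composes with the forgetful functor of \Cref{lem:weylspectramodules} to land at $\Phi_H$; and the transitivity equivalence is compatible with forgetting down to $\specg$ by construction. For the final claim about $F_A \Phi^H$, I would pass to left adjoints: under the equivalence of \Cref{lem:weylspectramodules}, $\Phi^H$ corresponds to the extension of scalars $F_{\Phi_H(\bS)} \colon \specg \to \Mod_{\Phi_H(\bS)}(\specg)$ along $\bS \to \Phi_H(\bS)$, while $F_A$ under $\overl{\Phi}_H$ corresponds to the extension of scalars along $\Phi_H(\bS) \to \Phi_H(A)$ inside $\Mod_{\Phi_H(\bS)}(\specg)$. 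The composite is then the extension of scalars along $\bS \to \Phi_H(\bS) \to \Phi_H(A)$, which under the transitivity equivalence becomes extension of scalars along $\bS \to \Phi_H(A)$ in $\specg$.

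I expect no essential obstacle here: the content is entirely formal once \Cref{lem:weylspectramodules} is in hand. The only point requiring care is making sure that the algebra structures match under the symmetric monoidal equivalence, which reduces to the fact that the inverse equivalence $\Mod_{\Phi_H(\bS)}(\specg) \to \specwgh$ is symmetric monoidal and sends $\Phi_H(A)$ to $A$ as algebra objects (dual to the unit transformation of the original adjunction $\Phi^H \dashv \Phi_H$ being a symmetric monoidal natural transformation on algebras).
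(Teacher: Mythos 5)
Your proposal is correct and follows essentially the same route as the paper's first argument: it identifies $A$ with $\overl{\Phi}_H(A)$ under the equivalence of \Cref{lem:weylspectramodules}, applies $\Mod_{(-)}$, and then invokes the change-of-algebra equivalence \cite[Corollary 3.4.1.9]{lur17} to collapse $\Mod_{\overl{\Phi}_H(A)}\Mod_{\Phi_H(\bS)}(\specg)$ to $\Mod_{\Phi_H(A)}(\specg)$. (The paper also sketches an alternative proof applying the symmetric monoidal Barr--Beck--Lurie theorem directly to the adjunction $(F_A\Phi^H,\Phi_HU_A)$, but your version matches the primary argument.)
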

\begin{proof}
By \Cref{lem:weylspectramodules} there is a symmetric monoidal equivalence 
$$\Mod_A(\specwgh) \simeq \Mod_{\overl{\Phi}_H(A)}\Mod_{\Phi_H(\bS)}(\specg),$$ and by \cite[Corollary 3.4.1.9]{lur17} there is a further change of algebra equivalence 
$$\Mod_{\overl{\Phi}_H(A)}\Mod_{\Phi_H(\bS)}(\specg) \simeq \Mod_{\Phi_H(A)}(\specg).$$
Alternatively, we can again apply the symmetric monoidal Barr-Beck-Lurie theorem \cite[Proposition 5.29]{mnn17}, but now to the adjunction $$(F_A\Phi^H,\Phi_HU_A): \specg \rightleftarrows \Mod_A(\specwgh).$$
The composition $F_A\Phi^H$ is a symmetric monoidal left adjoint. Both categories are rigidly compactly generated and presentably symmetric monoidal, so as in the previous proof the projection formula holds. Again by the proof of the previous lemma $\Phi_H$ preserves colimits and is conservative, and the same holds for $U_A$.
\end{proof}

\begin{re}\label{re:weylspectramodulesanyringfunctor}
The inverse to the equivalence functor $\overl{\Phi_HU_A}$ by \cite[Section 5.3]{mnn17} is the composition
\begin{equation}\label{tik:compone}
\begin{tikzcd}[column sep=0.6cm]
	{\Mod_{\Phi_H(A)}(\specg)} & {\Mod_{F_A\Phi^H\Phi_H(A)}\Mod_A(\specwgh)} & {\Mod_{A}(\specwgh),}
	\arrow["{F_A\Phi^H}", from=1-1, to=1-2]
	\arrow["{\varepsilon'_{A,!}}"{pos=0.4}, from=1-2, to=1-3]
\end{tikzcd}
\end{equation}
where $\varepsilon'_A: F_A\Phi^H\Phi_H(A) \to A$ is the counit of $F_A\Phi^H \dashv \Phi_HU_A$, and $F_A$ is the free functor $\specwgh \to \Mod_A(\specwgh)$. By \cite[Corollary 3.4.1.9]{lur17} the nested module category in the middle is equivalent to $\Mod_{\Phi^H\Phi_H(A) \otimes A}(\specwgh)$, and this identifies the composition \ref{tik:compone} with 
\begin{equation}\label{tik:comptwo}
\begin{tikzcd}
	{\Mod_{\Phi_H(A)}(\specg)} & {\Mod_{\Phi^H\Phi_H(A)}(\specwgh)} & {\Mod_{A}(\specwgh),}
	\arrow["{\Phi^H}", from=1-1, to=1-2]
	\arrow["{\varepsilon_{A,!}}"{pos=0.4}, from=1-2, to=1-3]
\end{tikzcd}
\end{equation}
where $\varepsilon_A$ is the counit map $\Phi^H\Phi_H(A) \to A$ of $\Phi^H \dashv \Phi_H$. As \ref{tik:compone} is the inverse of $\overl{\Phi_HU_A}: \Mod_A(\specwgh) \to \Mod_{\Phi_H(A)}(\specg)$, we can also express $\overl{\Phi_HU_A}$ as the inverse of \ref{tik:comptwo}, namely as
\[\begin{tikzcd}
	{\Mod_A(\specwgh)} & {\Mod_{\Phi^H\Phi_H(A)}(\specwgh)} & {\Mod_{\Phi_H(A)}(\specg),}
	\arrow["{\varepsilon_A^*}", from=1-1, to=1-2]
	\arrow["{\Phi_H}", from=1-2, to=1-3]
\end{tikzcd}\]
where $\varepsilon_A^*$ is the restriction of scalars along $\varepsilon_A: \Phi^H\Phi_H(A) \to A$.
\end{re}

\begin{re}\label{re:anotherapproach}
If $\cC^\otimes$ is a presentably symmetric monoidal $\infty$-category, then by \cite[Theorem 4.8.5.11]{lur17} there is a fully faithful functor 
\begin{align*}
\calg(\cC) &\hookrightarrow \Mod_{\cC}(\prl)_{\cC/} \\
A &\mapsto \Mod_A(\cC) \\
(f: A \to B) &\mapsto (f_!: \Mod_A(\cC) \to \Mod_B(\cC)),
\end{align*}
where $f_!: \Mod_A(\cC) \to \Mod_B(\cC)$ is the extension of scalars functor $M \mapsto M \otimes_A B$. In particular, any symmetric monoidal left adjoint $\Mod_A(\cC) \to \Mod_B(\cC)$ which commutes with the free functors is the extension of scalars along a map of algebra objects $A \to B$. By \Cref{cor:weylspectramodulesanyring} any functor $\Mod_{\nH \underl{R}}(\specg) \to \Mod_{\nH \underl{R}}(\specwgh)$ is equivalent to a functor of module categories of $G$-spectra
$$\Mod_{\nH \underl{R}}(\specg) \to \Mod_{\Phi_H(\nH \underl{R})}(\specg),$$
so if such a functor is a symmetric monoidal left adjoint which commutes with the free functors, it is induced by a ring map $\nH \underl{R} \to \Phi_H(\nH \underl{R})$. By adjunction, giving such a ring map is equivalent to giving a ring map $\Phi^H(\nH \underl{R}) \to \nH \underl{R} \simeq \iota_{t\leq 0} (\nH \underl{R})$, which in turn is equivalent to giving a ring map 
$$\tau_{t \leq 0} (\Phi^H(\nH \underl{R})) \to \nH \underl{R}.$$
As $\Phi^H(\nH \underl{R})$ is connective by \Cref{re:geometricfixedpointsremedy}, this corresponds to a ring map 
$$\pi_0 (\Phi^H(\nH \underl{R})) \to \underl{R}$$
in the t-heart $(\specg)^{t \heart} \simeq \mackg$. Hence, once we computed the Mackey functor $\pi_0 (\Phi^H(\nH \underl{R}))$, we have - in addition to the construction of \Cref{re:geometricfixedpointsremedy} - another candidate for the equivariant modular fixed point functor: an extension of scalars along a ring map $\nH \underl{R} \to \Phi_H(\nH \underl{R})$ corresponding to a ring map $\pi_0 (\Phi^H(\nH \underl{R})) \to \underl{R}$ under the process described above. We will show that under our assumptions both of these approaches agree.
\end{re}

\begin{lem}\label{lem:geometricfixedpointsofem}
Let $p$ be a prime, let $R$ be a discrete ring in which $p=0$, and let $H$ be a $p$-subgroup of $G$. Then $\pi_0 (\Phi^H(\nH \underl{R})) \cong \underl{R}$ as Green functors.
\end{lem}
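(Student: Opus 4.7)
My plan is to identify $\pi_0(\Phi^H(\nH \underl{R}))$ orbit-by-orbit as $R$ with the cohomological Mackey structure, and then read off the Green functor isomorphism from the canonical unit map. To set this up I would first observe that $\pi_0(\Phi^H(\nH \underl{R}))$ is canonically a cohomological Green functor: since $\Phi^H$ is symmetric monoidal, $\Phi^H(\nH \underl{R}) \in \calg(\specwgh)$, and the monoidal natural transformation $(-)^H \Rightarrow \Phi^H$ induced by the unit $S^0 \to \Sigma^{\infty}\tilde{E}\cF_H$ of the $A_H$-localisation evaluates on $\nH \underl{R}$ to a map of commutative algebras $(\nH \underl{R})^H \to \Phi^H(\nH \underl{R})$. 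By \Cref{rec:preservation} categorical fixed points preserve Eilenberg--MacLane spectra of constant Mackey functors, so this source identifies with $\nH \underl{R}$ in $\specwgh$. Therefore $\Phi^H(\nH \underl{R})$ is canonically a $\nH \underl{R}$-algebra in $\specwgh$, and $\pi_0(\Phi^H(\nH \underl{R}))$ is a $\underl{R}$-algebra in $\mack_R(\wgh)$, i.e.\ a cohomological Green functor.

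Next I would compute the orbit values. For $H \leq K \leq N_G(H)$, the Bredon identification recalled in \Cref{re:geometricfixedpointsremedy} gives $\pi_0^{K/H}(\Phi^H(\nH \underl{R})) \cong H_0^K(\tilde{E}\cF_H; \underl{R})$. The cofibre sequence $E\cF_{H,+} \to S^0 \to \tilde{E}\cF_H$ produces a right-exact sequence
\[
H_0^K(E\cF_H; \underl{R}) \xrightarrow{\phi} R \to H_0^K(\tilde{E}\cF_H; \underl{R}) \to 0,
\]
where the vanishing of the next term uses that Bredon homology lives in non-negative degrees. To show $\phi = 0$ I would use that any $K$-CW structure on $E\cF_H$ has all its isotropy contained in $\cF_H \cap K$, so $H_0^K(E\cF_H; \underl{R})$ is generated by images of cells $K/L_+ \to E\cF_{H,+}$ with $L \in \cF_H \cap K$, and each composite $K/L_+ \to E\cF_{H,+} \to S^0$ induces on $\pi_0^K(\nH \underl{R} \otimes -)$ the transfer $\underl{R}(K/L) = R \to \underl{R}(K/K) = R$, namely multiplication by $[K:L]$. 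The arithmetic crux is that $H \not\leq L$ forces $LH \supsetneq L$, so $[LH:L] = [H : L \cap H]$ is a non-trivial $p$-power and therefore divides $[K:L]$; consequently $[K:L] = 0$ in $R$, $\phi$ vanishes, and $\pi_0^{K/H}(\Phi^H(\nH \underl{R})) \cong R$.

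To conclude I would invoke the unit map $\underl{R} \to \pi_0(\Phi^H(\nH \underl{R}))$ of cohomological Green functors supplied by the first step. Evaluated on the orbit $(\wgh)/(K/H)$ it is an $R$-linear unital ring homomorphism $R \to R$, hence the identity, so an isomorphism. Being an isomorphism on every orbit it is an isomorphism of Mackey functors, and therefore of Green functors. The main technical points I anticipate are the monoidality of the natural transformation $(-)^H \Rightarrow \Phi^H$ (folklore but needing some care with the symmetric monoidal structures in play) and the passage from chain-level vanishing to homology-level vanishing of $\phi$; both should be straightforward once one fixes a concrete $K$-CW model of $E\cF_H$ or writes it as a homotopy colimit of orbits $K/L$ for $L \in \cF_H$.
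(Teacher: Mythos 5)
Your proposal is correct and follows essentially the same route as the paper: both identify $\pi_0^{K/H}(\Phi^H(\nH \underl{R}))$ with the Bredon homology $H_0^{K}(\tilde{E}\cF_H;\underl{R})$, exhibit it as the quotient of $R$ by the transfers $[K:L]$ coming from cells with isotropy $L \in \cF_H$, and kill those by showing each index is divisible by $p$. Your arithmetic via $[K:L]=[K:LH]\cdot[H:H\cap L]$ (using $H \trianglelefteq K$) is a touch cleaner than the paper's Sylow argument, and your explicit use of the unit map $(\nH \underl{R})^H \to \Phi^H(\nH \underl{R})$ to pin down the Green functor structure makes precise a point the paper's proof leaves to a subsequent remark.
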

\begin{proof}
If $H=1$ then $\Phi^H$ is identic, so let us assume that $H$ is nontrivial. As we saw in \Cref{re:geometricfixedpointsremedy} the group $\pi_0^{K/H} (\Phi^H(\nH \underl{R}))$ for $K/H \leq \wgh$ is isomorphic to the Bredon homology group 
$$H_0^{K}(\tilde{E}\cF_H;\underl{R}) \cong \tn{coker}(d_1: C^{\tn{cell}}_1(\tilde{E}\cF_H;\underl{R}) \to C^{\tn{cell}}_0(\tilde{E}\cF_H;\underl{R}))$$
of the $K$-space $\tilde{E}\cF_H$. By \cite[Theorem 1.9]{lüc05} there is a model for $E\cF_H$ which for each $L \leq K$ with $H \nleq L$ has an $n$-cell $D^n \times K/L$. The unreduced suspension $\tilde{E}\cF_H \simeq S^0 \ast E\cF_H$ then has one pointed $0$-cell $K/K_+$, and pointed $1$-cells $D^n \times K/L_+$ for each $H \nleq L \leq G$. It follows that
$$H_0^{K}(\tilde{E}\cF_H;\underl{R}) \cong \tn{coker} \left(\oplus \tn{ind}_L^{K}: \bigoplus_{H \nleq L \leq K} \underl{R}(K/L) \to  \underl{R}(K/K) \right).$$
An alternative way to arrive at this description is provided by geometric fixed points on Mackey functors. We refer to \cite[Section 5]{hmq23}, \cite[Section 5.2]{bghl19} and \cite[Section 2]{tw95}. \\
The maps $\tn{ind}_L^{K}$ are multiplication by $[K : L]$. We show that all of these indices are divisible by $p$ and hence vanish in $R$. Let $L$ be a subgroup of $K$ which does not contain $H$. Write $|H|=p^n$, $n \geq 1$, $|K| = p^k \cdot a$, $p \nmid a$, $|L| = p^l \cdot b$, $p \nmid b$. As $H \leq K$ we have $k \geq 1$. Then $[K : L] = p^{k-l}\cdot \frac{a}{b}$, and we need to show that $k > l$. Assume $l=k$. Let $S \leq L$ be a Sylow $p$-subgroup of $L$, then $|S| = p^l$ and $l=k$ implies $S \in \tn{Syl}_p(K)$. As $H$ is a $p$-subgroup of $K$ it is contained in a Sylow $p$-subgroup of $K$, so in $S^x$ for some $x \in K$. But then $H = H^{x^{-1}} \leq S$ as $x^{-1} \in K \leq N_G(H)$ and hence $H \leq S \leq L$, which is a contradiction. So $k > l$ and all of the indices $[K : L]$ are zero in $R$. It follows that $H_0^{K}(\tilde{E}\cF_H;\underl{R}) \cong R$.
\end{proof}

\begin{re}[cf. {\cite[Warning 5.1]{bg25a}}]
If $R=k$ a field of characteristic $p$, then $\pi_0^{K/H} (\Phi^H(\nH \underl{k}))$ will either be $k$ or $0$, depending on the invertibility of one of the indices $[K : L]$ in $k$, and being invertible is then equivalent to not being divisible by $p$. In the case that $H$ is not a $p$-subgroup, let $K = N_G(H)$ and choose a Sylow $p$-subgroup $P \leq N_G(H)$ (which can't contain $H$). Then $[N_G(H):P]$ is invertible in $k$, and hence $\pi_0^{\wgh} (\Phi^H(\nH \underl{R}))$ vanishes. This illustrates that the assumption that $H$ is a $p$-group is crucial to ensure that none of the sections of $\pi_0 (\Phi^H(\nH \underl{k}))$ vanishes, which we need to define the equivariant modular fixed point functor.
\end{re}

For the rest of this section we fix a prime $p$ and a discrete ring $R$ in which $p=0$. We are now ready to define the equivariant modular fixed point functor, following the approach sketched in \Cref{re:geometricfixedpointsremedy}.

\begin{defi}\label{defi:eqmodularfixedpoints}
For a $p$-subgroup $H \leq G$, we define the \emph{equivariant modular fixed point functor}
$$\Psi^H = \Psi^{H;G}: \Mod_{\nH \underl{R}}(\specg) \to \Mod_{\nH \underl{R}}(\specwgh)$$
as the composition
\[\begin{tikzcd}
	{\Mod_{\nH \underl{R}}(\spec^{G})} & {\Mod_{\Phi^H(\nH \underl{R})}(\specwgh)} & {\Mod_{\nH \underl{R}}(\specwgh),}
	\arrow["{\Phi^{H}}", from=1-1, to=1-2]
	\arrow["{\psi_{H,!}}", from=1-2, to=1-3]
\end{tikzcd}\]
where the first functor is the functor induced by geometric fixed points on module categories and the second functor is an extension of scalars along the ring map $\psi_{H;G} = \psi_H: \Phi^H(\nH \underl{R}) \to \nH \underl{R}$ which under the adjunction $(\tau_{t \leq 0} \dashv \iota_{t \leq 0})$ corresponds to the identic map of Mackey functors $\pi_0(\Phi^H(\nH \underl{R})) \cong \underl{R} \to \underl{R}$. \\
As a composition of such $\Psi^H$ is a symmetric monoidal left adjoint, and it has a right adjoint $\Psi_H = \Psi_{H;G} = \Phi_{H} \circ \psi_{H}^*: \Mod_{\nH \underl{R}}(\specwgh) \to \Mod_{\nH \underl{R}}(\spec^{G})$. 
\end{defi}

\begin{re}
We are choosing a map of Green functors $\underl{R} \to \underl{R}$, which is the same as a ring map $R \to R$, since the restriction maps of the constant Mackey functor are identic. Many properties in the remainder of this section will hold for any choice of ring map $\Phi^H(\nH \underl{R}) \to \nH \underl{R}$. In \Cref{thm:identifymodfix} we will identify the just defined equivariant modular fixed point functor with the one on derived permutation modules, and for this the choice of ring map will be crucial. In \Cref{prop:eqmodfixhrlinear} we will also see that once we fix the $\nH R$-linear structure on the $\infty$-categories of modules over $\nH \underl{R}$ to come from the identity on $R$ (see \Cref{re:hrlinearstructure}), we're left with no choice for the ring map $\psi_H: \Phi^H(\nH \underl{R}) \to \nH \underl{R}$ if we want the functor $\Psi^H$ to be $\nH R$-linear.
\end{re}

\begin{re}
The construction of the equivariant modular fixed point functor on $\nH \underl{R}$-modules comes in the same fashion as the construction of the $\nH R$-linear structure and the restriction along a group homomorphism on these - we employ the functors induced on module categories by the usual symmetric monoidal left adjoints available on equivariant spectra, but we also postcompose with a base change. This ensures that the `$R$-linear equivariant stable homotopy theory' we're doing is happening in $\calg(\prlst)_{\Mod_{\nH R}(\spec)/}$, that is, under $\Mod_{\nH R}(\spec)$, see \Cref{re:hrlinearstructure}, \Cref{re:restrictionunderhr} and also \Cref{prop:eqmodfixhrlinear} below.
\end{re}

\begin{re}
Another way to think about the ring map $\psi_H: \Phi^H(\nH \underl{R}) \to \nH \underl{R}$ is the following one: the map $S^0 \to \tilde{E}\cF_H$ induces a map 
$$\nH \underl{R} \simeq (\nH \underl{R})^H \to (\nH \underl{R} \otimes \Sigma^{\infty}\tilde{E}\cF_H)^H = \Phi^H(\nH \underl{R}),$$
and \Cref{lem:geometricfixedpointsofem} shows that it is an isomorphism on $\pi_0$, in fact under the choices of isomorphisms we make it is identic. The inverse of this isomorphism then using t-structure arguments gives the map $\psi_H$.
\end{re}

As desired, the functor $\Psi^H$ inherits a key property from geometric fixed points: fixed points of a free module on a $G$-space are the free module on the fixed points.

\begin{lem}\label{lem:eqmodfixfree}
For $H \leq G$ a $p$-subgroup, there is a commutative square
\[\begin{tikzcd}
	\specg & \specwgh \\
	{\Mod_{\nH \underl{R}}(\spec^{G})} & {\Mod_{\nH \underl{R}}(\specwgh).}
	\arrow["{\Phi^H}", from=1-1, to=1-2]
	\arrow["{F_{\nH \underl{R}}}"', from=1-1, to=2-1]
	\arrow["{F_{\nH \underl{R}}}", from=1-2, to=2-2]
	\arrow["{\Psi^H}", from=2-1, to=2-2]
\end{tikzcd}\]
In particular, $\Psi^H(\nH \underl{R} \otimes \Sigma^{\infty}X) \simeq \nH \underl{R} \otimes \Sigma^{\infty} X^H$ for any $X \in \spcgp$.
\end{lem}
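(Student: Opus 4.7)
The plan is to unwind the definition of $\Psi^H$ as a two-step composition and observe that each of the two steps commutes with the relevant free functors; pasting the two resulting squares then yields the claim.

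Recall from \Cref{defi:eqmodularfixedpoints} that $\Psi^H = \psi_{H,!} \circ \overline{\Phi^H}$, where $\overline{\Phi^H}: \Mod_{\nH \underl{R}}(\specg) \to \Mod_{\Phi^H(\nH \underl{R})}(\specwgh)$ is the symmetric monoidal left adjoint induced on module categories by $\Phi^H: \specg \to \specwgh$ (via \Cref{rec:modules}), and $\psi_{H,!}$ is extension of scalars along $\psi_H: \Phi^H(\nH \underl{R}) \to \nH \underl{R}$. First I would invoke \Cref{rec:modules} to obtain the commutative square
\[\begin{tikzcd}
	\specg & \specwgh \\
	{\Mod_{\nH \underl{R}}(\spec^{G})} & {\Mod_{\Phi^H(\nH \underl{R})}(\specwgh)}
	\arrow["{\Phi^H}", from=1-1, to=1-2]
	\arrow["{F_{\nH \underl{R}}}"', from=1-1, to=2-1]
	\arrow["{F_{\Phi^H(\nH \underl{R})}}", from=1-2, to=2-2]
	\arrow["{\overline{\Phi^H}}"', from=2-1, to=2-2]
\end{tikzcd}\]
expressing that $\overline{\Phi^H} \circ F_{\nH \underl{R}} \simeq F_{\Phi^H(\nH \underl{R})} \circ \Phi^H$.

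Next I would use that extension of scalars commutes with free functors: viewing $\psi_H$ as a map of commutative algebra objects of $\specwgh$, the equivalence $\psi_{H,!} \circ F_{\Phi^H(\nH \underl{R})} \simeq F_{\nH \underl{R}}$ of left adjoints follows from the corresponding identity of right adjoints (the forgetful functors factor as $U_{\Phi^H(\nH \underl{R})} \circ \psi_H^*$ by construction), cf.\ \Cref{rec:modules}. Concretely, on a free module $\Phi^H(\nH \underl{R}) \otimes Y$ we have $\psi_{H,!}(\Phi^H(\nH \underl{R}) \otimes Y) \simeq \nH \underl{R} \otimes_{\Phi^H(\nH \underl{R})}(\Phi^H(\nH \underl{R}) \otimes Y) \simeq \nH \underl{R} \otimes Y$.

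Pasting the two commuting squares and using the equality $\Psi^H = \psi_{H,!} \circ \overline{\Phi^H}$ gives the desired commutative square. The final \enquote{in particular} statement is then immediate: for $X \in \spcgp$ we have
\[
\Psi^H(\nH \underl{R} \otimes \Sigma^\infty X) \simeq \nH \underl{R} \otimes \Phi^H(\Sigma^\infty X) \simeq \nH \underl{R} \otimes \Sigma^\infty X^H,
\]
using the classical identity $\Phi^H \Sigma^\infty X \simeq \Sigma^\infty X^H$ recorded in \Cref{rec:geometricfixedpoints}. There is no genuine obstacle here; the only subtlety is bookkeeping the two-step nature of $\Psi^H$ carefully, since it is the compatibility of both steps with the free functor (and not, for instance, any property specific to the ring map $\psi_H$) that produces the result.
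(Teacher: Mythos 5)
Your argument is correct and is essentially the paper's own proof: the paper cites \Cref{rec:modules} for the first square and the fact that extension of scalars composed with a free functor is again a free functor for the second, exactly the two steps you paste together. The "in particular" statement then follows from $\Phi^H\Sigma^\infty X \simeq \Sigma^\infty X^H$ just as you say.
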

\begin{proof}
This follows immediately from \Cref{rec:modules} and the fact that precomposing an extension of scalars with a free functor gives a free functor.
\end{proof}

\begin{cor}\label{cor:weightexact}
Let $H \leq G$ be a $p$-subgroup. Then the restriction to compact parts
$$\Psi^H: \Mod_{\nH \underl{R}}^{\omega}(\specg) \to \Mod_{\nH \underl{R}}^{\omega}(\specwgh)$$
is weight exact, where we consider the weight structures of \Cref{prop:weightonhrmod}.
\end{cor}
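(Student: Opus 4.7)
The plan is to reduce weight exactness to the statement that $\Psi^H$ preserves the weight heart, exploiting the bounded weight structure on the source constructed in \Cref{prop:weightonhrmod}.

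First I would check that $\Psi^H$ indeed restricts to compact parts. Its right adjoint is $\Psi_H = \Phi_H \circ \psi_H^*$. Restriction of scalars $\psi_H^*$ preserves all colimits, and $\Phi_H$ preserves colimits by the colimit preservation argument in the proof of \Cref{lem:weylspectramodules} (using the same Barr--Beck setup with rigid compact generation). Hence $\Psi_H$ commutes with filtered colimits, so $\Psi^H$ preserves compact objects.

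The main step is to show that $\Psi^H$ sends the weight heart $\cH^{\natural}$ on the $G$-side into the weight heart $\cH_{\wgh}^{\natural}$ on the $\wgh$-side. By \Cref{lem:eqmodfixfree}, for every subgroup $K \leq G$ we have
$$\Psi^H(\nH \underl{R} \otimes \Sigma^{\infty} G/K_+) \simeq \nH \underl{R} \otimes \Sigma^{\infty} (G/K)^H_+.$$
Since $(G/K)^H$ is a finite $\wgh$-set, it decomposes as a finite disjoint union of $\wgh$-orbits, so this image lies in $\cH_{\wgh}$. As $\Psi^H$ is a symmetric monoidal left adjoint between idempotent complete stable $\infty$-categories, it preserves finite direct sums and retracts; therefore it maps all of $\cH^{\natural}$ into $\cH_{\wgh}^{\natural}$.

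Finally, I would bootstrap from heart-preservation to weight exactness. Because the weight structure on $\Mod_{\nH \underl{R}}^{\omega}(\specg)$ is bounded, the subcategory $\cC_{w \geq 0}$ is generated from $\cC^{w\heart}$ under extensions, positive suspensions, and retracts, and dually for $\cC_{w \leq 0}$; see \cite[Proposition 1.3.3]{bon10} or the discussion preceding \cite[Corollary 3.4]{sos19}. Any exact functor preserving the weight heart therefore preserves both halves of the weight structure, so $\Psi^H$ is weight exact. I do not expect a genuine obstacle here: the content is concentrated in \Cref{lem:eqmodfixfree}, which is already available, together with the standard fact that bounded weight structures are determined by their hearts.
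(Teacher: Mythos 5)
Your proposal is correct and follows essentially the same route as the paper: the paper likewise invokes boundedness of the weight structures to reduce weight exactness to preservation of weight hearts (citing \cite[Proposition 3.3]{sos19}) and then deduces heart preservation from \Cref{lem:eqmodfixfree}. Your additional details — the check that $\Psi^H$ preserves compacts and the explicit orbit decomposition of $(G/K)^H$ — are correct but implicit in the paper's argument.
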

\begin{proof}
Since both weight structures are bounded it suffices by \cite[Proposition 3.3]{sos19} to show that the functor restricts to an additive functor between the weight hearts. This follows from \Cref{lem:eqmodfixfree}.
\end{proof}

The following proposition will identify the just defined functor with the composition proposed in \Cref{re:anotherapproach}, namely base changing from $\nH \underl{R}$ to $\Phi_H(\nH \underl{R})$ and then using the equivalence of \Cref{cor:weylspectramodulesanyring}.

\begin{prop}\label{cor:eqmodfixequivalent}
The identic map of Mackey functors $\pi_0(\Phi^H(\nH \underl{R})) \cong \underl{R} \to \underl{R}$ under the adjunction $(\tau_{t \leq 0} \dashv \iota_{t \leq 0})$ gives adjoint ring maps $\psi_H: \Phi^H(\nH \underl{R}) \to \nH \underl{R}$ and $\varphi_H: \nH \underl{R} \to \Phi_H(\nH \underl{R})$ in $\calg(\specwgh)$ and $\calg(\specg)$ respectively. For these, geometric fixed points and extension of scalars provide a commutative square 
\begin{equation}\label{diag:equimodfixedpointsquare}
\begin{tikzcd}
	{\Mod_{\nH \underl{R}}(\specg)} & {\Mod_{\Phi^H(\nH \underl{R})}(\specwgh)} \\
	{\Mod_{\Phi_H(\nH \underl{R})}(\specg)} & {\Mod_{\nH \underl{R}}(\specwgh).}
	\arrow["{\Phi^H}", from=1-1, to=1-2]
	\arrow["{\varphi_{H,!}}"', from=1-1, to=2-1]
	\arrow["{\psi_{H,!}}", from=1-2, to=2-2]
    \arrow["\stackrel{\ref{cor:weylspectramodulesanyring}}{\sim}", from=2-1, to=2-2]
\end{tikzcd}
\end{equation}
In particular, the modular fixed point functor $\Psi^H: \Mod_{\nH \underl{R}}(\specg) \to \Mod_{\nH \underl{R}}(\specwgh)$ is equivalent to the composition
\[\begin{tikzcd}
	{\Mod_{\nH \underl{R}}(\specg)} & {\Mod_{\Phi_H(\nH \underl{R})}(\specg) \stackrel{\ref{cor:weylspectramodulesanyring}}{\simeq} \Mod_{\nH \underl{R}}(\specwgh).}
	\arrow["{\varphi_{H,!}}", from=1-1, to=1-2]
\end{tikzcd}\]
\end{prop}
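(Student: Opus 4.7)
The plan is to reduce the commutativity of square~\eqref{diag:equimodfixedpointsquare} to the triangle identity of the adjunction $\Phi^H \dashv \Phi_H$, combined with the compatibility of base change along ring maps with the symmetric monoidal functor $\Phi^H$. The last sentence of the proposition will then be immediate from the definition $\Psi^H = \psi_{H,!} \circ \Phi^H$ together with the explicit form of the inverse equivalence recorded in \Cref{re:weylspectramodulesanyringfunctor}.

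First I would unpack the bottom arrow of the square. By \Cref{re:weylspectramodulesanyringfunctor} applied to $A = \nH \underl{R}$, the inverse of the equivalence $\overl{\Phi_HU_{\nH\underl{R}}}: \Mod_{\nH\underl{R}}(\specwgh) \to \Mod_{\Phi_H(\nH\underl{R})}(\specg)$ is the composition $\varepsilon_{\nH\underl{R},!} \circ \Phi^H$, where $\varepsilon_{\nH\underl{R}}: \Phi^H\Phi_H(\nH\underl{R}) \to \nH\underl{R}$ is the counit of $\Phi^H \dashv \Phi_H$. Hence the equivalence in \eqref{diag:equimodfixedpointsquare} read left-to-right is also $\varepsilon_{\nH\underl{R},!}\circ\Phi^H$. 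So commutativity of the square amounts to the equivalence
$$\psi_{H,!} \circ \Phi^H \;\simeq\; \varepsilon_{\nH\underl{R},!} \circ \Phi^H \circ \varphi_{H,!}$$
of symmetric monoidal left adjoints $\Mod_{\nH\underl{R}}(\specg) \to \Mod_{\nH\underl{R}}(\specwgh)$.

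Second I would invoke two general compatibilities for extension of scalars (both visible from the functor $\calg(\cC) \hookrightarrow \Mod_{\cC}(\prl)_{\cC/}$ of \cite[Theorem~4.8.5.11]{lur17} recalled in \Cref{re:anotherapproach}): (a) base change is functorial in ring maps, i.e. $(gf)_! \simeq g_! \circ f_!$ for any composable pair of ring maps; and (b) since $\Phi^H$ is symmetric monoidal, the induced functor on module categories commutes with extension of scalars, giving a natural equivalence $\Phi^H \circ \varphi_{H,!} \simeq \Phi^H(\varphi_H)_! \circ \Phi^H$. Together with the triangle identity of $\Phi^H \dashv \Phi_H$, which forces $\psi_H \simeq \varepsilon_{\nH\underl{R}} \circ \Phi^H(\varphi_H)$ since $\varphi_H$ is the mate of $\psi_H$, (a) yields $\psi_{H,!} \simeq \varepsilon_{\nH\underl{R},!} \circ \Phi^H(\varphi_H)_!$. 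Plugging in (b) then produces exactly the desired equivalence $\psi_{H,!} \circ \Phi^H \simeq \varepsilon_{\nH\underl{R},!} \circ \Phi^H \circ \varphi_{H,!}$.

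The main obstacle I anticipate is the $\infty$-categorical bookkeeping for step (b): one has to make sure that the compatibility of a symmetric monoidal functor with extension of scalars is packaged as a coherent natural equivalence of functors, not just as a pointwise equivalence on objects. The needed coherence is built into \cite[Section~4.2.3, Section~4.5.3]{lur17} and is recorded already in \Cref{rec:modules}, so this is really a matter of citation rather than new input. The triangle identity itself and the factorisation of the inverse equivalence through $\varepsilon_{\nH\underl{R},!}$ are routine once that compatibility is in place.
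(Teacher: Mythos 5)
Your proof is correct, and it takes a genuinely more direct route than the paper's. Both arguments begin the same way, unpacking the bottom equivalence as $\varepsilon_{\nH \underl{R},!}\circ\Phi^H$ via \Cref{re:weylspectramodulesanyringfunctor}, but then they diverge. The paper first verifies that the two composites endow $\Mod_{\nH \underl{R}}(\specwgh)$ with the same $\specg$-algebra structure (by precomposing with the free functor and using \Cref{lem:eqmodfixfree}), and then invokes the fully faithful embedding $\calg(\specg)\hookrightarrow\Mod_{\specg}(\prl)_{\specg/}$ of \cite[Theorem 4.8.5.11]{lur17}: it computes the unit of the composite adjunction $(\psi_{H,!}\Phi^H \dashv \Phi_H\psi_H^*)$ and checks that it equals $\varphi_H$, so that both functors are classified by the same ring map. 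You instead argue directly at the level of the functors: functoriality of extension of scalars in ring maps, commutation of the induced functor $\Phi^H$ on module categories with base change, and the mate formula $\psi_H \simeq \varepsilon_{\nH \underl{R}}\circ\Phi^H(\varphi_H)$. This buys a shorter argument that never needs to compare algebra structures in $\calg(\prl)$. The one imprecision is your claim that the coherence needed for step (b) is ``recorded already in \Cref{rec:modules}'': that recollection only records commutation of $\bar{\alpha}$ with the free and forgetful functors, not with extension of scalars along an arbitrary ring map. The compatibility you actually need is the functoriality of \cite[Theorem 4.8.5.11]{lur17} in both the base category and the algebra object, which is exactly what the paper cites for squares of this shape in the proofs of \Cref{prop:eqmodfixnested} and \Cref{prop:eqmodfixrestriction}; replacing the citation accordingly closes the point.
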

\begin{proof}
By \Cref{re:weylspectramodulesanyringfunctor} the bottom arrow is equivalent to the functor $\varepsilon_{\nH \underl{R},!}\Phi^H$, where $\varepsilon_{\nH \underl{R}}: \Phi^H\Phi_H(\nH \underl{R}) \to \nH \underl{R}$ is the counit of the adjunction $\Phi^H \dashv \Phi_H$. Precomposing both compositions in \ref{diag:equimodfixedpointsquare} with the free functor $\specg \to \Mod_{\nH \underl{R}}(\specg)$ gives equivalences $\varepsilon_{\nH \underl{R},!}\Phi^H\varphi_{H,!} F_{\nH \underl{R}} \simeq \varepsilon_{\nH \underl{R},!}\Phi^H F_{\Phi_H(\nH \underl{R})} \simeq \varepsilon_{\nH \underl{R},!} F_{\Phi^H\Phi_H(\nH \underl{R})} \Phi^H \simeq F_{\nH \underl{R}} \Phi^H \simeq \psi_{H,!} \Phi^H F_{\nH \underl{R}}$, where we use \Cref{lem:eqmodfixfree}, \Cref{rec:modules} and \Cref{re:anotherapproach}. This shows that both compositions give $\Mod_{\nH \underl{R}}(\specwgh)$ the same $\specg$-algebra structure. As the bottom arrow is an equivalence, this implies that both $\specg$-algebra structures on $\Mod_{\Phi_H(\nH \underl{R})}(\specg)$ available in \ref{diag:equimodfixedpointsquare} agree as well. \\
So by \cite[Theorem 4.8.5.11]{lur17} the two functors $\Mod_{\nH \underl{R}}(\specg) \to \Mod_{\Phi_H(\nH \underl{R})}(\specg)$ agree if and only if they are equivalent to extensions of scalars along equivalent ring maps $\nH \underl{R} \to \Phi_H(\nH \underl{R})$. The functor $\varphi_{H,!}$ is an extension of scalars along $\varphi_H$, so it remains to show that the composition $\Phi_H\varepsilon_{\nH \underl{R},*}\psi_{H,!}\Phi^H$ is equivalent to an extension of scalars along $\varphi_H$ as well. The ring map it corresponds to is the unit map $\nH \underl{R} \to \Phi_H(\nH \underl{R})$ of the adjunction $(\Phi_H\varepsilon_{\nH \underl{R},*}\psi_{H,!}\Phi^H \dashv \Phi_H\psi_H^*\varepsilon_{\nH \underl{R},!}\Phi^H)$. The unit of the adjunction $(\psi_{H,!}\Phi^H \dashv \Phi_H\psi_H^*)$ is given by the composition
\[\begin{tikzcd}
	{\nH \underl{R}} & {\Phi_H\Phi^H(\nH \underl{R})} &&& {\Phi_H\psi_H^*\psi_{H,!}\Phi^H(\nH \underl{R}) \simeq \Phi_H(\nH \underl{R}),}
	\arrow["{\eta_{\nH \underl{R}}}", from=1-1, to=1-2]
	\arrow["{\Phi_H(\bar{\eta}_{\Phi^H(\nH \underl{R})})}", from=1-2, to=1-5]
\end{tikzcd}\]
where $\eta$ is the unit of $(\Phi^H \dashv \Phi_H)$ and $\bar{\eta}$ is the unit of $(\psi_{H,!} \dashv \psi_H^*)$. The evaluation of $\bar{\eta}$ on $\Phi^H(\nH \underl{R})$ is just $\psi_H$. So as $\psi_H$ is adjoint to $\varphi_H$, the above composition is already equivalent to $\varphi_H$. Since the adjunction $(\Phi_H\varepsilon_{\nH \underl{R},*} \dashv \varepsilon_{\nH \underl{R},!}\Phi^H)$ is an equivalence, its unit is an equivalence. So postcomposing the above ring map with an equivalence will give a ring map equivalent to $\varphi_H$. This finishes the proof.
\end{proof}

We will continue by proving that the functor $\Psi^H$ is $\nH R$-linear by showing the stronger statement that it is a morphism in $\calg(\prlst)_{\Mod_{\nH R}(\spec)/}$. We then continue by establishing basic properties about nested modular fixed points and naturality with respect to restriction along a group homomorphism.

\begin{lem}\label{lem:hrlinearstructureweylgroup}
Let $H \leq G$ be a $p$-subgroup and let $\xi_{\wgh}: \nH R_G \to \Phi_H(\nH \underl{R})$ be the ring map which corresponds to the identity on $R$ under the chain of adjunctions $(\nH R_G \to \Phi_H(\nH \underl{R})) \Leftrightarrow (\nH R_{\wgh} \simeq \Phi^H(\nH R_G) \to \nH \underl{R}) \Leftrightarrow (\nH R \to (\nH \underl{R})^{\wgh} \simeq \nH R) \Leftrightarrow (R \to R)$. Then the $\nH R$-linear structure on $\Mod_{\nH \underl{R}}(\specwgh)$ obtained from the symmetric monoidal composition
$$\Mod_{\nH R}(\spec) \stackrel{\tn{infl}^G_{1}}{\longrightarrow} \Mod_{\nH R_G}(\specg) \stackrel{\xi_{\wgh,!}}{\longrightarrow} \Mod_{\Phi_H(\nH \underl{R})}(\specg) \stackrel{\ref{cor:weylspectramodulesanyring}}{\simeq} \Mod_{\nH \underl{R}}(\specwgh)$$
agrees with the one from \Cref{re:hrlinearstructure}.
\end{lem}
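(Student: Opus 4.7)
The plan is to reduce the equality of the two $\nH R$-linear structures to an equality of ring maps $\nH R_{\wgh} \to \nH \underl{R}$ in $\calg(\spec^{\wgh})$. Both compositions are symmetric monoidal left adjoints $\Mod_{\nH R}(\spec) \to \Mod_{\nH \underl{R}}(\spec^{\wgh})$ in $\prlstmon$. Their underlying symmetric monoidal left adjoints on spectra agree with $\tn{infl}^{\wgh}_1$: for the second composition this is immediate from the construction in \Cref{re:hrlinearstructure}, and for the first one it follows from the naturality square for geometric fixed points recalled in \Cref{rec:geometricfixedpoints} applied to the inclusion $1 \hookrightarrow G$, which gives a canonical equivalence $\Phi^H \circ \tn{infl}^G_1 \simeq \tn{infl}^{\wgh}_1$. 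By \cite[Theorem 4.8.5.11]{lur17} (as in \Cref{re:anotherapproach}), each composition therefore factors canonically as $\tn{infl}^{\wgh}_1$ on module categories followed by an extension of scalars along a ring map $\nH R_{\wgh} \to \nH \underl{R}$.

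For the second composition this ring map is $\zeta_{\wgh}$ by definition. To compute the ring map for the first composition, I would use \Cref{re:weylspectramodulesanyringfunctor} to express the equivalence of \Cref{cor:weylspectramodulesanyring} as $\varepsilon_{\nH \underl{R},!} \circ \Phi^H$, where $\varepsilon_{\nH \underl{R}}: \Phi^H \Phi_H(\nH \underl{R}) \to \nH \underl{R}$ is the counit of $\Phi^H \dashv \Phi_H$. Commuting $\Phi^H$ past $\xi_{\wgh,!}$ (using that symmetric monoidal left adjoints commute with extension of scalars as in \Cref{rec:modules}) and applying $\Phi^H \circ \tn{infl}^G_1 \simeq \tn{infl}^{\wgh}_1$ together with the identification $\Phi^H(\nH R_G) \simeq \nH R_{\wgh}$, the first composition rewrites as $\tn{infl}^{\wgh}_1$ followed by extension of scalars along $\varepsilon_{\nH \underl{R}} \circ \Phi^H(\xi_{\wgh}): \nH R_{\wgh} \to \nH \underl{R}$. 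This last ring map is precisely the transpose of $\xi_{\wgh}$ under the adjunction $\Phi^H \dashv \Phi_H$.

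To conclude, I would observe that both $\zeta_{\wgh}$ and the transpose of $\xi_{\wgh}$ correspond to the identity $R \to R$ under the same chain of adjunctions, namely $(\tn{infl}^{\wgh}_1 \dashv (-)^{\wgh})$ together with the identification $(\nH \underl{R})^{\wgh} \simeq \nH R$: this is immediate from the defining chain of $\xi_{\wgh}$ given in the lemma statement and the definition of $\zeta_{\wgh}$ in \Cref{re:hrlinearstructure}. Hence the two ring maps agree and so do the two $\nH R$-linear structures. The main obstacle is the bookkeeping in the middle step, where one must carefully verify that the description of the equivalence from \Cref{cor:weylspectramodulesanyring} as $\varepsilon_{\nH \underl{R},!} \circ \Phi^H$ interacts correctly with the extension of scalars $\xi_{\wgh,!}$ at the level of symmetric monoidal functors; this is however a diagram chase relying only on the formalism already developed in \Cref{rec:modules} and the discussion surrounding \Cref{re:anotherapproach}.
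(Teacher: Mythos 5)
Your proposal is correct and follows essentially the same route as the paper: reduce to an equality of ring maps $\nH R_{\wgh}\to\nH\underl{R}$ via the functoriality of \cite[Theorem 4.8.5.11]{lur17}, use the description of the equivalence of \Cref{cor:weylspectramodulesanyring} from \Cref{re:weylspectramodulesanyringfunctor} together with $\Phi^H\tn{infl}^G_1\simeq\tn{infl}^{\wgh}_1$, and observe that the $\Phi^H\dashv\Phi_H$-transpose of $\xi_{\wgh}$ is by construction the map $\zeta_{\wgh}$. The paper states these same ingredients more tersely; your write-up just makes the diagram chase explicit.
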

\begin{proof}
Recall that the $\nH R$-linear structure on $\Mod_{\nH \underl{R}}(\specwgh)$ from \Cref{re:hrlinearstructure} is obtained from the symmetric monoidal composition
\[\begin{tikzcd}
	{\Mod_{\nH R}(\spec)} & {\Mod_{\nH R_{\wgh}}(\spec^{\wgh})} & {\Mod_{\nH \underl{R}}(\spec^{\wgh}),}
	\arrow["{\tn{infl}^{\wgh}_{1}}", from=1-1, to=1-2]
	\arrow["{\zeta_{\wgh,!}}", from=1-2, to=1-3]
\end{tikzcd}\]
where $\zeta_{\wgh}: \nH R_{\wgh} \to \nH \underl{R}$ is the ring map $\xi_{\wgh}$ corresponds to. The assertion thus follows from the description of the equivalence functor from \Cref{cor:weylspectramodulesanyring} given in \Cref{re:weylspectramodulesanyringfunctor}, the functoriality in \cite[Theorem 4.8.5.11]{lur17} and the fact that $\Phi^H \tn{infl}^G_1 \simeq \tn{infl}^{\wgh}_1$, see \Cref{rec:geometricfixedpoints}.
\end{proof}

\begin{prop}\label{prop:eqmodfixhrlinear}
Let $H \leq G$ be a $p$-subgroup. Using the maps from \Cref{re:hrlinearstructure}, the equivariant modular fixed point functor $\Psi^H: \Mod_{\nH \underl{R}}(\specg) \to \Mod_{\nH \underl{R}}(\specwgh)$ is a morphism under $\Mod_{\nH R}(\spec)$. In particular, it is $\nH R$-linear.
\end{prop}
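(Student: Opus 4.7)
The plan is to reduce the claim to equality of two ring maps of $G$-spectra, and then to equality of ring endomorphisms of $R$. First, I would invoke \Cref{cor:eqmodfixequivalent} to replace $\Psi^H$ with the composite
$$\Mod_{\nH \underl{R}}(\specg) \xrightarrow{\varphi_{H,!}} \Mod_{\Phi_H(\nH \underl{R})}(\specg) \stackrel{\ref{cor:weylspectramodulesanyring}}{\simeq} \Mod_{\nH \underl{R}}(\specwgh),$$
and then use \Cref{lem:hrlinearstructureweylgroup} to replace the $\nH R$-linear structure on the target by the composition $\xi_{\wgh,!} \circ \tn{infl}^G_1$, noting that this moves everything into the single $\infty$-category $\Mod_{\Phi_H(\nH \underl{R})}(\specg)$ of $G$-equivariant modules. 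Commutativity of $\Psi^H$ under $\Mod_{\nH R}(\spec)$ then boils down to showing the equivalence of two symmetric monoidal left adjoints $\Mod_{\nH R_G}(\specg) \to \Mod_{\Phi_H(\nH \underl{R})}(\specg)$, namely $\varphi_{H,!} \circ \zeta_{G,!}$ and $\xi_{\wgh,!}$.

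Next, since both candidate functors preserve the free functor from $\specg$ (being extensions of scalars in $\specg$), by \cite[Theorem 4.8.5.11]{lur17} they are determined by the underlying maps of commutative algebras in $\specg$. Thus it is enough to compare the composite ring map $\varphi_H \circ \zeta_G \colon \nH R_G \to \nH \underl{R} \to \Phi_H(\nH \underl{R})$ with $\xi_{\wgh}$ inside $\calg(\specg)$. Here I would set up the chain of adjunctions
\begin{align*}
\Hom_{\calg(\specg)}(\nH R_G, \Phi_H(\nH \underl{R}))
&\simeq \Hom_{\calg(\specwgh)}(\Phi^H(\nH R_G), \nH \underl{R}) \\
&\simeq \Hom_{\calg(\specwgh)}(\nH R_{\wgh}, \nH \underl{R}) \\
&\simeq \Hom_{\calg(\spec)}(\nH R, (\nH \underl{R})^{\wgh}) \simeq \Hom_{\tn{Ring}}(R, R),
\end{align*}
where the second step uses $\Phi^H \circ \tn{infl}^G_1 \simeq \tn{infl}^{\wgh}_1$ from \Cref{rec:geometricfixedpoints}. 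By construction $\xi_{\wgh}$ corresponds to $\tn{id}_R$; the point is to check that $\varphi_H \circ \zeta_G$ does the same.

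Tracing through the adjunctions, $\zeta_G$ corresponds to $\tn{id}_R$ by definition (\Cref{re:hrlinearstructure}), and $\varphi_H$ is adjoint to $\psi_H$, which in turn is adjoint to $\tn{id}_{\underl{R}}$ at the level of $\pi_0$. By naturality of the unit/counit of $\Phi^H \dashv \Phi_H$, the composite $\varphi_H \circ \zeta_G$ is adjoint to $\psi_H \circ \Phi^H(\zeta_G)\colon \nH R_{\wgh} \to \Phi^H(\nH \underl{R}) \to \nH \underl{R}$, which is precisely $\zeta_{\wgh}$ by the construction of these data, and hence corresponds to $\tn{id}_R$ as required. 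The main bookkeeping obstacle is this final step: making sure the mate transformations under the nested adjunctions $(\Phi^H \dashv \Phi_H)$ and $(\tn{infl}^{(-)}_1 \dashv (-)^{(-)})$ identify correctly, so that the abstract construction of $\xi_{\wgh}$ in \Cref{lem:hrlinearstructureweylgroup} really matches the composite built from $\zeta_G$ and $\psi_H$. Everything else is a formal assembly of functoriality statements already established.
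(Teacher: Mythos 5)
Your overall route is the same as the paper's: reduce via \cite[Theorem 4.8.5.11]{lur17}, \Cref{cor:eqmodfixequivalent} and \Cref{lem:hrlinearstructureweylgroup} to the commutativity of the triangle of ring maps $\varphi_H \circ \zeta_G$ versus $\xi_{\wgh}$, transpose along $\Phi^H \dashv \Phi_H$ to compare $\psi_H \circ \Phi^H(\zeta_G)$ with $\zeta_{\wgh}$ as maps $\nH R_{\wgh} \to \nH \underl{R}$, and settle the comparison on $\pi_0$. Your chain of adjunctions identifying the relevant mapping space with $\Hom_{\tn{Ring}}(R,R)$ is correct and matches how $\xi_{\wgh}$ is defined.

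The one place where you are too quick is the assertion that $\psi_H \circ \Phi^H(\zeta_G)$ ``is precisely $\zeta_{\wgh}$ by the construction of these data.'' This is not by construction, and it is more than bookkeeping of mates: $\zeta_{\wgh}$ is the transpose of $\tn{id}_R$ along $\tn{infl}^{\wgh}_1 \dashv (-)^{\wgh}$, whereas $\Phi^H(\zeta_G)$ is the image under geometric fixed points of the transpose of $\tn{id}_R$ along $\tn{infl}^G_1 \dashv (-)^G$, and geometric fixed points do not commute with categorical fixed points, so the two constructions do not formally match. The paper closes this gap by first observing that a map $\nH R_{\wgh} \to \nH \underl{R}$ is determined on $\pi_0$ (since $\tn{infl}^{\wgh}_1$ preserves connectivity), noting $\pi_0(\psi_H)$ is identic, and then explicitly computing $\pi_0(\Phi^H(\zeta_G))$: one uses the square induced by $S^0 \to \tilde{E}\cF_H$ relating $(\zeta_G)^H$ to $\Phi^H(\zeta_G)$, the identification $\pi_0(\nH R_{\wgh}) \cong \bA_R$ with $\pi_0(\zeta_{\wgh})$ the unit map $\bA_R \to \underl{R}$, and crucially \Cref{lem:geometricfixedpointsofem} to see that the vertical maps of that square are isomorphisms on $\pi_0$. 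You should supply this computation (or an equivalent one); without it the final identification is unsupported.
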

\begin{proof}
We show that the diagram
\[\begin{tikzcd}
	& {\Mod_{\nH R}(\spec)} \\
	{\Mod_{\nH R_G}(\specg)} && {\Mod_{\nH R_{\wgh}}(\specwgh)} \\
	{\Mod_{\nH \underl{R}}(\specg)} && {\Mod_{\nH \underl{R}}(\specwgh)}
	\arrow["{\tn{infl}^G_1}"', from=1-2, to=2-1]
	\arrow["{\tn{infl}^{\wgh}_1}", from=1-2, to=2-3]
	\arrow["{\zeta_{G,!}}"', from=2-1, to=3-1]
	\arrow["{\zeta_{\wgh,!}}", from=2-3, to=3-3]
	\arrow["{\Psi^H}", from=3-1, to=3-3]
\end{tikzcd}\]
commutes. By \cite[Theorem 4.8.5.11]{lur17} there is a fully faithful functor 
$$\calg(\spec^G)_{\nH R_G/} \hookrightarrow \calg(\prl)_{\Mod_{\nH R_G}(\specg)/}$$
which sends a commutative algebra object $A$ under $\nH R_G$ to the module category $\Mod_A(\spec^G)$ under $\Mod_{\nH R_G}(\specg)$. So using \Cref{lem:hrlinearstructureweylgroup} and \Cref{cor:eqmodfixequivalent} it suffices to check that the triangle of ring maps
\[\begin{tikzcd}
	& {\nH R_G} \\
	{\nH \underl{R}} && {\Phi_H(\nH \underl{R})}
	\arrow["{\zeta_G}"', from=1-2, to=2-1]
	\arrow["\xi_{\wgh}", from=1-2, to=2-3]
	\arrow["\varphi_H", from=2-1, to=2-3]
\end{tikzcd}\]
commutes, where we use the notation from \Cref{lem:hrlinearstructureweylgroup} and \Cref{cor:eqmodfixequivalent}. Using the adjunction $(\Phi^H \dashv \Phi_H)$, this is equivalent to the commutativity of the following diagram of ring maps:
\[\begin{tikzcd}
	{\Phi^H(\nH R_G)\simeq \nH R_{\wgh}} \\
	{\Phi^H(\nH \underl{R})} & {\Phi^H\Phi_H(\nH \underl{R})} & {\nH \underl{R}.}
	\arrow["{\Phi^H(\zeta_G)}"', from=1-1, to=2-1]
	\arrow["{\zeta_{\wgh}}", shift left=3, from=1-1, to=2-3]
	\arrow["{\Phi^H(\varphi_H)}", from=2-1, to=2-2]
	\arrow["\varepsilon_{\nH \underl{R}}", from=2-2, to=2-3]
\end{tikzcd}\]
Here $\varepsilon_{\nH \underl{R}}: \Phi^H\Phi_H(\nH \underl{R}) \to \nH \underl{R}$ is the counit of $(\Phi^H \dashv \Phi_H)$ and hence the lower horizontal composite using the notation of \Cref{defi:eqmodularfixedpoints} and \Cref{cor:eqmodfixequivalent} is given by $\psi_H$. Since categorical fixed points $(-)^{\wgh}: \specwgh \to \spec$ are t-exact, its left adjoint $\tn{infl}_1^{\wgh}: \spec \to \specwgh$ is right t-exact and hence preserves connective parts. So a map $\nH R_{\wgh} \to \nH \underl{R}$ is determined on $\pi_0$. By definition of $\psi_H$ the lower horizontal map is identic on $\pi_0$, so it suffices to show that $\Phi^H(\zeta_G)$ and $\zeta_{\wgh}$ identify after passing to $\pi_0$. By \cite[Section 5.2]{gs14} we have that $\pi_0(\nH R_{\wgh}) \cong \bA \otimes R \cong \bA_R$, and the map $\pi_0(\zeta_{\wgh}): \pi_0(\nH R_{\wgh}) \to \pi_0(\nH \underl{R})$ identifies with the unit map $\bA_R \to \underl{R}$ of $\mack_R(\wgh)$. To identify $\Phi^H(\zeta_G)$ on $\pi_0$, note that the map $S^0 \to \tilde{E}\cF_H$ induces a commutative diagram
\[\begin{tikzcd}
	{(\nH R_G)^H} && {(\nH \underl{R})^H} \\
	{(\nH R_G \otimes \Sigma^{\infty}\tilde{E}\cF_H)^H} && {(\nH \underl{R} \otimes \Sigma^{\infty}\tilde{E}\cF_H)^H}
	\arrow["{(\zeta_G)^H}", from=1-1, to=1-3]
	\arrow[from=1-1, to=2-1]
	\arrow[from=1-3, to=2-3]
	\arrow["{(\zeta_G \otimes \tn{id})^H}", from=2-1, to=2-3]
\end{tikzcd}\]
where the lower horizontal map is given by $\Phi^H(\zeta_G)$. By definition of $\zeta_G$ the top horizontal map on $\pi_0$ is the unit map $\bA_R \to \underl{R}$ in $\mack_R(\wgh)$, hence it is $\zeta_{\wgh}$. Now by \Cref{lem:geometricfixedpointsofem} and \Cref{rec:geometricfixedpoints} the two vertical maps induce isomorphisms on $\pi_0$, which finishes the proof.
\end{proof}

\begin{prop}\label{prop:eqmodfixnested}
Let $H \leq G$ be a $p$-subgroup and let $\bar{K} = K/H \leq \wgh$ be another $p$-subgroup, where $H \leq K \leq N_G(H)$. Then there is a natural inclusion
$$\weyl{(\wgh)}{(\bar{K})} = N_{\wgh}(\bar{K})/\bar{K} \hookrightarrow N_G(K)/K = \weyl{G}{K}$$
and the following diagram commutes:
\[\begin{tikzcd}
	{\Mod_{\nH \underl{R}}(\specg)} && {\Mod_{\nH \underl{R}}(\specwgh)} \\
	{\Mod_{\nH \underl{R}}(\spec^{\weyl{G}{K}})} && {\Mod_{\nH \underl{R}}(\spec^{\weyl{(\wgh)}{\bar{K}}}).}
	\arrow["{\Psi^{H;G}}", from=1-1, to=1-3]
	\arrow["{\Psi^{K;G}}"', from=1-1, to=2-1]
	\arrow["{\Psi^{\bar{K};\wgh}}", from=1-3, to=2-3]
	\arrow["{\tn{res}_{\weyl{(\wgh)}{(\bar{K})}}^{\weyl{G}{K}}}", from=2-1, to=2-3]
\end{tikzcd}\]
\end{prop}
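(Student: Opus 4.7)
The strategy is to reduce the claim to the nesting compatibility of geometric fixed points recorded in \Cref{rec:geometricfixedpoints}, together with a $\pi_0$ computation identifying the two induced base-change ring maps.

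First, I would precompose both compositions with the free functor $F_{\nH \underl{R}}: \specg \to \Mod_{\nH \underl{R}}(\specg)$. Iterating \Cref{lem:eqmodfixfree} gives
\[\Psi^{\bar{K};\wgh} \circ \Psi^{H;G} \circ F_{\nH \underl{R}} \;\simeq\; F_{\nH \underl{R}} \circ \Phi^{\bar{K};\wgh} \circ \Phi^{H;G}.\]
Since the subgroup inclusion $\weyl{(\wgh)}{(\bar{K})} \hookrightarrow \weyl{G}{K}$ preserves $\nH \underl{R}$, \Cref{cons:restrictionmodules} realises its induced functor on module categories as the honest restriction, which therefore commutes with the free functor, so that $\tn{res} \circ \Psi^{K;G} \circ F_{\nH \underl{R}} \simeq F_{\nH \underl{R}} \circ \tn{res} \circ \Phi^{K;G}$. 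The nested geometric fixed points square from \Cref{rec:geometricfixedpoints} identifies these two compositions.

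To upgrade the equivalence to all of $\Mod_{\nH \underl{R}}(\specg)$, I would use that $\Phi^{\bar{K};\wgh}$ is symmetric monoidal (hence intertwines $\psi_{H,!}$ with $\Phi^{\bar{K};\wgh}(\psi_H)_!$) and that restriction for subgroup inclusion similarly commutes with $\psi_{K,!}$. Under the nesting equivalence of geometric fixed points, both compositions then become extensions of scalars along ring maps
\[\tn{res}\Phi^{K;G}(\nH \underl{R}) \;\simeq\; \Phi^{\bar{K};\wgh}\Phi^{H;G}(\nH \underl{R}) \longrightarrow \nH \underl{R}\]
in $\calg(\spec^{\weyl{(\wgh)}{(\bar{K})}})$. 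By \cite[Theorem 4.8.5.11]{lur17} applied to $\spec^{\weyl{(\wgh)}{(\bar{K})}}$, it then suffices to check that these two ring maps coincide.

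Since the source is connective (both $\Phi^{(-)}$ and $\tn{res}$ preserve connectivity) and the target lies in the t-heart, both ring maps are determined on $\pi_0$. Applying \Cref{lem:geometricfixedpointsofem} to $K$ (and using t-exactness of restriction) computes $\pi_0\tn{res}\Phi^{K;G}(\nH \underl{R}) \cong \underl{R}$; iterating the lemma for $H$ and then $\bar{K}$ yields the same identification for $\pi_0\Phi^{\bar{K};\wgh}\Phi^{H;G}(\nH \underl{R})$. By construction (\Cref{defi:eqmodularfixedpoints}) each $\psi$ is identic on $\pi_0$, so both ring maps reduce to the identity $\underl{R} \to \underl{R}$. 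The main obstacle is to verify that the two identifications of $\pi_0$ with $\underl{R}$ match under the nesting equivalence, which amounts to tracking the Bredon homology cokernel computation of \Cref{lem:geometricfixedpointsofem} through the two fixed point operations, a careful but routine bookkeeping exercise.
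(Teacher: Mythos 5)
Your reduction is essentially the paper's: use the nesting equivalence $\Phi^{\bar{K};\wgh}\Phi^{H;G} \simeq \tn{res}\,\Phi^{K;G}$ from \Cref{rec:geometricfixedpoints}, handle the extension-of-scalars squares via \cite[Theorem 4.8.5.11]{lur17}, and reduce to comparing two ring maps $\Phi^{\bar{K};\wgh}\Phi^{H;G}(\nH \underl{R}) \simeq \tn{res}(\Phi^{K;G}(\nH \underl{R})) \to \nH \underl{R}$, which are determined on $\pi_0 \cong \underl{R}$ by connectivity of the source, the target being in the t-heart, and \Cref{lem:geometricfixedpointsofem}. (One imprecision: the two composites out of $\Mod_{\nH \underl{R}}(\specg)$ are not literally extensions of scalars in $\spec^{\weyl{(\wgh)}{(\bar{K})}}$, so invoking \cite[Theorem 4.8.5.11]{lur17} directly "applied to $\spec^{\weyl{(\wgh)}{(\bar{K})}}$" needs either the paper's square-by-square decomposition or the monadic identification of \Cref{cor:weylspectramodulesanyring} and \Cref{re:anotherapproach}; this is repairable.)

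The genuine gap is the final step, which you flag as "routine bookkeeping" but which is in fact the whole remaining content. Knowing that $\pi_0$ of the source is abstractly $\underl{R}$ and that each $\psi$ is identic on $\pi_0$ \emph{under its own identification} does not force the two composite ring maps to agree: a map of Green functors $\underl{R} \to \underl{R}$ is an arbitrary ring endomorphism of $R$, and your two composites are "the identity" only relative to two a priori different identifications of $\pi_0(\Phi^{\bar{K}}\Phi^{H}(\nH \underl{R}))$ with $\underl{R}$ — one by iterating \Cref{lem:geometricfixedpointsofem} through $\Phi^{\bar{K}}$, the other by transporting the computation for $K$ across the nesting equivalence. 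These could differ by a nontrivial automorphism of $R$, and matching them is not a chain-level exercise, since the nesting equivalence is only produced by the essential uniqueness of \Cref{thm:specguniversal}, so its effect on $\pi_0\Phi^{\bullet}(\nH \underl{R})$ is not visible in the Bredon cokernel computation without further compatibility statements. The paper closes exactly this point by a different mechanism: by \Cref{prop:eqmodfixhrlinear} and \Cref{re:restrictionunderhr} (with \Cref{cons:restrictionmodules}) both composites are functors under $\Mod_{\nH R}(\spec)$, so on $\pi_0$ both ring maps are $R$-algebra maps $R \to R$, and the identity is the unique such map — no comparison of identifications is needed. Your argument needs either this $\nH R$-linearity input or an honest proof that the two identifications coincide; as written, it does not supply either.
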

\begin{proof}
By \Cref{rec:geometricfixedpoints} we have that $\Phi^{\bar{K};\wgh} \circ \Phi^{H;G} \simeq \tn{res}_{\weyl{(\wgh)}{(\bar{K})}}^{\weyl{G}{K}} \circ \Phi^{K;G}$, hence $\Phi^{\bar{K};\wgh}\Phi^{H;G}(\nH \underl{R}) \simeq \tn{res}_{\weyl{(\wgh)}{(\bar{K})}}^{\weyl{G}{K}}(\Phi^{K;G}(\nH \underl{R}))$ and the top left square in the following diagram commutes.
\[\begin{tikzcd}[scale cd=0.88]
	{\Mod_{\nH \underl{R}}(\specg)} & {\Mod_{\Phi^{H}(\nH \underl{R})}(\specwgh)} & {\Mod_{\nH \underl{R}}(\specwgh)} \\
	{\Mod_{\Phi^{K}(\nH \underl{R})}(\spec^{\weyl{G}{K}})} & {\Mod_{\Phi^{\bar{K}}\Phi^{H}(\nH \underl{R})}(\spec^{\weyl{(\wgh)}{(\bar{K})}}) \;} & {\; \Mod_{\Phi^{\bar{K}}(\nH \underl{R})}(\spec^{\weyl{(\wgh)}{(\bar{K})}})} \\
	{\Mod_{\nH \underl{R}}(\spec^{\weyl{G}{K}})} & {\Mod_{\tn{res}(\nH \underl{R})}(\spec^{\weyl{(\wgh)}{(\bar{K})}})} & {\Mod_{\nH \underl{R}}(\spec^{\weyl{(\wgh)}{(\bar{K})}})}
	\arrow["{\Phi^{H;G}}", from=1-1, to=1-2]
	\arrow["{\Phi^{K;G}}", from=1-1, to=2-1]
	\arrow["{\psi_{H,!}}", from=1-2, to=1-3]
	\arrow["{\Phi^{\bar{K};\wgh}}", from=1-2, to=2-2]
	\arrow["{\Phi^{\bar{K};\wgh}}", from=1-3, to=2-3]
	\arrow["{\tn{res}}", from=2-1, to=2-2]
	\arrow["{\psi_{K,!}}", from=2-1, to=3-1]
	\arrow["{\Phi^{\bar{K}}(\psi_{H})_!}", from=2-2, to=2-3]
	\arrow["{\psi_{\bar{K},!}}", from=2-3, to=3-3]
	\arrow["{\tn{res}}", from=3-1, to=3-2]
    \arrow["{\tn{res}}", from=3-1, to=3-2]
    \arrow["{\tn{res}(\psi_{K})_!}", from=2-2, to=3-2]
    \arrow["{\nu_{\weyl{(\wgh)}{(\bar{K})},!}^{\weyl{G}{K}}}", from=3-2, to=3-3]
    \arrow["\sim"', draw=none, from=3-2, to=3-3]
\end{tikzcd}\]
The functoriality in \cite[Theorem 4.8.5.11]{lur17} implies that the top right and the bottom left square commute. It remains to show that $\psi_{\bar{K}}\Phi^{\bar{K}}(\psi_H) \simeq \nu \, \tn{res}(\psi_K)$ as algebra maps 
$$\Phi^{\bar{K};\wgh}\Phi^{H;G}(\nH \underl{R}) \simeq \tn{res}_{\weyl{(\wgh)}{(\bar{K})}}^{\weyl{G}{K}}(\Phi^{K;G}(\nH \underl{R})) \to \nH \underl{R}.$$
Since the domain is connective and the target is in the t-heart, such a map is determined on $\pi_0$, where it is a map $\underl{R} \to \underl{R}$ by \Cref{lem:geometricfixedpointsofem}, and hence a map $R \to R$. Now $\psi_H$ is produced as a map of $R$-algebras on $\pi_0$, and $\psi_{\bar{K},!}\Phi^{\bar{K}}(\psi_{H}) \simeq \Psi^{\bar{K}}(\psi_{H})$ by definition of $\Psi^{\bar{K}}$, hence $\psi_{\bar{K},!}\Phi^{\bar{K}}(\psi_{H})_! \simeq \Psi^{\bar{K}}(\psi_{H})_!$. By \Cref{prop:eqmodfixhrlinear} $\Psi^{\bar{K}}$ is a functor under $\Mod_{\nH R}(\spec)$, hence $\Psi^{\bar{K}}(\psi_{H})_!$ corresponds to a map of $R$-algebras $R \to R$ on $\pi_0$. Using \Cref{cons:restrictionmodules} and \Cref{re:restrictionunderhr} the same reasoning applies to $\nu_{\weyl{(\wgh)}{(\bar{K})}}^{\weyl{G}{K}}\tn{res}_{\weyl{(\wgh)}{(\bar{K})}}^{\weyl{G}{K}}(\psi_{K})$. Since there is only one $R$-algebra map $R \to R$ the bottom right square commutes.
\end{proof}

\begin{cor}
Let $H \leq K \leq G$ be two $p$-subgroups with $H \trianglelefteq G$ normal. Then $\weyl{(G/H)}{(K/H)} = N_{G/H}(K/H)/(K/H) \cong \weyl{G}{K}$ and the following triangle commutes:
\[\begin{tikzcd}
	{\Mod_{\nH \underl{R}}(\specg)} & {\Mod_{\nH \underl{R}}(\spec^{G/H})} \\
	& {\Mod_{\nH \underl{R}}(\spec^{\weyl{G}{K}}).}
	\arrow["{\Psi^{H;G}}", from=1-1, to=1-2]
	\arrow["{\Psi^{K;G}}"', from=1-1, to=2-2]
	\arrow["{\Psi^{K/H;G/H}}", from=1-2, to=2-2]
\end{tikzcd}\]
\end{cor}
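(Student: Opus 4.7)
The plan is to obtain this corollary as a direct specialisation of \Cref{prop:eqmodfixnested} to the case where $H$ is a normal subgroup. First, I would record the group-theoretic simplifications that occur under this normality assumption. Since $H \trianglelefteq G$ we have $N_G(H) = G$ and hence $\wgh = G/H$, so the hypothesis $H \leq K \leq N_G(H)$ of \Cref{prop:eqmodfixnested} is automatic. The subgroup $\bar{K} = K/H \leq G/H$ is again a $p$-subgroup, so the proposition applies.

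Next, I would verify the Weyl group identification $\weyl{(G/H)}{(K/H)} \cong \weyl{G}{K}$. Since $H \leq K$ and $H \trianglelefteq G$, the standard correspondence theorem gives $N_{G/H}(K/H) = N_G(K)/H$, and the third isomorphism theorem then yields
\[
\weyl{(G/H)}{(K/H)} = (N_G(K)/H)/(K/H) \;\cong\; N_G(K)/K = \weyl{G}{K}.
\]
Under this identification the inclusion $\weyl{(\wgh)}{(\bar{K})} \hookrightarrow \weyl{G}{K}$ appearing in \Cref{prop:eqmodfixnested} is an equality, so the restriction functor $\tn{res}_{\weyl{(\wgh)}{(\bar{K})}}^{\weyl{G}{K}}$ is the identity on $\Mod_{\nH \underl{R}}(\spec^{\weyl{G}{K}})$.

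With these observations in place, the square of \Cref{prop:eqmodfixnested} collapses to the asserted commuting triangle
\[
\Psi^{K/H;\, G/H} \circ \Psi^{H;G} \simeq \Psi^{K;G}.
\]
There is no serious obstacle here; the only item requiring care is the identification of Weyl groups, which is a purely $1$-categorical group-theoretic check and is then transported to the module categories via the naturality already established in \Cref{prop:eqmodfixnested}.
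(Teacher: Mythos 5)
Your proof is correct and takes essentially the same route as the paper: both deduce the triangle by specialising \Cref{prop:eqmodfixnested} to the case of a normal subgroup, where the inclusion of Weyl groups $\weyl{(\wgh)}{(\bar{K})} \hookrightarrow \weyl{G}{K}$ becomes an isomorphism and the restriction functor therefore an equivalence. Your explicit verification that $N_{G/H}(K/H)=N_G(K)/H$ and the third-isomorphism-theorem identification is precisely the group-theoretic check the paper leaves implicit.
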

\begin{proof}
This follows from \Cref{prop:eqmodfixnested} as $\tn{res}^{\weyl{G}{K}}_{\weyl{(G/H)}{(K/H)}}$ is an equivalence.
\end{proof}

\begin{prop}\label{prop:eqmodfixrestriction}
Let $\alpha: G \to G'$ be a group homomorphism, $H \leq G$ a $p$-subgroup and set $H' \defeq \alpha(H)$, inducing a homomorphism $\overl{\alpha}: \wgh \to \weyl{G'}{H'}$. Then the following square commutes:
\[\begin{tikzcd}
	{\Mod_{\nH \underl{R}}(\spec^{G'})} & {\Mod_{\nH \underl{R}}(\spec^{G})} \\
	{\Mod_{\nH \underl{R}}(\spec^{\weyl{G'}{H'}})} & {\Mod_{\nH \underl{R}}(\specwgh).}
	\arrow["{\alpha^*}", from=1-1, to=1-2]
	\arrow["{\Psi^{H';G'}}"', from=1-1, to=2-1]
	\arrow["{\Psi^{H;G}}", from=1-2, to=2-2]
	\arrow["{\overl{\alpha}^*}", from=2-1, to=2-2]
\end{tikzcd}\]
\end{prop}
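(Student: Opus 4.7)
The plan is to reduce the commutativity of the diagram to a comparison of commutative algebra maps in $\calg(\specwgh)$ via the universal property of module $\infty$-categories. I will factor $\alpha$ as the inclusion of a closed subgroup followed by the quotient by a closed normal subgroup and handle these two cases separately.

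Both composites $\overl{\alpha}^* \circ \Psi^{H';G'}$ and $\Psi^{H;G} \circ \alpha^*$ are symmetric monoidal left adjoints between the relevant module categories. Unwinding \Cref{cons:restrictionmodules} and \Cref{defi:eqmodularfixedpoints}, each composite can be written as the underlying left adjoint $\overl{\alpha}^* \Phi^{H';G'} \simeq \Phi^{H;G} \alpha^*$ on spectra (equivalent by the naturality of geometric fixed points recorded in \Cref{rec:geometricfixedpoints}) postcomposed with the free functor to $\Mod_{\nH \underl{R}}(\specwgh)$ and further extensions of scalars assembled from $\psi_{H'}, \psi_{H}, \nu(\alpha)$, and $\nu(\overl{\alpha})$. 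Invoking \cite[Theorem 4.8.5.11]{lur17} as in \Cref{re:anotherapproach} and \Cref{cor:eqmodfixequivalent}, commutativity of the original diagram reduces to equivalence of the two composite algebra maps $\overl{\alpha}^* \Phi^{H';G'}(\nH \underl{R}) \to \nH \underl{R}$ in $\calg(\specwgh)$.

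To identify these two algebra maps, I note that the source is connective (by \Cref{re:geometricfixedpointsremedy} together with right $t$-exactness of restriction and inflation), while the target lies in the $t$-heart; hence each map is determined on $\tau_{t \leq 0}$. By \Cref{prop:eqmodfixhrlinear} and \Cref{re:restrictionunderhr} both composites are morphisms under $\Mod_{\nH R}(\spec)$, so each induced algebra map is $\nH R$-linear. Arguing as in the proof of \Cref{prop:eqmodfixnested}, after passing to $\pi_0^1$ via \Cref{lem:pi0ofinflation} this reduces the comparison to $R$-algebra endomorphisms of $R$, of which the identity is the only one; the two algebra maps therefore coincide.

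The main obstacle will be the careful bookkeeping of the sequence of base changes, confirming that on each side the iterated extensions of scalars indeed produce a single algebra map with target $\nH \underl{R}$ and that both arise from a common underlying left adjoint on $\spec^{G'}$. Once this has been unwound, the $\nH R$-linearity of everything in sight collapses the comparison to the uniqueness of the identity ring endomorphism of $R$.
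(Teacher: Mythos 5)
Your proposal is correct and follows essentially the same route as the paper: decompose both composites into geometric fixed points plus extensions of scalars, reduce via \cite[Theorem 4.8.5.11]{lur17} to comparing two algebra maps $\bar{\alpha}^*\Phi^{H'}(\nH \underl{R}) \to \nH \underl{R}$, and then use connectivity, $\nH R$-linearity and \Cref{lem:pi0ofinflation} to collapse the comparison on $\pi_0^1$ to the unique $R$-algebra endomorphism of $R$. The only cosmetic difference is that you split into the subgroup and quotient cases at the outset, whereas the paper keeps $\alpha$ general and only invokes that factorisation at the final step to compute $\pi_0^1(\bar{\alpha}^*\Phi^{H'}(\nH \underl{R})) \cong R$.
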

\begin{proof}
By \Cref{rec:geometricfixedpoints} we have $\Phi^{H;G}\alpha^* \simeq \overl{\alpha}^*\Phi^{H';G'}$, and in particular it follows that $\Phi^{H}\alpha^*(\nH \underl{R}) \simeq \overl{\alpha}^*\Phi^{H'}(\nH \underl{R})$. Consequently the top left square in the following diagram commutes:
\[\begin{tikzcd}
	{\Mod_{\nH \underl{R}}(\spec^{G'})} & {\Mod_{\alpha^*(\nH \underl{R})}(\spec^{G})} & {\Mod_{\nH \underl{R}}(\spec^{G})} \\
	{\Mod_{\Phi^{H'}(\nH \underl{R})}(\spec^{\weyl{G'}{H'}})} & {\Mod_{\Phi^H\alpha^*(\nH \underl{R})}(\specwgh) \;\;} & {\;\; \Mod_{\Phi^H(\nH \underl{R})}(\specwgh)} \\
	{\Mod_{\nH \underl{R}}(\spec^{\weyl{G'}{H'}})} & {\Mod_{\bar{\alpha}^*(\nH \underl{R})}(\specwgh)} & {\Mod_{\nH \underl{R}}(\specwgh)}
	\arrow["{\alpha^*}", from=1-1, to=1-2]
	\arrow["{\Phi^{H';G'}}", from=1-1, to=2-1]
	\arrow["{\nu(\alpha)_!}", from=1-2, to=1-3]
	\arrow["{\Phi^{H;G}}", from=1-2, to=2-2]
	\arrow["{\Phi^{H;G}}", from=1-3, to=2-3]
	\arrow["{\bar{\alpha}^*}", from=2-1, to=2-2]
	\arrow["{\psi_{H',!}}", from=2-1, to=3-1]
	\arrow["{\Phi^H(\nu(\alpha))_!}", from=2-2, to=2-3]
	\arrow["{\bar{\alpha}^*(\psi_{H'})_!}", from=2-2, to=3-2]
	\arrow["{\psi_{H,!}}", from=2-3, to=3-3]
	\arrow["{\bar{\alpha}^*}", from=3-1, to=3-2]
	\arrow["{\nu(\bar{\alpha})_!}", from=3-2, to=3-3]
\end{tikzcd}\]
The top right and lower left square commute by the functoriality in \cite[Theorem 4.8.5.11]{lur17}. For the lower right square we will argue as in the proof of \Cref{prop:eqmodfixnested}. We have to identify $\psi_{H,!}\Phi^H(\nu(\alpha))$ and $\nu(\bar{\alpha})_!\alpha^*(\psi_{H'})$ as algebra maps 
$$\Phi^{H}\alpha^*(\nH \underl{R}) \simeq \bar{\alpha}^*\Phi^{H'}(\nH \underl{R}) \to \nH \underl{R}.$$ Since the domain is connective and the target is in the t-heart, such a map is determined on $\pi_0$. The map of Mackey functors obtained in this way has target the constant Mackey functor, hence it is already determined on the sections for the trivial subgroup $\pi^1_0$. The maps $\nu(\alpha)$ and $\psi_{H'}$ are produced on $\pi_0^1$ as maps of $R$-algebras, see \Cref{cons:restrictionmodules} and \Cref{defi:eqmodularfixedpoints}. We have $\psi_{H,!}\Phi^H(\nu(\alpha))_! \simeq \Psi^H(\nu(\alpha))_{!}$ by definition of $\Psi^H$, and the latter is a functor under $\Mod_{\nH R}(\spec)$ by \Cref{prop:eqmodfixhrlinear}. Similar reasoning applies to $\nu(\bar{\alpha})_!\bar{\alpha}^*$ using \Cref{re:restrictionunderhr}. Thus both maps correspond to maps of $R$-algebras on $\pi^1_0$, and it suffices to show that $\pi^1_0(\bar{\alpha}^*\Phi^{H'}(\nH \underl{R}))$ is itself given by $R$, which only admits one $R$-algebra map to $R$. \\
When $\alpha$ is the inclusion of a subgroup $G \hookrightarrow G'$ the induced $\bar{\alpha}: \wgh \hookrightarrow \weyl{G'}{H}$ is an inclusion as well, hence $\pi^1_0(\tn{res}^{\weyl{G'}{H}}_{\wgh}\Phi^{H'}(\nH \underl{R})) \cong \pi^1_0(\Phi^{H'}(\nH \underl{R})) \cong R$. When $\alpha$ is a quotient map $G \twoheadrightarrow G/N$ then the induced homomorphism $\bar{\alpha}$ is the canonical quotient map $\wgh \twoheadrightarrow (\wgh)/(HN/N) \cong \weyl{(G/N)}{(HN/N)}$, so by \Cref{lem:pi0ofinflation} we have $\pi^1_0(\tn{infl}^{\wgh}_{\weyl{(G/N)}{(HN/N)}}\Phi^{H'}(\nH \underl{R})) \cong \pi^1_0(\Phi^{H'}(\nH \underl{R})) \cong R$, finishing the proof.
\end{proof}

\begin{cor}\label{cor:restrictiontonormalsubgroup}
Let $H \leq G$ be a $p$-subgroup. Then the following triangle commutes:
\[\begin{tikzcd}
	{\Mod_{\nH \underl{R}}(\spec^{G})} && {\Mod_{\nH \underl{R}}(\spec^{N_G(H)})} \\
	&& {\Mod_{\nH \underl{R}}(\spec^{\wgh}).}
	\arrow["{\tn{res}^G_{N_G(H)}}", from=1-1, to=1-3]
	\arrow["{\Psi^{H;G}}"', from=1-1, to=2-3]
	\arrow["{\Psi^{H;N_G(H)}}", from=1-3, to=2-3]
\end{tikzcd}\]
\end{cor}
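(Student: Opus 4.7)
The plan is to apply \Cref{prop:eqmodfixrestriction} directly to the inclusion $\alpha: N_G(H) \hookrightarrow G$. Since $H \leq N_G(H)$, we have $H' \defeq \alpha(H) = H$. Moreover, the Weyl group of $H$ inside $N_G(H)$ is
$$\weyl{N_G(H)}{H} = N_{N_G(H)}(H)/H = N_G(H)/H = \wgh,$$
so the induced homomorphism $\overl{\alpha}: \weyl{N_G(H)}{H} \to \wgh$ is the identity on $\wgh$. Consequently the restriction functor $\overl{\alpha}^*$ appearing in \Cref{prop:eqmodfixrestriction} is the identity on $\Mod_{\nH \underl{R}}(\spec^{\wgh})$.

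Plugging these identifications into the commutative square supplied by \Cref{prop:eqmodfixrestriction} yields
\[\begin{tikzcd}
	{\Mod_{\nH \underl{R}}(\spec^{G})} & {\Mod_{\nH \underl{R}}(\spec^{N_G(H)})} \\
	{\Mod_{\nH \underl{R}}(\spec^{\wgh})} & {\Mod_{\nH \underl{R}}(\spec^{\wgh}),}
	\arrow["{\tn{res}^G_{N_G(H)}}", from=1-1, to=1-2]
	\arrow["{\Psi^{H;G}}"', from=1-1, to=2-1]
	\arrow["{\Psi^{H;N_G(H)}}", from=1-2, to=2-2]
	\arrow["{\tn{id}}", from=2-1, to=2-2]
\end{tikzcd}\]
which is precisely the commutativity of the triangle asserted in the corollary. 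No additional computation is required, so there is no real obstacle; the work has already been done in establishing naturality of $\Psi^H$ with respect to group homomorphisms.
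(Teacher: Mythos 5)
Your proposal is correct and is essentially identical to the paper's own proof: both apply \Cref{prop:eqmodfixrestriction} to the inclusion $\alpha: N_G(H) \hookrightarrow G$, observe $H' = \alpha(H) = H$ and $\weyl{N_G(H)}{H} = \wgh$ so that $\overl{\alpha}^*$ is the identity, and read off the triangle from the resulting square.
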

\begin{proof}
This follows from \Cref{prop:eqmodfixrestriction} applied to $\alpha: N_G(H) \hookrightarrow G$, so $H'=H$ and $\weyl{N_G(H)}{H} = \weyl{G}{H}$.
\end{proof}

\begin{cor}
Let $N \trianglelefteq G$ be a normal $p$-subgroup. Then the composition
$$\Psi^N \circ \tn{infl}^G_{G/N}: \Mod_{\nH \underl{R}}(\spec^{G/N}) \to \Mod_{\nH \underl{R}}(\specg) \to \Mod_{\nH \underl{R}}(\spec^{G/N})$$ is equivalent to the identity.
\end{cor}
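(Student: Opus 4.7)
The plan is to reduce this to a direct application of Proposition \ref{prop:eqmodfixrestriction} for the quotient map $\alpha: G \twoheadrightarrow G/N$, taking as the $p$-subgroup $H \defeq N \leq G$ in the source of $\alpha$. With $G' \defeq G/N$ we then have $H' = \alpha(N) = 1$, the trivial subgroup of $G/N$, so that $\weyl{G'}{H'} = G/N$. On the other hand, because $N$ is normal in $G$ we get $\wgh = N_G(N)/N = G/N$, and the homomorphism $\overl{\alpha}: \wgh \to \weyl{G'}{H'}$ induced by $\alpha$ on Weyl groups is the identity of $G/N$.

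The second ingredient is the observation that $\Psi^{1;G/N}$ is the identity functor on $\Mod_{\nH \underl{R}}(\spec^{G/N})$. Indeed, $\Phi^{1;G/N}$ is the identity by construction (\Cref{rec:geometricfixedpoints}), and the corresponding ring map $\psi_1: \Phi^1(\nH \underl{R}) \to \nH \underl{R}$ is the identity by \Cref{defi:eqmodularfixedpoints} since $\pi_0(\nH \underl{R}) \cong \underl{R}$ is identified with $\underl{R}$ through the identity; consequently the extension of scalars $\psi_{1,!}$ is the identity as well.

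Substituting into the commutative square provided by \Cref{prop:eqmodfixrestriction}, and using that under \Cref{cons:restrictionmodules} the functor $\alpha^*: \Mod_{\nH \underl{R}}(\spec^{G/N}) \to \Mod_{\nH \underl{R}}(\spec^G)$ is exactly $\tn{infl}^G_{G/N}$ on module categories, we obtain
$$\Psi^{N;G} \circ \tn{infl}^G_{G/N} \simeq \overl{\alpha}^* \circ \Psi^{1;G/N} \simeq \tn{id} \circ \tn{id} \simeq \tn{id},$$
which is the desired statement. There is essentially no obstacle here: the only small bookkeeping is to verify that the Weyl groups genuinely match and that $\overl{\alpha}$ is the identity, both of which are immediate from $N \trianglelefteq G$.
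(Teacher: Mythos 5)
Your proposal is correct and follows essentially the same route as the paper, which simply applies \Cref{prop:eqmodfixrestriction} to the quotient map $\alpha: G \twoheadrightarrow G/N$ with $H=N$, so that $H'=1$ and $\weyl{G'}{H'}=G/N$. Your extra bookkeeping (that $\overl{\alpha}$ is the identity, that $\Psi^{1;G/N}\simeq\mathrm{id}$, and that $\alpha^*$ in the sense of \Cref{cons:restrictionmodules} is the inflation functor appearing in the statement) is exactly what the paper leaves implicit.
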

\begin{proof}
Apply \Cref{prop:eqmodfixrestriction} to $\alpha: G \twoheadrightarrow G/N = G'$ and $H=N$, so that $H'=1$ and $\weyl{G'}{H'} = G/N$.
\end{proof}

We will now extend the defined modular fixed point functors to profinite groups. One approach to do so for $H$ a closed pro-$p$-subgroup of some profinite group $G = \lim_i G_i = \lim_i G/N_i$ would be to mimic \Cref{defi:eqmodularfixedpoints}. This would mean defining it as the functor induced by the geometric $H$-fixed points on module categories, followed by an extension of scalars. Instead of arguing about the $\pi_0$ of geometric fixed points of $\nH \underl{R}$ in the profinite case we will use a colimit description of $\Mod_{\nH \underl{R}}(\specg)$ to formally extend the functor. \\
As argued in the proof of \Cref{thm:mackeyasspgmodules}, the equivalence $\specg \simeq \tn{colim}_i \; \spec^{G_i}$ induces an equivalence $\Mod_{\nH \underl{R}}(\specg) \simeq \tn{colim}_i \, \Mod_{\nH \underl{R}}(\spec^{G_i})$ for module categories, where the colimit is taken in $\calg(\prlst)$ along the maps induced on module categories by the restriction functors $f_{ij}^*: \spec^{G_j} \to \spec^{G_i}$ followed by a base change, as in \Cref{cons:restrictionmodules}. We provide some general framework to deal with the higher coherences involved when mapping out of this colimit. The construction of the natural transformation this requires is lengthy, but in a nutshell it says that modular fixed points highly coherently commute with restriction along a group homomorphism, which is an enhancement of \Cref{prop:eqmodfixrestriction}.

\begin{cons}\label{cons:eqmodfixasnattrans}
Write $\gppairs$ for the category of pairs $(G,H)$ with $G$ a finite group and $H \leq G$ a $p$-subgroup, a morphism $\alpha: (G,H) \to (G',H')$ is a group homomorphism $\alpha: G \to G'$ with $\alpha(H) = H'$. We denote by $\glo$ the global category of finite groups, group homomorphisms and conjugations. It is a (2,1)-category which we view as an $\infty$-category via the Duskin nerve. By \cite[Corollary 10.6, Theorem 11.10, Remark p. 1382]{lnp25} there is a functor 
$$\spec^{(-)}: \glo^\op \to \calg(\prlst), \; G \mapsto \specg,$$
called global category of equivariant spectra, and a symmetric monoidal equivalence 
\begin{equation}\label{eq:laxlim}
\spec_{\tn{Fin-gl}} \simeq \underset{G \in (\glo^\op)^{\dagger}}{\tn{laxlim}^{\dagger}} \; \specg
\end{equation}
between Schwedes's $\infty$-category of Fin-global spectra which is defined as the underlying $\infty$-category of the Fin-global model structure on orthogonal spectra from \cite[Theorem 4.3.17]{sch18} and the partially lax limit over $(\glo^\op)^{\dagger}$ of the functor $\spec^{(-)}$ as defined in \cite[Section 11]{lnp25}. We also refer to Sections 4 and 5 of \cite{lnp25} for a formal treatment of partially lax limits of (symmetric monoidal) $\infty$-categories. There are functors $\gppairs \to \glo$ which map 
\begin{align*}
&(G,H) \mapsto G, \; (\alpha: (G,H) \to (G',H')) \mapsto (\alpha: G \to G'),\\
&(G,H) \mapsto \wgh, \; (\alpha: (G,H) \to (G',H')) \mapsto (\bar{\alpha}: \wgh \to \weyl{G'}{H'}),
\end{align*}
where $\bar{\alpha}$ is induced by $\alpha$ on Weyl groups as in \Cref{prop:eqmodfixrestriction}. Composing with $\spec^{(-)}: \glo^\op \to \calg(\prlst)$ yields functors $\cM, \cN: \gppairs^\op \to \calg(\prlst)$ which map
\begin{align*}
&\cM: (G,H) \mapsto \specg, \; (\alpha: (G,H) \to (G',H')) \mapsto (\alpha^*: \spec^{G'} \to \specg), \\
&\cN: (G,H) \mapsto \specwgh, \; (\alpha: (G,H) \to (G',H')) \mapsto (\bar{\alpha}^*: \spec^{\weyl{G'}{H'}} \to \specwgh).
\end{align*}
The fact that geometric fixed points are natural with respect to restriction along a group homomorphism as in \Cref{rec:geometricfixedpoints} can be phrased highly coherently in terms of a natural transformation $\eta_{\tn{geo}}: \gppairs^\op \times \Delta^1 \to \calg(\prlst)$ between the above functors. This can be deduced by deriving an appropriate model categorical natural transformation \cite[Section V]{mm02} or by using the parametrised approach of Bachmann-Hoyois \cite{bh21}, see \cite[Section 6.B]{bbb24}. \\
Associated to the ring $R$ there exists a constant Fin-global Green functor $\underl{R}_{\tn{Fin-gl}}$ which can be promoted to an ultracommutative Fin-global Eilenberg-MacLane ring spectrum $\nH \underl{R}_{\tn{Fin-gl}} \in \calg(\spec_{\tn{Fin-gl}})$, for its construction we refer to \cite[Section 5.3]{sch18}. Under the equivalence \ref{eq:laxlim} it corresponds to the partially lax family of ring spectra which informally is given by the Eilenberg-MacLane spectrum $\nH \underl{R} \in \calg(\specg)$ for every $G \in \glo$ and compatible ring maps $\nu(\alpha): \alpha^*(\nH \underl{R}) \to \nH \underl{R}$ for every $\alpha: G \to G'$ which are equivalences when $\alpha$ is injective, as given in \Cref{cons:restrictionmodules} (and more structure which comes with higher coherences, see \cite[Section 1]{lnp25}). Restriction along the two functors $\gppairs \to \glo$ gives a (partially) lax family of ring spectra indexed by $\gppairs$, and hence an object $\cH \underl{R}$ in $\calg(\tn{laxlim} \, \cM)$ and $\calg(\tn{laxlim} \, \cN)$. The natural transformation $\eta_{\tn{geo}}$ induces a map 
$$\phi: \calg(\tn{laxlim} \, \cM) \to \calg(\tn{laxlim} \, \cN),$$
and it takes $\cH \underl{R}$ to $\phi(\cH \underl{R})$, the latter is informally given by the family
\begin{gather*}
\Phi^H(\nH \underl{R}) \in \calg(\specwgh) \tn{ for } (G,H) \in \gppairs,\\
\Phi^H(\nu(\alpha)): \bar{\alpha}^*\Phi^{H'}(\nH \underl{R}) \simeq \Phi^H\alpha^*(\nH \underl{R}) \to \Phi^H(\nH \underl{R}) \tn{ for } \alpha: (G,H) \to (G',H').
\end{gather*}
Given a functor $\cC_{\bullet}: \cI \to \calg(\prlst)$ and an object $S \in \calg(\tn{laxlim} \; \cC_{\bullet})$ which corresponds to a lax family of algebra objects $S_i \in \cC_i$, \cite[Theorem 5.10]{lnp25} produces a module functor $\Mod_{S_{\bullet}}(\cC_{\bullet}): \cI \to \calg(\prlst), \; i \mapsto \Mod_{S_i}(\cC_i)$. We claim that this construction is functorial in both natural transformations of functors $\cI \to \calg(\prlst)$ and morphisms of algebra objects in $\calg(\tn{laxlim} \; \cC_{\bullet})$. Indeed, using the notation of the proof of op. cit., if $\cC_{\bullet}$ corresponds to the $\cI^{\amalg}$-monoidal $\infty$-category $\cC^{\otimes} \to \cI^{\amalg}$ under symmetric monoidal unstraightening \cite[Corollary A.12]{dg22}, then the functor $\Mod_{S_{\bullet}}(\cC_{\bullet})$ is identified as the straightening of the $\cI^{\amalg}$-monoidal $\infty$-category 
$$\alg_{\mcal{CM}/\cI^{\amalg}}(\cC)^{\otimes} \times_{\alg_{\finstar/\cI^{\amalg}}(\cC)^{\otimes}} \cI^{\amalg} \to \cI^{\amalg}.$$
The $\infty$-operad $\mcal{CM}^{\otimes}$ is defined in \cite[Appendix]{lnp25}. The commutative algebra object $S \in \calg(\tn{laxlim} \; \cC_{\bullet})$ is considered as a section of $\alg_{\finstar/\cI^{\amalg}}(\cC)^{\otimes} \to \cI^{\amalg}$ in $\infoperads$, here $\alg_{\finstar/\cI^{\amalg}}(\cC)^{\otimes} \to \cI^{\amalg}$ is constructed from the fibration of $\infty$-operads $\cC^{\otimes} \to \cI^{\amalg}$ using \cite[Construction 3.2.4.1]{lur17}, see \cite[Construction 5.11]{lnp25}. Viewing $S$ as such a section uses an equivalence
\begin{equation}\label{eq:calglaxlim}
\calg(\tn{laxlim} \; \cC_{\bullet}) \simeq \alg_{\cI^{\amalg}}(\alg_{\finstar/\cI^{\amalg}}(\cC)^{\otimes})
\end{equation}
which is deduced from \cite[Proposition 5.8]{lnp25}. If we are given a natural transformation $\cI \times \Delta^1 \to \calg(\prlst)$ between functors $\cC_{\bullet}, \cD_{\bullet}: \cI \to \calg(\prlst)$, it unstraightens to an $\cI^{\amalg}$-monoidal functor $\cC^{\otimes} \to \cD^{\otimes}$. A map $S \to S'$ in $\calg(\tn{laxlim} \; \cC_{\bullet})$ under the equivalence \ref{eq:calglaxlim} corresponds to a natural transformation of sections of the map $\alg_{\finstar/\cI^{\amalg}}(\cC)^{\otimes} \to \cI^{\amalg}$. Hence in both cases we obtain maps of $\cI^{\amalg}$-monoidal $\infty$-categories which are expressed as fibre products, and they straighten to natural transformations $\Mod_{S_{\bullet}}(\cC_{\bullet}) \Rightarrow \Mod_{S_{\bullet}}(\cD_{\bullet})$ and $\Mod_{S_{\bullet}}(\cC_{\bullet}) \Rightarrow \Mod_{R_{\bullet}}(\cC_{\bullet})$. \\
In our case, we obtain module functors 
$\cM_{\cH \underl{R}}, \, \cN_{\phi(\cH \underl{R})}, \, \cN_{\cH \underl{R}}: \gppairs^\op \to \calg(\prlst)$
which map $(G,H)$ to $\Mod_{\nH \underl{R}}(\specg)$, $\Mod_{\Phi^H(\nH \underl{R})}(\specwgh)$, and $\Mod_{\nH \underl{R}}(\specwgh)$ respectively. As above $\eta_{\tn{geo}}$ induces a natural transformation $\bar{\eta}_{\tn{geo}}: \cM_{\cH \underl{R}} \Rightarrow \cN_{\phi(\cH \underl{R})}$ which is pointwise given by the functors induced by geometric fixed points on module categories. To invoke the functoriality in morphisms of algebra objects, we will need that the maps $\psi_{H;G}: \Phi^H(\nH \underl{R}) \to \nH \underl{R}$ of \Cref{defi:eqmodularfixedpoints} assemble into a morphism $\phi(\cH \underl{R}) \to \cH \underl{R}$ in $\calg(\tn{laxlim} \; \cN)$. We will do this by computing the mapping space between (the sections corresponding to) $\phi(\cH \underl{R})$ and $\cH \underl{R}$ in $\alg_{(\gppairs^{\op})^{\amalg}}(\alg_{\finstar/(\gppairs^{\op})^{\amalg}}(\cN)^{\otimes})$. Here $\cN^{\otimes} \to (\gppairs^{\op})^{\amalg}$ is the cocartesian symmetric monoidal unstraightening of $\cN$. \\
Write $p: \alg_{\finstar/(\gppairs^{\op})^{\amalg}}(\cN)^{\otimes} \to (\gppairs^{\op})^{\amalg}$ for the map of $\infty$-operads constructed from $\cN^{\otimes} \to (\gppairs^{\op})^{\amalg}$ as above, and we furthermore write $S_{\phi(\cH \underl{R})}$ and $S_{\cH \underl{R}}$ for the sections of $p$ corresponding to $\phi(\cH \underl{R})$ and $\cH \underl{R}$ respectively. The $\infty$-category 
$$\alg_{(\gppairs^{\op})^{\amalg}}(\alg_{\finstar/(\gppairs^{\op})^{\amalg}}(\cN)^{\otimes}) \simeq \alg_{(\gppairs^{\op})^{\amalg}/(\gppairs^{\op})^{\amalg}}(\cN^{\otimes})$$ 
is the full subcategory of 
$$\fun_{(\gppairs^{\op})^{\amalg}}((\gppairs^{\op})^{\amalg},\cN^{\otimes})$$
on maps of $\infty$-operads, and the latter functor category sits in the pullback diagram
\begin{equation}\label{diag:pullbackfunctors}
\begin{tikzcd}
	{(-)} & {\fun((\gppairs^{\op})^{\amalg},\cN^{\otimes})} \\
	{\Delta^0} & {\fun((\gppairs^{\op})^{\amalg},(\gppairs^{\op})^{\amalg}).}
	\arrow[from=1-1, to=1-2]
	\arrow[from=1-1, to=2-1]
	\arrow["{p_*}", from=1-2, to=2-2]
	\arrow["{\tn{id}}", from=2-1, to=2-2]
    \arrow["\usebox\pullback"{anchor=center, pos=0.13}, draw=none, from=1-1, to=2-2]
\end{tikzcd}
\end{equation}
By \cite[Proposition 2.3]{gla16} mapping spaces in the two functor categories on the right-hand side of \ref{diag:pullbackfunctors} are computed as ends, and hence the mapping space 
\begin{equation}\label{eq:mappingspace}
\Map_{\alg_{(\gppairs^{\op})^{\amalg}}(\alg_{\finstar/(\gppairs^{\op})^{\amalg}}(\cN)^{\otimes})}(S_{\phi(\cH \underl{R})},S_{\cH \underl{R}})
\vspace{0.2cm}
\end{equation}
is computed as the pullback
\begin{equation}\label{diag:pullbackmappingspaces}
\begin{tikzcd}
	{(-)} & {\int_{(\gppairs^{\op})^{\amalg}} \Map_{\cN^{\otimes}}(S_{\phi(\cH \underl{R})}(-),S_{\cH \underl{R}}(-))} \\
	{\ast} & {\int_{(\gppairs^{\op})^{\amalg}} \Map_{(\gppairs^{\op})^{\amalg}}(\tn{id}(-),\tn{id}(-)).}
	\arrow[from=1-1, to=1-2]
	\arrow[from=1-1, to=2-1]
	\arrow["{p_*}", from=1-2, to=2-2]
	\arrow[from=2-1, to=2-2]
    \arrow["\usebox\pullback"{anchor=center, pos=0.13}, draw=none, from=1-1, to=2-2]
\end{tikzcd}
\end{equation}
The above ends are limits over the twisted arrow category $\tn{Tw}((\gppairs^{\op})^{\amalg})^\op$ (see \cite[Section 5.2.1]{lur17}), and we can view the terminal space $\ast$ as a limit over the constant diagram $\tn{Tw}((\gppairs^{\op})^{\amalg})^\op \to \spc$. Assigning to a morphism in $(\gppairs^{\op})^{\amalg}$ its corresponding point in the mapping space that appears in the bottom right end provides the lower horizontal map in \ref{diag:pullbackmappingspaces}. Hence the mapping spaces \ref{eq:mappingspace} are given by the limit of the functor $F: \tn{Tw}((\gppairs^{\op})^{\amalg})^\op \to \; \spc$ which sends an object $\alpha^\op: (G',H') \to (G,H)$ of $\tn{Tw}((\gppairs^{\op})^{\amalg})^\op$ to the pullback of the cospan
\[\begin{tikzcd}
	& {\Map_{\cN^{\otimes}}([(G',H'),\Phi^{H'}(\nH\underl{R})],[(G,H),\nH\underl{R}])} \\
	{\ast} & {\Map_{(\gppairs^{\op})^{\amalg}}((G',H'),(G,H)).}
	\arrow[from=1-2, to=2-2]
	\arrow["\alpha^\op", from=2-1, to=2-2]
\end{tikzcd}\]
Using the description of the $(\gppairs^{\op})^{\amalg}$-monoidal $\infty$-category $\cN^{\otimes}$ given in \cite[Remark A.13]{dg22} the pullback is given by $\Map_{\calg(\specwgh)}(\bar{\alpha}^*\Phi^{H'}(\nH \underl{R}),\nH \underl{R})$. For a full description of the operad $(\gppairs^{\op})^{\amalg}$ we refer to \cite[Constructions 2.4.3.1, 2.4.4.1]{lur17}, we only included length 1 strings of objects in the above description. As in the proof of \Cref{prop:eqmodfixrestriction} maps $\Phi^H\alpha^*(\nH \underl{R}) \simeq \bar{\alpha}^*\Phi^{H'}(\nH \underl{R}) \to \nH \underl{R}$ are determined on $\pi_0^1$, and there is an equivalence of discrete spaces 
$$\Map_{\calg(\specwgh)}(\bar{\alpha}^*\Phi^{H'}(\nH \underl{R}),\nH \underl{R}) \simeq \tn{End}(R).$$ 
On morphisms of $\tn{Tw}((\gppairs^{\op})^{\amalg})^\op$, the functor $F$ maps
\[\begin{tikzcd}
	{(G',H')} & {(P',T')} && {\Map_{\calg(\specwgh)}(\bar{\alpha}^*\Phi^{H'}(\nH \underl{R}),\nH \underl{R})} \\
	&& \mapsto & {\Map_{\calg(\spec^{\weyl{P}{T}})}(\bar{\delta}^*\bar{\alpha}^*\Phi^{H'}(\nH \underl{R}),\nH \underl{R})} \\
	{(G,H)} & {(P,T)} && {\Map_{\calg(\spec^{\weyl{P}{T}})}(\bar{\gamma}^*\Phi^{T'}(\nH \underl{R}),\nH \underl{R}),}
	\arrow["{\alpha^\op}"', from=1-1, to=3-1]
	\arrow["{\gamma^\op}"', from=1-2, to=1-1]
	\arrow["{\beta^\op}", from=1-2, to=3-2]
	\arrow["{\nu(\bar{\delta})_* \circ \, \bar{\delta}^*}", from=1-4, to=2-4]
	\arrow["{(\bar{\delta}^*\bar{\alpha}^*\Phi^T(\nu(\gamma)))^*}", from=2-4, to=3-4]
	\arrow["{\delta^\op}", from=3-1, to=3-2]
\end{tikzcd}\]
and the right-hand composition on mapping spaces is an equivalence since the maps involved are either compositions with maps that are identic on $\pi^1_0$ or maps induced by restriction along a group homomorphism on mapping spaces which do not change the $\pi^1_0$ of the objects at play. Since for any morphism $\alpha: (G,H) \to (G',H')$ in $\gppairs$ there is a commutative diagram
\[\begin{tikzcd}
	{(G,H)} & {(G,H)} & {(1,1)} \\
	{(G',H')} & {(1,1)} & {(1,1)}
	\arrow["\alpha"', from=1-1, to=2-1]
	\arrow["{\tn{id}}"', from=1-2, to=1-1]
	\arrow[from=1-2, to=1-3]
	\arrow[from=1-2, to=2-2]
	\arrow[from=1-3, to=2-3]
	\arrow[from=2-1, to=2-2]
	\arrow[from=2-3, to=2-2]
\end{tikzcd}\]
the diagram given by $F: \tn{Tw}((\gppairs^{\op})^{\amalg})^\op \to \; \spc$ is connected. Hence \ref{eq:mappingspace} is discrete (which also follows from \cite[Remark 7.4.1.5]{kerodon}) and equivalent to $\tn{End}(R)$. So the maps $\psi_{H;G}$ assemble into a map $\phi(\cH \underl{R}) \to \cH \underl{R}$ in $\calg(\tn{laxlim} \; \cN)$, and this map corresponds to the identity of $R$ under the equivalence to the computed mapping space. \\
We obtain a natural transformation $\eta_{\tn{mod}}: \Mod_{\nH \underl{R}}(\spec^{(-)}) \Rightarrow \Mod_{\nH \underl{R}}(\spec^{\weyl{(-)}{(-)}})$ of functors $\gppairs^\op \to \calg(\prlst)$ which on $(G,H)$ is given by the modular $H$-fixed points $\Psi^{H;G}: \Mod_{\nH \underl{R}}(\specg) \to \Mod_{\nH \underl{R}}(\specwgh)$.
\end{cons}

Let us now return to the case where $G = \lim_{i \in I} G_i = \lim_{i \in I} G/N_i$ is a general profinite group with maps $\alpha_i: G \twoheadrightarrow G_i = G/N_i$. 

\begin{defi}\label{defi:eqmodularfixedpointsprofinite}
Let $H \leq G$ be a closed pro-$p$-subgroup. We define the \emph{equivariant modular fixed point functor}
$$\Psi^H = \Psi^{H;G}: \Mod_{\nH \underl{R}}(\specg) \to \Mod_{\nH \underl{R}}(\specwgh)$$
as the essentially unique symmetric monoidal left adjoint functor
$$\tn{colim}_i \, \Mod_{\nH \underl{R}}(\spec^{G_i}) \to \tn{colim}_i \, \Mod_{\nH \underl{R}}(\spec^{\weyl{G_i}{H_i}})$$ 
induced by the natural transformation 
\[\begin{tikzcd}
	{I^\op \times \Delta^1} & {\gppairs^\op \times \Delta^1} & \calg(\prlst),
	\arrow["{\iota \times \tn{id}}", from=1-1, to=1-2]
	\arrow["\eta_{\tn{mod}}", from=1-2, to=1-3]
\end{tikzcd}\]
where the map $\iota$ is given by $i \mapsto (N_i, H_i)$ and $\eta_{\tn{mod}}$ is the natural transformation between $\Mod_{\nH \underl{R}}(\spec^{(-)})$ and $\Mod_{\nH \underl{R}}(\spec^{\weyl{(-)}{(-)}})$ of \Cref{cons:eqmodfixasnattrans}.
\end{defi}

\begin{re}
Since the functor $\Psi^H: \Mod_{\nH \underl{R}}(\specg) \to \Mod_{\nH \underl{R}}(\specwgh)$ is formally extended from finite to profinite groups, it inherits all properties from the finite group case which are compatible with the description of $\Mod_{\nH \underl{R}}(\specg)$ as a colimit. In particular, it makes the square of geometric fixed points and free functors commute (\Cref{lem:eqmodfixfree}), it is weight exact when restricted to compacts (\Cref{cor:weightexact}), it is $\nH R$-linear (\Cref{prop:eqmodfixhrlinear}), nesting modular fixed point functors up to restriction gives another modular fixed point functor (\Cref{prop:eqmodfixnested}), and it is compatible with restriction along a group homomorphism (\Cref{prop:eqmodfixrestriction})
\end{re}

\begin{thm}\label{thm:identifymodfix}
Under (the \tn{Ind}-extension of) the equivalence of \Cref{thm:eqhrmoddpermweight}, the equivariant modular fixed point functor
$$\Psi^H: \Mod_{\nH \underl{R}}(\specg) \to \Mod_{\nH \underl{R}}(\specwgh)$$
for a closed pro-$p$-subgroup $H \leq G$ identifies with the modular fixed point functor on derived permutation modules
$$\Psi^H: \cD\Permgr \to \cD\Perm(\wgh;R)$$
of Balmer-Gallauer \cite[Definition 5.10]{bg25a}, \cite[Construction 2.11]{bg25b}.
\end{thm}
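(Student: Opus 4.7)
The plan is to reduce to the compact part of each side, where both source and target carry bounded weight structures, and to observe that the two functors agree on the weight heart and are both weight exact symmetric monoidal functors.

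First I would reduce to the case where $G$ is finite. Both functors are extended from the finite case via the colimit descriptions $\Mod_{\nH \underl{R}}(\specg) \simeq \tn{colim}_i \Mod_{\nH \underl{R}}(\spec^{G_i})$ and $\cD\Permgr \simeq \tn{colim}_i \cD\Perm(G_i;R)$, and the equivalence of \Cref{thm:eqhrmoddpermweight} is compatible with these descriptions. So it suffices to check that the finite-level functors agree compatibly. Next I would pass to compact parts. The equivariant functor $\Psi^H$ preserves compacts (it is a symmetric monoidal left adjoint between rigidly-compactly generated $\infty$-categories, and by \Cref{lem:eqmodfixfree} it sends the compact generators $\nH\underl{R} \otimes \Sigma^{\infty}G/K_+$ to free $\nH\underl{R}$-modules on finite $\wgh$-sets), while the Balmer--Gallauer functor is defined on $\cK_{\tn{b}}(\permgr^{\natural})$ by construction. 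Under \Cref{thm:eqhrmoddpermweight} both compact parts are identified with $\cK_{\tn{b}}(\permgr^{\natural})$ and $\cK_{\tn{b}}(\perm(\wgh;R)^{\natural})$ respectively, each equipped with its canonical bounded weight structure.

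Now I would compare the two functors on weight hearts. By \Cref{cor:weightexact} the equivariant modular fixed point functor is weight exact, and the Balmer--Gallauer modular fixed point functor is weight exact by construction (it is induced from the additive symmetric monoidal $H$-fixed point functor $\permgr \to \perm(\wgh;R)$, $R(X) \mapsto R(X^H)$). On the weight heart $\permgr^{\natural}$ identified via \Cref{lem:heartperm}, \Cref{lem:eqmodfixfree} shows that $\Psi^H$ sends $\nH\underl{R} \otimes \Sigma^{\infty}X_+$ to $\nH\underl{R} \otimes \Sigma^{\infty}(X^H)_+$, which under the identification corresponds to $R(X^H)$. This is the same additive symmetric monoidal functor as the one induced on weight hearts by the Balmer--Gallauer construction.

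To finally conclude I would invoke the rigidity of weight exact functors: by Sosnilo's equivalence $\cK_{\tn{b}}(\cA) \simeq \tn{stab}(\cA)$ from \cite[Corollary 3.4]{sos19} together with its symmetric monoidal enhancement \cite[Theorem 4.3]{aok20}, a weight exact symmetric monoidal functor between stable $\infty$-categories with bounded weight structures is determined, up to canonical equivalence, by its restriction to weight hearts. Applied to our setting this identifies the two functors on compact parts as symmetric monoidal functors, and passing to $\tn{Ind}$ gives the identification on all of $\Mod_{\nH \underl{R}}(\specg)$. The main obstacle I expect is bookkeeping: verifying that the various symmetric monoidal structures and the identification of the weight heart with $\permgr^{\natural}$ match exactly on the nose (in particular that the ring map $\psi_H$ chosen in \Cref{defi:eqmodularfixedpoints} is precisely what is needed for the two weight-heart functors to coincide, not merely to be abstractly isomorphic), and that the reduction to the finite case is genuinely functorial in the system $(G_i)_i$ on both sides.
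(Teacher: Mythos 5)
Your overall strategy is the same as the paper's: reduce to finite $G$ via the colimit descriptions, pass to compact parts, use the bounded weight structures and weight exactness of both functors, and conclude by the principle (Sosnilo/Aoki, resp.\ \cite[Proposition 3.3]{sos19} and \cite[Proposition 3.27]{bgv24} in the paper) that a weight exact symmetric monoidal functor is determined by its restriction to weight hearts. However, the step you dismiss as ``bookkeeping'' is in fact the substantive content of the proof, and as written your argument has a genuine gap there: you only verify that the two weight-heart functors agree \emph{on objects} ($\nH\underl{R}\otimes\Sigma^{\infty}X_+ \mapsto \nH\underl{R}\otimes\Sigma^{\infty}(X^H)_+$ versus $R(X)\mapsto R(X^H)$), and then assert they are ``the same additive symmetric monoidal functor.'' Agreement on objects does not determine an additive functor; one must match the functors on morphisms of $\permgr^{\natural}$, and this is exactly where the specific choice of the ring map $\psi_H$ (induced by the identity on $\pi_0(\Phi^H(\nH\underl{R}))\cong\underl{R}$) is used -- you correctly sense this (``not merely to be abstractly isomorphic'') but supply no argument.

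The paper resolves this as follows: it first reduces to the case $H\trianglelefteq G$ normal, using \Cref{cor:restrictiontonormalsubgroup} together with the naturality of the equivalence of \Cref{thm:eqhrmoddpermweight} in restriction to subgroups (a reduction your plan does not mention but which simplifies the morphism check, since otherwise $H$-fixed points of orbits $G/L$ with $H\not\leq_G L$ must be handled); it then invokes \cite[Remark 4.14]{bg23}, which says an $R$-linear tensor functor on $\permgr$ is determined by its values on conjugation maps $(-)^g$ and projections $\pi^L_K$, and explicitly computes the images of these generators under both functors, checking they coincide because $\psi_H$ is induced by the identic map of constant Mackey functors and by \cite[Proposition 4.12]{bg22b} on the Balmer--Gallauer side. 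Without this morphism-level comparison (or some substitute for it), your proof does not go through; with it, your outline becomes essentially the paper's proof.
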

\begin{proof}
Both modular fixed point functors are obtained by formally extending their construction from finite to profinite groups, see \cite[Remark 2.12]{bg25b}. It hence suffices to consider the case where $G$ is finite and $H$ is a $p$-subgroup. \\
Since all functors involved are colimit preserving functors of compactly generated $\infty$-categories, it suffices to show commutativity on compact parts. These carry bounded weight structures. Since the functors at play are weight exact, it suffices by \cite[Proposition 3.3]{sos19} to identify their restrictions to weight hearts. In fact by \cite[Proposition 3.27]{bgv25} this will identify the functors on $\infty$-categories as symmetric monoidal functors. Since the targets are idempotent complete, we can consider the weight hearts before idempotent completion. By \Cref{cor:restrictiontonormalsubgroup}, \cite[Definition 5.10]{bg25a} and since the equivalence of \Cref{thm:eqhrmoddpermweight} is natural in restrictions to subgroups we can furthermore assume that $H$ is normal in $G$. So we reduced to showing that the square
\[\begin{tikzcd}
	{\cH(G)} & {\cH(G/H)} \\
	{\permgr} & {\perm(G/H;R)}
	\arrow["{\Psi^H}", from=1-1, to=1-2]
	\arrow[from=1-1, to=2-1, "\rotatebox{270}{$\sim$}"']
	\arrow[from=1-2, to=2-2, "\rotatebox{90}{$\sim$}"]
	\arrow["{\Psi^H}", from=2-1, to=2-2]
\end{tikzcd}\]
commutes, where we write $\cH(G)$ and $\cH(G/H)$ for the weight hearts of \Cref{prop:weightonhrmod} with an emphasis on the ambient category. \\
We have $\Phi^H(\nH \underl{R} \otimes \Sigma^{\infty}_+ G/K) \simeq \nH \underl{R} \otimes \Sigma^{\infty}_+ (G/K)^H$ for $K \leq G$ by \Cref{lem:eqmodfixfree}, and on the permutation module side $R(G/K)$ is mapped to $R((G/K)^H)$ by \cite[Proposition 5.12]{bg25a}. Hence the square commutes on objects. \\
Let us consider morphisms. By \cite[Remark 4.14]{bg23} the behaviour on morphisms of an $R$-linear symmetric monoidal functor on $\permgr$ is uniquely determined by its image of conjugations $(-)^g: R(G/K) \to R(G/K^g)$ and projections to subgroups $\pi^L_K: R(G/L) \to R(G/K)$ for $L \leq K$. We can assume that $H$ is subconjugate to $L$, as otherwise the $H$-fixed points of $G/L$ are empty. So since $H$ is normal in $G$, we can assume that $H \leq L \leq K$. Again by \cite[Proposition 5.12]{bg25a}, the modular fixed point functor on permutation modules sends $(-)^g: R(G/K) \to R(G/K^g)$ to $(-)^{[g]_H}: R((G/H)/(K/H)) \to R((G/H)/(K^g/H))$, and $\pi^L_K: R(G/L) \to R(G/K)$ which corresponds to an element $[x]_K \in N_G(L,K)/K$ is sent to $\pi^{L/H}_{K/H}: R((G/H)/(L/H)) \to R((G/H)/(K/H))$, which corresponds to the element $[x]_{K/H} \in N_{G/H}(H/L,K/H)/(K/H)$. \\
On the other hand, the map $\tn{id} \otimes (-)^g: \nH \underl{R} \otimes \Sigma^{\infty} G/K_+ \to \nH \underl{R} \otimes \Sigma^{\infty} G/K^g_+$ by the geometric fixed points on $\nH \underl{R}$-modules is sent to 
$$\tn{id} \otimes (-)^{[g]_H}: \Phi^H(\nH \underl{R}) \otimes \Sigma^{\infty} (G/H)/(K/H)_+ \to \Phi^H(\nH \underl{R}) \otimes \Sigma^{\infty} (G/H)/(K^g/H)_+,$$
and base changing back to $\nH \underl{R}$ via $\psi: \Phi^H(\nH \underl{R}) \to \nH \underl{R}$ from \Cref{defi:eqmodularfixedpoints} gives 
$$\tn{id} \otimes (-)^{[g]_H}: \nH \underl{R} \otimes \Sigma^{\infty} (G/H)/(K/H)_+ \to \nH \underl{R} \otimes \Sigma^{\infty} (G/H)/(K^g/H)_+,$$ since $\psi$ is induced by the identic map of constant Mackey functors. Similarly, the image of the projection $\tn{id} \otimes \pi^L_K: \nH \underl{R} \otimes \Sigma^{\infty} G/L_+ \to \nH \underl{R} \otimes \Sigma^{\infty} G/K_+$ is 
$$\tn{id} \otimes \pi^{L/H}_{K/H}: \nH \underl{R} \otimes \Sigma^{\infty} (G/H)/(L/H)_+ \to \nH \underl{R} \otimes \Sigma^{\infty} (G/H)/(K/H)_+.$$
This finishes the proof.
\end{proof}

We are indebted to Drew Heard and Achim Krause for pointing out the following.

\begin{re}
The proof of the previous theorem shows that a (symmetric monoidal) left adjoint functor $\Mod_{\nH \underl{R}}(\specg) \to \Mod_{\nH \underl{R}}(\specwgh)$ which is weight exact when restricted to compacts is completely determined by its restriction to weight hearts $\perm(G;R)^{\natural} \to \perm(\wgh;R)^\natural$. \\
This also means that the functor $\Mod_{\nH \underl{R}}(\specg) \to \Mod_{\nH \underl{R}}(\specwgh)$ we obtain from the universal property of $\Mod_{\nH \underl{R}}(\specg)$ provided by \Cref{cor:modhunderlraspresheaves} and \Cref{rec:psigma}, applied to the composition of classical Brauer quotients with the spectral Yoneda embedding
$$\perm(G;R) \to \perm(\wgh;R) \hookrightarrow \psigma(\perm(\wgh;R);\spec) \simeq \Mod_{\nH \underl{R}}(\specwgh),$$
is given by modular fixed points.
\end{re}

\section{Picard Groups and Borel-Smith Class Functions}\label{sec:picardgroups}

In this section we use the established machinery to study the Picard group of $\Mod_{\nH \underl{k}}(\specg)$, where now $k$ is a field of prime characteristic $p > 0$. We first assume $G$ to be a general finite group and later restrict to $p$-groups. In the end we will formally extend our results for $p$-groups to pro-$p$-groups. \\
Since $\Mod_{\nH \underl{k}}(\specg)$ is rigidly compactly generated, invertible objects are compact, so we can restrict our attention to the Picard group of the compact part. The arguments presented in \Cref{prop:lambdaclassfunctions} and \Cref{lem:lambdaimage} that produce an injective homomorphism from the Picard group to the group of \emph{Borel-Smith class functions} defined on $p$-subgroups to a large extent are direct topological analogues of arguments of Miller \cite{mil25b}. The advantage of the topological approach is that we can invoke a result on dimension functions of real representations to show surjectivity of this homomorphism, see \Cref{thm:picborelsmith}. In what comes below we will consider the family of functors
\[\begin{tikzcd}
	{\bar{\Psi}^H: \; \Mod_{\nH \underl{k}}(\specg)} & {\Mod_{\nH \underl{k}}(\specwgh)} & {\Mod_{\nH k}(\spec) \simeq \cD(\Mod(k))}
	\arrow["{\Psi^H}", from=1-1, to=1-2]
	\arrow["{\tn{res}^{\wgh}_1}", from=1-2, to=1-3]
\end{tikzcd}\]
indexed on the (conjugacy classes of) $p$-subgroups $H \leq G$, and their restrictions to compact parts
\[\begin{tikzcd}
	{\bar{\Psi}^H: \; \Mod_{\nH \underl{k}}^{\omega}(\specg)} & {\Mod_{\nH \underl{k}}^{\omega}(\specwgh)} & {\Mod_{\nH k}^{\omega}(\spec) \simeq \cD_{\tn{b}}(\tsf{mod}(k)),}
	\arrow["{\Psi^H}", from=1-1, to=1-2]
	\arrow["{\tn{res}^{\wgh}_1}", from=1-2, to=1-3]
\end{tikzcd}\]
where $\tsf{mod}(k)$ is the category of finitely generated $k$-vector spaces.
\begin{prop}\label{prop:conservativity}
The family of functors $(\bar{\Psi}^H)_{H \in \, \subp(G)/G}$ is jointly conservative and weight conservative.
\end{prop}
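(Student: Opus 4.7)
The plan is to first establish joint weight conservativity on the compact subcategory $\Mod^{\omega}_{\nH\underl{k}}(\specg)$ and then derive joint conservativity on the entire category. Each $\bar{\Psi}^H$ restricts to a weight exact functor between compact parts: $\Psi^H$ is weight exact by \Cref{cor:weightexact}, and $\tn{res}^{\wgh}_1$ sends the linearised orbits $\nH\underl{k} \otimes \Sigma^{\infty} (\wgh)/(L/H)_+$ generating the weight heart of $\Mod_{\nH\underl{k}}^{\omega}(\spec^{\wgh})$ to finite direct sums of $\nH k$, which lie in the weight heart $\tsf{mod}(k)$ of $\cD_{\tn{b}}(\tsf{mod}(k))$. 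Since the source weight structure is bounded by \Cref{prop:weightonhrmod}, applying \Cref{prop:weightconservativecharac} to each $\bar{\Psi}^H$ reduces joint left and right weight conservativity of the family to showing that the heart restrictions
\[
\bar{\Psi}^{H,w\heart}: \perm(G;k)^{\natural} \longrightarrow \tsf{mod}(k)
\]
jointly detect retractions and sections.

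I will then identify $\bar{\Psi}^{H,w\heart}$ using \Cref{lem:heartperm}, \Cref{lem:eqmodfixfree} and \Cref{thm:identifymodfix}: it sends a $p$-permutation module $M$ to the underlying $k$-vector space of its classical Brauer quotient $\tn{Br}_H(M)$, and in particular $k(X) \mapsto k(X^H)$ on permutation modules. The required joint detection then reduces to the classical statement that a morphism $f$ between $p$-permutation $kG$-modules is a split monomorphism (respectively split epimorphism) iff $\tn{Br}_H(f)$ is injective (respectively surjective) as a $k$-linear map for every $p$-subgroup $H \leq G$. I will extract this from Broué's structure theory of $p$-permutation modules and the Krull-Schmidt theorem, using that each indecomposable $p$-permutation module has a unique vertex $Q$ and a nonzero Brauer quotient at $Q$; a parallel detection is implicit in the stratification-theoretic arguments of \cite[Section 5]{bg22b} under the identification of \Cref{thm:identifymodfix}.

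Joint conservativity on compacts then follows: if $\bar{\Psi}^H(X) \simeq 0$ for every $H$, then for each $H$ the value $\bar{\Psi}^H(X)$ lies in $\cD_{\tn{b}}(\tsf{mod}(k))_{w \leq -1} \cap \cD_{\tn{b}}(\tsf{mod}(k))_{w \geq 0}$, so joint left and right weight conservativity places $X$ in the analogous intersection on the source, which is trivial by the $\Hom$-vanishing axiom of a weight structure. To extend joint conservativity to the whole category, I will invoke Neeman's classification of localising ideals in rigidly-compactly generated tt-categories: each $\bar{\Psi}^H$ is symmetric monoidal and preserves compacts, so $\ker(\bar{\Psi}^H)$ is a compactly-generated localising $\otimes$-ideal of $\Mod_{\nH\underl{k}}(\specg)$, and the intersection $\bigcap_H \ker(\bar{\Psi}^H)$ is the localising ideal generated by its compact part, which vanishes by the previous step.

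The main obstacle will be the classical detection on $\perm(G;k)^{\natural}$: since $\bar{\Psi}^{H,w\heart}$ lands in plain $k$-vector spaces rather than in $k[\wgh]$-modules, we are asked to detect split monomorphisms of $kG$-modules using only the $k$-linear injectivity of each $\tn{Br}_H(f)$. Justifying that this $k$-linear information already suffices, by combining the vertex theory of indecomposable $p$-permutation modules with the nonvanishing of their Brauer quotients at the vertex, will be the technical heart of the argument.
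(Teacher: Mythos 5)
Your treatment of weight conservativity coincides with the paper's: weight exactness from \Cref{cor:weightexact}, reduction to joint detection of retractions and sections on weight hearts via (the joint version of) \Cref{prop:weightconservativecharac}, identification of the heart restrictions with the classical Brauer quotients via \Cref{thm:identifymodfix}, and then the detection statement for $p$-permutation modules. The paper disposes of that last step by citing \cite[Proposition 5.8.11]{lin18} and its dual, so the ``technical heart'' you anticipate is an off-the-shelf result; and your worry about only retaining $k$-linear information is harmless, since injectivity or surjectivity of a map of $k[\wgh]$-modules is in any case a condition on underlying vector spaces. Where you genuinely diverge is conservativity: the paper deduces it independently, by factoring $\bar{\Psi}^H$ through $\cD_{\tn{b}}(\tsf{mod}(\wgh;k))$ and importing the joint conservativity of the Balmer--Gallauer functors from \cite[Theorem 5.12]{bg22a} together with conservativity of $\tn{res}^{\wgh}_1$, whereas you derive it on compact objects from weight conservativity via $\cC_{w \leq -1} \cap \cC_{w \geq 0} = 0$. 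That derivation is correct and makes the compact statement self-contained modulo the Brauer-quotient detection result, which is a genuine (and arguably cleaner) alternative.

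Your final step, extending joint conservativity from compacts to all of $\Mod_{\nH \underl{k}}(\specg)$, has a gap: the kernel of a coproduct-preserving, compact-preserving exact functor is a localising ($\otimes$-)ideal, but it need not be generated by its intersection with the compacts --- this failure is precisely the phenomenon behind the telescope conjecture, so the assertion that $\bigcap_H \ker(\bar{\Psi}^H)$ is generated by its compact part requires justification you have not supplied. This does not damage the result as used in the paper, since the proof there and the applications (\Cref{cor:invertiblemodules}, \Cref{prop:lambdaclassfunctions}) concern only the compact parts; but if you want the unrestricted statement you should argue differently, for instance by importing conservativity on the big category from the Balmer--Gallauer side.
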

\begin{proof}
For conservativity, we have a commutative diagram
\[\begin{tikzcd}
	{\Mod_{\nH \underl{k}}^{\omega}(\specg)} & {\Mod_{\nH \underl{k}}^{\omega}(\specwgh)} & {\cD_{\tn{b}}(\tsf{mod}(\wgh;k))} \\
	& {\cD_{\tn{b}}(\tsf{mod}(k))}
	\arrow["{\Psi^H}", from=1-1, to=1-2]
	\arrow["{\bar{\Psi}^H}"', from=1-1, to=2-2]
	\arrow["{\Upsilon_{\wgh}}", from=1-2, to=1-3]
	\arrow["{\tn{res}^{\wgh}_1}", from=1-2, to=2-2]
	\arrow["{\tn{res}^{\wgh}_1}", from=1-3, to=2-2]
\end{tikzcd}\]
in which $\Upsilon_{\wgh}: \Mod_{\nH \underl{k}}^{\omega}(\specwgh) \simeq \cK_{\tn{b}}(\perm(\wgh;k)^{\natural}) \to  \cD_{\tn{b}}(\tsf{mod}(\wgh;k))$ is the canonical functor induced by $\perm(\wgh;k) \hookrightarrow \tsf{mod}(\wgh;k)$. By \cite[Theorem 6.12]{bg25a} and \Cref{thm:identifymodfix} the family of functors $(\Upsilon_{\wgh}\Psi^H)_{H \in \, \subp(G)/G}$ is jointly conservative, and since the right-hand restriction functors are conservative, the family $(\bar{\Psi}^H)_{H \in \, \subp(G)/G}$ is jointly conservative. \\
Let us consider weight conservativity. The functors $\bar{\Psi}^H$ for $H \in \subp(G)/G$ are weight exact by \Cref{cor:weightexact}, so by \Cref{prop:weightconservativecharac} it suffices to show that their restriction to weight hearts jointly detects sections and retractions. By \Cref{thm:identifymodfix} the restriction to weight hearts can be identified with the classical Brauer quotient $\perm(G;k)^{\natural} \to \perm(\wgh;k)^{\natural}$, and hence the result follows from \cite[Proposition 5.8.11]{lin18} and its dual version, which holds since we are using field coefficients.
\end{proof}

The following is the topological analogue of \cite[Theorem 3.4]{mil24}.

\begin{cor}\label{cor:invertiblemodules}
A compact $\nH \underl{k}$-module $X \in \Mod_{\nH \underl{k}}^{\omega}(\specg)$ is invertible if and only if for all $H \in \subp(G)/G$ the graded $k$-vector space $\pi_*(\bar{\Psi}^H(X))$ is isomorphic to $k$, concentrated in a single degree.
\end{cor}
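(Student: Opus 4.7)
The plan is to reduce the statement to two observations: that each $\bar{\Psi}^H$ is a symmetric monoidal left adjoint landing in $\Mod^{\omega}_{\nH k}(\spec) \simeq \cD_{\tn{b}}(\tsf{mod}(k))$, and that the family $(\bar{\Psi}^H)_{H \in \subp(G)/G}$ is jointly conservative by \Cref{prop:conservativity}. The forward direction is essentially immediate: symmetric monoidal functors preserve invertible objects, so if $X$ is invertible then $\bar{\Psi}^H(X)$ is invertible in $\cD_{\tn{b}}(\tsf{mod}(k))$. Since $k$ is a field, every compact object decomposes as a finite sum of shifts of $k$, and such an object is invertible if and only if it is a single shift of $k$, which translates exactly to the condition on $\pi_*$.

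For the converse, I would use the invertibility criterion via the evaluation map. Since $\Mod_{\nH \underl{k}}(\specg)$ is rigidly compactly generated (see \Cref{rec:modules}), a compact object is dualisable, so $X$ has a strong dual $DX$ in the compact part and there is a natural evaluation map $\varepsilon: X \otimes DX \to \nH \underl{k}$. The object $X$ is invertible precisely when $\varepsilon$ is an equivalence. Now I would argue that $\bar{\Psi}^H(\varepsilon)$ is identified, under the symmetric monoidality of $\bar{\Psi}^H$, with the evaluation map $\bar{\Psi}^H(X) \otimes D\bar{\Psi}^H(X) \to \nH k$. By hypothesis each $\bar{\Psi}^H(X)$ has homotopy $k$ in a single degree, is therefore invertible in $\cD_{\tn{b}}(\tsf{mod}(k))$, and its evaluation map is an equivalence.

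To conclude, set $C \defeq \tn{cofib}(\varepsilon)$. Each exact functor $\bar{\Psi}^H$ sends the cofibre sequence $X \otimes DX \to \nH \underl{k} \to C$ to a cofibre sequence in which the first map is an equivalence, hence $\bar{\Psi}^H(C) \simeq 0$ for every $H \in \subp(G)/G$. Joint conservativity (\Cref{prop:conservativity}) then forces $C \simeq 0$, so $\varepsilon$ is an equivalence and $X$ is invertible. The main obstacle, such as it is, lies in the routine but formal verification that $\bar{\Psi}^H$ carries the duality datum $(X, DX, \varepsilon, \eta)$ to the duality datum of $\bar{\Psi}^H(X)$; this is automatic for any symmetric monoidal functor between symmetric monoidal $\infty$-categories and requires no separate argument.
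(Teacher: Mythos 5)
Your proposal is correct and follows essentially the same route as the paper: the forward direction uses that $\bar{\Psi}^H$ is symmetric monoidal together with the classification of invertibles in $\cD_{\tn{b}}(\tsf{mod}(k))$, and the converse uses dualisability of compact objects, preservation of the duality datum under the symmetric monoidal functors $\bar{\Psi}^H$, and the joint conservativity of \Cref{prop:conservativity} applied to the (co)evaluation map. Your cofibre formulation of the conservativity step is just an explicit spelling-out of the same argument.
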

\begin{proof}
We have $\pic(\Mod_{\nH k}(\spec)) \cong \bZ$ with invertible objects given by $\Sigma^n \nH k$ for $n \in \bZ$. Since $\bar{\Psi}^H$ is symmetric monoidal the image of an invertible $\nH \underl{k}$-module is invertible, so of the form $\Sigma^n Hk$. Hence the statement about the homotopy. \\
Conversely, for a module $X \in \Mod_{\nH \underl{k}}^{\omega}(\specg)$ and all $p$-subgroups $H \leq G$ assume that $\pi_*(\bar{\Psi}^H(X))$ vanishes in all but one degree in which it is isomorphic to $k$. Then $\bar{\Psi}^H(X) \simeq \Sigma^n \nH k$ for some $n$, and hence $\bar{\Psi}^H(X)$ is invertible for all $p$-subgroups $H \leq G$. Since $X$ is compact it is dualisable, and hence it is invertible if and only if the evaluation and coevaluation maps $X^* \otimes_{\nH \underl{k}} X \to \nH \underl{k}$ and $\nH \underl{k} \to X \otimes_{\nH \underl{k}} X^*$ are equivalences. As the $\bar{\Psi}^H(X)$ for $H \in \subp(G)/G$ are invertible the images of these maps under $\bar{\Psi}^H$ are equivalences. Hence conservativity from \Cref{prop:conservativity} implies that $X$ is invertible.
\end{proof}

\begin{defi}[cf. {\cite[Theorem 1.2]{mil25a}}]
Let $\Lambda(G;k)$ be the abelian group of equivalence classes of compact modules $X \in \Mod_{\nH \underl{k}}^{\omega}(\specg)$ for which the following holds: If $H \in \subp(G)$ then $\pi_*(\bar{\Psi}^H(X))$ is nonzero in exactly one degree and if $H \in \sylp(G)$ then this $k$-vector space has dimension $1$. The group structure is induced by the symmetric monoidal structure on $\Mod_{\nH \underl{k}}(\specg)$.
\end{defi}

Note that \Cref{cor:invertiblemodules} implies that $\pic(\Mod_{\nH \underl{k}}(\specg))$ is a subgroup of $\Lambda(G;k)$.

\begin{prop}[cf. {\cite[Theorem 2.12]{mil25b}}]\label{prop:lambdaclassfunctions}
Write $\tn{CF}(G,p)$ for the group of integral class functions defined on the conjugacy classes of $p$-subgroups of $G$. There is a surjective group homomorphism
$$\lambda: \Lambda(G;k) \to \tn{CF}(G,p), \; X  \mapsto (\lambda_X: (H) \mapsto n, \tn{ where } \pi_n(\bar{\Psi}^H(X)) \neq 0).$$
If $G$ is a $p$-group, then it is an isomorphism.
\end{prop}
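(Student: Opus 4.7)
The proof has four parts, and I would organize it as follows.

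\emph{Well-definedness and homomorphism property.} For $X \in \Lambda(G;k)$ the integer $\lambda_X(H)$ is well-defined by hypothesis. That it depends only on the conjugacy class of $H$ follows from \Cref{prop:eqmodfixrestriction} applied to the inner automorphism $c_g : G \to G$: on $\specg$ the functor $c_g^*$ is naturally equivalent to the identity, so one obtains a natural equivalence $\bar{\Psi}^H(X) \simeq \bar{\Psi}^{gHg^{-1}}(X)$. Multiplicativity follows from the symmetric monoidality of $\bar{\Psi}^H$: one has $\bar{\Psi}^H(X \otimes Y) \simeq \bar{\Psi}^H(X) \otimes_{\nH k} \bar{\Psi}^H(Y)$, which is concentrated in degree $\lambda_X(H)+\lambda_Y(H)$, and the tensor product of two $1$-dimensional $k$-vector spaces at a Sylow is $1$-dimensional. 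Hence $X \otimes Y \in \Lambda(G;k)$ and $\lambda_{X \otimes Y} = \lambda_X + \lambda_Y$. The unit $\nH\underl{k}$ evidently has $\lambda_{\nH\underl{k}} = 0$.

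\emph{Surjectivity.} The shifts $\Sigma^n \nH \underl{k}$ realise constant class functions. For a real orthogonal $G$-representation $V$, the module $\nH \underl{k} \otimes \Sigma^\infty S^V$ lies in $\Lambda(G;k)$ with $\lambda$-value $H \mapsto \dim_{\bR} V^H$ by \Cref{lem:eqmodfixfree}; by tom Dieck's theory of dimension functions, such representation spheres already exhaust the Borel--Smith subgroup of $\tn{CF}(G,p)$. To hit arbitrary integer class functions beyond the Borel--Smith ones, I would work inside $\cK_{\tn{b}}(\perm(G;k)^{\natural})$ via \Cref{thm:eqhrmoddpermweight}, using \Cref{thm:identifymodfix} to identify $\bar{\Psi}^H$ with the classical Brauer quotient at $H$ followed by the underlying $k$-vector space functor. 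For each conjugacy class $(H)$ of $p$-subgroup, one constructs cones of suitable maps between free modules $\nH\underl{k} \otimes \Sigma^\infty (G/K)_+$ for chains of nested $K$'s, whose Brauer quotients are computed via \cite[Proposition 4.12]{bg22b}; an induction along the descending order of subgroup sizes (so that modification at $(H)$ only enlarges $\lambda$-values at subgroups subconjugate to $H$) then allows one to realise any prescribed integer class function.

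\emph{Injectivity for $p$-groups.} Assume $G$ is a $p$-group and $\lambda_X = 0$. Then each $\bar{\Psi}^H(X)$ lies in weight degree $0$ (single-degree concentration in the standard $\cK_{\tn{b}}(\tsf{mod}(k))$-weight structure). Applying the joint weight conservativity of the family $(\bar{\Psi}^H)_{H \in \sub(G)/G}$ from \Cref{prop:conservativity} together with \Cref{prop:weightconservativecharac}, the object $X$ lies in the weight heart $\cH^\natural \simeq \perm(G;k)^\natural$ of \Cref{prop:weightonhrmod}. Thus $X$ corresponds to a direct sum of indecomposable $p$-permutation $kG$-modules. Since $G$ is a $p$-group, Broué's classification parameterises these indecomposables by conjugacy classes of subgroups (their vertices), and the unique indecomposable whose Brauer quotient at $G$ is $1$-dimensional is the trivial module $k$. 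The condition $\dim\bar{\Psi}^G(X) = 1$ forces $k$ to appear as a summand with multiplicity one, and the definition of the group $\Lambda(G;k)$ identifies any remaining summands with smaller vertex with the zero class, yielding $X = \nH\underl{k}$ in $\Lambda(G;k)$.

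\emph{Main obstacle.} The hardest step is the surjectivity of $\lambda$ beyond the Borel--Smith image: while representation spheres provide a complete set of preimages for Borel--Smith class functions via tom Dieck, achieving arbitrary integer class functions requires chain-level constructions in $\cK_{\tn{b}}(\perm(G;k)^{\natural})$ whose Brauer quotients must be carefully controlled at every $p$-subgroup simultaneously. Organising the inductive construction so that modification at one conjugacy class does not destroy the single-degree condition at another conjugacy class is the crux of the argument, and is the content of \cite[Theorem 2.12]{mil24c} in the representation-theoretic language of endotrivial complexes.
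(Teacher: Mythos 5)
Your surjectivity argument contains the genuine gap. You realise constant class functions by shifts and Borel--Smith functions by representation spheres, and then for everything else you gesture at an inductive construction of cones over chains of orbits and ultimately defer to Miller's Theorem 2.12 --- but that deferred step is exactly what the proposition asks you to prove, and tom Dieck's dimension-function result is irrelevant at this stage (it is only needed later, in \Cref{thm:picborelsmith}, where one restricts to the Picard group). The paper's proof shows the missing step is in fact short and requires no induction: the group $\tn{CF}(G,p)$ has the $\bZ$-basis $(\omega_P)_{P \in \subp(G)/G}$ with $\omega_P(H)=1$ iff $H \leq_G P$, and each basis element is hit explicitly. For $P \in \sylp(G)$ one takes $X_P = \Sigma \nH\underl{k}$; for non-Sylow $P$ one takes the free $\nH\underl{k}$-module on the one-dimensional $G$-CW complex $Y_P = G/G \cup (G/P \times D^1)$ (a $G$-wedge of circles), and a direct computation of the $H$-fixed-point cellular complex $k(N_G(H,P)/P) \to k$ shows the homotopy of $\bar{\Psi}^H(X_P)$ is concentrated in degree $1$ exactly when $H \leq_G P$ and in degree $0$ otherwise, so $\lambda_{X_P} = \omega_P$. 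Since $\lambda$ is a homomorphism, hitting a basis gives surjectivity onto all of $\tn{CF}(G,p)$, with no appeal to the representation-theoretic literature. Your proposal, as written, leaves precisely this core construction unproved.

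For injectivity your route agrees with the paper's up to the decisive point: weight exactness plus the joint weight conservativity of \Cref{prop:conservativity} and \Cref{prop:weightconservativecharac} place $X$ in the weight heart $\perm(G;k)^{\natural} = \perm(G;k)$. After that you invoke Brou\'e's classification and then assert that ``the definition of the group $\Lambda(G;k)$ identifies any remaining summands with smaller vertex with the zero class''; this is not something the stated definition of $\Lambda(G;k)$ gives you, and as written it is an unjustified appeal to an equivalence relation rather than an argument. The paper instead concludes directly from the dimension-one constraint built into $\Lambda(G;k)$, which for a $p$-group singles out $k(G/G)$ among permutation modules. So the injectivity half is essentially the paper's argument with a vaguer ending, while the surjectivity half is where your proposal genuinely falls short.
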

\begin{proof}
We use the fact that the abelian group $\tn{CF}(G,p)$ has a $\bZ$-basis given by 
$$(\omega_P)_{P \, \in \, \subp(G)/G}, \hspace{0.2cm} \omega_P(H) = \begin{cases}
    1, & \tn{if } H \leq_G P, \\
    0, & \tn{otherwise.}
\end{cases}$$
So for surjectivity we need to show that there are $X_P \in \Lambda(G;k)$ with $\lambda_{X_P} = \omega_P$. If $P \in \sylp(G)$ we set $X_P \defeq \Sigma \nH \underl{k}$. Then $\bar{\Psi}^H(X_P) \simeq \Sigma \nH k$ for all $H \in \subp(G)/G$, and hence $\lambda_{X_P} = \omega_P$. If $P \in \subp(G) \backslash \sylp(G)$ we set $X_P \defeq \nH \underl{k} \otimes \Sigma^{\infty}_+ Y_P$, where 
$$Y_P \defeq G/G \cup_f (G/P \times D^1)$$
with trivial attaching map $f: G/P \times S^0 \to G/G$. Then $Y_P$ is a wedge of $[G:P]$ many copies of $S^1$ on which $G$ acts by permutation. Then $\bar{\Psi}^H(X_P) \simeq \nH \underl{k} \otimes \Sigma^{\infty}_+ (Y_P)^H$. The $\wgh$-CW complex $(Y_P)^H$ has cell structure 
$$(\wgh)/(\wgh) \cup_g \coprod_i (\wgh)/L_i \times D^1$$
with trivial attaching map $g$, where $\coprod_i (\wgh)/L_i \cong N_G(H,P)/P$ as $\wgh$-sets and $N_G(H,P)$ is the relative normaliser which consists of elements of $G$ which conjugate $H$ into $P$. This $\wgh$-CW complex corresponds to the cellular chain complex of $(\wgh;k)$-permutation modules
\[\begin{tikzcd}
	\dots & 0 & {k(N_G(H,P)/P)} & k & 0 & \dots
	\arrow[from=1-1, to=1-2]
	\arrow[from=1-2, to=1-3]
	\arrow[from=1-3, to=1-4]
	\arrow[from=1-4, to=1-5]
	\arrow[from=1-5, to=1-6]
\end{tikzcd}\]
with augmentation map $k(N_G(H,P)/P) \to k$ going from degree $1$ to degree $0$. Since $N_G(H,P)$ is nonempty precisely when $H \leq_G P$ this complex has homology concentrated in degree $1$ precisely when $H \leq_G P$ and in degree $0$ otherwise. Thus we have $\lambda_{X_P} = \omega_P$. \\
For injectivity, if $\pi_*(\bar{\Psi}^H(X))$ is concentrated in degree $0$ for all $H \in \subp(G)/G$ then the whole family $(\bar{\Psi}^H)_{H \in \, \subp(G)/G}$ sends $X$ into the weight heart of $\Mod_{\nH k}^{\omega}(\spec)$. It follows from the weight conservativity of \Cref{prop:conservativity} that $X$ lives in the weight heart of $\Mod_{\nH \underl{k}}^{\omega}(\specg)$. If now $G$ is a $p$-group, then $k(G/G)$ is the only element of $\permgr^{\natural} = \permgr$ which has $k$-dimension $1$. Hence $X \simeq \nH \underl{k}$ and we conclude that $\lambda$ is injective.
\end{proof}

\begin{re}
In the context of modular representation theory, the elements of the Picard group $\pic(\nK_{\tn{b}}(\perm(G;k)^{\natural}))$ are known as \emph{endotrivial complexes}, and the elements in the group analogous to $\Lambda(G;k)$ are the \emph{$V$-endosplit-trivial complexes}, where $V = \bigoplus_{Q \, \in \, \subp(G) \backslash \sylp(G)} k(G/Q)$, cf. \cite{mil24}, \cite{mil25a}. In this context the homomorphism $\lambda: \Lambda(G;k) \to \tn{CF}(G,p)$ is also called \emph{h-marks}.
\end{re}

\begin{defi}{\cite[p. 210]{td87}}
An integral class function $h \in \tn{CF}(G,p)$ defined on the $p$-subgroups of $G$ satisfies the \textit{Borel-Smith conditions} if the following holds:
\begin{enumerate}
    \item[(1)] If $N \trianglelefteq H \leq G$ are $p$-subgroups of $G$ such that $H/N \cong (\bZ/p\bZ)^2$, then 
    $$h(N) - \sum_{N < K < H}h(K) +ph(H) = 0.$$
    \item[(2)] If $N \trianglelefteq H \leq G$ are $p$-subgroups of $G$ such that $H/N$ is cyclic of order $p$ for an odd prime $p$, or cyclic of order $4$, then $h(N) \equiv h(K) \mod 2$, where $K/N$ is the unique subgroup of prime order of $H/N$.
    \item[(3)] If $N \trianglelefteq H \leq G$ are $p$-subgroups of $G$ such that $H/N$ is a quaternion group of order $8$, then $h(N) \equiv h(K) \mod 4$, where $K/N$ is the unique subgroup of order $2$ of $H/N$.
\end{enumerate}
The class functions satisfying the Borel-Smith conditions form an additive subgroup of $\tn{CF}(G,p)$, denoted by $\tn{CF}_{\tn{b}}(G,p)$.
\end{defi}

From now on we assume that $G$ is a $p$-group. We will consequently write $\tn{CF}(G)$ and $\tn{CF}_{\tn{b}}(G)$ instead of $\tn{CF}(G,p)$ and $\tn{CF}_{\tn{b}}(G,p)$. We will use a representation theoretic input to show that $\lambda: \Lambda(G;k) \to \tn{CF}(G)$ lands in the class functions satisfying the Borel-Smith conditions when restricted to $\pic(\Mod_{\nH \underl{k}}(\specg))$.

\begin{lem}[cf. {\cite[Proposition 4.1, Theorem 4.6]{mil25b}}]\label{lem:lambdaimage}
The homomorphism $\lambda: \Lambda(G;k) \to \tn{CF}(G)$ maps elements of the Picard group $\pic(\Mod_{\nH \underl{k}}(\specg))$ to class functions in $\tn{CF}_{\tn{b}}(G)$.
\end{lem}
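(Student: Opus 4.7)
The plan is to reduce each Borel--Smith condition to a local verification on a small $p$-subquotient, and then apply classical Smith theory.

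\smallskip

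\noindent\emph{Step 1: local reduction.} Let $N \trianglelefteq H \leq G$ be $p$-subgroups with $H/N$ of one of the forms appearing in conditions (1)--(3). Combining the nesting property of \Cref{prop:eqmodfixnested} with the naturality of $\Psi$ under restriction along the inclusion $H/N \hookrightarrow \weyl{G}{N}$ (\Cref{prop:eqmodfixrestriction}), I obtain a symmetric monoidal functor
\[
\Mod_{\nH \underl{k}}(\specg) \xrightarrow{\Psi^{N;G}} \Mod_{\nH \underl{k}}(\spec^{\weyl{G}{N}}) \xrightarrow{\;\tn{res}\;} \Mod_{\nH \underl{k}}(\spec^{H/N})
\]
that sends $X \in \pic(\Mod_{\nH \underl{k}}(\specg))$ to an invertible object $\tilde X \in \pic(\Mod_{\nH \underl{k}}(\spec^{H/N}))$ satisfying $\lambda_{\tilde X}(L/N) = \lambda_X(L)$ for every $N \leq L \leq H$. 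Hence each Borel--Smith condition on $\lambda_X$ at the pair $(N,H)$ is equivalent to the corresponding condition on $\lambda_{\tilde X}$ inside $H/N$, and it suffices to verify conditions (1)--(3) in the ambient groups $G \in \{(\bZ/p)^2,\, \bZ/p\ (\text{odd } p),\, \bZ/4,\, Q_8\}$.

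\smallskip

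\noindent\emph{Step 2: verification in the small cases.} For such a $Q$ and $Y \in \pic(\Mod_{\nH \underl{k}}(\spec^Q))$, I would use \Cref{thm:eqhrmoddpermweight} to represent $Y$ by a bounded complex $C_\bullet$ of finitely generated $(Q;k)$-permutation modules. By \Cref{thm:identifymodfix} the image $\bar{\Psi}^L(Y)$ is identified with the fixed-point complex $C_\bullet^L$, whose $k$-cohomology is one-dimensional in degree $\lambda_Y(L)$ and zero otherwise, for every $L \leq Q$. Realising $C_\bullet$ as the reduced cellular chain complex of a pointed finite $Q$-CW complex $W$---possible because each $C_n$ is a direct sum of orbit modules $k(Q/L)$ and the differentials arise from $Q$-equivariant maps of $Q$-sets---yields that $W^L$ is an $\bF_p$-cohomology sphere of dimension $\lambda_Y(L)$ for every $L \leq Q$. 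Smith theory as presented in \cite[Section III.4]{td87} applied to $W$ then produces the desired relations on $L \mapsto \lambda_Y(L)$, concluding the proof.

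\smallskip

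\noindent The main obstacle is the geometric realisation in Step 2: producing a pointed $Q$-CW complex whose reduced cellular chain complex is chain-homotopy equivalent to $C_\bullet$. The summand structure of permutation modules facilitates attaching equivariant cells, but matching arbitrary chain differentials to degrees of attaching maps may require modifying $C_\bullet$ within its chain-homotopy class, or absorbing auxiliary summands. An alternative, avoiding the geometric step, is to run the Smith-theoretic analysis intrinsically on $C_\bullet$ via the Borel-construction spectral sequence $E_2^{s,t} = H^s(BL;\bF_p) \otimes H^t(C_\bullet^L) \Rightarrow H^{s+t}_L(C_\bullet)$; this is more hands-on but bypasses the need for an honest CW model.
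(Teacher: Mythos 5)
Your Step 1 is fine: invertibility is preserved by the symmetric monoidal composite $\tn{res}\circ\Psi^{N;G}$, and \Cref{prop:eqmodfixnested} together with \Cref{prop:eqmodfixrestriction} does give $\bar{\Psi}^{L/N}(\tilde X)\simeq\bar{\Psi}^{L}(X)$ for $N\leq L\leq H$, so the Borel--Smith condition at $(N,H)$ reduces to the condition at $(1,H/N)$ for the groups $(\bZ/p)^2$, $\bZ/p$, $\bZ/4$, $Q_8$.

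The gap is Step 2, and it is exactly where all the content of the lemma sits. The claim that a bounded complex $C_\bullet$ of finitely generated $(Q;k)$-permutation modules can be realised as the reduced cellular chain complex of a pointed finite $Q$-CW complex is not justified by the reason you give: a morphism in $\cK_{\tn{b}}(\perm(Q;k)^{\natural})$ is an arbitrary $k$-linear combination of spans/double cosets, not something induced by a $Q$-map of $Q$-sets, and even when the differentials do come from maps of $Q$-sets one still has to produce equivariant attaching maps inducing prescribed degrees on each fixed-point set. This is the equivariant chain-realisation (Steenrod/Moore space) problem, which carries genuine obstructions; "modifying $C_\bullet$ within its chain-homotopy class or absorbing auxiliary summands" is precisely the hard part, and nothing in the present framework supplies it. (There is also the minor point that invertible objects can have negative weights, so one must first suspend by a trivial sphere; that part is harmless since the Borel--Smith conditions are invariant under constant shifts.) The proposed fallback via the Borel-construction spectral sequence does not close the gap either: the additive $E_2$-page alone yields at best inequalities, while conditions (2) and (3) are congruences modulo $2$ and $4$ whose classical proofs use the multiplicative structure and Bocksteins in $H^*(BQ;\bF_p)$ (and, for $Q_8$, the $4$-periodicity of its cohomology). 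Carrying Smith theory out algebraically on complexes of permutation modules is exactly the content of the cited results of Miller, so as written your argument either silently invokes them or leaves the key step unproved.

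For comparison, the paper avoids Smith theory here altogether: it defines $\gamma:\Lambda(G;k)\to D_k(G)$ by $X\mapsto[\pi^G_{\lambda_X(1)}(X)]$ in the Dade group, checks on the basis $(X_P)$ that $\beta\circ\lambda=\gamma$ where $\beta$ is the Bouc homomorphism, observes via \Cref{cor:invertiblemodules} that the Picard group lies in $\ker\gamma$, and then quotes the Bouc--Yal\c{c}{\i}n theorem that $\ker\beta=\tn{CF}_{\tn{b}}(G)$. Topological input (fixed points of $\bF_p$-cohomology spheres, dimension functions of representations) is used only later, for the surjectivity statement in \Cref{thm:picborelsmith}. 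If you want a proof along your lines, you would need either a realisation theorem for these complexes or an honest algebraic Smith theory for Brauer quotients of invertible complexes; with the tools available in this paper, the Dade-group detour is the efficient route.
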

\begin{proof}
We can define a homomorphism $\gamma: \Lambda(G;k) \to D_k(G)$ which sends a module $X \in \Lambda(G;k)$ to the class of the $(k;G)$-module $\pi^{1}_{\lambda_X(1)}(X)$. Here $D_k(G)$ is the \emph{Dade group}, it is defined as the set of capped endopermutation modules up to  isomorphism of caps, and it is an abelian group under tensor product over $k$, see \cite{dad78}. We furthermore need another homomorphism to the Dade group, the \emph{Bouc homomorphism}\footnote{The target of the Bouc homomorphism normally is a certain subgroup of $D_k(G)$, but since we are only interested in the kernel this does not matter for our purposes.} $\beta: \tn{CF}(G) \to D_k(G)$, which on the $\bZ$-basis $(\omega_P)_{P \, \in \, \sub(G)/G}$ of $\tn{CF}(G)$ is defined by
$$\omega_P \mapsto \begin{cases}
    [k], & \tn{if } P = G, \\
    \Omega_P, & \tn{otherwise},
\end{cases}$$
where $\Omega_P$ is the equivalence class represented by the kernel of the augmentation map $k(G/P) \twoheadrightarrow k$. \\
Recall from the proof of \Cref{prop:lambdaclassfunctions} that there is a $\bZ$-basis $(X_P)_{P \, \in \, \sub(G)/G}$ of $\Lambda(G;k)$ with $\lambda(X_P) = \omega_P$. We have that $\beta \lambda(X_P) = \beta(\omega_P) = \begin{cases}
    [k], & \tn{if } P = G, \\
    \Omega_P, & \tn{otherwise}.
\end{cases}$
On the other hand, $\gamma(X_P) = \gamma(\Sigma \nH \underl{k}) = [\pi^{1}_{1}(\Sigma \nH \underl{k})] = [k]$ for $P=G$ and $\gamma(X_P) = \gamma(\nH \underl{k} \otimes \Sigma^{\infty}_+ Y_P) = [\pi^{1}_{1}(\nH \underl{k} \otimes \Sigma^{\infty}_+ Y_P)] = [\tn{ker}(k(G/P) \twoheadrightarrow k)]$ for $P \neq G$, using the notation of \Cref{prop:lambdaclassfunctions}. This shows $\beta \circ \lambda = \gamma$. Now by \Cref{cor:invertiblemodules} the subgroup $\pic(\Mod_{\nH \underl{k}}(\specg))$ is contained in the kernel of $\gamma$, hence $\lambda$ maps them into the kernel of $\beta$. But by \cite[Theorem 1.2]{by07} the kernel of the Bouc homomorphism $\beta$ is the group of Borel-Smith class functions $\tn{CF}_{\tn{b}}(G)$.
\end{proof}

At this point we could show that $\pic(\Mod_{\nH \underl{k}}(\specg))$ is not only contained in but equal to the kernel of the composition $\beta \circ \lambda: \Lambda(G;k) \to \tn{CF}(G) \to D_k(G)$. Instead of doing so we will use a topological argument to show surjectivity of the induced homomorphism 
$$\theta: \pic(\Mod_{\nH \underl{k}}(\specg)) \to \tn{CF}_{\tn{b}}(G)$$
which relies on work of tom Dieck on dimension functions of orthogonal representations \cite[Section III.5]{td87}, which in turn builds on work of Atiyah-Tall and Serre on Adams operations on $RO(G)$.

\begin{thm}\label{thm:picborelsmith}
The map $\theta: \pic(\Mod_{\nH \underl{k}}(\specg)) \to \tn{CF}_{\tn{b}}(G)$ is an isomorphism.
\end{thm}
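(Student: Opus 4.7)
The strategy is to prove injectivity and surjectivity separately, treating injectivity as essentially a bookkeeping consequence of the preceding results and carrying the weight of the argument in surjectivity, where a classical representation-theoretic input enters.

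For injectivity, Proposition~\ref{prop:lambdaclassfunctions} already shows that $\lambda: \Lambda(G;k) \to \tn{CF}(G)$ is an isomorphism when $G$ is a $p$-group, and Corollary~\ref{cor:invertiblemodules} exhibits $\pic(\Mod_{\nH \underl{k}}(\spec^G))$ as a subgroup of $\Lambda(G;k)$. Since Lemma~\ref{lem:lambdaimage} ensures that $\lambda$ restricts on this subgroup to land in $\tn{CF}_{\tn{b}}(G)$, the corestriction $\theta$ is automatically injective.

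For surjectivity I will produce enough invertible $\nH \underl{k}$-modules coming from real orthogonal $G$-representations. If $V$ is such a representation with one-point compactification $S^V \in \spcgp$, then $\Sigma^\infty S^V$ is invertible in $\specg$, and hence $M_V \defeq \nH \underl{k} \otimes \Sigma^\infty S^V$ is invertible in $\Mod_{\nH \underl{k}}(\spec^G)$. Applying Lemma~\ref{lem:eqmodfixfree} to the pointed $G$-space $S^V$ gives $\Psi^H(M_V) \simeq \nH \underl{k} \otimes \Sigma^\infty S^{V^H}$ in $\Mod_{\nH \underl{k}}(\spec^{\wgh})$, so after restriction to the trivial subgroup one obtains $\bar{\Psi}^H(M_V) \simeq \Sigma^{\dim V^H} \nH k$. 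This yields the formula
\[
\theta(M_V)(H) = \dim V^H.
\]
Passing to virtual representations assembles into a group homomorphism $RO(G) \to \pic(\Mod_{\nH \underl{k}}(\spec^G))$ whose composition with $\theta$ is the classical fixed-point dimension function $[V] \mapsto (H \mapsto \dim V^H)$.

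The decisive step is then to invoke tom Dieck's theorem \cite[Section~III.5]{td87}, building on work of Atiyah--Tall and Serre on Adams operations in $RO(G)$, which states that for a $p$-group $G$ the image of $\dim: RO(G) \to \tn{CF}(G)$ is precisely the subgroup $\tn{CF}_{\tn{b}}(G)$ of Borel--Smith functions. Combined with the computation above, this produces for every $h \in \tn{CF}_{\tn{b}}(G)$ a virtual representation hitting $h$, and thus a preimage in $\pic(\Mod_{\nH \underl{k}}(\spec^G))$ under $\theta$. The main obstacle is conceptual rather than technical: the whole argument hinges on recognizing that the bridge from Borel--Smith class functions to invertible $\nH \underl{k}$-modules runs through real representation theory and that modular fixed points, being controlled by geometric fixed points via Lemma~\ref{lem:eqmodfixfree}, convert representation spheres into shifts of $\nH k$ in exactly the way predicted by the dimension function.
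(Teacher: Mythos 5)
Your proposal is correct and follows essentially the same route as the paper: injectivity comes for free from \Cref{prop:lambdaclassfunctions}, \Cref{cor:invertiblemodules} and \Cref{lem:lambdaimage}, and surjectivity is obtained by sending a real representation $V$ to the invertible module $\nH \underl{k} \otimes \Sigma^{\infty} S^V$, computing $\theta$ of it as the fixed-point dimension function via \Cref{lem:eqmodfixfree}, and invoking tom Dieck's theorem that $\tn{dim}: RO(G) \to \tn{CF}_{\tn{b}}(G)$ is surjective for $p$-groups. The only cosmetic difference is that the paper phrases the computation through the cellular chain complex of $(\wgh;k)$-permutation modules of $S^{V^H}$ rather than directly through $\bar{\Psi}^H(M_V) \simeq \Sigma^{\dim_{\bR} V^H}\nH k$, which amounts to the same thing.
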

\begin{proof}
Injectivity was shown in \Cref{prop:lambdaclassfunctions}, we need to show that $\theta$ is surjective. For a real orthogonal $G$-representation $V$ there is an associated dimension class function $\tn{dim}(V): (H) \mapsto \tn{dim}_{\bR}(V^H)$. The assignment $V \mapsto \tn{dim}(V)$ extends to a homomorphism of additive groups $\tn{dim}: RO(G) \to \tn{CF}(G)$ defined on the ring of real orthogonal $G$-representations. Using results of Smith and Borel \cite[Section III.4]{td87} about fixed points of finite $G$-CW complexes which are $\bF_p$-cohomology spheres this dimension homomorphism lands in the Borel-Smith class functions $\tn{CF}_{\tn{b}}(G)$, see \cite{td87} before Definition III.5.1. \\
Mapping a real orthogonal $G$-representation $V$ to its representation sphere $S^V$ also extends to a group homomorphism $\eta: RO(G) \to \pic(\specg)$. The constructed map $\theta: \pic(\Mod_{\nH \underl{k}}(\specg)) \to \tn{CF}_{\tn{b}}(G)$ sends the invertible $\nH \underl{k}$-module $\nH \underl{k} \otimes S^V$ to the class function mapping a subgroup $H \leq G$ to the unique degree in which the cellular chain complex of $(\wgh;k)$-permutation modules for $(S^V)^H \cong S^{V^H}$ has nonzero homology. But this degree is precisely the real dimension of $V^H$. Hence we obtain a factorisation
\[\begin{tikzcd}
	{RO(G)} & {\tn{CF}_{\tn{b}}(G)} \\
	{\pic(\specg)} & {\pic(\Mod_{\nH \underl{k}}(\specg)),}
	\arrow["\tn{dim}", from=1-1, to=1-2]
	\arrow["\eta", from=1-1, to=2-1]
	\arrow["{f_{\nH \underl{k}}}", from=2-1, to=2-2]
	\arrow["\theta"', from=2-2, to=1-2]
\end{tikzcd}\]
where $f_{\nH \underl{k}}$ is the homomorphism induced by the (symmetric monoidal) free functor on Picard groups. By \cite[Theorem III.5.4]{td87} the top map is surjective, hence $\theta$ is surjective as well. This finishes the proof.
\end{proof}

\begin{re}
From the isomorphism $\pic(\Mod_{\nH \underl{k}}(\specg)) \cong \tn{CF}_{\tn{b}}(G)$ which holds when $G$ is a $p$-group one can deduce an isomorphism
$$\pic(\Mod_{\nH \underl{k}}(\specg)) \cong \tn{CF}_{\tn{b}}(G,p) \times \Hom(G,k^{\times})$$
which holds for arbitrary finite groups $G$. It requires showing that restriction to a certain \emph{$G$-stable} subgroup of $\pic(\Mod_{\nH \underl{k}}(\specg))$ is surjective \cite[Theorem 12.6]{mil25a}. We refer to \cite[Section 6.1]{mil25b}, where the proof is given in the language of representation theory.
\end{re}

The result for $p$-groups still formally extends to pro-$p$-groups. We use the notation for profinite groups from the previous sections.

\begin{cor}\label{cor:picborelsmithprop}
Let $G = \lim_{i \in I} G_i$ be a pro-$p$-group (so all $G_i=G/N_i$ are $p$-groups). Then
$$\pic(\Mod_{\nH \underl{k}}(\specg)) \cong \tn{CF}_{\tn{b}}(G),$$
where $\tn{CF}_{\tn{b}}(G)$ is the subgroup of $\tn{CF}_{\tn{o}}(G)$, the group of integral class functions on the conjugacy classes of open pro-subgroups of $G$, which consists of class functions $h$ for which there exists an open normal subgroup $N \unlhd G$ such that $h$ lies in $\tn{CF}_{\tn{b}}(G/N)$ when restricted to the conjugacy classes of subgroups of $G/N$.
\end{cor}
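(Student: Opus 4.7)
The plan is to reduce to the finite case handled in \Cref{thm:picborelsmith} via a filtered colimit argument. As established in the proof of \Cref{thm:mackeyasspgmodules}, there is an equivalence
$$\Mod_{\nH \underl{k}}(\specg) \simeq \colimit{i} \Mod_{\nH \underl{k}}(\spec^{G_i})$$
in $\prlstmon$, with transition functors those constructed in \Cref{cons:restrictionmodules} out of the projections $G_i \twoheadrightarrow G_j$ associated to the inclusions $N_i \subseteq N_j$ of open normal subgroups.

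The first step is to show that the Picard group takes filtered colimits in $\prlstmon$ of rigidly compactly generated categories with compact unit to filtered colimits of abelian groups. In our setting invertible objects are dualisable and hence compact, and the above equivalence restricts to a filtered colimit of symmetric monoidal compact parts in $\catinf^{\tn{perf},\otimes}$. So any invertible $X$ in the colimit already comes as a compact object from some $\Mod^{\omega}_{\nH \underl{k}}(\spec^{G_i})$, as does its inverse $X^{-1}$, and the identification $X \otimes X^{-1} \simeq \nH \underl{k}$ lives in a mapping space of a filtered colimit, hence is witnessed at some stage $j \geq i$. Therefore $X$ is already invertible in $\Mod_{\nH \underl{k}}(\spec^{G_j})$. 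A parallel argument at the level of equivalences between invertible objects gives injectivity, and one obtains
$$\pic(\Mod_{\nH \underl{k}}(\specg)) \cong \colimit{i} \pic(\Mod_{\nH \underl{k}}(\spec^{G_i})).$$

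Next I would apply \Cref{thm:picborelsmith} at each finite stage and verify naturality of the isomorphisms $\theta_i$ with respect to the transition maps, so that the induced maps on the right-hand side are inflation of class functions. Since $\theta$ factors through the h-marks homomorphism $\lambda$, which is defined in terms of the functors $\bar{\Psi}^H$, naturality follows from the compatibility of modular fixed points with restriction along a group homomorphism recorded in \Cref{prop:eqmodfixrestriction}. Concretely, for the projection $\alpha: G_i \twoheadrightarrow G_j$ and a $p$-subgroup $H \leq G_i$ with image $H' = \alpha(H) \leq G_j$, one obtains $\bar{\Psi}^H \alpha^*(X) \simeq \bar{\Psi}^{H'}(X)$ in $\Mod_{\nH k}(\spec)$ for any $X \in \pic(\Mod_{\nH \underl{k}}(\spec^{G_j}))$, so $\lambda_{\alpha^*(X)}(H) = \lambda_X(H')$, which is precisely the inflation formula on class functions. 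Passing to colimits then yields
$$\pic(\Mod_{\nH \underl{k}}(\specg)) \cong \colimit{i} \tn{CF}_{\tn{b}}(G_i) \cong \tn{CF}_{\tn{b}}(G),$$
the final identification being the unwinding of the definition of $\tn{CF}_{\tn{b}}(G)$ given in the corollary, which encodes exactly that every element is inflated from some $\tn{CF}_{\tn{b}}(G_i)$.

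The main obstacle will be the first step. While the outline is short, it requires the technical input that the compact-part functor $\cC \mapsto \cC^{\omega}$ exhibits $\prlstmon$ as $\tn{Ind}(\catinf^{\tn{perf},\otimes})$, so that filtered colimits in $\prlstmon$ are computed at the compact level and equivalences in the colimit are detected at finite stages. Once this is in place, all further steps reduce to the naturality discussion above and the straightforward identification at the level of class functions.
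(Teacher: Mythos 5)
Your proposal is correct and follows essentially the same route as the paper: restrict the $\prlstmon$-colimit $\Mod_{\nH \underl{k}}(\specg) \simeq \tn{colim}_i \, \Mod_{\nH \underl{k}}(\spec^{G_i})$ to compact parts, use that Picard groups commute with filtered colimits of symmetric monoidal $\infty$-categories, apply \Cref{thm:picborelsmith} at each finite stage, and identify $\tn{colim}_i \, \tn{CF}_{\tn{b}}(G_i)$ with $\tn{CF}_{\tn{b}}(G)$ via the (injective) inflation maps into $\tn{CF}_{\tn{o}}(G)$. The paper is slightly more explicit about the injectivity of the transition maps $\tn{CF}_{\tn{b}}(G_j) \to \tn{CF}_{\tn{b}}(G_i)$ and the resulting identification of the colimit as a union, but this is exactly the "unwinding" you defer to.
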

\begin{proof}
Recall the equivalence $\Mod_{\nH \underl{k}}(\specg) \simeq \tn{colim}_i \, \Mod_{\nH \underl{k}}(\spec^{G_i})$, where the colimit is taken in stable presentably symmetric monoidal $\infty$-categories $\calg(\prlst)$. Since the colimit diagram lies in compactly generated $\infty$-categories we can restrict to compact parts and obtain an equivalence $\Mod^{\omega}_{\nH \underl{k}}(\specg) \simeq \tn{colim}_i \, \Mod^{\omega}_{\nH \underl{k}}(\spec^{G_i})$, where the colimit is taken in $\calg(\tn{Cat}_{\infty}^{\tn{perf}})$. As the diagram is filtered it is computed in symmetric monoidal $\infty$-categories $\calg(\catinf)$, where it commutes with Picard groups. So \Cref{thm:picborelsmith} gives $\pic(\Mod_{\nH \underl{k}}(\specg)) \cong \tn{colim}_i \, \tn{CF}_b(G_i)$, and it remains to reformulate the right-hand colimit. \\
The maps $\tn{CF}_{\tn{b}}(G_i) \to \tn{CF}_{\tn{o}}(G), \; h \mapsto ((H) \mapsto h(HN_i/N_i))$ obtained by precomposition with $\alpha_i: G \twoheadrightarrow G_i$ are injective, since every subgroup of $G_i$ is of the form $HN_i/N_i$ for some open pro-subgroup $H \leq G$. One checks that the transition maps $\tn{CF}_{\tn{b}}(G_j) \to \tn{CF}_{\tn{b}}(G_i)$ induced by the $f_{ij}^*: \Mod_{\nH \underl{k}}(\spec^{G_j}) \to \Mod_{\nH \underl{k}}(\spec^{G_i})$ on Picard groups are the maps $\tn{CF}_{\tn{b}}(G_j) \to \tn{CF}_{\tn{b}}(G_i), \; h \mapsto ((H/N_i) \mapsto h(HN_j/N_j))$. They are injective since every subgroup of $G_j$ is of the form $HN_j/N_j$ for some subgroup $H \leq G_i$, and the maps $\tn{CF}_{\tn{b}}(G_i) \to \tn{CF}_{\tn{o}}(G)$ for $i \in I$ are compatible with them, i.e. whenever $f_{ij} \circ \alpha_i = \alpha_j$ the corresponding triangle of class function groups commutes. This shows that $\tn{colim}_i \, \tn{CF}_b(G_i)$ is given by the union $\bigcup_{i \in I} \tn{CF}_{\tn{b}}(G_i)$ inside $\tn{CF}_{\tn{o}}(G)$. For each open normal subgroup $N \unlhd G$ the surjective homomorphism $\alpha: G \twoheadrightarrow G/N$ induces an injection $\alpha^{-1}: (\sub(G/N))/(G/N) \hookrightarrow \sub_{\tn{o}}(G)/G, \; (H/N) \mapsto (H)$ by taking preimages, where the codomain is the set of conjugacy classes of open pro-subgroups of $G$. Since every open normal subgroup of $G$ is of the form $N_i$ for some $i \in I$ it is clear that the subgroups $\bigcup_{i \in I} \tn{CF}_{\tn{b}}(G_i)$ and $\tn{CF}_{\tn{b}}(G)$ of $\tn{CF}_{\tn{o}}(G)$ agree.
\end{proof}

\printbibliography
\end{document}